\numberwithin{equation}{section}        
\newtheorem{thm}{Theorem}[section]
\newtheorem{lem}[thm]{Lemma}
\newtheorem{assum}[thm]{Assumption}
\newtheorem{cly}[thm]{Corollary}
\newtheorem{prop}[thm]{Proposition}
\theoremstyle{definition}
\newtheorem{defn}[thm]{Definition}
\newtheorem{rem}[thm]{Remark}
\DeclareMathOperator{\Diff}{Diff}          
\DeclareMathOperator{\Exp}{Exp}          
\DeclareMathOperator{\cof}{cof}          
\DeclareMathOperator{\Id}{Id}           
\DeclareMathOperator{\Op}{\mathfrak{Op}} 
\DeclareMathOperator{\sgn}{sgn}       
\DeclareMathOperator{\Tr}{Tr}           
\DeclareMathOperator{\argch}{argch}    
\DeclareMathOperator{\argsh}{argsh}    
\DeclareMathOperator{\sech}{sech}    
\renewcommand{\a}{\alpha}             
\newcommand{\B}{\mathcal{B}}            
\renewcommand{\b}{\beta}              
\newcommand{\bfr}{\mathfrak{b}}				
\newcommand{\C}{\mathbb{C}}             
\newcommand{\D}{\mathcal{D}}            
\newcommand{\E}{\mathcal{E}}						
\newcommand{\F}{\mathcal{F}}						
\newcommand{\del}{\partial}             
\newcommand{\eps}{\varepsilon}          
\newcommand{\Ga}{\Gamma}                
\newcommand{\ga}{\gamma}                
\renewcommand{\H}{\mathcal{H}}          
\newcommand{\HH}{\mathbb{H}}          
\newcommand{\half}{{\mathchoice{\thalf}{\thalf}{\shalf}{\shalf}}}
\newcommand{\ka}{\kappa}								
\newcommand{\la}{\lambda}               
\newcommand{\M}{\mathcal{M}}            
\newcommand{\N}{\mathbb{N}}             
\newcommand{\ol}{\overline}             
\renewcommand{\O}{\mathcal{O}}						
\newcommand{\wox}{\wh \otimes}					
\newcommand{\om}{\omega}                
\newcommand{\ox}{\otimes}               
\newcommand{\R}{\mathbb{R}}             
\newcommand{\RR}{\mathcal{R}}						
\newcommand{\rt}{\mathrm{t}}             
\newcommand{\rx}{\mathrm{x}}             
\newcommand{\ry}{\mathrm{y}}             
\newcommand{\rv}{\mathrm{v}}             
\newcommand{\ru}{\mathrm{u}}             
\newcommand{\shalf}{{\scriptstyle\frac{1}{2}}} 
\renewcommand{\S}{\mathcal{S}}         
\newcommand{\sg}{\sigma}                
\renewcommand{\th}{\theta}              
\newcommand{\thalf}{\tfrac{1}{2}}       
\newcommand{\Ups}{\Upsilon}							
\newcommand{\vth}{\vartheta}            
\newcommand{\wh}{\widehat}              
\newcommand{\wt}{\widetilde}            
\newcommand{\norm}[1]{\left\lVert#1\right\rVert} 
\newcommand{\set}[1]{\{\,#1\,\}}        
\newcommand{\twobyone}[2]{\begin{pmatrix}#1\\#2\end{pmatrix}} 
\def\<#1,#2>{\langle#1\mathbin|#2\rangle} 
\title{
\textbf{Pseudodifferential operators on manifolds with linearization}}
\author{\textbf{Cyril Levy} 
\\
\vspace{0cm}
\\
 \small Universit\'e de Provence and Centre de Physique Th\'{e}orique$^1$,\\
\small CNRS--Luminy, Case 907, 13288 Marseille Cedex 9, France.\\
\small {\em Email address:} {cyril.levy@ens-lyon.org} }
\date{June 2009}
\begin{document}

\maketitle
\vspace{0.6cm}  
\begin{abstract}
We present in this paper the construction of a pseudodifferential calculus on smooth non-compact manifolds associated to a globally defined and coordinate independant complete symbol calculus, that generalizes the standard pseudodifferential calculus on $\R^n$. We consider the case of manifolds $M$ with linearization in the sense of Bokobza-Haggiag \cite{Bokobza}, such that the associated (abstract) exponential map provides global diffeomorphisms of $M$ with $\R^n$ at any point. Cartan--Hadamard manifolds are special cases of such manifolds. The abstract exponential map encodes a notion of infinity on the manifold that allows, modulo some hypothesis of $S_\sigma$-bounded geometry, to define the Schwartz space of rapidly decaying functions, globally defined Fourier transformation and classes of symbols with uniform and decaying control over the $x$ variable. Given a linearization on the manifold with some properties of control at infinity, we construct symbol maps and $\la$-quantization, explicit Moyal star-product on the cotangent bundle, and classes of pseudodifferential operators. We show that these classes are stable under composition, and that the $\la$-quantization map gives an algebra isomorphism (which depends on the linearization) between symbols and pseudodifferential operators. We study, in our setting, $L^2$-continuity and give some examples. We show in particular that the hyperbolic 2-space $\HH$ has a $S_1$-bounded geometry, allowing the construction of a global symbol calculus of pseudodifferential operators on $\S(\HH)$. 
  
\vspace{1.5cm}  
\noindent {\bf Key words}: global pseudodifferential calculus, exponential map, non-compact manifolds, linearization, Fourier transform, Fourier integral operators, quantization, explicit Moyal product, symbols, amplitudes, composition, hyperbolic space.
\vspace{0.2cm}

\noindent {\bf MSC classification}: 35S05-58J40-46F12-53C22
 
 \vspace{1.5cm}

\noindent $^1$ UMR 6207

\noindent -- Unit\'e Mixte de Recherche du CNRS et des
Universit\'es Aix-Marseille I, Aix-Marseille 
II et de l'Universit\'e
du Sud Toulon-Var

\noindent -- Laboratoire affili\'e \`a la FRUMAM -- FR 2291
\end{abstract}

\tableofcontents


\section{Introduction}

Classically, a pseudodifferential operator on a (smooth, finite dimensional) 
manifold is defined through local charts and the notion of pseudodifferential operator
on open subsets of $\R^n$ \cite{Shubin,Treves}. In this setting, the full symbol of a
pseudodifferential operator is a coordinate dependent notion. However, 
the \emph{principal} symbol can be globally defined as a function on the cotangent bundle.
Naturally, the question of a full coordinate free definition of the symbol calculus of pseudodifferential 
operators on a manifold has been considered. One approach, based on the ideas of Bokobza-Haggiag \cite{Bokobza}, Widom \cite{Widom1,Widom2} 
and Drager \cite{Drager} 
allows such a calculus if one provides the manifold with a linear connection. Parallel transport along geodesics and the exponential map to connect any two points sufficiently close on the manifold are then used for the definitions and properties of local phase functions and oscillatory integrals.
Safarov \cite{Safarov} has formulated a version 
of a full coordinate free symbol calculus and $\la$-quantization ($0\leq \la \leq 1$) using invariant oscillatory integral over the cotangent bundle and 
determined by the linear connection. Pflaum \cite{PflaumQ,Pflaum} developped a complete symbol calculus on any Riemannian manifold using normal coordinates and microlocal lift on the test functions on manifolds with arbitrary Hermitian bundles. Sharafutdinov \cite{Shara1,Shara2} constructed a similar global pseudodifferential calculus, based on coordinate invariant geometric symbols. Further results in the same direction, connection to Weyl quantization and application to physics has been considered in Fulling and Kennedy \cite{Fulling}, Fulling \cite{Fulling0} and G\"{u}nt\"{u}rk \cite{Gunturk}. Connection between complete symbol calculus, deformation quantization and star-products on the cotangent bundle has also been made (see for instance Gutt \cite{Gutt}, Bordemann, Neumaier and Waldmann \cite{Bordeman} and Voronov \cite{Voronov1,Voronov2}). Getzler \cite{Getzler} used a global pseudodifferential calculus in the context of the Atiyah-Singer index theorem on supermanifolds.

All these pseudodifferential calculi are based on symbol (functions of $(x,\th)\in T^*M$) estimates over the covariable $\th$ while the dependence on the variable $x$ is only controlled locally uniformly on compact sets. This is well suited for the case of a compact manifold. For non-compact manifolds, we have to impose a uniform control over $x$ in order to obtain $L^2(M)$ continuity of operators of order 0 and compactness of the remainder operators if the control over $x$ is decaying. In other words, any global pseudodifferential calculus adapted to non-compact manifolds and sensitive to non-local effects needs to encode the behaviour ``at infinity" of symbols. On the Euclidean space $\R^n$, several types of pseudodifferential calculi have been defined: standard pseudodifferential calculus with uniform control over $x$ (see for instance H\"{o}rmander \cite{Hormander}, Beals \cite{Beals}, Shubin \cite{Shubin3}), isotropic calculus with simultaneous decay of the $x$ and $\th$ variables (Shubin \cite{Shubin,Shubin2}, Melrose \cite{Melrose}), and $SG$-pseudodifferential calculus with separated decay of the $x$ and $\th$ variables (Shubin \cite{Shubin2}, Parenti \cite{Parenti}, Cordes \cite{Cordes1,Cordes2}, Schrohe \cite{Schrohe}), which is invariant under a special class of diffeomorphisms and can be extended to an adapted class of manifolds, namely the $SG$-manifolds (Schrohe \cite{Schrohe}). This class of manifolds contains the non-compact manifolds ``with exits" and adapted pseudodifferential calculus has been developed (see for instance Cordes \cite{Cordes1}, Schulze \cite{Schulze}, Maniccia and Panarese \cite{Maniccia2}). Another approach, based on Lie structures at infinity, has been investigated to study the geometry of pseudodifferential operators on non-compact manifolds. Describing the geometry at infinity of the basis manifold by a Lie algebra of vector fields, an adapted pseudodifferential calculus has been constructed (see for instance Melrose \cite{Melrose2}, Mazzeo and Melrose \cite{Mazzeo}, Ammann, Lauter and Nistor \cite{Ammann}). Let us also mention the groupoid approach: by associating to any manifold with corners a smooth Lie groupoid and by building a pseudodifferential calculus on Lie groupoids, the $b$-calculus of Melrose on manifolds with corners can be generalized (see Monthubert \cite{Monthubert}).

Our purpose in this paper is to construct a global pseudodifferential calculus that generalizes the standard and $SG$ calculi on $\R^n$, on manifolds with linearization. These manifolds provide a natural geometric setting to deal simultaneously with the questions of a global isomorphism between symbols and pseudodifferential operators, and the non-local effects associated to non-compact manifolds.

The papers in organized as follows.
We define in section 2 a manifold with linearization (or exponential manifold) as a pair $(M,\exp)$ where $M$ is a smooth real finite-dimensional manifold and $\exp$ is an abstract exponential map, a smooth map from the tangent bundle onto $M$ that satisfies, besides the usual properties of an exponential map associated to a connection $\nabla$ on $TM$, the property that at each point $x\in M$, $\exp_x$ is a diffeomorphism. Any Cartan--Hadamard manifold with its canonical exponential map is an exponential manifold. These diffeomorphisms are used to define topological vector spaces of functions on the manifold (or on $TM$, $T^*M$, $M\times M$) that generalize, for instance, the notions of rapidly decaying function on $\R^n$ or of tempered distribution, provided that we add a hypothesis of ``$\O_M$-bounded geometry" on the exponential map.  
In section 3, we use linearizations in the spirit of Bokobza-Haggiag \cite{Bokobza}, to define symbol and quantization maps. This leads to topological isomorphisms between tempered distributional sections on $T^* M$ and $M\times M$, if we consider polynomially controlled (at infinity) linearizations ($\O_M$-linearizations). In particular, we extend the usual (explicit) Moyal product (or $\la$-product, for the $\la$-quantization) on any exponential manifold with $\O_M$-bounded geometry on which we set a $\O_M$-linearization. We get the following $\la$-product formula, giving a Fr\'{e}chet algebra structure to $\S(T^*M)$, 
$$ 
a\circ_\la b\,(x,\eta) = \int_{T_x(M) \times M} d\mu_{x}(\xi)d\mu(y)\int_{V^\la_{x,\xi,y}} d\mu_{x,\xi,y}^*(\th,\th')\, g^\la_{x,\xi,y}\,e^{2\pi i \om^\la_{x,\xi,y}(\eta,\th,\th')} a(y^\la_{x,\xi},\th)\,b(y^{1-\la}_{x,-\xi},\th')
$$
where $a,b\in \S(T^*M)$ and the other notations are detailed in Proposition \ref{la-product}.

In section 4, we define the symbol and amplitudes spaces for our pseudodifferential calculus. Symbol spaces can be defined in an intrinsic way on the exponential manifold with the help of "symbol-like" control ($S_\sg$-bounded geometry, see Definition \ref{ssigmadef}) of the coordinate change diffeomorphisms $\psi_{z,z'}^{\bfr,\bfr'}$ associated to the exponential map $\exp$ on $M$.
For practical reasons the definition of amplitudes here is slightly different from the usual functions of the parameters $x,y$ and $\th$. Instead, our amplitudes generalize functions of the form $(\rx,\zeta,\vth)\mapsto a(\rx,\rx+\zeta,\vth)$, where $a$ is a standard amplitude of the Euclidian pseudodifferential calculus. We establish continuity and regularity results for operators of the following form (which can be seen, for some forms of $\Ga$, as special Fourier integral operators on $\R^n$):  
$$
\langle \Op_{\Ga}(a) , u\rangle :=  \int_{\R^{3n}} e^{2\pi i \langle \vth,\zeta \rangle } \Tr\big(a(\rx,\zeta,\vth)\, \Ga(u)^*(\rx,\zeta)) \, d\zeta\,d\vth\,  d\rx
$$
where $\Ga$ is a topological isomorphism on $\S(\R^{2n},L(E_z))$ (here $E_z$ is a fixed fiber of the Hermitian bundle $E\to M$, so $L(E_z)$ can be identified with $\M_{\dim E_z}(\C)$), $a$ is in a $\O_{f,z}$ space (see Definition \ref{OFZ}) and $u\in \S(\R^n,E_z)$. In particular, results of Proposition \ref{ampliOP} and \ref{amplContinu} and Lemma \ref{noyauReste} are believed to be new.

With the help of a hypothesis of a control of symbol type over the derivative of the linearization ($S_\sg$-linearizations), we obtain in section \ref{pdosection} an intrinsic definition (Theorem \ref{lambdainv}) of pseudodifferential operators $\Psi_{\sg}^{l,m}$ on $M$.
We see in section \ref{linkstd} a condition $(H_V)$ on the linearization that entails that any pseudodifferential operator on $M$, when transferred in a frame $(z,\bfr)$, is a standard pseudodifferential operator on $\R^n$. This condition yields a $L^2$-continuity result in Proposition \ref{L2cont}. 
The last part of section 4 is devoted to the derivation of a symbol product asympotic formula for the composition of two pseudodifferential operators. The main result is Theorem \ref{compo}: under a special hypothesis $(C_\sg)$ on the linearization (see Definition \ref{Csigma}), we have the following asymptotic formula for the normal symbol (transferred in a frame $(z,\bfr)$) of the product of two pseudodifferential operators
$$
\sigma_{0}(AB)_{z,\bfr} \sim \sum_{\b,\ga \in \N^n} c_\b c_\ga \del_{\zeta,\vth}^{\ga,\ga}\big( a(\rx,\vth)\del^{\b}_{\zeta'}\big(e^{2\pi i \langle \vth,\varphi_{\rx,\zeta}(\zeta')\rangle} (\del^{\b}_{\vth'} f_b)(\rx,\zeta,\zeta',L_{\rx,\zeta}(\vth))\big)_{\zeta'=0} \tau^{-1}_{\rx,\zeta} \big)_{\zeta=0}
$$
where  $a:=\sigma_0(A)_{z,\bfr}$, $b:=\sigma_0(B)_{z,\bfr}$, and other notations are defined in section \ref{composec}.

Finally, we give in section \ref{exsec} two possible settings (besides the usual standard calculus on the Euclidian $\R^n$) in which the previous calculus applies. The first is based on the Euclidian space $\R^n$, with a ``deformed" (non-bilinear, non-flat) $S_\sg$-linearization. The second example is the hyperbolic plane (or Poincar\'{e} half-plane) $\HH$. We prove in particular that $\HH$ has a $S_1$-bounded geometry. This allows to define a global Fourier transform, Schwartz spaces $\S(\HH)$, $\S(T^*\HH)$, $\S(T\HH)$, $\B(\HH)$ and the space of symbols $S_1^{l,m}(T^*\HH)$. Moreover we can then define in an intrinsic way a global complete pseudodifferential calculus on $\HH$, and Moyal product, for any specified $S_\sg$-linearization on $\HH$.

\section{Manifolds with linearization and basic function spaces}
\subsection{Abstract exponential maps, definitions and notations}

The notion of linearization on a manifold was first introduced by Bokobza-Haggiag in \cite{Bokobza} and corresponds to a smooth map $\nu$ from $M\times M$ into $TM$ such that $\pi \circ \nu = \pi_1$, $\nu(x,x)=0$ for any $x\in M$ and $(d_y\nu)_{y=x}=\Id_{T_x M}$. In all the following, we shall work with ``global" linearizations, in the following sense:

\begin{defn} A manifold with linearization (or exponential manifold) is a pair $(M,\exp)$ where $M$ is a smooth manifold and $\exp$ a smooth map from $TM$ into $M$ such that:

\noindent $(i)$ for any $x\in M$, $\exp_x:T_x M \to M$ defined as $\exp_x(\xi):=\exp(x,\xi)$, is a global diffeomorphism between $T_x M$ and $M$,

\noindent $(ii)$ for any $x\in M$, $\exp_x(0)=x$ and $(d\exp_x)_0=\Id_{T_xM}$.

\noindent The map $\exp$ will be called the exponential map, and $(x,y)\mapsto \exp_x^{-1}(y)$ the linearization, of the exponential manifold $(M,\exp)$. We shall sometimes use the shorthand $e_x^\xi:=\exp_x (\xi)$.
\end{defn}

Note that the term ``exponential manifold" used here is not to be confused with the notion of ``exponential statistical manifold" used in stochastic analysis. 
Remark that if $\exp\in C^{\infty}(TM,M)$ satisfies $(i)$, then defining $\Exp:=\exp\circ\ T$ where $T(x,\xi):=\exp_{x}^{-1}(x)+(d\exp^{-1}_x)_x\xi$, we see that $(M,\Exp)$ is an exponential manifold.

We will say that $(M,\nabla)$ (resp. $(M,g)$) is exponential, where $M$ is a smooth manifold with connection $\nabla$ on $TM$ (resp. with pseudo-Riemannian metric $g$), if $(M,\exp)$ where $\exp$ is the canonical exponential map associated to $\nabla$ (resp. to $g$) is an exponential manifold, or in other words, if for any $x\in M$, $\exp_x$ is a diffeomorphism from $T_x M$ onto $M$. Note that $(M,\nabla)$ (resp. $(M,g)$) is exponential if and only if
\begin{itemize}
\item $M$ is geodesically complete
\item For any $x,y\in M$, there exists one and only one maximal geodesic $\ga$ such that $\ga(0)=x$ and $\ga(1)=y$.
\item For any $x\in M$, $\exp_x$ is a local diffeomorphism.
\end{itemize}

\begin{rem} $\R^n$ (with its standard metric of signature $(p,n-p)$) is an exponential manifold and any $n$-dimensional real exponential manifold is diffeomorphic to $\R^n$. In particular, an exponential manifold cannot be compact. 
A Cartan--Hadamard manifold is a Riemannian, complete, simply connected manifold with nonpositive sectional curvature. It is a consequence of the Cartan--Hadamard theorem (see for instance \cite[Theorem 3.8]{Lang}) that any Cartan--Hadamard manifold is exponential.
\end{rem}

\begin{rem} The exponential structure can be transported by diffeomorphism: if $(M,\exp_M)$ is an exponential manifold, $N$ a smooth manifold and $\varphi: M\to N$  is a diffeomorphism, then $(N,\exp_N:=\varphi\circ \exp_M \circ \  T\varphi^{-1})$ is an exponential manifold.
\end{rem}
\begin{assum} We suppose from now on that $(M,\exp)$ is an exponential $n$-dimensional real manifold.
\end{assum}

For any $x,y\in M$, we define $\ga_{xy}$ as the curve $\R\to M$, $t\mapsto \exp_{x}(t\exp_x^{-1}y)$, and $\wt\ga_{xy}(t):=\ga_{yx}(1-t)$. Note that $\ga_{xy}(0)=x$ and $\ga_{xy}(1)=y$. If the exponential map is derived from a linear connection, we have for any $t\in \R$, $\ga_{xy}(t)=\wt \ga_{xy}(t)$. In the general case, this is only true for $t=0$ and $t=1$. 

The abstract exponential map $\exp$ provides the manifold $M$ with a notion of ``points at infinity" and ``straight lines" ($\ga_{xy}$). It can be seen as a generalization to manifolds of the useful properties of $\R^n$ for the study of the behaviour of functions at infinity. The abstract exponential map $\exp$ formalizes the fact that our straight lines never stop and connect any two different points.

The diffeomorphism $\exp_z^{-1}$, for a given $z\in M$, is not stricto sensu a chart, since it maps $M$ onto $T_z M$, which is diffeormorphic but not equal to $\R^n$. In order to obtain a chart, one needs to choose a linear basis of $T_z M$. If $z\in M$ and $\bfr$ is a basis of $T_z M$ we will call the pair $(z,\bfr)$ a (normal) frame. For any frame $(z,\bfr)$, we define $n_z^{\mathfrak{b}}:=L_\bfr\circ \exp_z^{-1}$ with $L_\bfr$ the linear isomorphism from $T_z M$ onto $\R^n$ associated to $\mathfrak{b}$. As a consequence, the pair $(M,n_z^\bfr)$ is a chart which is a global diffeomorphism from $M$ onto $\R^n$.

We note $\psi_{z,z'}^{\bfr,\bfr'}:= n_z^\bfr \circ (n_{z'}^{\bfr'})^{-1}$ the normal coordinate change diffeomorphism from $\R^n$ onto $\R^n$ and
$(\del_{i,z,\bfr})_{i\in \N_n}$ and $(dx^{i,z,\bfr})_{i\in \N_n}$ (whith $\N_n:=\set{1,\cdots,n}$) the global frame vector fields and 1-forms associated to the chart $n_z^\bfr$. We also note $n^\bfr_{z,*}$ the diffeomorphism from $T^*M$ onto $\R^{2n}$ defined by $n^\bfr_{z,*}(x,\th)=(n^\bfr_{z}(x),\wt M_{z,x}^\bfr(\th))$ where $(\wt M_{z,x}^\bfr(\th)_i)_{i\in \N_n}$ are the components of $\th$ in $(dx^{i,z,\bfr}_{x})_{i\in \N_n}$ and $n_{z,T}^\bfr : (x,\xi)\to (n_z^\bfr(x), M^\bfr_{z,x}(\xi))$ the diffeomorphism from $TM$ onto $\R^{2n}$, where $( M ^\bfr_{z,x}(\xi)_i)_{i\in \N_n}$ are the coordinates of $\xi$ in the basis $({\del_{i,z,\bfr}}_x)_{i\in \N_n}$. We have $M^{\bfr}_{z,x}  = (dn_{z}^\bfr)_x$ and $\wt M^{\bfr}_{z,x}  =\, ^t(dn_{z}^\bfr)_x^{-1}$.
The diffeomorphism from $M\times M$ onto $\R^{2n}$ defined by $(x,y)\mapsto (n_z^\bfr(x),n_z^\bfr(y))$ will be noted $n_{z,M^2}^\bfr$.

We note $(\del_{i,z,\bfr})_{i\in \N_{2n}}$ the family of vector fields on $T^*M$ (resp. $TM$, $M\times M$) associated to the chart $n^\bfr_{z,*}$ (resp. $n^\bfr_{z,T}$, $n^\bfr_{z,M^2}$) onto $\R^{2n}$. 
We suppose in all the following that $\mathfrak{E}$ is an arbitrary normed finite dimensional complex vector space. If $\nu$ is a ($2n$)-multi-index, we define the following operator on $C^{\infty}(T^*M,\mathfrak{E})$ (resp. $C^{\infty}(TM,\mathfrak{E})$, $C^{\infty}(M\times M,\mathfrak{E})$): 
$$
\del_{z,\bfr}^\nu:= \prod_{k=1}^{2n}\,\del_{k,z,\bfr}^{\nu_k}.
$$
If $\a$ and $\beta$ are $n$-multi-indices, we note $(\a,\beta)$ the $2n$-multi-index obtained by concatenation. If $\a$ is a $n$-multi-index, $\del_{z,\bfr}^\a$ is a linear operator on $C^\infty(M,\mathfrak{E})$. 
We fix the shorcut $\langle \rx \rangle:=(1+\norm{\rx}^2)^{1/2}$ for any $\rx\in \R^p$, $p\in \N$. 
We will use the convention $\rx^\a := \rx_1^{\a_1} \cdots \rx_p^{\a_p}$ for $\rx\in \R^p$ and $\a$ $p$-multi-index, with $0^0:=1$. 
If $f$ is continuous function from $\R^p$ to a normed vector space and $g$ is a continuous function from $\R^p$ to $\R$, we note $f=\O(g)$ if and only if there exist $r>0$, $C>0$ such that for any $\rx\in \R^{p}\backslash B(0,r)$, $\norm{f(\rx)}\leq C |g(\rx)|$. In the case where $g$ is strictly positive on $\R^p$, this is equivalent to: there exists $C>0$ such that for any  $\rx\in \R^{p}$, $\norm{f(\rx)}\leq C g(\rx)$.  
We also introduce the following shorthands, for given $(z,\bfr)$, $x,y\in M$, $\th\in T^*_x (M)$, $\xi\in T_x(M)$:
\begin{align*}
&\langle x\rangle_{z,\bfr} := \langle{n_{z}^\bfr(x)}\rangle,\qquad
\langle \th \rangle_{z,\bfr,x} := \langle{\wt M_{z,x}^\bfr(\th)}\rangle,\qquad
\langle \xi \rangle_{z,\bfr,x} := \langle{ M_{z,x}^\bfr(\xi)}\rangle, \\
&\langle x,y\rangle_{z,\bfr}:=\langle (n_z^\bfr(x),n_z^\bfr(y))\rangle,\quad  \langle x,\th\rangle_{z,\bfr}:=\langle (n_z^\bfr(x),\wt M_{z,x}^{\bfr}(\th))\rangle, \quad \langle x,\xi\rangle_{z,\bfr}:=\langle (n_z^\bfr(x), M_{z,x}^{\bfr}(\xi))\rangle\, .
\end{align*}
If $f$ and $g$ are in $C^0(\R^p, \R^{p'})$ we note $f\asymp g$ the equivalence relation defined by: $\langle f\rangle =\O(\langle g\rangle)$ and $\langle g \rangle =\O(\langle f \rangle )$.
 
\subsection{Parallel transport on an Hermitian bundle}

Let $E$ be an hermitian vector bundle (with typical fiber $\mathbb{E}$ as a finite dimensional complex vector space) on the exponential manifold $(M,\exp)$. $E$ admits a (non-unique) connection $\nabla^E$ compatible with the hermitian metric \cite{Berline}. It is a differential operator from $C^\infty(M,E)$ (the space of smooth sections of $E\to M$) to $C^\infty(M,T^*M\otimes E)$
such that for any smooth function $f$ on $M$ and smooth $E$-sections $\psi$, $\psi'$, 
\begin{align*}
&\nabla^E(f\psi)= df\ox\psi+ f\nabla^E \psi \, ,\\
&d(\psi|\psi')= (\nabla^E \psi|\psi')+(\psi|\nabla^E\psi')\, ,
\end{align*}
where $(\psi|\psi')$ is the hermitian pairing of $\psi$ and $\psi'$. We will note $|\psi|^2:=(\psi|\psi)$. The sesquilinear form $(\cdot|\cdot)_x$ of $E_x$ is antilinear in the second variable by convention. The operator $\nabla^E$ can be (uniquely) extended as an operator acting on $E$-valued differential forms on $M$. 
If $\gamma$ is a curve on $M$ defined on an interval $J$ and $\ga^*E$ the associated pullback bundle on $J$, there exists a natural connection (the pullback of $\nabla^E$) on $\ga^*E$, noted $\nabla^{\ga^*E}$ compatible with $\nabla^E$.

Let us fix $x,y\in M$ and $\ga :\, J \to M$ a curve such that $\ga(0)=x$ and $\ga(1)=y$. For any $v\in E_x$, there exists an unique smooth section $\beta$ of $\ga^*E\to J$ such that $\beta(0)=v$ and $\nabla^{\ga^*E} \beta=0$. Clearly, $\beta(1) \in E_{y}$ and we can define a linear isomorphism $\tau_{\ga}$ from $E_x$ to $E_y$ as $\tau_{\ga}(v)=\beta(1)$.
The map $\tau_{\ga}$ is the parallel transport map associated to $\ga$ from $E_x$ to $E_y$. 
The compatibility of $\nabla^E$ with the hermitian metric entails that the maps $\tau_{\ga}$ are in fact isometries for the hermitian structures on $E_x$ and $E_y$. 

The vector bundle $L(E)\to M$, defined by $L(E)_x:=L(E_x)$ (the space of endomorphisms on $E_x$), is lifted to $T^*M$, $TM$ and $M\times M$ by setting the fiber at $(x,\th)$ to $L(E_x)$ for $T^*M$ or $TM$, and the fiber at $(x,y)$ to $L(E_y,E_x)$ for $M\times M$. The canonical projection from $T^*M$ or $TM$ to $M$ is noted $\pi$.

We note $\tau_{xy}:=\tau_{\ga_{xy}}$. Remark that $\tau_{xy}^{-1}=\tau_{\wt\ga_{yx}}$.  
We define $\tau_z : x \mapsto \tau_{zx}$ and $\tau_z^{-1} : x\mapsto \tau_{zx}^{-1}=\tau_{zx}^*$. 

If $u\in C^\infty(M,E)$ and $z\in M$, we note $u^z (x):= (\tau_{z}^{-1}\, u)(x)$ for any $x\in M$.
If $a$ is section of $L(E)\to T^*M$ or $L(E)\to TM$, we note $a^z := (\tau_z^{-1} \circ \pi) \, a\,(\tau_{z}\circ \pi)$. If $a$ is a section of $L(E)\to M\times M$, we note $a^z(x,y):= \tau_z^{-1}(x)\,a(x,y)\, \tau_{z}(y)$.
We also define $\tau^z:= (x,y)\mapsto \tau_z^{-1}(y)\tau(x,y) \tau_z(x) \in L(E_z)$. Noting $\pi_1(x,y):=x$, $\pi_2(x,y):=y$, we get $a^z= (\tau_z^{-1}\circ \pi_1)\,a\, (\tau_z\circ \pi_2) $ and $\tau^z=(\tau_z^{-1}\circ \pi_2)\, a\, (\tau_z\circ \pi_1)$.

Parallel transport on $E$ has the following smoothness property: 

\begin{lem} 
(i) 
The map $\tau : (x,y) \mapsto \tau_{xy}$ (resp. $\tau^{-1} : (x,y) \mapsto \tau_{xy}^{-1}$) is a smooth section of the vector bundle $L(E)^\vee\to M\times M$ where the fiber at $(x,y)$ is $L(E_x,E_y)$ (resp. of the vector bundle $L(E)\to M\times M$).

\noindent (ii) $\tau_z \in C^\infty(M,L(E_z,E))$ and $\tau_z^{-1} \in C^{\infty}(M,L(E,E_z))$.

\noindent (iii) $\tau^z \in C^\infty(M\times M,L(E_z))$.
\end{lem}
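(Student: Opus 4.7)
The plan is to first establish joint smoothness of the map $(x,y)\mapsto \exp_x^{-1}(y)$ on $M\times M$, which is implicit in the hypotheses but needs to be extracted. Consider the smooth map $F:TM\to M\times M$, $F(x,\xi):=(x,\exp_x(\xi))$. It is bijective by hypothesis $(i)$, and expressed in any chart its Jacobian is block triangular with diagonal blocks $\Id$ and $(d\exp_x)_\xi$; the latter is invertible for every $(x,\xi)$ since $\exp_x$ is a diffeomorphism for each $x$. Hence $F$ is a global diffeomorphism, $(x,y)\mapsto \exp_x^{-1}(y)$ is smooth on $M\times M$, and therefore $\Phi:\R\times M\times M\to M$, $(t,x,y)\mapsto\exp_x(t\exp_x^{-1}(y))=\ga_{xy}(t)$, is smooth as well.

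For (i), the idea is to realize $\tau_{xy}$ as the time-one value of the solution of a linear ODE depending smoothly on the parameters $(x,y)$. Pull back $E$ to $\Phi^{*}E\to \R\times M\times M$ with its pullback connection $\Phi^{*}\nabla^{E}$; the fiber over $(t,x,y)$ is $E_{\ga_{xy}(t)}$, and parallel transport along the horizontal curve $t\mapsto(t,x_0,y_0)$ coincides with $\tau_{x_0 y_0}$. To upgrade this to smoothness in $(x,y)$, fix $(x_0,y_0)$, cover the compact image $\ga_{x_0 y_0}([0,1])$ by finitely many trivializing opens $U_1,\ldots,U_N$ of $E$, and choose a subdivision $0=t_0<t_1<\cdots<t_N=1$ with $\ga_{x_0 y_0}([t_{k-1},t_k])\subset U_k$. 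By continuity of $\Phi$, the same inclusions persist for $\ga_{xy}$ when $(x,y)$ varies in a suitable neighborhood of $(x_0,y_0)$. In each trivialization the parallel-transport equation $\nabla^{\ga^{*}E}\beta=0$ becomes a linear first-order system whose coefficients depend smoothly and jointly on $(t,x,y)$. The classical smooth-dependence theorem for linear ODEs applied successively on each $[t_{k-1},t_k]$, together with the smooth transition functions between $U_k$ and $U_{k+1}$ at the interface, yields joint smoothness of $(x,y)\mapsto \tau_{xy}$ in the chosen neighborhood; this proves $\tau\in C^\infty$. Smoothness of $\tau^{-1}$ follows either by applying the same argument to $\wt\ga_{yx}$, or by noting that fiberwise inversion of linear isomorphisms is a smooth bundle map.

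Parts (ii) and (iii) are formal consequences of (i). The section $\tau_z:x\mapsto\tau_{zx}$ is the restriction of $\tau$ to $\{z\}\times M$, hence smooth as a section of the bundle with fiber $L(E_z,E_x)$; the statement for $\tau_z^{-1}$ is identical. For (iii), I would write $\tau^z=(\tau_z^{-1}\circ\pi_2)\,\tau\,(\tau_z\circ\pi_1)$ and observe that $\tau^z(x,y)$ is the pointwise composition of three smooth $L$-valued factors, valued in the trivial bundle $M\times M\times L(E_z)$, hence smooth.

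The only delicate step is the joint smoothness in (i): the curve $\ga_{xy}$ is not in general confined to a single trivialization of $E$, so one must cover its compact image by finitely many trivializations and run the ODE-dependence argument piecewise. The key technical point is that this covering can be chosen uniformly for $(x,y)$ near $(x_0,y_0)$, which in turn relies on joint continuity of $\Phi$ established in the first step. Once this uniformity is secured, the result reduces to a textbook application of smooth dependence of linear ODEs on parameters and initial data.
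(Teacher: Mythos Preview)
Your proof is correct and follows essentially the same approach as the paper: both first show that $F:TM\to M\times M$, $(x,\xi)\mapsto(x,\exp_x\xi)$ is a diffeomorphism (hence $(x,y)\mapsto\exp_x^{-1}y$ is smooth), then use this to obtain a smooth parametrization of the family of curves $\ga_{xy}$ and deduce smoothness of parallel transport. The only difference is cosmetic: where you spell out the covering/subdivision and ODE smooth-dependence argument explicitly, the paper cites \cite[p.~17]{Dumitrescu} for this step.
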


\begin{proof} $(i)$ The map $G: TM \to M\times M$ defined by $G(v):= (\pi(v),\exp(v))$ is a local diffeomorphism since the Jacobian of $G$ at $v_0=(x_0,\xi_0)\in TM$ is equal to the Jacobian of $\exp_{x_0}$ at $\xi_0$. Since it is also bijective (with inverse $G^{-1}(x,y):= (x,\exp_x^{-1}(y))$), it is a (global) diffeomorphism $TM\to M\times M$.
The map $b(x,y,t):= (x,t\exp_x^{-1}(y))$ is thus a smooth map from $M\times M\times \R$ to $TM$, and we get a smooth parametrization by $M\times M$ of the following family of curves: $c (x,y)\mapsto \big(\ga_{xy}: t\mapsto \exp b(x,y,t))$. This parametrization leads (see \cite[p. 17]{Dumitrescu}) to a smooth bundle homomorphism between $c^*(\cdot)(0)E\to M\times M$ and $c^*(\cdot)(1) E\to M\times M$, so a smooth section $\tau:(x,y)\mapsto \tau_{xy}$ of $L(E_x,E_y)\to M\times M$. The case of $\tau^{-1}$ is similar, by taking $b^{-1}(x,y,t):=b(x,y,1-t)$.

\noindent $(ii,iii)$ are straightforward consequences of $(i)$.
\end{proof}

\begin{cly} If $u$ is in the space $C^\infty(M,E)$, then $u^z \in C^\infty(M,E_z)$. Similarly, if $a\in C^\infty({T^*M,L(E)})$ (resp. $C^\infty(TM,L(E))$, $C^\infty(M\times M, L(E))$), then $a^z \in C^\infty({T^*M, L(E_z)})$ (resp. $C^\infty({TM, L(E_z)})$, $C^\infty(M\times M ,L(E_z))$). 
\end{cly}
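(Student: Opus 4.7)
The plan is to deduce each claim from parts $(ii)$ and $(iii)$ of the preceding lemma by viewing $u^z$ and $a^z$ as pointwise compositions of smooth bundle morphisms.

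First I would handle $u^z$. By definition $u^z(x) = \tau_z^{-1}(x)\,u(x)$. Part $(ii)$ of the lemma gives $\tau_z^{-1}\in C^\infty(M,L(E,E_z))$, so $\tau_z^{-1}$ is a smooth bundle map $E \to M\times E_z$ (the trivial bundle with fiber $E_z$). Composing the smooth section $u$ of $E\to M$ with this smooth bundle map yields a smooth section of $M\times E_z \to M$, that is, an element of $C^\infty(M,E_z)$.

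For the three cases of $a^z$, I would proceed uniformly. Let $\pi:T^*M\to M$ and $\pi:TM\to M$ denote the canonical projections, and $\pi_1,\pi_2:M\times M\to M$ the two factor projections; all are smooth. Since $\tau_z$ and $\tau_z^{-1}$ are smooth by $(ii)$, the maps $\tau_z\circ \pi$, $\tau_z^{-1}\circ\pi$, $\tau_z\circ \pi_2$, $\tau_z^{-1}\circ\pi_1$ are smooth sections of the appropriate pullback bundles. The identities
\begin{align*}
a^z &= (\tau_z^{-1}\circ \pi)\, a\, (\tau_z\circ \pi) \quad \text{on } T^*M \text{ or } TM,\\
a^z(x,y) &= \tau_z^{-1}(x)\, a(x,y)\, \tau_z(y) \quad \text{on } M\times M,
\end{align*}
then express $a^z$ as a fiberwise composition of three smooth bundle morphisms; the composition is smooth because bundle morphism composition is a smooth operation in local trivializations. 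Hence $a^z$ lies in the claimed $C^\infty$-space with fibers $L(E_z)$.

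There is no real obstacle here: the only point requiring minimal care is that in the $M\times M$ case we compose $\tau_z^{-1}$ with $\pi_1$ and $\tau_z$ with $\pi_2$ (following the definition of $a^z$ given in the text) so that the resulting endomorphism acts in $L(E_z)$. Everything else is a direct invocation of the smoothness results of the lemma combined with the fact that pointwise operator composition is smooth.
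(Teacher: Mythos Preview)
Your proposal is correct and matches the paper's approach: the paper states this corollary without proof, treating it as an immediate consequence of parts $(ii)$ and $(iii)$ of the preceding lemma, exactly as you argue via pointwise composition of smooth bundle morphisms.
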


\begin{rem} The vector bundle $E$ on $M$ is trivializable and the parallel transport provides a $M$-indexed family of trivializations, since for any $z\in M$, the pair $f_z :E\mapsto M \times \mathbb{E} , (x,v)\mapsto (x,\tau_{xz}(v))$, $\Id : M\mapsto M, x\mapsto x$, is a vector bundle isomorphism 
from $E\to M$ onto $M \times \mathbb{E}\to M$. Note that if $\exp$ is derived from a connection, $\tau_{xy}^{-1}=\tau_{yx}$ for any $x,y\in M$.
\end{rem}

\subsection{$\O_M$ and $S_\sigma$-bounded geometry}

Classically, in Riemannian geometry, bounded geometry hypothesis gives boundedness on the covariant derivative of the Riemann curvature of the basis manifold.
For the following pseudodifferential calculus, we shall need some hypothesis of that kind, formulated not with the curvature but with the exponential diffeomorphisms (``normal" coordinate transition maps). 
The hypothesis that we will need for pseudodifferential symbol calculus is actually not simply the boundedness condition on the derivatives of the transition maps, which is a classical consequence of bounded geometry. For symbol calculus, we will require that the $n^{\text{th}}$-derivatives are not only bounded, but decrease to zero at infinity as $\norm{x}^{-\sigma (n-1)}$ where $\sigma$ is a parameter in $[0,1]$. Or, in other words, the normal coordinate change maps behave as ``symbols" or order 1. Thus, we introduce the following

\begin{defn}
\label{ssigmadef}
Let $\sigma \in [0,1]$. The exponential manifold $(M,\exp)$ is said to have a $S_\sigma$-bounded geometry if for any $(z,\bfr)$, $(z',\bfr')$, and any $n$-multi-index $\a\neq 0$, 
\begin{align*}
&(S_\sigma 1)\qquad \del^{\a} \psi_{z,z'}^{\bfr,\bfr'}(\rx) = \mathcal{O}(\langle \rx \rangle^{-\sigma(|\a|-1)})\, ,
\end{align*}
and a $\O_M$-bounded geometry if for any $(z,\bfr)$, $(z',\bfr')$, and any $n$-multi-index $\a$, there exist $p_\a\geq 1$ such that
\begin{align*}
&(\O_M 1)\qquad \del^{\a} \psi_{z,z'}^{\bfr,\bfr'}(\rx) = \mathcal{O}(\langle \rx \rangle^{p_\a})  \, .
\end{align*}
\end{defn}

We shall be working with $\O_M$-bounded geometry for the definition of function spaces and Fourier transform and with $S_{\sigma}$-bounded geometry (for a $\sigma \in [0,1]$) for pseudodifferential symbol calculus. 

\begin{defn} The triple $(M,\exp,E)$ where $(M,\exp)$ is exponential and $E$ is a hermitian vector bundle on $M$ has a $S_\sigma$-bounded geometry if $(M,\exp)$ has a $S_\sigma$-bounded geometry and for any $(z,\bfr)$, $z',z''$, and any $n$-multi-index $\a$,  
$$
(S_\sigma 2)\qquad \del_{z,\bfr}^\a \tau_{z'}^{-1}\tau_{z''}(x) = \mathcal{O}( \langle x \rangle_{z,\bfr}^{-\sigma |\a|}) \, ,
$$
and a $\O_M$-bounded geometry if $(M,\exp)$ has a $\O_M$-bounded geometry and for any $(z,\bfr)$, $(z',\bfr')$, and any $n$-multi-index $\a$, there exist $p_\a\geq 1$ such that
\begin{align*}
&(\O_M 2)\qquad \del_{z,\bfr}^\a \tau_{z'}^{-1}\tau_{z''}(x) = \mathcal{O}(\langle x \rangle_{z,\bfr}^{p_\a})  \, .
\end{align*}
\end{defn}

Clearly, if $\sigma\leq \sigma'$, since $(S_{\sigma'} i)\Rightarrow (S_\sigma i)$, we have $S_{\sigma'} $-bounded $\Rightarrow$ $S_\sigma$-bounded $\Rightarrow$ $\O_M$-bounded. 
Note that $S_\sigma$-bounded geometry on the vector bundle entails that the derivatives of the transport transition maps $\tau_{z}^{-1}\tau_{z'}$ (smooth from $M$ to $L(E_{z'},E_z$)) are bounded (for $S_0$-bounded geometry) or decrease to zero with an order equal to the order of the derivative (for $S_1$-bounded geometry). Remark also that if $E$ is a trivial bundle and $\nabla^E=d$, then $(S_1 2)$ is automatically satisfied since the maps $\tau_{z}$ are all equal to the constant $x\mapsto Id_{\mathbb{E}}$.

\begin{lem}
\label{B1-lem}
Let $\sigma\in [0,1]$ and $(z,\bfr)$, $(z',\bfr')$ be given frames.

\noindent (i) If $(M,\exp)$ has a $S_\sigma$-bounded geometry, there exist $K,C,C'>0$ such that for any $\rx \in \R^n$, $x\in M$, $\th\in T^*_x(M)$, $\xi \in T_x(M)$,
\begin{align}
 \psi_{z,z'}^{\bfr,\bfr'} \asymp \Id_{\R^n}  \qquad \text{ and } \qquad \langle x \rangle_{z,\bfr} \leq K \langle x \rangle_{z',\bfr'}\, ,\label{i-first} \\
 \langle \th \rangle_{z,\bfr,x} \leq C \langle \th \rangle_{z',\bfr',x} \qquad \text{ and } \qquad \langle \xi \rangle_{z,\bfr,x} \leq C' \langle \xi \rangle_{z',\bfr',x}\, ,\label{i-bis}
\end{align}
and if $(M,\exp)$ has a $\O_M$-bounded geometry, there exist $K,K',K'',C,C'>0$ and $q\geq 1$ such that for any $\rx \in \R^n$, $x\in M$, $\th\in T^*_x(M)$, $\xi \in T_x(M)$,
\begin{align}
K' \langle \rx \rangle^{1/q} \leq \langle \psi_{z,z'}^{\bfr,\bfr'}(\rx)\rangle \leq K''\langle \rx \rangle^q  \qquad \text{ and } \qquad \langle x \rangle_{z,\bfr} \leq K \langle x \rangle^q_{z',\bfr'}\, ,\label{i-first-om} \\
 \langle \th \rangle_{z,\bfr,x} \leq C \langle x \rangle^q_{z',\bfr'}\langle \th \rangle_{z',\bfr',x} \qquad \text{ and } \qquad \langle \xi \rangle_{z,\bfr,x} \leq C' \langle x \rangle^q_{z',\bfr'}\langle \xi \rangle_{z',\bfr',x}\, ,\label{i-bis-om}
\end{align}

\noindent (ii) For any given $n$-multi-indice $\a$, we can write 
$$
\del_{z,\bfr}^{\a} = \sum_{0\leq |\a'|\leq |\a|} f_{\a,\a'}\, \del_{z',\bfr'}^{\a'}
$$
where the $(f_{\a,\a'})$ are smooth real functions on $M$ such that for each $n$-multi-indices $\a,\a'$,   

(a) if $(M,\exp)$ has a $S_\sigma$-bounded geometry, there exists $C_{\a}>0$ such that for any $x\in M$,
$| f_{\a,\a'} (x)| \leq C_{\a} \langle x\rangle_{z,\bfr}^{-\sigma (|\a|-|\a'|)}$,

(b) if $(M,\exp)$ has a $\O_M$-bounded geometry, there exist $C_{\a}>0$ and $q_{\a}\geq 1$ such that for any $x\in M$, $| f_{\a,\a'} (x)| \leq C_{\a} \langle x\rangle_{z,\bfr}^{q_{\a}}$.

\end{lem}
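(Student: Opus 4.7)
\emph{Strategy.} Every estimate reduces to properties of the coordinate change diffeomorphism $\psi := \psi_{z,z'}^{\bfr,\bfr'}$ (with inverse $\psi_{z',z}^{\bfr',\bfr}$) via the identity $n_z^\bfr = \psi \circ n_{z'}^{\bfr'}$ and the chain rule.

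\emph{Part (i).} The case $|\a|=1$ of $(S_\sg 1)$ forces $d\psi$ to be uniformly bounded on $\R^n$, so by the mean value inequality $|\psi(\rx)| \le |\psi(0)| + C|\rx|$, hence $\langle \psi(\rx)\rangle \le K''\langle\rx\rangle$. Applying the same argument to $\psi^{-1}$ gives the reverse inequality and proves $\psi \asymp \Id_{\R^n}$; evaluating at $\rx = n_{z'}^{\bfr'}(x)$ then yields \eqref{i-first}. Under $\O_M$-bounded geometry, integrating $|d\psi(\rx)| = \O(\langle\rx\rangle^{p_1})$ along $[0,\rx]$ gives $|\psi(\rx)| \le C\langle\rx\rangle^{p_1+1}$, and combining with the analogous bound for $\psi^{-1}$ produces \eqref{i-first-om} with $q := \max(p_1+1, p_1'+1)$. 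For the (co)tangent estimates, differentiating $n_z^\bfr = \psi \circ n_{z'}^{\bfr'}$ gives
$$
M_{z,x}^\bfr = (d\psi)_{n_{z'}^{\bfr'}(x)} \circ M_{z',x}^{\bfr'}, \qquad \wt M_{z,x}^\bfr = {}^t(d\psi)_{n_{z'}^{\bfr'}(x)}^{-1} \circ \wt M_{z',x}^{\bfr'},
$$
and the inverse function identity $(d\psi)^{-1}_\rx = (d\psi^{-1})_{\psi(\rx)}$ transfers the $|\a|=1$ bound for $\psi^{-1}$ into a bound on $\|(d\psi)^{-1}\|$. In the $S_\sg$ case both $\|d\psi\|$ and $\|(d\psi)^{-1}\|$ are then uniformly bounded, giving \eqref{i-bis}; in the $\O_M$ case they are polynomially bounded (using \eqref{i-first-om} to compose the polynomial bound for $d\psi^{-1}$ with $\psi$), yielding \eqref{i-bis-om}.

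\emph{Part (ii).} Setting $\varphi := \psi_{z',z}^{\bfr',\bfr}$ and $h := f \circ (n_{z'}^{\bfr'})^{-1}$ for $f \in C^\infty(M,\mathfrak{E})$, the relation $(n_z^\bfr)^{-1} = (n_{z'}^{\bfr'})^{-1} \circ \varphi$ gives $\del_{z,\bfr}^\a f(x) = \del^\a(h\circ\varphi)(n_z^\bfr(x))$. A multivariate Faà di Bruno expansion yields
$$
\del^\a(h\circ\varphi)(\rx) = \sum_{1\le |\a'|\le |\a|} P_{\a,\a'}(\varphi)(\rx)\,(\del^{\a'} h)(\varphi(\rx)),
$$
where each $P_{\a,\a'}$ is a finite linear combination of monomials $\prod_{k=1}^{|\a'|}(\del^{\beta_k}\varphi^{j_k})(\rx)$ with $|\beta_k|\ge 1$ and $\sum_k |\beta_k| = |\a|$. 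Recognising $(\del^{\a'} h)(\varphi(n_z^\bfr(x))) = \del_{z',\bfr'}^{\a'} f(x)$ identifies $f_{\a,\a'}(x) = P_{\a,\a'}(\varphi)(n_z^\bfr(x))$; the bounds (a) and (b) then follow monomial by monomial, since $(S_\sg 1)$ gives $|\prod_k (\del^{\beta_k}\varphi)(\rx)| \le C\langle\rx\rangle^{-\sg\sum_k(|\beta_k|-1)} = C\langle\rx\rangle^{-\sg(|\a|-|\a'|)}$, while $(\O_M 1)$ gives a polynomial bound of the form $C\langle\rx\rangle^{q_\a}$ (taking $q_\a$ to be the maximum of $\sum_k p_{\beta_k}$ over the monomials that appear).

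\emph{Main obstacle.} The only delicate point is justifying the combinatorial structure of the $P_{\a,\a'}$, that each monomial has exactly $|\a'|$ factors of total derivation weight $|\a|$. Rather than invoking multivariate Faà di Bruno as a black box, the cleanest route is a direct induction on $|\a|$: applying $\del_{i,z,\bfr}$ to a term $\prod_k \del^{\beta_k}\varphi \cdot \del_{z',\bfr'}^{\a'}$ either differentiates one factor $\del^{\beta_k}\varphi$ (bumping $|\beta_k|$ and the total weight each by one, keeping the number of factors and $|\a'|$ fixed) or multiplies by a fresh first-derivative $(\del_i\varphi)^j(n_z^\bfr(x))$ and appends $\del_{j,z',\bfr'}$ (adding a weight-one factor while incrementing $|\a'|$ by one). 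Both operations preserve the invariant ``$|\a'|$ factors, total weight $|\a|$'', closing the induction and delivering the symbol-type estimates announced in (a) and (b).
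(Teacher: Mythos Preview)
Your proof is correct and follows essentially the same route as the paper: part~(i) via the mean value inequality applied to $\psi$ and $\psi^{-1}$ together with the chain-rule identities for $M_{z,x}^\bfr$ and $\wt M_{z,x}^\bfr$, and part~(ii) via the Faà di Bruno expansion of $\del^\a(h\circ\varphi)$ with the key combinatorial observation $\sum_k(|\beta_k|-1)=|\a|-|\a'|$. The only difference is cosmetic: the paper quotes the Constantine--Savits multivariate Faà di Bruno formula (stated there as Theorem~\ref{FaaCS}) as a black box, whereas you recover the needed combinatorial invariant ``$|\a'|$ factors of total weight $|\a|$'' by a direct induction on $|\a|$; both arguments yield the same estimate and neither is materially shorter.
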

\begin{proof} $(i)$   Suppose that $(M,\exp)$ has a $S_\sigma$-bounded geometry. Taylor formula implies that 
$\norm{\psi_{z,z'}^{\bfr,\bfr'}(\rx)} \leq \norm{\psi_{z,z'}^{\bfr,\bfr'}(0)}  + C_0 \norm{\rx}$ for any $\rx\in \R^n$, where $C_0:= \sup_{\rx\in \R^n} \norm{(d\psi_{z,z'}^{\bfr,\bfr'})_\rx}$. As a consequence $\psi_{z,z'}^{\bfr,\bfr'}(\rx)=\O(\norm{\rx})$ and thus, there is $K''>0$ such that $\langle \psi_{z,z'}^{\bfr,\bfr'}(\rx) \rangle \leq K'' \langle \rx \rangle$. The same argument for $\psi_{z',z}^{\bfr',\bfr}=(\psi_{z,z'}^{\bfr,\bfr'})^{-1}$ gives $\psi_{z,z'}^{\bfr,\bfr'} \asymp \Id_{\R^n}$ and $\langle x \rangle_{z,\bfr} \leq K \langle x \rangle_{z',\bfr'}$ follows immediately. 
Since $x\mapsto \norm{\wt M_{z,x}^\bfr ( \wt M_{z',x}^{\bfr'})^{-1}}=\norm{(d \psi_{z',z}^{\bfr',\bfr})_{n_z^\bfr(x)}}$ and $x\mapsto \norm{ M_{z,x}^\bfr (  M_{z',x}^{\bfr'})^{-1}}=\norm{(d \psi_{z,z'}^{\bfr,\bfr'})_{n_{z'}^{\bfr'}(x)}}$ are bounded functions, (\ref{i-bis}) follows. The case where $(M,\exp)$ has a $\O_M$-bounded geometry is similar.

\noindent $(ii)$ We have for any $f\in C^\infty(M,\mathfrak{E})$, 
$$
\del_{z,\bfr}^\a (f) = \del^\a(f\circ (n_{z}^\bfr)^{-1}) \circ n_z^\bfr = \del^\a(f\circ (n_{z'}^{\bfr'})^{-1} \circ \psi_{z',z}^{\bfr',\bfr}) \circ n_z^\bfr\, .
$$
We now apply the multivariate Faa di Bruno formula obtained by G.M. Constantine and T.H. Savits in \cite{Constantine}, that we reformulated for convenience in Theorem \ref{FaaCS}. This formula entails that for any $n$-multi-index $\a\neq 0$,
$$
\del^\a(f\circ (n_{z'}^{\bfr'})^{-1} \circ \psi_{z',z}^{\bfr',\bfr}) =  \sum_{1\leq |\a'|\leq |\a|} P_{\a,\a'}(\psi_{z',z}^{\bfr',\bfr})\,(\del^{\a'} f\circ (n_{z'}^{\bfr'})^{-1} )\circ \psi_{z',z}^{\bfr',\bfr}\  
$$
and thus 
$$
\del_{z,\bfr}^\a =  \sum_{1\leq |\a'|\leq |\a|} (P_{\a,\a'}(\psi_{z',z}^{\bfr',\bfr})\circ n_z^\bfr) \ \del_{z',\bfr'}^{\a'} =:\sum_{1\leq |\a'|\leq |\a|} f_{\a,\a'} \ \del_{z',\bfr'}^{\a'}
$$ 
where $P_{\a,\a'}(g)$ is a linear combination of terms of the form $\prod_{j=1}^{s} (\del^{l^j} g)^{k^j} $, where $1\leq s\leq |\a|$ and  
the $k^j$ and $l^j$ are $n$-multi-indices with $|k^j|>0$, $|l^j|>0$, $\sum_{j=1}^s |k^j| = |\a'|$ and $\sum_{j=1}^s|k^j||l^j|=|\a|$. In the case where $(M,\exp)$ has a $S_\sigma$-bounded geometry, for each $s,(k^j),(l^j)$, there is $K>0$ such that for any $\rx\in \R^n$,
$$
|\prod_{j=1}^{s} (\del^{l^j} \psi_{z',z}^{\bfr',\bfr})^{k^j}(\rx)|\leq K \langle \rx \rangle^{-\sigma \sum_{j=1}^s(|l^j|-1)|k^j|} = K \langle \rx \rangle^{-\sigma (|\a|-|\a'|)}  
$$
which gives the result. The case where $(M,\exp)$ has a $\O_M$-bounded geometry is similar.
\end{proof}

\begin{thm}\cite{Constantine}
\label{FaaCS} 
Let $f\in C^\infty(\R^p,\mathfrak{E})$ and $g\in C^\infty(\R^n,\R^p)$. Then for any $n$-multi-index $\nu\neq 0$,
$$
\del^\nu (f\circ g) = \sum_{1\leq |\la|\leq |\nu|} (\del^\la f)\circ g\  \sum_{s=1}^{|\nu|} \sum_{p_s(\nu,\la)}\nu! \prod_{j=1}^s \tfrac{1}{k^j!(l^j!)^{|k^j|}} (\del^{l^j} g)^{k^j}
$$
where $p_s(\nu,\la)$ is the set of $p$-multi-indices $k^j$ and $n$-multi-indices $l^j$ ($1\leq j\leq s$) such that $0\prec l^1 \prec \cdots\prec l^s$ ($l\prec l'$ being defined as ``$|l|<|l'|$ or $|l|=|l'|$ and $l<_Ll'$" where $<_L$ is the strict lexicographical order), $|k^j|>0$, $\sum_{j=1}^s k^j = \la$ and $\sum_{j=1}^s |k^j| l^j= \nu$.
\end{thm}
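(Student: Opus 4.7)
The natural strategy is the one used in the original Constantine--Savits paper: induction on $|\nu|$. The base case $|\nu|=1$ (i.e.\ $\nu=e_i$) is the ordinary chain rule: the only admissible partition has $s=1$, $k^1=e_j$ for some $j\in\{1,\dots,p\}$, $l^1=e_i$, and $|\lambda|=1$, so the right-hand side collapses to $\sum_{j=1}^p (\partial_{x_j} f)\circ g\cdot \partial_i g_j$. For the inductive step, assume the formula holds for all multi-indices of size $|\nu|$ and apply $\partial_i$ to both sides, using the product and chain rules on each summand on the right. One obtains two kinds of new terms: (a) an extra factor of a first-order derivative $(\partial^{e_i}\lambda$-derivative) applied to $f\circ g$, which corresponds to raising some $k^j$ with $l^j=e_i$ by one (or introducing a new block with $l^{s+1}=e_i$ and $k^{s+1}=e_m$); (b) differentiation of one of the factors $(\partial^{l^j} g)^{k^j}$, which raises an $l^j$ by $e_i$. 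The entire task is then to match, term by term, the resulting expression with the claimed expansion for $\partial^{\nu+e_i}(f\circ g)$.

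Alternatively, and arguably more transparently, the identity can be proved by a symbolic Taylor-series manipulation. Writing formally
\begin{align*}
f(g(x+h)) &= \sum_{\nu} \frac{h^\nu}{\nu!}\partial^\nu(f\circ g)(x),\\
g(x+h)-g(x) &= \sum_{|l|\geq 1} \frac{h^l}{l!}\partial^l g(x) =: u(h),\\
f(g(x)+u) &= \sum_{\lambda} \frac{u^\lambda}{\lambda!}(\partial^\lambda f)(g(x)),
\end{align*}
one substitutes and expands $u(h)^\lambda$ by the multinomial theorem. Grouping the resulting monomials in $h$ by the set of distinct orders $l^1\prec\cdots\prec l^s$ appearing, with multiplicities recorded by the $p$-multi-indices $k^j$ (so that $\sum k^j=\lambda$ and the total degree in $h$ is $\sum |k^j|\,l^j=\nu$), produces exactly the combinatorial factor $\nu!\prod_j \tfrac{1}{k^j!\,(l^j!)^{|k^j|}}$ after dividing by $\lambda!$ and accounting for the number of orderings identified by the partition. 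Identifying the coefficient of $h^\nu/\nu!$ on both sides gives the stated formula.

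The main obstacle in either approach is combinatorial rather than analytic: keeping track of the equivalence classes of ordered partitions of $\nu$ compatible with a prescribed $\lambda$, and verifying that the multinomial multiplicities produced by either induction or generating-function expansion coincide with $\prod_j 1/(k^j!\,(l^j!)^{|k^j|})$ under the convention $l^1\prec\cdots\prec l^s$ (which eliminates the overcounting of sequences obtained by reordering blocks with distinct $l^j$'s). Once this bookkeeping is set up cleanly, the identity falls out; the full details are carried out in \cite{Constantine}, which we invoke directly.
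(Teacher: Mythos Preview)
The paper does not prove this theorem at all: it is stated as a quoted result from \cite{Constantine} with no accompanying proof. Your proposal therefore goes beyond what the paper does by sketching the inductive and generating-function arguments; the sketch is sound, and since you too ultimately invoke \cite{Constantine}, your treatment subsumes the paper's.
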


Note that by Lemma \ref{B1-lem}, if $(M,\exp)$ satisfies $(S_\sigma 1)$ (resp. $(\O_M 1)$), then $(S_\sigma 2)$ (resp. $(\O_M 2)$) is equivalent to: for any $z',z''\in M$, there exists a frame $(z,\bfr)$ such that $
\del^\a_{z,\bfr} \tau_{z'}^{-1}\tau_{z''} (x) =\O(\langle x \rangle_{z,\bfr}^{-\sigma |\a|})$ (resp. $\O(\langle x \rangle_{z,\bfr}^{p_{\a}})$ for a $p_\a\geq 1$)
for any $n$-multi-index $\a$. 

As the following proposition shows, $S_\sigma$ or $\O_M$-bounded geometry properties can be transported by any diffeomorphism.

\begin{prop}
\label{ssigmDiff}
If $(M,\exp_M)$ has a $S_\sigma$ (resp. $\O_M$) bounded geometry, $N$ a smooth manifold and $\varphi: M\to N$  is a diffeomorphism, then $(N,\exp_N:=\varphi\circ \exp_M \circ \  d\varphi^{-1})$ has a $S_\sigma$ (resp. $\O_M$) bounded geometry.
\end{prop}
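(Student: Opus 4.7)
The plan is to reduce the statement on $N$ to the assumed property on $M$ by showing that, up to a canonical identification of frames, the normal coordinate change diffeomorphisms on $N$ coincide literally with those on $M$. All that has to be checked is therefore the naturality of the chart $n_z^{\bfr}$ under a diffeomorphism.

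First, I introduce the following correspondence between frames. Given a frame $(z,\bfr)$ on $N$, define $\wt z := \varphi^{-1}(z)\in M$ and $\wt\bfr := (d\varphi_{\wt z})^{-1}(\bfr)$, which is a basis of $T_{\wt z}M$; this yields a bijection between frames on $N$ and frames on $M$. Unpacking the definition $\exp_N := \varphi\circ \exp_M\circ d\varphi^{-1}$, one obtains
\begin{equation*}
\exp_{N,z} \;=\; \varphi\circ \exp_{M,\wt z}\circ (d\varphi_{\wt z})^{-1},
\qquad
(\exp_{N,z})^{-1} \;=\; d\varphi_{\wt z}\circ (\exp_{M,\wt z})^{-1}\circ \varphi^{-1},
\end{equation*}
and since by construction $d\varphi_{\wt z}$ sends $\wt\bfr$ onto $\bfr$, the linear isomorphisms associated to the bases satisfy $L_\bfr\circ d\varphi_{\wt z} = L_{\wt\bfr}$. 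Composing with $(\exp_{M,\wt z})^{-1}\circ \varphi^{-1}$ gives the key identity
\begin{equation*}
n_z^{\bfr} \;=\; n_{\wt z}^{\wt\bfr}\circ \varphi^{-1}.
\end{equation*}

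Second, taking two frames $(z,\bfr), (z',\bfr')$ on $N$ and their counterparts $(\wt z,\wt\bfr), (\wt z',\wt\bfr')$ on $M$, the previous identity yields
\begin{equation*}
\psi_{z,z'}^{\bfr,\bfr'} \;=\; n_z^{\bfr}\circ (n_{z'}^{\bfr'})^{-1} \;=\; n_{\wt z}^{\wt\bfr}\circ \varphi^{-1}\circ \varphi\circ (n_{\wt z'}^{\wt\bfr'})^{-1} \;=\; \psi_{\wt z,\wt z'}^{\wt\bfr,\wt\bfr'}.
\end{equation*}
Hence the family of coordinate change diffeomorphisms $\R^n\to\R^n$ attached to $N$ equals, as a family, the one attached to $M$. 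The $S_\sigma$-bounded (resp.\ $\O_M$-bounded) geometry hypothesis for $(M,\exp_M)$ provides the pointwise estimates $\del^\alpha \psi_{\wt z,\wt z'}^{\wt\bfr,\wt\bfr'}(\rx) = \O(\langle \rx\rangle^{-\sigma(|\alpha|-1)})$ (resp.\ $\O(\langle \rx\rangle^{p_\alpha})$) for every pair of $M$-frames and every $n$-multi-index $\alpha\neq 0$; by the bijection between frames, the same estimates hold for every pair of $N$-frames, so $(N,\exp_N)$ satisfies $(S_\sigma 1)$ (resp.\ $(\O_M 1)$).

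There is no real obstacle here: the whole argument is the naturality identity $n_z^{\bfr}=n_{\wt z}^{\wt\bfr}\circ\varphi^{-1}$, whose only subtlety is the correct choice $\wt\bfr=(d\varphi_{\wt z})^{-1}(\bfr)$ ensuring $L_\bfr\circ d\varphi_{\wt z}=L_{\wt\bfr}$. Once that is in place, no computation of derivatives is needed, since the maps whose derivatives must be estimated are literally identical on both sides.
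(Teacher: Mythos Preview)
Your proof is correct and follows essentially the same approach as the paper: both establish the identity $\psi_{z,z',N}^{\bfr,\bfr'}=\psi_{\wt z,\wt z',M}^{\wt\bfr,\wt\bfr'}$ via the naturality relation $n_z^{\bfr}=n_{\wt z}^{\wt\bfr}\circ\varphi^{-1}$, with the same choice of transported frame (the paper writes $L_{\bfr_z}=L_\bfr\circ(d\varphi)_{\varphi^{-1}(z)}$, which is exactly your $\wt\bfr=(d\varphi_{\wt z})^{-1}(\bfr)$). Your exposition is in fact slightly more detailed than the paper's.
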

\begin{proof} Let us note $\psi_{z,z',N}^{\bfr,\bfr'}:=n_{z,N}^\bfr \circ (n_{z',N}^{\bfr'})^{-1}$ where $n_{z,N}^\bfr:= L_\bfr \circ \exp_{N,z}^{-1}$ and $(z,\bfr)$, $(z',\bfr')$ are two frames on $N$.
Since $\exp_{z',N}=\varphi \circ \exp_{M,\varphi^{-1}(z')}\circ (d\varphi^{-1})_{z'}$ and $\exp_{N,z}^{-1}= (d\varphi^{-1})_z^{-1}\circ \exp_{M,\varphi^{-1}(z)}^{-1}\circ\ \varphi^{-1}$, we obtain $\psi_{z,z',N}^{\bfr,\bfr'} = \psi_{\varphi^{-1}(z),\varphi^{-1}(z'),M}^{\bfr_z,\bfr'_{z'}}$ where $\bfr_z$ is the basis of $T_{\varphi^{-1}(z)}(M)$ such that $L_{\bfr_z}= L_\bfr\circ (d\varphi)_{\varphi^{-1}(z)}$. The result follows.
\end{proof}

The following technical lemma will be used for Fourier transform and the definition of rapidly decreasing section spaces over the tangent and cotangent bundle in section 3. It will also give the behaviour of symbols under coordinate change. 

\begin{lem}
\label{cchange} Let $(z,\bfr)$, $(z',\bfr')$ be given frames.

\noindent (i) We can express $\del_{z,\bfr}^{(\a,\b)}$ as an operator on $C^{\infty}(T^*M,\mathfrak{E})$ (resp. $C^{\infty}(TM,\mathfrak{E})$) , where $(\a,\b)$ is a $2n$-multi-index,  with the following finite sum:
$$
\del_{z,\bfr}^{(\a,\b)} = \underset{|\b'|\geq |\b|}{\sum_{0\leq |(\a',\b')|\leq |(\a,\b)|}}\, f_{\a,\b,\a',\b'}\,   \del_{z',\bfr'}^{(\a',\b')} 
$$
where the $f_{\a,\b,\a',\b'}$ are smooth functions on $T^*M$ (resp. $TM$) such that

(a) if $(M,\exp)$ has a $S_\sigma$-bounded geometry for a given $\sigma\in [0,1]$, there exists $C_{\a,\b}>0$ such that for any $(x,\th)\in T^*M$ (resp. $TM$),

\begin{equation}
\label{fabab}
|f_{\a,\b,\a',\b'}(x,\th)|\leq    C_{\a,\b}\langle x \rangle_{z,\bfr }^{\sigma(|\a'|-|\a|)}\langle \th \rangle_{z,\bfr,x}^{|\b'|-|\b|}.
\end{equation}

(b) if $(M,\exp)$ has a $\O_M$-bounded geometry, there exist $C_{\a,\b}>0$ and $q_{\a,\b}\geq 1$ such that for any $(x,\th)\in T^*M$ (resp. $TM$),

\begin{equation}
\label{fabab2}
|f_{\a,\b,\a',\b'}(x,\th)|\leq    C_{\a,\b}\langle x \rangle_{z,\bfr }^{q_{\a,\b}}\langle \th \rangle_{z,\bfr,x}^{|\b'|-|\b|}.
\end{equation}

\noindent (ii) We can express $\del_{z,\bfr}^{(\a,\b)}$ as an operator on $C^{\infty}(M\times M,\mathfrak{E})$,  with the following finite sum:
$$
\del_{z,\bfr}^{(\a,\b)} = \underset{0\leq |\b'|\leq |\b|}{\sum_{0\leq |\a'|\leq |\a|}}\, f_{\a,\b,\a',\b'}\,   \del_{z',\bfr'}^{(\a',\b')} 
$$
where the $f_{\a,\b,\a',\b'}$ are smooth functions on $M\times M$ such that 

(a) if $(M,\exp)$ has a $S_\sigma$-bounded geometry for a given $\sigma\in [0,1]$, there exists $C_{\a,\b}>0$ such that for any $(x,y)\in M\times M$,

\begin{equation}
\label{fabab3}
|f_{\a,\b,\a',\b'}(x,y)|\leq    C_{\a,\b}\,\langle x \rangle_{z,\bfr }^{\sigma(|\a'|-|\a|)}\,\langle y \rangle_{z,\bfr}^{\sigma(|\b'|-|\b|)}.
\end{equation}

(b) if $(M,\exp)$ has a $\O_M$-bounded geometry, there exist $C_{\a,\b}>0$ and $q_{\a},q_{\b}\geq 1$ such that for any $(x,y)\in M\times M$,

\begin{equation}
\label{fabab4}
|f_{\a,\b,\a',\b'}(x,y)|\leq    C_{\a,\b}\,\langle x \rangle_{z,\bfr}^{q_{\a}}\,\langle y \rangle_{z,\bfr}^{q_{\b}}.
\end{equation}

\end{lem}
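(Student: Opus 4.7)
The plan is to transfer the problem to $\R^{2n}$ via the charts $n^{\bfr}_{z,*}$, apply the multivariate Fa\`a di Bruno formula of Theorem \ref{FaaCS} to the composition with the chart transition, and exploit the affine-in-fibre structure of this transition. I set $\Psi:=n^{\bfr'}_{z',*}\circ (n^{\bfr}_{z,*})^{-1}:\R^{2n}\to\R^{2n}$ and compute, directly from the definitions of $n^\bfr_{z,*}$ and $\wt M^\bfr_{z,x}$, the block form
$$\Psi(\rx,\vth)=\bigl(\psi(\rx),\,A(\rx)\,\vth\bigr),\qquad A(\rx)={}^t(d\psi)_{\rx}^{-1},$$
where $\psi:=\psi^{\bfr',\bfr}_{z',z}$ (the $TM$ case is identical with $A(\rx)=(d\psi)_{\rx}$). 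The two essential structural features are that $\Psi_1$ is $\vth$-independent and $\Psi_2$ is $\R$-linear in $\vth$. Writing $\wt h:=h\circ (n^{\bfr'}_{z',*})^{-1}$ so that $h\circ (n^\bfr_{z,*})^{-1}=\wt h\circ\Psi$, Theorem \ref{FaaCS} yields
$$\del^{(\a,\b)}_{z,\bfr}h=\sum_{1\le |\la|\le |(\a,\b)|} f_{\a,\b,\la}\,\del^{\la}_{z',\bfr'}h,$$
with $\la=(\a',\b')$ and each $f_{\a,\b,\la}$ a finite linear combination of products $\prod_j (\del^{l^j}\Psi)^{k^j}$ (evaluated at $(n^\bfr_z(x),\wt M^\bfr_{z,x}(\th))$) over $2n$-multi-indices $k^j=(r^j,s^j),\ l^j=(p^j,q^j)$ obeying $|k^j|,|l^j|>0$, $\sum_j k^j=\la$, $\sum_j |k^j|l^j=(\a,\b)$.

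The index restriction $|\b'|\ge |\b|$ and the $\langle\th\rangle$-growth in (\ref{fabab}) both follow from the affine-in-$\vth$ structure of $\Psi$. A factor $(\del^{l^j}\Psi_1)^{r^j}(\del^{l^j}\Psi_2)^{s^j}$ is nonzero only if $q^j=0$ whenever $r^j\neq 0$ and $|q^j|\le 1$ whenever $s^j\neq 0$; thus any surviving index with $|q^j|=1$ must have $r^j=0$, while $|q^j|\ge 2$ forces $k^j=0$, which is excluded. Inserting this into $\sum_j |k^j|q^j=\b$ gives $|\b|=\sum_{j:|q^j|=1}|s^j|\le |\la^2|=|\b'|$. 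Furthermore $(\del^{l^j}\Psi_2)^{s^j}$ contributes a $\vth$-monomial of degree $|s^j|$ when $q^j=0$ and of degree $0$ when $|q^j|=1$, so the total $\vth$-degree of $f_{\a,\b,\la}$ equals $\sum_{j:q^j=0}|s^j|=|\la^2|-|\b|$, producing the factor $\langle\th\rangle^{|\b'|-|\b|}_{z,\bfr,x}$.

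For the $\langle\rx\rangle$-behaviour, under $(S_\sigma 1)$ one has $\del^{p^j}\psi=\O(\langle\rx\rangle^{-\sigma(|p^j|-1)})$ for $|p^j|\ge 1$, and since $A={}^t(d\psi)^{-1}$ with $d\psi$ uniformly invertible (by $(S_\sigma 1)$ applied to both $\psi$ and $\psi^{-1}$), a standard matrix-inversion identity combined with $(S_\sigma 1)$ yields $\del^{p^j}A=\O(\langle\rx\rangle^{-\sigma|p^j|})$ for all $|p^j|\ge 0$. Summing these exponents along the Fa\`a di Bruno tree with the splitting by $|q^j|\in\{0,1\}$ and invoking $\sum_j |k^j||l^j|=|\a|+|\b|$, $\sum_j r^j=\la^1$, $\sum_{j:|q^j|=1}|s^j|=|\b|$, a direct count gives the total $\langle\rx\rangle$-exponent $\sigma(|\la^1|-|\a|)$, completing the proof of (\ref{fabab}). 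Under the weaker $(\O_M 1)$ hypothesis each derivative of $\psi$ and $A$ is only $\O(\langle\rx\rangle^p)$ for some $p\ge 1$, and the same counting gives (\ref{fabab2}) with a $\la$-independent exponent $q_{\a,\b}$.

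Part $(ii)$ is simpler, because the $M\times M$ chart transition is $G(\rx,\ry)=(\psi(\rx),\psi(\ry))$, which completely decouples. A Fa\`a di Bruno factor $(\del^{l^j}G_1)^{r^j}(\del^{l^j}G_2)^{s^j}$ is nonzero only if $r^j s^j=0$ and $l^j$ is supported in the corresponding block, so the summation indices split into disjoint sets $J_\rx\sqcup J_\ry$ with $\sum_{J_\rx}|r^j||p^j|=|\a|$, $\sum_{J_\ry}|s^j||q^j|=|\b|$ and $|p^j|,|q^j|\ge 1$. This immediately yields $|\a'|\le |\a|$, $|\b'|\le |\b|$ and, under $(S_\sigma 1)$, the factorised bound $\O(\langle\rx\rangle^{-\sigma(|\a|-|\a'|)}\langle\ry\rangle^{-\sigma(|\b|-|\b'|)})$ of (\ref{fabab3}); (\ref{fabab4}) follows analogously under $(\O_M 1)$. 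The main obstacle throughout is the combinatorial bookkeeping in $(i)$: the $\vth$-linearity of $\Psi$ has to be exploited twice---once to extract the restriction $|\b'|\ge |\b|$, once to separate the polynomial $\langle\th\rangle$-growth from the $\langle\rx\rangle$-decay---and the Fa\`a di Bruno tree must be split according to the value of $|q^j|\in\{0,1\}$.
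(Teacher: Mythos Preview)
Your proof is correct and follows essentially the same route as the paper's: both transfer to $\R^{2n}$, apply the Fa\`a di Bruno formula of Theorem \ref{FaaCS} to the chart transition, exploit that the transition is affine in the fibre variable to force $|q^j|\le 1$, split the index set according to $|q^j|\in\{0,1\}$, and count exponents. The only cosmetic difference is that the paper obtains the estimate $\del^{p}A=\O(\langle\rx\rangle^{-\sigma|p|})$ by writing the entries of $A$ as the composition $(\del_i\psi_{z,z'}^{\bfr,\bfr'})_p\circ\psi_{z',z}^{\bfr',\bfr}$ and applying Fa\`a di Bruno once more, whereas you invoke the matrix-inversion derivative identity; both arguments yield the same bound.
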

\begin{proof} 
$(i)$ Suppose that $(M,\exp)$ has a $S_\sigma$-bounded geometry.
Let us note $\psi_*:=n_{z',*}^{\bfr'}\circ (n_{z,*}^{\bfr})^{-1}$ and $\psi_T:=n_{z',T}^{\bfr'}\circ (n_{z,T}^{\bfr})^{-1}$.
We have $\psi_*=(\psi_{z',z}^{\bfr',\bfr}\circ \pi_1,L)$ where $\pi_1$ is the projection from $\R^{2n}$ onto the first copy of $\R^n$ in $\R^{2n}$ and $L$ is the smooth map from $\R^{2n}$ to $\R^n$ defined as $L(\rx,\vth):=\, ^t(d\psi_{z',z}^{\bfr',\bfr})^{-1}_{\rx}(\vth)=\, ^t(d\psi_{z,z'}^{\bfr,\bfr'})_{\psi_{z',z}^{\bfr',\bfr}(\rx)}(\vth)$. Noting $(L_i)_{1\leq i\leq n}$ the components of $L$, we have $L_i(\rx,\vth)=\sum_{1\leq p\leq n} L_{i,p}(\rx)\,\vth_{p}$, where $L_{i,p}:=(\del_i \psi_{z,z'}^{\bfr,\bfr'})_p\circ \psi_{z',z}^{\bfr',\bfr}$. As a consequence, for $1\leq i\leq n$ and $\a,\b$, $n$-multi-indices such that $|(\a,\b)|>0$
\begin{align*}
&(\del^{(\a,\b)}\psi_*)_i=  \delta_{\b,0}(\del^{\a}\psi_{z',z}^{\bfr',\bfr})_i \circ \pi_1\, ,\qquad (\del^{(\a,\b)}\psi_*)_{n+i}=(\del^{(\a,\b)}L)_i\, , \\
&(\del^{(\a,\b)}L)_i(\rx,\vth) = \sum_{1\leq p\leq n} (\del^{\a} L_{i,p})(\rx)\  F_{\b,p}(\vth)\,,\\ 
&\del^{\a} L_{i,p} = \sum_{1\leq |\a'|\leq |\a|} P_{\a,\a'}(\psi_{z',z}^{\bfr',\bfr})\ ((\del^{\a'+e_i} \psi_{z,z'}^{\bfr,\bfr'})_p\circ \psi_{z',z}^{\bfr',\bfr})\quad \text{ if } \quad |\a|>0 \, ,
\end{align*}
where $F_{\b,p}(\vth)$ is equal to $\vth_p$ if $\b=0$, to $\delta_{p,r}$ if $\b=e_r$, and to 0 otherwise. We get from the proof of Lemma \ref{B1-lem} that (for $1\leq |\a'|\leq |\a|$) $P_{\a,\a'}(\psi_{z',z}^{\bfr',\bfr})(\rx)=\O(\langle \rx \rangle^{-\sigma(|\a|-|\a'|)})$. As a consequence, using (\ref{i-first}), we see that $\del^{\a} L_{i,p}(\rx)=\O(\langle \rx \rangle^{-\sigma|\a|})$. Thus, if $|\b|>1$, $\del^{(\a,\b)}\psi_* = 0$ and 
\begin{align*}
&\text{ if } \b=0\, , \quad \ (\del^{(\a,\b)}\psi_*)_i(\rx,\vth) = \O(\langle \rx \rangle^{-\sigma(|\a|-1)}) \quad  \text{ and }  \quad  (\del^{(\a,\b)}\psi_*)_{n+i}(\rx,\vth) = \O(\langle\rx\rangle^{-\sigma |\a|}\langle \vth \rangle )\, , \\
&\text{ if } |\b|=1\, , \quad (\del^{(\a,\b)}\psi_*)_i =0 \quad \hspace{3cm} \text{ and } \quad   (\del^{(\a,\b)}\psi_*)_{n+i}(\rx,\vth) = \O(\langle\rx\rangle^{-\sigma |\a|})\, .
\end{align*}
Similar results hold for $\psi_T$, the only difference is that we just have to take $\wt L:=(d\psi_{z',z}^{\bfr',\bfr})_{\rx}(\vth)$ instead of $L$.

\noindent We have for any $f\in C^\infty(T^*M,\mathfrak{E})$, 
$$
\del_{z,\bfr}^{\nu} (f) = \del^{\nu}(f\circ (n_{z,*}^\bfr)^{-1}) \circ n_{z,*}^\bfr = \del^{\nu}(f\circ (n_{z',*}^{\bfr'})^{-1} \circ \psi_*) \circ n_{z,*}^\bfr\, .
$$
Using again the Faa di Bruno formula in Theorem \ref{FaaCS}, we get 
$$
\del_{z,\bfr}^{\nu} =  \sum_{1\leq |\nu'|\leq |\nu|} (P_{\nu,\nu'}(\psi_*)\circ n_{z,*}^\bfr) \ \del_{z',\bfr'}^{\nu'} =:\sum_{1\leq |\nu'|\leq |\nu|} f_{\nu,\nu'} \ \del_{z',\bfr'}^{\nu'}
$$ 
where $P_{\nu,\nu'}(\psi_*)$ is a linear combination of terms of the form $\prod_{j=1}^{s} (\del^{l^j} \psi_*)^{k^j} $, where $1\leq s\leq |\nu|$,  
the $k^j$ and $l^j$ are $2n$-multi-indices with $|k^j|>0$, $|l^j|>0$, $\sum_{j=1}^s k^j = \nu'$ and $\sum_{j=1}^s|k^j|l^j=\nu$. 

Let us note $l^{j}=:(l^{j,1},l^{j,2})$, $k^{j}=:(k^{j,1},k^{j,2})$ where $l^{j,1},l^{j,2},k^{j,1},k^{j,2}$ are $n$-multi-indices. Thus,
 $$
(\del^{l^j}\psi_*)^{k^j}= \prod_{i=1}^n ((\del^{l^j}\psi_*)_i)^{k^{j,1}_i}\ ((\del^{l^j}\psi_*)_{n+i})^{k^{j,2}_i}
$$
and we get, for a given $s$, $(l^{j})$, $(k^j)$ such that $(\del^{l^j} \psi_*)^{k^j} \neq 0$ for all $1\leq j\leq s$,
\begin{align*}
&\text{ if } l^{j,2}=0\, ,  \qquad   (\del^{l^j}\psi_*)^{k^j} = \O(\langle \rx\rangle^{-\sigma(|l^{j}|-1)|k^{j}| - \sigma |k^{j,2}|}\langle \vth\rangle^{|k^{j,2}|})\, , \\ 
&\text{ if } |l^{j,2}|=1\, ,\qquad k^{j,1}=0 \text{ and } \    (\del^{l^j}\psi_*)^{k^j} = \O(\langle \rx \rangle^{-\sigma (|l^j|-1)|k^j|})\, . 
\end{align*}

Since $k^j\neq 0$ and $(\del^{l^j} \psi_*)^{k^j} \neq 0$, $l^{j,2}$ always satisfies $|l^{j,2}|\leq 1$.
By permutation on the $j$ indices, we can suppose that for $1\leq j\leq j_1-1$, we have $l^{j,2}=0$, for $j_1\leq j\leq s$, we have $|l^{j,2}|=1$, where $1\leq j_1 \leq s+1$. Thus,  
$$
\prod_{j=1}^s (\del^{l^j} \psi_*)^{k^j} = \O(\langle\rx \rangle^{-\sigma(\sum_{j=1}^{s}(|l^j|-1)|k^j| + \sum_{j=1}^{j_1-1}|k^{j,2}|)}\langle \vth \rangle^{\sum_{j=1}^{j_1-1}|k^{j,2}|})\, .
$$
Since, with $\nu=(\a,\b)$, $\nu'=(\a',\b')$, 
$$
\sum_{j=1}^{j_1-1} |k^{j,2}| = \sum_{j=1}^s |k^{j,2}| - \sum_{j=j_1}^{s} |k^{j,2}| = |\b'|-\sum_{j=j_1}^{s} |k^{j}||l^{j,2}| =|\b'|-|\b|\, , 
$$
(\ref{fabab}) follows. If we set $f_{0,0,0,0}:=1$ and $f_{\a,0,0,0}:=0$ if $\a\neq 0$, then for any $2n$-multi-index $(\a,\b)$,
$$
\del_{z,\bfr}^{(\a,\b)} = \underset{|\b'|\geq |\b|}{\sum_{0\leq |(\a',\b')|\leq |(\a,\b)|}}\, f_{\a,\b,\a',\b'}\,   \del_{z',\bfr'}^{(\a',\b')} 
$$
and the estimate (\ref{fabab}) holds for any $f_{\a,\b,\a',\b'}$. In the case of $\O_M$-bounded geometry, the proof is similar, and we obtain for a $r_\nu\geq 1$, $\prod_{j=1}^s (\del^{l^j} \psi_*)^{k^j} = \O(\langle\rx \rangle^{r_\nu}\langle \vth \rangle^{|\b'|-|\b|})$, which gives the result.

\noindent $(ii)$ Replacing $\psi_{*}$ by $\psi_{z',z,M^2}^{\bfr',\bfr}:=n_{z',M^2}^{\bfr'}\circ (n_{z,M^2}^\bfr)^{-1}$ in $(i)$, we obtain the result by similar arguments.
\end{proof}

\subsection{Basic function and distribution spaces}

We suppose in this section that $E$ is an hermitian vector bundle on the exponential manifold $(M,\exp)$. Recall that if $u\in C^\infty(M;E)$ (resp. $C^\infty_c(M;E)$) the Fr\'{e}chet space of smooth sections (resp. the $LF$-space of compactly supported smooth sections) of $E\to M$, we have for any $z\in M$, $u^z:=\tau_z^{-1} u \in C^\infty(M,E_z)$ (resp. $C^\infty_c(M,E_z)$). We define for any frame $(z,\bfr)$ on $M$,
$$
T_{z,\bfr}(u):= u^z \circ (n_z^\bfr)^{-1}.
$$
Thus, $T_{z,\bfr}$ sends sections of $E\to M$ to functions from $\R^n$ to $E_z$ and is in fact a topological isomorphism from $C^\infty(M;E)$ (resp. $C^\infty_c(M;E)$) onto $C^{\infty}(\R^n,E_z)$ (resp. $C^{\infty}_c(\R^n,E_z)$). 

In the following, a density (resp. a codensity) is a smooth section of the complex line bundle over $M$ defined by the disjoint union over $x\in M$ of the complex lines formed by the 1-twisted forms on $T_x M$ (resp. $T_x^*(M)$). Recall that a 1-twisted form on a $n$-dimensional vector space $V$ is a function on $F$ on $\Lambda_n V\backslash \{0\}$ such that 
$$
F(cv)= |c| F(v) \qquad \text{for all } v\in \Lambda_n V\backslash \{0\} \text{ and } c\in \R^*.
$$
For a given frame $(z,\bfr)$, let us note $|dx^{z,\bfr}|$ the density associated to the volume form on $M$: $dx^{z,\bfr}:=dx^{1,z,\bfr}\wedge \cdots \wedge dx^{n,z,\bfr}$ and $|\del_{z,\bfr}|$ the codensity defined as $|\del_{1,z,\bfr}\wedge \cdots \wedge \del_{n,z,\bfr}|$.

Any density (resp. codensity) is of the form $c|dx^{z,\bfr}|$ (resp. $c|\del_{z,\bfr}|$) where $c$ is a smooth function on $M$, and by definition is strictly positive if $c(x)>0$ for any $x\in M$.
 For a given strictly positive density $d\mu$, we also note by $d\mu$ its associated (positive, Borel--Radon, $\sigma$-finite) measure on $M$.  
This allows to define the following Banach spaces of (equivalence classes of) functions on $M$: $L^p(M,d\mu)$ ($1\leq p\leq \infty$). Actually, $L^{\infty}(M):=L^{\infty}(M,d\mu)$ does not depend on the chosen $d\mu$, since the null sets for $d\mu$ are exactly the null sets for any other strictly positive density $d\mu'$ on $M$.

For a given $z\in M$, we note $L^p(M,E_z,d\mu)$ ($1\leq p<\infty$) and $L^\infty(M,E_z)$ the Bochner spaces on $M$ with values in $E_z$.
$E_z$ is a hermitian complex vector space, so we can identify $E_z$ with its antidual $E'_z$. There is a natural anti-isomorphism between $E'_z$ and the dual of $E_z$ but there is in general no canonical way to identify $E_z$ with its dual with a {\it linear} isomorphism. 
Thus, we shall use antiduals rather than duals in the following. However, $E_z$ is anti-isomorphic with its dual by complex conjugaison on $E'_z$. We shall note $\ol x$ the image under this anti-isomorphism of $x\in E_z$ and $\ol E_z$ the dual of $E_z$.


We note 
$L^p(M;E,d\mu):=\set{\psi \text{ section of }E\to M\text{ such that }|\psi|^p\in L^1(M,d\mu)}/\sim_{a.e.}$ and 
$L^\infty(M;E):= \set{\psi \text{ section of }E\to M\text{ such that }|\psi|\in L^\infty(M) }/\sim_{a.e.}$
where $\sim_{a.e.}$ the standard ``almost everywhere" equivalence relation.
Since the $\tau_{xy}$ maps are isometries, for any $z\in M$, the map $\psi \to \tau_z^{-1} \psi$ defines linear isometries: $L^p(M;E,d\mu)\simeq L^p(M,E_z,d\mu)$, and $L^\infty(M;E)\simeq L^\infty(M,E_z)$.
In particular, $L^p(M;E,d\mu)$ and $L^\infty(M;E)$ are Banach spaces and $L^2(M;E,d\mu)$ a Hilbert space.
Moreover, we can define for any $\psi \in L^1(M;E,d\mu)$ and $z\in M$ the following Bochner integral $\int \tau_z^{-1}\psi \in E_z$. We can canonically identify $L^\infty(M; E)$ as the antidual of $L^1(M;E,d\mu)$ and 
$L^2(M;E,d\mu)$ as its own antidual. 
The (strong) antiduals of $C_c^\infty(M;E)$ and $C^\infty(M;E)$ are noted respectively $\D' (M;E)$ and $\E'(M;E)$. 

We define $G_{\sigma}(\R^p,\mathfrak{E})$ (resp. $S_\sigma(\R^p)$), where $\sg\in [0,1]$, as the space of smooth functions $g$ from $\R^{p}$ into $\mathfrak{E}$ (resp. $\R$) such that for any $p$-multi-index $\nu\neq 0$ (resp. any $p$-multi-index $\nu$), there exists $C_\nu>0$
such that $\norm{\del^{\nu} g (\rv)}\leq C_{\nu} \langle \rv \rangle^{-\sigma (|\nu|-1)}$ (resp. $|\del^{\nu} g (\rv)|\leq C_{\nu} \langle \rv \rangle^{-\sigma |\nu|}$) for any $\rv\in \R^{p}$. We note $\O_M(\R^p,\mathfrak{E})$ the space of smooth $\mathfrak{E}$-valued functions with polynomially bounded derivatives.
We use the shorcuts $G_\sigma(\R^p):=G_\sigma(\R^p,\R^p)$ and $\O_M(\R^p):=\O_M(\R^p,\R)$.

We have the following lemma which will give an equivalent formulation of $S_\sigma$ or $\O_M$-bounded geometry.

\begin{lem}
\label{Ssigma} 
(i) Let $f\in G_{\sigma}(\R^p,\mathfrak{E})$ (resp. $S_\sigma(\R^p)$) and $g\in G_\sigma(\R^{n},\R^p)$ such that, if $\sigma>0$, there exists $\eps>0$ such that $\langle g(\rv) \rangle \geq \eps \langle \rv \rangle$ for any $\rv\in \R^n$. Then $f\circ g \in G_\sigma(\R^n,\mathfrak{E})$ (resp. $S_\sigma(\R^n)$).

\noindent (ii) The set $G_\sigma^\times(\R^p)$ of diffeomorphisms $g$ on $\R^p$ such that $g$ and $g^{-1}$ are in $G_\sigma(\R^p)$ is a subgroup of $\mathrm{Diff}(\R^p)$ and contains $GL_p(\R)$ as a subgroup.  

\noindent (iii) We have $\O_M(\R^p,\mathfrak{E})\circ \O_M(\R^n,\R^p) \subseteq \O_M(\R^n,\mathfrak{E})$. In particular, the space $\O_M(\R^p,\R^p)$ is a monoid under the composition of functions. The set of inversible elements of the monoid $\O_M(\R^p,\R^p)$, noted $\O_M^\times(\R^p,\R^p)$, is a subgroup of $\mathrm{Diff}(\R^p)$ and contains $G_\sigma^\times(\R^p)$ as a subgroup. 

\noindent (iv)
$(M,\exp)$ has a $S_\sigma$ (resp. $\O_M$)-bounded geometry if and only if there exists a frame $(z_0,\bfr_0)$ such that for any frame $(z,\bfr)$, $\psi_{z_0,z}^{\bfr_0,\bfr}\in G_{\sigma}^\times(\R^n)$ (resp. $\O_M^\times(\R^n,\R^n)$).

\noindent (v) The set, noted $S_\sigma^\times(\R^p)$ (resp. $\O_M^\times(\R^p)$), of smooth functions $f:\R^p\to \R^*$ such that $f$ and $1/f$ are in $S_\sigma(\R^p)$ (resp. $\O_M(\R^p)$) is a commutative group under pointwise multiplication of functions. Moreover, $S_\sigma^\times(\R^p) \leq S_{\sigma'}^\times(\R^p) \leq \O_M^\times(\R^p)$ if $1\geq \sigma \geq \sigma'\geq 0$.  

\noindent (vi) If $g\in G_\sigma^\times (\R^p)$ (resp. $\O_M^\times(\R^p,\R^p)$) then its Jacobian determinant $J(g)$ is in $S_\sigma^\times(\R^p)$ (resp. $\O_M^\times(\R^p)$).
\end{lem}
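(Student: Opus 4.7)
The plan is to reduce everything to (i), which is the technical heart of the lemma and follows from a direct application of the multivariate Faà di Bruno formula (Theorem \ref{FaaCS}). For a $n$-multi-index $\nu\neq 0$, write
$$
\del^\nu(f\circ g)(\rv) = \sum_{1\leq |\la|\leq |\nu|} (\del^\la f)(g(\rv))\, P_{\nu,\la}(g)(\rv),
$$
where each $P_{\nu,\la}(g)$ is a linear combination of products $\prod_j (\del^{l^j}g)^{k^j}$ with $|l^j|>0$, $\sum_j |k^j|=|\la|$ and $\sum_j |k^j|\,|l^j|=|\nu|$. The hypothesis $g\in G_\sigma(\R^n,\R^p)$ yields $|\del^{l^j}g(\rv)|\leq C\langle \rv\rangle^{-\sigma(|l^j|-1)}$, so $|P_{\nu,\la}(g)(\rv)|\leq C'\langle \rv\rangle^{-\sigma(|\nu|-|\la|)}$. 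Combined with $|(\del^\la f)(g(\rv))|\leq C\langle g(\rv)\rangle^{-\sigma(|\la|-1)}$ (resp. $\langle g(\rv)\rangle^{-\sigma|\la|}$ for $f\in S_\sigma$) and the hypothesis $\langle g(\rv)\rangle\geq\eps\langle \rv\rangle$ when $\sigma>0$ (trivial if $\sigma=0$), this gives the required bound $\langle \rv\rangle^{-\sigma(|\nu|-1)}$ (resp. $\langle \rv\rangle^{-\sigma|\nu|}$).

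Items (ii) and (iii) follow from (i) plus an elementary reverse estimate. Elements of $GL_p(\R)$ have constant Jacobian matrix and vanishing higher derivatives, hence lie in $G_\sigma^\times(\R^p)$. Closure of $G_\sigma^\times$ under composition requires the reverse linear bound $\langle g(\rv)\rangle\geq \eps\langle \rv\rangle$ for $g\in G_\sigma^\times$: applying the mean value inequality to $g^{-1}\in G_\sigma$ (whose differential is uniformly bounded) yields $|\rv|=|g^{-1}(g(\rv))|\leq |g^{-1}(0)|+C|g(\rv)|$, from which the bound follows after a compactness argument on a large ball. Closure under inverse is built into the definition. Item (iii) is the analogous argument with polynomial bounds; here the reverse estimate $\langle g(\rv)\rangle\geq \eps\langle \rv\rangle^{1/q}$ comes from $g^{-1}\in\O_M$.

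For (iv), one implication is tautological. Conversely, given $(z,\bfr)$ and $(z',\bfr')$, factor
$$
\psi_{z,z'}^{\bfr,\bfr'} \;=\; \psi_{z,z_0}^{\bfr,\bfr_0}\circ \psi_{z_0,z'}^{\bfr_0,\bfr'},
$$
and apply the group property (ii) (resp. (iii)). For (v), closure of $S_\sigma(\R^p)$ (resp. $\O_M(\R^p)$) under pointwise multiplication is a routine Leibniz estimate, and closure under inversion is by definition; the inclusions $S_\sigma^\times\leq S_{\sigma'}^\times\leq\O_M^\times$ for $1\geq\sigma\geq\sigma'\geq 0$ are immediate from $\langle \rv\rangle\geq 1$.

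For (vi), $J(g)=\det(dg)$ is a polynomial in the entries $\del_i g_j$. Each $\del_i g_j$ is bounded with $|\del^\nu(\del_i g_j)(\rv)|\leq C\langle \rv\rangle^{-\sigma|\nu|}$, so belongs to $S_\sigma(\R^p)$; by the product closure from (v), $J(g)\in S_\sigma(\R^p)$. The identity $J(g^{-1})\circ g = 1/J(g)$ then combined with $J(g^{-1})\in S_\sigma$ and (i) applied to $f=J(g^{-1})$ and the diffeomorphism $g$ yields $1/J(g)\in S_\sigma$, hence $J(g)\in S_\sigma^\times$. The $\O_M$ case is identical with polynomial bounds. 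The main obstacle is the careful multi-index bookkeeping in (i); the remaining items are all downstream of that estimate together with standard Leibniz and reverse-growth arguments.
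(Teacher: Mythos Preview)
Your proof is correct and follows essentially the same approach as the paper: the Fa\`a di Bruno estimate in (i) is identical, the reverse bound $\langle g(\rv)\rangle\geq\eps\langle\rv\rangle$ in (ii) is obtained the same way (the paper uses Taylor--Lagrange on $g^{-1}$), and (iv)--(vi) use the same factorization, Leibniz closure, and the identity $1/J(g)=J(g^{-1})\circ g$. One minor remark: for (iii) the $\O_M$ composition needs no reverse estimate at all, since polynomial growth composes with polynomial growth directly via Fa\`a di Bruno; your mention of $\langle g(\rv)\rangle\geq\eps\langle\rv\rangle^{1/q}$ is harmless but unnecessary there.
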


\begin{proof} 
$(i)$ The Faa di Bruno formula yields for any $n$-multi-index $\nu\neq 0$,
\begin{equation}
\del^\nu (f\circ g) = \sum_{1\leq |\la|\leq |\nu|} (\del^\la f)\circ g\  P_{\nu,\la}(g) \label{delnufg}
\end{equation}
where $P_{\nu,\la}(g)$ is a linear combination (with coefficients independant of $f$ and $g$) of functions of the form $\prod_{j=1}^s (\del^{l^j} g)^{k^j}$ where $s\in \set{1,\cdots ,|\nu|}$.  
The $k^j$ are $p$-multi-indices and the $l^j$ are $n$-multi-indices (for $1\leq j\leq s$) such that $|k^j|>0$, $|l^j|>0$, $\sum_{j=1}^s k^j = \la$ and $\sum_{j=1}^s |k^j| l^j= \nu$. As a consequence, since $g\in G_\sigma(\R^{n},\R^p)$, for each $\nu,\la$ with $1\leq |\la|\leq |\nu|$ there exists $C_{\nu,\la}>0$ such that for any $\rv\in \R^n$,
\begin{equation}\label{pnulag1}
|P_{\nu,\la}(g) (\rv)|\leq C_{\nu,\la} \langle \rv \rangle^{-\sigma(|\nu|-|\la|)}\, . 
\end{equation}
Moreover, if $f\in G_\sigma(\R^p,\mathfrak{E})$ (resp. $S_\sigma(\R^p)$), there is $C'_\la>0$ such that for any $\rv\in \R^n$, $\norm{(\del^\la f) \circ g(\rv)}\leq C'_\la \langle \rv\rangle ^{-\sigma(|\la|-1)}$ (resp. $|(\del^\la f) \circ g(\rv)|\leq C'_\la \langle \rv\rangle ^{-\sigma|\la|}$). The result now follows from (\ref{delnufg}) and (\ref{pnulag1}).

\noindent $(ii)$ Let $f$ and $g$ in $G_\sigma^\times(\R^p)$. We have $\del_i g^{-1} = \O(1)$ for any $i\in \set{1,\cdots,p}$. Taylor--Lagrange inequality of order 1 entails that $\langle g^{-1}(v)\rangle =\O(\langle v\rangle)$ and thus there is $\eps>0$ such that $\langle g(\rv) \rangle \geq \eps \langle \rv \rangle$ for any $\rv\in \R^n$. With $(i)$, we get $f\circ g\in G_\sigma(\R^p)$. The same argument shows that $g^{-1}\circ f^{-1} \in G_\sigma(\R^p)$.

\noindent $(iii)$ Direct consequence of Theorem \ref{FaaCS}.

\noindent $(iv)$
The only if part is obvious. Suppose then that for any frame $(z,\bfr)$, $\psi_{z_0,z}^{\bfr_0,\bfr}\in G_{\sigma}^\times(\R^n)$ (resp. $\O_M^\times(\R^n,\R^n)$. Let $(z,\bfr)$, $(z',\bfr')$ be two frames. We have $\psi_{z,z'}^{\bfr,\bfr'}=\psi_{z,z_0}^{\bfr,\bfr_0}\circ \psi_{z_0,z'}^{\bfr_0,\bfr'}$. So, by $(ii)$ (resp. $(iii)$),  $\psi_{z,z'}^{\bfr,\bfr'}\in G_{\sigma}^\times(\R^n)$ (resp. $\O_M^\times(\R^n,\R^n)$), which yields the result. 

\noindent $(v)$ By Leibniz rule, the spaces $S_\sigma(\R^p)$ and $\O_M(\R^p)$ are $\R$-algebras under the pointwise product of functions. The result follows.

\noindent $(vi)$ Consequence of $(ii)$, $(iii)$, $1/J(g) = J(g^{-1})\circ g$ and the fact that $S_\sigma(\R^p)$ (resp. $\O_M(\R^p)$) is stable under the pointwise product of functions.
\end{proof}

Remark that for any $g \in G_\sigma^\times(\R^p)$, we have $g\asymp\Id_{\R^p}$.
The multiplication by a function in $\O_M^\times (\R^n)$ is a topological isomorphism from the Fr\'{e}chet space of rapidly decaying $E_z$-valued functions $\S(\R^n,E_z)$ onto itself. 
If we note $J_{z,z'}^{\bfr,\bfr'}$ the Jacobian of $\psi_{z,z'}^{\bfr,\bfr'}$, then $1/{J_{z,z'}^{\bfr,\bfr'}}=J_{z',z}^{\bfr',\bfr}\circ \psi_{z,z'}^{\bfr,\bfr'}$ and $J_{z,z'}^{\bfr,\bfr'}\circ n_{z'}^{\bfr'}(x)= dx^{z,\bfr}/dx^{z',\bfr'} (x)= \det M_{z,x}^\bfr ( M_{z',x}^{\bfr'})^{-1}=\det( M_{z',x}^{\bfr'})^{-1}  M_{z,x}^\bfr$.
We deduce from Lemma \ref{Ssigma} $(vi)$ that if $(M,\exp)$ has a $S_{\sigma}$ (resp. $\O_M$) bounded geometry then $J_{z,z'}^{\bfr,\bfr'}$ is in $S^\times_{\sigma}(\R^n)$ (resp. $\O_M^\times (\R^n)$). 

\begin{defn} Any smooth function $f$ is in $S_\sigma$ (resp. $\O_M$) if for any frame $(z,\bfr)$, $f\circ (n_z^\bfr)^{-1}\in S_\sigma(\R^n)$ (resp. $\O_M(\R^n)$). Similarly, any smooth function $f$ is in $S_\sigma^\times$ (resp. $\O_M^\times$) if for any frame $(z,\bfr)$, $f\circ (n_z^\bfr)^{-1}\in S_\sigma^\times(\R^n)$ (resp. $\O_M^\times(\R^n)$).
\end{defn}

\begin{lem}
\label{redSOM}
If $(M,\exp)$ has a $S_\sigma$-bounded geometry then a smooth function $f$ on $M$ is in $S_\sigma$ (resp. $S_\sigma^\times$) if there exists a frame $(z,\bfr)$ such that $f\circ (n_z^\bfr)^{-1}\in S_\sigma(\R^n)$ (resp. $S_\sigma^\times(\R^n)$). Similarly, If $(M,\exp)$ has a $\O_M$-bounded geometry then $f$ is in $\O_M$ (resp. $\O_M^\times$) if there exists a frame $(z,\bfr)$ such that $f\circ (n_z^\bfr)^{-1}\in \O_M(\R^n)$ (resp. $\O_M^\times(\R^n)$). 
\end{lem}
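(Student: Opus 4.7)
The plan is to reduce everything to the previously established composition rules in Lemma \ref{Ssigma}. Let $(z_0,\bfr_0)$ be the frame for which $f\circ (n_{z_0}^{\bfr_0})^{-1}$ is known to lie in $S_\sigma(\R^n)$ (resp.\ $S_\sigma^\times(\R^n)$, $\O_M(\R^n)$, $\O_M^\times(\R^n)$). Fix any other frame $(z,\bfr)$; I want to prove that $f\circ (n_z^\bfr)^{-1}$ belongs to the same space. The basic identity is
\[
f\circ (n_z^\bfr)^{-1} \;=\; \big(f\circ (n_{z_0}^{\bfr_0})^{-1}\big)\circ \psi_{z_0,z}^{\bfr_0,\bfr},
\]
so the task is to transport the estimates from $(z_0,\bfr_0)$ to $(z,\bfr)$ through the normal coordinate change $\psi_{z_0,z}^{\bfr_0,\bfr}$.

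In the $S_\sigma$ case, the hypothesis of $S_\sigma$-bounded geometry says precisely that $\psi_{z_0,z}^{\bfr_0,\bfr}$ and its inverse $\psi_{z,z_0}^{\bfr,\bfr_0}$ both satisfy $(S_\sigma 1)$, hence both lie in $G_\sigma(\R^n,\R^n)$; that is, $\psi_{z_0,z}^{\bfr_0,\bfr}\in G_\sigma^\times(\R^n)$ (this is the point made in Lemma \ref{Ssigma}(iv)). By Lemma \ref{Ssigma}(ii), the proof shows as a byproduct the lower bound $\langle \psi_{z_0,z}^{\bfr_0,\bfr}(\rv)\rangle \geq \eps \langle \rv\rangle$ required to apply Lemma \ref{Ssigma}(i). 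Composing $f\circ (n_{z_0}^{\bfr_0})^{-1}\in S_\sigma(\R^n)$ with $\psi_{z_0,z}^{\bfr_0,\bfr}\in G_\sigma(\R^n,\R^n)$ via Lemma \ref{Ssigma}(i) then yields $f\circ (n_z^\bfr)^{-1}\in S_\sigma(\R^n)$. For the $S_\sigma^\times$ variant, one runs the same argument with $1/f$ in place of $f$ (using that $1/(f\circ (n_{z_0}^{\bfr_0})^{-1})$ is also in $S_\sigma(\R^n)$ by hypothesis), obtaining $1/(f\circ (n_z^\bfr)^{-1})\in S_\sigma(\R^n)$, hence $f\circ (n_z^\bfr)^{-1}\in S_\sigma^\times(\R^n)$.

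The $\O_M$ case is strictly easier: $\O_M$-bounded geometry gives $\psi_{z_0,z}^{\bfr_0,\bfr}\in \O_M(\R^n,\R^n)$ directly from $(\O_M 1)$, and Lemma \ref{Ssigma}(iii) provides the composition law $\O_M(\R^n,\R)\circ \O_M(\R^n,\R^n)\subseteq \O_M(\R^n,\R)$, so $f\circ (n_z^\bfr)^{-1} = (f\circ (n_{z_0}^{\bfr_0})^{-1})\circ \psi_{z_0,z}^{\bfr_0,\bfr}\in \O_M(\R^n)$. The $\O_M^\times$ case again follows by applying the same reasoning to $1/f$. There is no real obstacle here; the lemma is simply a verification that the frame-dependence in the definition of $S_\sigma$, $S_\sigma^\times$, $\O_M$, $\O_M^\times$ collapses to a single-frame check once the appropriate bounded-geometry assumption is in force, and all the necessary combinatorics was packaged into Lemma \ref{Ssigma}.
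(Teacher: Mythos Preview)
Your proof is correct and morally the same as the paper's, but you route it through a different lemma. The paper invokes Lemma~\ref{B1-lem}(ii) to expand $\del^\a(f\circ(n_z^\bfr)^{-1})$ as $\sum_{|\a'|\le|\a|} (f_{\a,\a'}\circ(n_z^\bfr)^{-1})\,(\del^{\a'}(f\circ(n_{z'}^{\bfr'})^{-1}))\circ\psi_{z',z}^{\bfr',\bfr}$ and then reads off the $\O(\langle\rx\rangle^{-\sigma|\a|})$ bound term by term; you instead use the composition statements of Lemma~\ref{Ssigma}(i),(iii), noting that $\psi_{z_0,z}^{\bfr_0,\bfr}\in G_\sigma^\times(\R^n)$ (resp.\ $\O_M(\R^n,\R^n)$) and applying $S_\sigma\circ G_\sigma\subset S_\sigma$ (resp.\ $\O_M\circ\O_M\subset\O_M$) in one stroke. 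Since Lemma~\ref{Ssigma}(i) is itself proved by exactly the Fa\`a di Bruno expansion the paper writes out, the two arguments are the same computation packaged differently; yours is slightly more modular and handles the $S_\sigma^\times$, $\O_M^\times$ cases cleanly via $1/f$, which the paper leaves implicit. One cosmetic remark: rather than appealing to the \emph{proof} of Lemma~\ref{Ssigma}(ii) for the lower bound $\langle\psi_{z_0,z}^{\bfr_0,\bfr}(\rv)\rangle\ge\eps\langle\rv\rangle$, you could cite Lemma~\ref{B1-lem}(i), equation~(\ref{i-first}), where this is stated explicitly.
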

\begin{proof} Let $(z',\bfr')$ be a frame such that $f\circ (n_{z'}^{\bfr'})^{-1}\in S_\sigma(\R^n)$, and let $(z,\bfr)$ be another frame. By Lemma \ref{B1-lem} $(ii)$, if $(M,\exp)$ has a $S_\sigma$-bounded geometry then for any $n$-multi-index $\a$, 
$$
\del^\a (f\circ (n_z^\bfr)^{-1}) = \sum_{0\leq |\a'|\leq |\a|} f_{\a,\a'}\circ (n_{z}^{\bfr})^{-1} \, (\del^{\a'} f\circ (n_{z'}^{\bfr'})^{-1}) \circ \psi_{z',z}^{\bfr',\bfr}
$$
where $(f_{\a,\a'}\circ (n_{z}^{\bfr})^{-1})(\rx) = \O(\langle \rx\rangle^{-\sigma(|\a|-|\a'|)})$. As a consequence $\del^\a (f\circ (n_z^\bfr)^{-1})(\rx)=\O(\langle \rx\rangle^{-\sigma|\a|})$ and the result follows. The case of $\O_M$ bounded geometry is similar.
\end{proof}

\begin{defn}
A smooth strictly positive density $d\mu$ is a $S_\sigma^\times$-density (resp. $\O_M^\times$-density) if for any frame $(z,\bfr)$, the unique smooth strictly positive function $f_{z,\bfr}$ such that $d\mu = f_{z,\bfr} |dx^{z,\bfr}|$ is in $S_\sigma^\times$ (resp. $\O_M^\times$). In this case, we shall note $\mu_{z,\bfr}$ the smooth stricly positive function in $S_\sigma^\times(\R^n)$ (resp. $\O_M^\times(\R^n)$) such that $d\mu =(\mu_{z,\bfr}\circ n_z^\bfr)\, |dx^{z,\bfr}|$. 
\end{defn}

We shall say that $(M,\exp,d\mu)$ has a $S_\sg$ (resp. $\O_M$) bounded geometry if $(M,\exp)$ has a $S_\sg$ (resp. $\O_M$) bounded geometry  and $d\mu$ is a $S^\times_\sg$(resp. $\O^\times_M$) density.

\begin{lem} If $(M,\exp)$ has a $S_\sigma$ (resp. $\O_M$) bounded geometry then any density of the form $ u\circ n_{z'}^{\bfr'} |dx^{z,\bfr}|$ where $u$ is a smooth strictly positive function in $S_\sigma^\times(\R^n)$ (resp. $\O_M(\R^n)$) and $(z,\bfr)$, $(z',\bfr')$ are frames, is a $S_\sigma^\times$-density (resp. $\O_M^\times$-density).
\end{lem}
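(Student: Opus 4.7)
The plan is to reduce the verification to a single frame via Lemma \ref{redSOM}, then to express the density in that frame and combine the composition and product stability of $S_\sigma^\times(\R^n)$ (resp.\ $\O_M^\times(\R^n)$) already established in Lemma \ref{Ssigma}.

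Let $(z'',\bfr'')$ be an arbitrary frame; I must show that the unique smooth strictly positive function $f_{z'',\bfr''}$ on $M$ determined by $d\mu = f_{z'',\bfr''}\,|dx^{z'',\bfr''}|$ lies in $S_\sigma^\times$ (resp.\ $\O_M^\times$). By Lemma \ref{redSOM} it suffices to check, in the single frame $(z,\bfr)$, that $f_{z'',\bfr''}\circ (n_z^\bfr)^{-1}\in S_\sigma^\times(\R^n)$ (resp.\ $\O_M^\times(\R^n)$). Using the identity $|dx^{z,\bfr}| = |J_{z,z''}^{\bfr,\bfr''}|\circ n_{z''}^{\bfr''}\,|dx^{z'',\bfr''}|$ (which follows from the relation $J_{z,z''}^{\bfr,\bfr''}\circ n_{z''}^{\bfr''}= dx^{z,\bfr}/dx^{z'',\bfr''}$ recorded after Lemma \ref{Ssigma}) together with the hypothesis $d\mu=u\circ n_{z'}^{\bfr'}\,|dx^{z,\bfr}|$, I obtain
\[
f_{z'',\bfr''}\circ (n_z^\bfr)^{-1} \;=\; \bigl(u\circ \psi_{z',z}^{\bfr',\bfr}\bigr)\cdot\bigl(|J_{z,z''}^{\bfr,\bfr''}|\circ \psi_{z'',z}^{\bfr'',\bfr}\bigr).
\]

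By Lemma \ref{Ssigma}(iv) the diffeomorphisms $\psi_{z',z}^{\bfr',\bfr}$ and $\psi_{z'',z}^{\bfr'',\bfr}$ belong to $G_\sigma^\times(\R^n)$ (resp.\ $\O_M^\times(\R^n,\R^n)$), and the remark recorded between Lemma \ref{Ssigma} and the definition of a $S_\sigma^\times$-density (a direct consequence of Lemma \ref{Ssigma}(vi)) gives $J_{z,z''}^{\bfr,\bfr''}\in S_\sigma^\times(\R^n)$ (resp.\ $\O_M^\times(\R^n)$). Taking the absolute value is harmless: since $\R^n$ is connected the Jacobian of a diffeomorphism has constant sign, and both $S_\sigma^\times(\R^n)$ and $\O_M^\times(\R^n)$ are stable under $f\mapsto -f$ because $S_\sigma(\R^n)$ and $\O_M(\R^n)$ are $\R$-algebras.

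Finally I apply Lemma \ref{Ssigma}(i) (resp.\ the composition rule (iii) in the $\O_M$ setting) to each factor: since $u\in S_\sigma^\times(\R^n)$ (resp.\ $\O_M^\times(\R^n)$), both $u$ and $1/u$ lie in $S_\sigma(\R^n)$ (resp.\ $\O_M(\R^n)$), and the lower bound $\langle \psi(\cdot)\rangle\geq\eps\langle\cdot\rangle$ needed in (i) is automatic from $\psi_{z',z}^{\bfr',\bfr}\asymp\Id_{\R^n}$ by Lemma \ref{B1-lem}(i); this yields $u\circ \psi_{z',z}^{\bfr',\bfr}\in S_\sigma^\times(\R^n)$ (resp.\ $\O_M^\times(\R^n)$), and likewise for $|J_{z,z''}^{\bfr,\bfr''}|\circ \psi_{z'',z}^{\bfr'',\bfr}$. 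Lemma \ref{Ssigma}(v) then says the product lies in $S_\sigma^\times(\R^n)$ (resp.\ $\O_M^\times(\R^n)$), completing the proof. The lemma is essentially bookkeeping that packages composition, product and Jacobian stability into one statement, so no step is truly delicate; the only mild subtlety is the absolute value on the Jacobian, handled as above.
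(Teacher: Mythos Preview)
Your proof is correct and follows essentially the same route as the paper: express the coefficient of $d\mu$ in a given frame as the product $(u\circ n_{z'}^{\bfr'})\,(|J_{z,z''}^{\bfr,\bfr''}|\circ n_{z''}^{\bfr''})$, then invoke the already-established stability of $S_\sigma^\times(\R^n)$ (resp.\ $\O_M^\times(\R^n)$) under composition with the transition maps, Jacobians, and pointwise products. The paper simply applies Lemma~\ref{redSOM} to each factor in its natural frame, whereas you pull everything back to the single frame $(z,\bfr)$ first and then use Lemma~\ref{Ssigma}(i),(iii),(v),(vi) directly; this is only a cosmetic difference.
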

\begin{proof} Let $(z'',\bfr'')$ be an arbitrary frame. Noting $d\mu:=u\circ n_{z'}^{\bfr'} |dx^{z,\bfr}|$, we get $d\mu = (u\circ n_{z'}^{\bfr'})| J_{z,z''}^{\bfr,\bfr''}|\circ n_{z''}^{\bfr''} |dx^{z'',\bfr''}|$.  We already saw that the function $J_{z,z''}^{\bfr,\bfr''}$ is in $S^\times_\sigma(\R^n)$ (resp. $\O_M^\times(\R^n))$. By Lemma \ref{redSOM}, $(u\circ n_{z'}^{\bfr'})( |J_{z,z''}^{\bfr,\bfr''}|\circ n_{z''}^{\bfr''})$  is in $S_\sigma^\times$ (resp. $\O_M^\times$).
\end{proof}

\begin{rem} By taking $u:=\rx\mapsto 1$ in the previous lemma, we see that for any exponential manifold $(M,\exp)$ with $S_\sigma$ (resp. $\O_M$) bounded geometry, we can define a canonical family of $S^\times_\sigma$-densities (resp. $\O_M^\times$-densities) on $M$: $\mathcal{D}:=(|dx^{z,\bfr}|)_{(z,\bfr)\in I}$ where $I$ is the set of frames on $M$. If the map $\exp$ is the exponential map associated to a pseudo-Riemannian metric $g$ on $M$, we can also define a canonical subfamily of $\mathcal{D}$ by $\mathcal{D}_g:=(|dx^{z}|)_{z\in M}$ where $|dx^z|:=|dx^{z,\bfr}|$ with $\bfr$ any orthonormal basis (in the sense $g_z(\bfr_i,\bfr_j)=\eta_i\delta_{i,j}$ where $\eta_i=1$ for $1\leq i \leq m$ and $\eta_i=-1$ for $i>m$, where $g$ has signature $(m,n-m)$) of $T_z(M)$ ($|dx^z|$ is then independant of $\bfr$). A priori, the Riemannian density does not belong to the canonical $M$-indexed family $\mathcal{D}_g$. 
\end{rem}

We shall need integrations over tangent and cotangent fibers and manifolds. We thus define $d\mu^*:=(\mu^{-1}_{z,\bfr}\circ n_z^\bfr)\,|\del_{z,\bfr}|$ the codensity associated to $d\mu$, where $\mu^{-1}_{z,\bfr}:=\tfrac{1}{\mu_{z,\bfr}}$ and 
$(z,\bfr)$ is a frame. Note that since $|\del_{z,\bfr}|/|\del_{z',\bfr'}|=|dx^{z',\bfr'}|/|dx^{z,\bfr}|= (\mu_{z,\bfr}\circ n_{z}^{\bfr})/(\mu_{z',\bfr'}\circ n_{z'}^{\bfr'})$, $d\mu^*$ is independant of $(z,\bfr)$.
For a given $x\in M$, the density on $T_x(M)$ associated to $d\mu$ is $d\mu_x:=(\mu_{z,\bfr}\circ n_z^\bfr(x))\, |dx^{z,\bfr}_x|$ and the associated density on $T_x^*(M)$ is $d\mu_x^*:=(\mu^{-1}_{z,\bfr}\circ n_z^\bfr(x))\,|{\del_{z,\bfr}}_x|$. For a function $f$ defined on $T_x(M)$ or $T^*_x(M)$, we have formally:
\begin{align*}
&\int_{T_x(M)} f(\xi)\, d\mu_x(\xi) = \mu_{z,\bfr}\circ n_z^\bfr(x)\int_{\R^n}f\circ (  M_{z,x}^{\bfr})^{-1}(\zeta)\, d\zeta \,, \\
&\int_{T_x^*(M)} f(\th)\, d\mu_x^*(\th) = \mu_{z,\bfr}^{-1}\circ n_z^\bfr(x)\int_{\R^n}f\circ (\wt M_{z,x}^{\bfr})^{-1}(\vth)\, d\vth\, ,
\end{align*}
and it is straightforward to check that these integrals are independant of the chosen frame $(z,\bfr)$.

\subsection{Schwartz spaces and operators}

\begin{assum} We suppose in this section and in section \ref{fouriersec} that $(M,\exp,E,d\mu)$, where $E$ is an hermitian vector bundle on $M$, has a $\O_M$-bounded geometry.
\end{assum}

The main consequence of the exponential structure is the possibility to define Schwartz functions on $M$.

\begin{defn} A section $u\in C^\infty(M,E)$ is rapidly decaying at infinity if for any $(z,\bfr)$, any $n$-multi-index $\a$ and $p\in \N$, there exists $K_{\a,p}>0$ such that the following estimate
\begin{equation}\label{SB-est}
\norm{\del_{z,\bfr}^{\a} u^z (x)}_{E_z}< K_{\a,p} \langle x\rangle_{z,\bfr}^{-p} 
\end{equation}
holds uniformly in $x\in M$. We note $\S(M,E)$ the space of such sections.
\end{defn}

With the hypothesis of $\O_M$-bounded geometry, we see that the requirement ``any $(z,\bfr)$" can be reduced to a simple existence:

\begin{lem}
\label{indepzbS}
A section $u\in C^\infty(M,E)$ is in $\S(M,E)$ if and only if there exists a frame $(z,\bfr)$ such that (\ref{SB-est}) is valid. 
\end{lem}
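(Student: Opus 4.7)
The plan is as follows. The ``only if'' direction is immediate since the definition demands the estimate for every frame. For ``if'', I would fix a frame $(z_0,\bfr_0)$ at which (\ref{SB-est}) holds and an arbitrary target frame $(z,\bfr)$, and prove that the corresponding estimates at $(z,\bfr)$ follow from the $\O_M$-bounded geometry hypothesis on $(M,\exp,E)$. The three ingredients I would combine are: (a) the change-of-frame expansion of $\del^{\a}_{z,\bfr}$ from Lemma~\ref{B1-lem}(ii) (in its $\O_M$ version), (b) the transport identity $u^{z}=(\tau_{z}^{-1}\tau_{z_0})\, u^{z_0}$ together with the Leibniz rule and the assumption $(\O_M 2)$, and (c) the polynomial comparison of the weight functions from (\ref{i-first-om}).

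More concretely, I would first write
\[
\del_{z,\bfr}^{\a}u^{z} = \sum_{0\leq|\a'|\leq|\a|} f_{\a,\a'}\,\del_{z_0,\bfr_0}^{\a'}\bigl((\tau_{z}^{-1}\tau_{z_0})\,u^{z_0}\bigr),
\]
where, by Lemma~\ref{B1-lem}(ii)(b), $|f_{\a,\a'}(x)|\leq C_{\a}\langle x\rangle_{z,\bfr}^{q_{\a}}$. Applying the Leibniz rule inside each summand decomposes $\del_{z_0,\bfr_0}^{\a'}\bigl((\tau_{z}^{-1}\tau_{z_0})u^{z_0}\bigr)$ as a finite sum of products $\del_{z_0,\bfr_0}^{\b_{1}}(\tau_{z}^{-1}\tau_{z_0})\cdot \del_{z_0,\bfr_0}^{\b_{2}}u^{z_0}$ with $\b_{1}+\b_{2}=\a'$. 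Hypothesis $(\O_M 2)$ yields $\|\del_{z_0,\bfr_0}^{\b_{1}}(\tau_{z}^{-1}\tau_{z_0})(x)\|=\O(\langle x\rangle_{z_0,\bfr_0}^{p_{\b_{1}}})$, while the hypothesis on $u$ at the frame $(z_{0},\bfr_{0})$ gives $\|\del_{z_0,\bfr_0}^{\b_{2}}u^{z_0}(x)\|_{E_{z_0}}=\O(\langle x\rangle_{z_0,\bfr_0}^{-P})$ for \emph{any} $P\in \N$.

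To conclude, given an arbitrary $p\in\N$, I would invoke (\ref{i-first-om}) and its symmetric counterpart: there exist $K,K'>0$ and $q,q'\geq 1$ such that $\langle x\rangle_{z,\bfr}\leq K\langle x\rangle_{z_0,\bfr_0}^{q}$ and $\langle x\rangle_{z_0,\bfr_0}\leq K'\langle x\rangle_{z,\bfr}^{q'}$. The first inequality lets me control the factor $\langle x\rangle_{z,\bfr}^{q_{\a}}$ coming from $f_{\a,\a'}$ by a polynomial in $\langle x\rangle_{z_0,\bfr_0}$; the second lets me convert a bound $\langle x\rangle_{z_0,\bfr_0}^{-P}$ into $\langle x\rangle_{z,\bfr}^{-p}$ once $P$ is chosen large enough to absorb all positive polynomial contributions (namely $q_{\a}q+p_{\b_{1}}+pq'$, with $p_{\b_{1}}$ maximized over the finitely many multi-indices appearing in the Leibniz/Faà~di~Bruno expansion). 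Since $P$ is arbitrary, this is always possible.

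The only mild obstacle is a bookkeeping one: three independent polynomial growth rates (from $f_{\a,\a'}$, from $(\O_M 2)$, and from the weight comparison) enter multiplicatively, so one must choose the exponent $P$ in the hypothesis \emph{after} fixing $\a$ and $p$. Because the multi-index range in the expansion is finite and the hypothesis at $(z_{0},\bfr_{0})$ grants \emph{every} polynomial decay rate, this choice goes through without difficulty, yielding a constant $K_{\a,p}>0$ such that $\|\del^{\a}_{z,\bfr}u^{z}(x)\|_{E_{z}}\leq K_{\a,p}\langle x\rangle_{z,\bfr}^{-p}$ uniformly in $x\in M$, which completes the proof.
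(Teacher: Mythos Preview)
Your proposal is correct and follows essentially the same argument as the paper: expand $\del_{z,\bfr}^{\a}$ via Lemma~\ref{B1-lem}(ii), apply Leibniz to $u^{z}=(\tau_{z}^{-1}\tau_{z_0})u^{z_0}$, bound the transport derivatives by $(\O_M 2)$, and absorb all polynomial factors using (\ref{i-first-om}) together with the arbitrary decay rate available at $(z_0,\bfr_0)$. Your bookkeeping of the exponents is more explicit than the paper's, but the structure and ingredients are identical.
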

\begin{proof} Suppose that (\ref{SB-est}) is valid for $(z',\bfr')$ and let $(z,\bfr)$ be another frame.  Thus, with Lemma \ref{B1-lem} $(ii)$ and  Leibniz rule,
\begin{align}\label{del-coord-ch}
\del_{z,\bfr}^\a u^z(x)  =  \sum_{0\leq |\a'|\leq |\a| }\, \sum_{\b\leq \a' }\ \ f_{\a,\a'} \,\tbinom{\a'}{\b}\,\del_{z',\bfr'}^{\a'-\b} (\tau_z^{-1}\tau_{z'})\ \del_{z',\bfr'}^{\b} u^{z'}(x).
\end{align}
Moreover $|f_{\a,\a'} \,\tbinom{\a'}{\b}\,\del_{z',\bfr'}^{\a'-\b} (\tau_z^{-1}\tau_{z'})| \leq C_{\a} \langle x \rangle_{z,\bfr}^{q_\a}$ for a $C_\a>0$ and a $q_\a\geq 1$. Now (\ref{SB-est}) and (\ref{i-first-om}) entail that for any $p\in \N$, there is $K>0$ such that
$\norm{\del_{z,\bfr}^\a u^z(x)}_{E_z} \leq K \langle x \rangle_{z,\bfr}^{-p}$. The result follows.
\end{proof}

\begin{rem} Let $u\in C^\infty(M,E)$ and $(z,\bfr)$ a frame. Then $u\in \S(M,E)$ if and only if $(\tau_z^{-1} u)\circ (n_{z}^{\bfr})^{-1} \in \S(\R^n,E_z)$. In other words, if $v\in \S(\R^n,E_z)$ then $\tau_z(v\circ n_z^\bfr) \in \S(M,E)$. 
\end{rem}

The following lemma shows that we can define canonical Fr\'{e}chet topologies on $\S(M,E)$.

\begin{lem} Let $(z,\bfr)$ a frame. Then 

\noindent (i) The following set of semi-norms: 
$$
q_{\a,p}(u):= \sup_{x\in M} \langle x \rangle_{z,\bfr}^{p} \norm{\del_{z,\bfr}^{\a} u^z (x)}_{E_z}\,.
$$
defines a locally convex metrizable topology $\mathcal{T}$ on $\S(M,E)$.

\noindent (ii) The application $T_{z,\bfr}$ is a topological isomorphism from the space $\S(M,E)$ onto $\S(\R^n,E_z)$.

\noindent (iii) The topology $\mathcal{T}$ is Fr\'{e}chet and independent of the chosen frame $(z,\bfr)$.
\end{lem}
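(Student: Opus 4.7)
The plan is to identify $\S(M,E)$ with $\S(\R^n,E_z)$ via $T_{z,\bfr}$, which transfers (i), (ii), and the Fr\'{e}chet part of (iii) to standard facts about Schwartz functions on Euclidean space, leaving only a change-of-frame bookkeeping argument for the independence claim in (iii).

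First I would record the tautological identity $(\del_{z,\bfr}^\a u^z)(x)=(\del^\a T_{z,\bfr}(u))(n_z^\bfr(x))$ together with $\langle x\rangle_{z,\bfr}=\langle n_z^\bfr(x)\rangle$. Under the substitution $\rx=n_z^\bfr(x)$ this yields
\[
q_{\a,p}(u)=\sup_{\rx\in\R^n}\langle \rx\rangle^p\,\norm{\del^\a T_{z,\bfr}(u)(\rx)}_{E_z},
\]
so each $q_{\a,p}$ is literally a standard Schwartz seminorm on $\S(\R^n,E_z)$ pulled back along the linear bijection $T_{z,\bfr}$ supplied by the remark preceding the lemma. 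This gives (i) at once: the $q_{\a,p}$ are genuine seminorms and, being indexed by the countable set $\N^n\times\N$, define a locally convex metrizable topology $\mathcal{T}$ on $\S(M,E)$. It gives (ii) as well: $T_{z,\bfr}$ transports the seminorms defining $\mathcal{T}$ exactly onto the usual Schwartz seminorms on $\S(\R^n,E_z)$, so it is a topological isomorphism. For the completeness part of (iii), $\S(\R^n,E_z)$ is the classical Schwartz space with values in the finite-dimensional normed space $E_z$, hence Fr\'{e}chet, and completeness is inherited by $(\S(M,E),\mathcal{T})$ through $T_{z,\bfr}$.

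The remaining substantive point, which I expect to be the main obstacle, is frame-independence of $\mathcal{T}$. Let $(z',\bfr')$ be another frame, producing seminorms $q'_{\a,p}$. I would expand $\del_{z,\bfr}^\a u^z$ using the formula (\ref{del-coord-ch}) already derived in the proof of Lemma \ref{indepzbS},
\[
\del_{z,\bfr}^\a u^z(x)=\sum_{0\leq|\a'|\leq|\a|}\sum_{\b\leq\a'} f_{\a,\a'}\,\tbinom{\a'}{\b}\,\del_{z',\bfr'}^{\a'-\b}(\tau_z^{-1}\tau_{z'})\,\del_{z',\bfr'}^{\b}u^{z'}(x),
\]
and bound each factor via $\O_M$-bounded geometry: Lemma \ref{B1-lem}(ii) in the $\O_M$ case gives $|f_{\a,\a'}(x)|\leq C_\a\langle x\rangle_{z,\bfr}^{q_\a}$, while $(\O_M 2)$ gives $\norm{\del_{z',\bfr'}^{\a'-\b}(\tau_z^{-1}\tau_{z'})(x)}\leq C'\langle x\rangle_{z',\bfr'}^{p_{\a'-\b}}$; the weight exchange $\langle x\rangle_{z,\bfr}\leq K\langle x\rangle_{z',\bfr'}^q$ from (\ref{i-first-om}) then lets one collect all polynomial factors into a single power $\langle x\rangle_{z',\bfr'}^{r_\a}$. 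For each $p\in\N$ this produces an estimate
\[
q_{\a,p}(u)\leq C_{\a,p}\sum_{|\b|\leq|\a|} q'_{\b,\,p+r_\a}(u),
\]
so every $q_{\a,p}$ is continuous for the topology $\mathcal{T}'$ defined by $(z',\bfr')$. Swapping the roles of the two frames yields the reverse inequality, hence $\mathcal{T}=\mathcal{T}'$. The whole argument is essentially a bookkeeping exercise on top of Lemmas \ref{B1-lem} and \ref{indepzbS}; the only thing requiring care is that the polynomial growth coming from $\O_M$-bounded geometry is absorbed by the arbitrary polynomial decay of Schwartz sections.
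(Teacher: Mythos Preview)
Your proof is correct and follows essentially the same approach as the paper: (i) and (ii) are treated as tautologies via the identification with $\S(\R^n,E_z)$, completeness in (iii) is transferred through $T_{z,\bfr}$, and frame-independence is obtained from the change-of-frame expansion of Lemma~\ref{indepzbS} combined with the $\O_M$ bounds. One minor imprecision: in the final displayed estimate the exponent should be some $r_{\a,p}$ depending on both $\a$ and $p$, not $p+r_\a$, since the weight exchange $\langle x\rangle_{z,\bfr}\leq K\langle x\rangle_{z',\bfr'}^{q}$ multiplies the exponent $p$ by $q$; this does not affect the conclusion, and the paper's proof records exactly this dependence.
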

\begin{proof}
$(i)$ and $(ii)$ are obvious.

\noindent $(iii)$ Since $T_{z,\bfr}$ is a topological isomorphism, $\mathcal{T}$ is complete. Following the arguments of the proof of Lemma \ref{indepzbS}, we see that there is $r\in \N^*$ such that for any $n$-multi-index $\a$ and $p\in \N$, there exist $C_{\a,p}>0$, $r_{\a,p}\in \N^*$, such for any $u\in \S(M,E)$,
$$
q_{\a,p}^{(z,\bfr)} (u) \leq C_{\a,p} \sum_{|\b|\leq |\a|} q^{(z',\bfr')}_{\b,r_{\a,p}}(u)\, .
$$
The independance on $(z,\bfr)$ follows.
\end{proof}

\begin{rem}
\label{BME}
If $(M,\exp,E,d\mu)$ has a $S_0$-bounded geometry, then it is possible to define the Fr\'{e}chet space of smooth sections with bounded derivatives $\B(M,E)$ by following the same procedure of $\S(M,E)$, with Lemma \ref{B1-lem}.
\end{rem}

Classical results of distribution theory \cite{Treves} and the previous topological isomorphisms $T_{z,\bfr}$ entail the following diagrams of continuous linear injections ($(M;E)$ ommitted and $1\leq p<\infty$):

\[\xymatrix{
    C_c^\infty \ar[r]\ar[d] & \S \ar[r]\ar[d]  & C^\infty \ar[d]\\
    \E' \ar[r] &  \S' \ar[r] & \D'\\
  } \hskip1cm
\xymatrix{
     & \B \ar[rr]    &&  L^\infty \ar[rd] & \\
    \S \ar[rr]\ar[ru] & & L^p(d\mu) \ar[rr] && \S' 
  \ .} \] 
  
The injections $\S\to \B\to L^\infty$ are valid in the case where $M$ has a $S_0$-bounded geometry. In the case of a general $\O_M$-bounded geometry, only the injection $\S\to L^\infty$ holds a priori.  
The injection from functions into distribution spaces is given here by $u\mapsto \langle u,\cdot \rangle$ where $\langle u,v\rangle :=\int (u | v)\, d\mu$. Note that the following continuous injections $\S \to \S' $ and $ \S\to L^{p}(d\mu) \to \S'$, $(1\leq p<\infty)$ have a dense image.

Using the same principles of the definition of $\S$ together with the $\O_M$-bounded geometry hypothesis and Lemma \ref{cchange} $(ii)$, we define the Fr\'{e}chet space $\S(M\times M, L(E))$ such that for any $(z,\bfr)$ the applications $T_{z,\bfr,M^2}:=K\mapsto  K^z \circ (n_{z,M^2}^\bfr)^{-1}$
are topological isomorphisms from $\S(M\times M , L(E))$ onto $\S(\R^{2n},L(E_z))$. Noting $j_{M^2}$ the continous dense injection from $\S(M\times M,L(E))$ into its antidual $\S'(M\times M,L(E))$ defined as $\langle j_{M^2}(K),K'\rangle= \int_{M\times M} \Tr(K(x,y) (K'(x,y))^*)\,d\mu\ox d\mu(x,y)$, we have the following commutative diagram, where $j$ is the classical continuous dense inclusion from $\S(\R^{2n},L(E_z))$ into its antidual, and $M_{\mu\otimes\mu}$ is the multiplication operator from $\S(\R^{2n},L(E_z))$ onto itself by the $\O_M^\times(\R^{2n})$ function $\mu_{z,\bfr}\otimes \mu_{z,\bfr}$:

\[\xymatrix{
    \S(M\times M,L(E)) \ar[rr]^{j_{M^2}} \ar[d]_{T_{z,\bfr,M^2}} &  \ar[r] &\S'(M\times M,L(E))  \\
    \S(\R^{2n},L(E_z)) \ar[r]_{M_{\mu\ox\mu}} &  \S(\R^{2n},L(E_z)) \ar[r]_j & \S'(\R^{2n},L(E_z))\ . \ar[u]^{T^*_{z,\bfr,M^2}}   \\
  }\]

\noindent Since $\S$ is nuclear, $L(\S,\S')\simeq \S'(M\times M,L(E))$ and $\S(M\times M, L(E)) \simeq \S\ \wox\  \ol\S$ where $\ol \S :=\S(M,\ol E)$.  
Thus, $\S'(M\times M,L(E))\simeq \S'\wox\ \ol\S'$, where $\ol \S'$ is the dual of $\S$ which is also the antidual of $\ol \S$. Note that the isomorphism $L(\S,\S')\simeq \S'(M\times M,L(E))$ is given by 
$$
\langle A_K (v) ,u\rangle = K(u\otimes \ol v) 
$$
where $A_K$ is operator associated to the kernel $K$, $u,v\in \S$, and $\ol v(y):= \ol{v(y)}$. Formally, 
$$
\langle A_K (v), u\rangle= \int_{M\times M}(K(x,y) v(y) | u(x)) d\mu\otimes d\mu (x,y)\, ,  \qquad (A_K v)(x)= \int_M K(x,y) v(y) d\mu(y).
$$  
Thus any continuous linear operator $A:\S \to \S'$ is uniquely determined by its kernel $K_A \in  \S'(M\times M , L(E))$.
The transfert of $A$ into the frame $(z,\bfr)$ is the operator $A_{z,\bfr}$ from $\S(\R^n,E_z)$ into $\S'(\R^{n},E_z)$ such that 
$$
\langle A_{z,\bfr}(v),u\rangle := \langle A(T_{z,\bfr}^{-1}(v)), T_{z,\bfr}^{-1}(u) \rangle.
$$
Thus, if $K_A$ is the kernel of $A$, we have $K_{A_{z,\bfr}}:= \wt T_{z,\bfr,M^2}(K_A)$ as the kernel of $A_{z,\bfr}$, where $\wt T_{z,\bfr,M^2}$ here is the inverse of the adjoint of $T_{z,\bfr,M^2}$. $\wt T_{z,\bfr,M^2}$ is a topological isomorphism from $\S'(M\times M , L(E))$ onto  $\S'(\R^{2n} , L(E_z))$.

\begin{defn} An operator $A\in L(\S,\S')$ is regular if $A$ and its adjoint $A^\dag$ send continously $\S$ into itself. An isotropic smoothing operator is an operator with kernel in $\S(M\times M,L(E))$. The space regular operators and the space of isotropic smoothing operators are respectively noted $\Re(\S)$ and $\Psi^{-\infty}$.
\end{defn}

Note that this definition of isotropic smoothing operators differs from the standard smoothing operators one where only local effects are taken into account, since in this case, a smoothing operator is just an operator with smooth kernel.  Clearly, $A$ is regular if and only if for any frame $(z,\bfr)$, $A_{z,\bfr}$ is regular as an operator from $\S(\R^{n},E_z)$ into $\S'(\R^{n},E_z)$. Remark that the space of regular operators  forms a $*$-algebra under composition and the space of isotropic smoothing operators $\Psi^{-\infty}$ is a $*$-ideal of this algebra.

Let us record the following important fact:

\begin{prop} Any isotropic smoothing operator extends (uniquely) as a Hilbert--Schmidt operator on $L^2(d\mu)$.
\end{prop}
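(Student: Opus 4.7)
The plan is to reduce the statement to the classical Hilbert--Schmidt criterion for Euclidean vector-valued integral operators, by transferring the kernel through a frame, and then to exploit the Schwartz decay of $K$ against the $\O_M^\times$-growth of the density $\mu_{z,\bfr}$.

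First I would fix a frame $(z,\bfr)$ and combine the fiber trivialization $u\mapsto u^z=\tau_z^{-1}u$ with the global chart $n_z^\bfr$. Since every $\tau_{zx}$ is a fiberwise isometry, the map $T_{z,\bfr}:u\mapsto u^z\circ (n_z^\bfr)^{-1}$ is a unitary isomorphism from $L^2(M;E,d\mu)$ onto the weighted space $L^2(\R^n,E_z,\mu_{z,\bfr}(\rx)\,d\rx)$. Under the analogous transfer on $M\times M$, any $K\in\S(M\times M,L(E))$ is sent to $\wt K := K^z\circ (n^\bfr_{z,M^2})^{-1}\in\S(\R^{2n},L(E_z))$, and a direct change of variable in the integral $(A_K v)(x)=\int_M K(x,y)v(y)\,d\mu(y)$ shows that, on Schwartz sections, $A_K$ is intertwined by $T_{z,\bfr}$ with the integral operator on $\S(\R^n,E_z)$ of kernel $\wt K(\rx,\ry)$ against the measure $\mu_{z,\bfr}(\ry)\,d\ry$.

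Next I would compute the candidate Hilbert--Schmidt norm. Because $\tau_{zx},\tau_{zy}$ are isometries, the fiberwise Hilbert--Schmidt norms satisfy $\|K(x,y)\|_{HS(E_y,E_x)}=\|K^z(x,y)\|_{HS(E_z)}$, so
\[
\int_{M\times M}\|K(x,y)\|_{HS}^{2}\,d\mu\otimes d\mu(x,y) \;=\; \int_{\R^{2n}}\|\wt K(\rx,\ry)\|_{HS(E_z)}^{2}\,\mu_{z,\bfr}(\rx)\mu_{z,\bfr}(\ry)\,d\rx\,d\ry.
\]
Since $\wt K$ is Schwartz on $\R^{2n}$ and $\mu_{z,\bfr}\in\O_M^\times(\R^n)$ is polynomially bounded, the right-hand side is finite.

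The classical Hilbert--Schmidt kernel theorem for integral operators on the vector-valued weighted space $L^2(\R^n,E_z,\mu_{z,\bfr}(\rx)\,d\rx)$ (proved component-wise in an orthonormal basis of $E_z$) then implies that the Euclidean integral operator with kernel $\wt K$ extends uniquely to a Hilbert--Schmidt operator whose HS norm equals the square root of the displayed quantity. Pulling back through the unitary $T_{z,\bfr}^{-1}$ yields the claimed Hilbert--Schmidt extension of $A_K$ on $L^2(M;E,d\mu)$, and uniqueness of the extension follows from the density of $\S(M;E)$ in $L^2(M;E,d\mu)$ already recorded after the diagrams of continuous injections. The only point requiring care is the bookkeeping of the weight $\mu_{z,\bfr}$ when translating the Euclidean HS statement; but since $\mu_{z,\bfr}$ is smooth, positive, and controlled above and below by polynomials, it does not affect HS integrability and the identification goes through.
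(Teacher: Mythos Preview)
Your proof is correct and follows essentially the same route as the paper: transfer everything through the frame $(z,\bfr)$ to the weighted space $L^2(\R^n,E_z,\mu_{z,\bfr}\,d\rx)$ via the unitary $T_{z,\bfr}$, observe that the transferred kernel $\wt K\in\S(\R^{2n},L(E_z))$ is square-integrable against the polynomially bounded weight $\mu_{z,\bfr}\otimes\mu_{z,\bfr}$, and invoke the classical Hilbert--Schmidt kernel criterion. Your write-up is in fact a bit more explicit about the HS norm computation and the uniqueness via density than the paper's, but the underlying argument is the same.
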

\begin{proof} An isotropic smoothing operator $A$ (with kernel $K$) extends as a continous linear operator from $\S'$ to $\S$, and thus it also extends as a bounded operator on $L^2(d\mu)$. Let $(z,\bfr)$ be a frame. If $U$ is the unitary associated to the isomorphism $L^2(d\mu)$ onto $\H_{z,\bfr}:=L^2(\R^n,E_z,\mu_{z,\bfr}\, dx)$ we have $A = U^* A_{z,\bfr} U$ where $A_{z,\bfr}$ is a bounded operator on $\H_{z,\bfr}$ given by the kernel $K^z\circ (n_z^\bfr,n_z^\bfr)^{-1}$. Since this kernel is in $\H_{z,\bfr}\otimes \ol \H_{z,\bfr} = L^2(\R^{2n},E_z\otimes \ol E_z, (\mu_{z,\bfr}\, dx)^{\ox 2} )$, it follows that $A_{z,\bfr}$ is Hilbert--Schmidt on $\H_{z,\bfr}$, which gives the result. 
\end{proof}

\subsection{Fourier transform}\label{fouriersec}

Fourier transform is the fundamental element that will allow the passage from operators to their symbols. 
In our setting, it is natural to extend the classical Fourier transform on $\R^n$ to Schwartz spaces of rapidly decreasing sections 
on the tangent and cotangent bundles of $M$, and use the fibers $T_x(M)$, $T_x^*(M)$ as support of integration. 

\begin{defn} A smooth section $a \in C^\infty(T^*M,L(E))$ is in $\S(T^*M,L(E))$ if for any $(z,\bfr)$, any $2n$-multi-index $\nu$ and any $p\in \N$, there exists $K_{p,\nu}>0$ such that 
\begin{equation}
\label{STM-est}
\norm{\del_{z,\bfr}^{\nu} a^z(x,\th)}_{L(E_z)}\leq K_{p,\nu} \langle x,\th \rangle_{z,\bfr}^{-p}
\end{equation}
uniformly in $(x,\th)\in T^*M$. A similar definition is set for $\S(TM,L(E))$.
\end{defn}

Following the same technique as for the space $\S(M,E)$, using the coordinate invariance given by Lemma \ref{cchange} we obtain the
\begin{prop} 
(i)  A section $u\in C^\infty(T^*M,L(E))$ is in $\S(T^*M,L(E))$ if and only if there exists a frame $(z,\bfr)$ such that (\ref{STM-est}) is valid. A similar property holds for $\S(TM,L(E))$.

\noindent (ii) There is a Fr\'{e}chet topology on $\S(T^*M,L(E))$ such that each 
$$
T_{z,\bfr,*}: a \mapsto a^z \circ (n_{z,*}^{\bfr})^{-1}
$$
is a topological isomorphism from $\S(T^*M,L(E))$ onto $\S(\R^{2n},L(E_z))$. A similar property holds for $\S(TM,L(E))$ and the applications $T_{z,\bfr,T}:=a\mapsto a^z \circ (n_{z,T}^\bfr)^{-1}$.
\end{prop}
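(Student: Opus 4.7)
The plan is to mimic the proofs of Lemma \ref{indepzbS} and of the Fr\'{e}chet-topology lemma for $\S(M,E)$, replacing Lemma \ref{B1-lem}(ii) by its $(T^*M,L(E))$-analogue given in Lemma \ref{cchange}(i). The essential identity driving both parts is that if $a$ is a smooth $L(E)\to T^*M$-section and $(z,\bfr)$, $(z',\bfr')$ are two frames, then
$$
a^z(x,\th) = A(x)\, a^{z'}(x,\th)\, B(x),\qquad A:=\tau_z^{-1}\tau_{z'},\quad B:=\tau_{z'}^{-1}\tau_z,
$$
so $a^z$ is obtained from $a^{z'}$ by multiplication with endomorphism-valued functions depending only on $x$.

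For part (i), assume (\ref{STM-est}) holds for $(z',\bfr')$ and fix any other frame $(z,\bfr)$. First I rewrite $\del_{z,\bfr}^{\nu}$ by Lemma \ref{cchange}(i)(b) as a finite sum of $f_{\nu,\nu'}\del_{z',\bfr'}^{\nu'}$ with $|f_{\nu,\nu'}(x,\th)|\leq C_\nu\,\langle x\rangle_{z,\bfr}^{q_\nu}\,\langle\th\rangle_{z,\bfr,x}^{|\b'|-|\b|}$. Writing $\nu'=(\a',\b')$, I then apply $\del_{z',\bfr'}^{\nu'}$ to the product $A\,a^{z'}\,B$ by Leibniz rule; since $A,B$ are functions of $x$ alone, only the $\a'$-part of $\nu'$ can hit them, and $(\O_M 2)$ bounds each $\del_{z',\bfr'}^{\gamma}A$ and $\del_{z',\bfr'}^{\epsilon}B$ by $\O(\langle x\rangle_{z',\bfr'}^{p_\gamma})$. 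Combining these with the comparison estimates (\ref{i-first-om})--(\ref{i-bis-om}), every polynomial blow-up in $\langle x\rangle$ and $\langle\th\rangle$ can be absorbed by choosing the decay order in the $(z',\bfr')$-estimate sufficiently large, yielding the desired bound $\norm{\del_{z,\bfr}^\nu a^z(x,\th)}\leq K_{p,\nu}\langle x,\th\rangle_{z,\bfr}^{-p}$ for every prescribed $\nu$ and $p$.

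For part (ii), I introduce the countable family of seminorms
$$
q_{\nu,p}^{(z,\bfr)}(a) := \sup_{(x,\th)\in T^*M}\langle x,\th\rangle_{z,\bfr}^{p}\,\norm{\del_{z,\bfr}^{\nu} a^z(x,\th)}_{L(E_z)}.
$$
By construction $T_{z,\bfr,*}$ intertwines this family with the standard Schwartz seminorms on $\S(\R^{2n},L(E_z))$, so it is bijective and bicontinuous onto a Fr\'{e}chet space, and the pullback topology on $\S(T^*M,L(E))$ is Fr\'{e}chet. The quantitative estimates produced in part (i) show that each $q_{\nu,p}^{(z,\bfr)}$ is dominated by a finite sum of seminorms $q_{\nu'',p'}^{(z',\bfr')}$, so the topology is independent of the chosen frame. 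The same construction, with the $TM$-version of Lemma \ref{cchange}(i) and $M_{z,x}^{\bfr}$ replacing $\wt M_{z,x}^{\bfr}$, handles $\S(TM,L(E))$ and $T_{z,\bfr,T}$.

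The only genuine obstacle is bookkeeping: the $\O_M$-bounded geometry hypothesis only controls the Faa di Bruno coefficients of Lemma \ref{cchange} and the transport derivatives of $(\O_M 2)$ up to arbitrary polynomial growth in $\langle x\rangle$, so the chosen decay order $p$ in the assumed estimate on $u^{z'}$ has to be made large enough to simultaneously compensate for these factors and for the polynomial factors in (\ref{i-first-om})--(\ref{i-bis-om}). Since the Schwartz hypothesis grants every order of decay, this is straightforward combinatorial work with no conceptual difficulty beyond that already encountered in Lemma \ref{indepzbS}.
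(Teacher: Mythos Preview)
Your proposal is correct and follows essentially the same approach as the paper: apply Lemma \ref{cchange}(i)(b) to change the derivative operators, then Leibniz on the triple product $(\tau_z^{-1}\tau_{z'})\,a^{z'}\,(\tau_{z'}^{-1}\tau_z)$ using that the transport factors depend only on $x$, bound the transport derivatives by $(\O_M 2)$, and absorb all polynomial blow-up via (\ref{i-first-om})--(\ref{i-bis-om}) and the Schwartz decay. The paper writes out the resulting combinatorial identity explicitly as equation (\ref{del-cchange}) and isolates the elementary inequality $\langle \rx\rangle^{1/2}\langle\vth\rangle^{1/2}\leq\langle(\rx,\vth)\rangle\leq\langle\rx\rangle\langle\vth\rangle$ needed to pass between joint and separate brackets, but otherwise the argument is identical to what you outline.
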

\begin{proof}
\noindent $(i,ii)$ 
Suppose that (\ref{STM-est}) is valid for $(z',\bfr')$ and $a\in C^\infty(T^*M,L(E))$ and let $(z,\bfr)$ another frame.  With Lemma \ref{cchange} and  Leibniz rule, noting $\nu=(\a,\b)$, $\nu'=(\a',\b')$, $\la=(\la^1,\la^2)$ and $\rho=(\rho^1,\rho^2)$, we get
\begin{align}\label{del-cchange}
\del_{z,\bfr}^{\nu} a^z  =  \underset{|\b'|\geq |\b| }{\sum_{0\leq |\nu'|\leq |\nu|}} \sum_{\rho\leq \la \leq \nu'}\ \ f_{\nu,\nu'} C_{\nu',\la,\rho}\,\del_{z',\bfr'}^{\a'-\la^1}(\tau_z^{-1}\tau_{z'})\, \del_{z',\bfr'}^{(\rho^1,\b')}(a^{z'}) \,\del_{z',\bfr'}^{\la^1-\rho^1}(\tau_{z'}^{-1}\tau_{z})\,
\end{align}
where $C_{\nu',\la,\rho}= \delta_{\b',\la^2}\delta_{\b',\rho^2} \tbinom{\nu'}{\la} \tbinom{\la}{\rho}$. Using now the fact that for any $\rx,\vth \in \R^n$,
$\langle \rx \rangle^{1/2} \langle \vth \rangle^{1/2} \leq \langle(\rx,\vth)\rangle \leq \langle  \rx \rangle \langle \vth \rangle$, and (\ref{i-first-om}), (\ref{i-bis-om}), we see that for any $2n$-multi-index $\nu$, and $p\in \N$, there is $r_{\nu,p}\in \N^*$ and $C_{\nu,p}>0$ such that $q_{\nu,p}^{(z,\bfr)}(a) \leq C_{\nu, p} \sum_{|\rho|\leq |\nu|} q^{(z',\bfr')}_{\rho,r_{\nu,p}}(a)$, where 
$$
q_{\nu,p}^{(z,\bfr)}(a):=  \sup_{(x,\th)\in T^*M} \langle x,\th \rangle_{z,\bfr}^{p} \norm{\del_{z,\bfr}^{\nu} a^z (x,\th)}_{L(E_z)}\, .
$$
The results follow, as in the case of $\S(M,E)$, by taking the topology given by the seminorms $q_{\nu,p}^{z,\bfr}$ for an arbitrary frame $(z,\bfr)$.
\end{proof}

\begin{rem} If $(M,\exp,E)$ has a $S_0$-bounded geometry, we saw in Remark \ref{BME} that a coordinate free (independant of the frame $(z,\bfr)$) definition of a space of smooth $E$-sections on $M$ with bounded derivatives is possible. However, a similar definition cannot be given in the same manner for $L(E)$-sections on $TM$ or $T^*M$ with bounded derivatives, due to the fact that the change of coordinates of Lemma \ref{cchange} impose an increasing power of $\langle\th \rangle$ (when $|\b'|>|\b|$). However, the independance over $(z,\bfr)$ would still hold for the space of smooth sections of $L(E)\to T^*M$ (resp. $TM$) with polynomially bounded derivatives.
\end{rem}

We note $\S'(T^*M,L(E))$ and $\S'(TM,L(E))$ the strong antiduals of $\S(T^*M,L(E))$ and $\S(TM,L(E))$, respectively.
We have the following continuous inclusion with dense image 
$$
j_{T^*M}:\S(T^*M,L(E))\to \S'(T^*M,L(E)) \qquad \text{\big(resp. } j_{TM}:\S(TM,L(E)) \to \S'(TM,L(E))\big)
$$
defined by
$$
\langle j_{T^*M}(a),b\rangle := \int_{TM^*} \Tr(ab^*) d\mu^*\qquad \text{\big(resp. }\langle j_{TM}(a),b\rangle := \int_{TM} \Tr(ab^*) d\mu^T \,\big)
$$
where $d\mu^*$ is the measure on $T^*M$ given by $d\mu^*(x,\th):= d\mu^*_x(\th)d\mu(x)$ and $d\mu^T$ is the measure on $TM$ given by $d\mu^T(x,\xi):= d\mu_x(\xi)d\mu(x)$. Note that for any $(z,\bfr)$, $d\mu^*(x,\th)= |{\del_{z,\bfr}}_x|(\th) |dx^{z,\bfr}|(x)$ (this is the Liouville measure on $T^*M$) and $d\mu^T(x,\th)= \mu^2_{z,\bfr}\circ n_z^\bfr(x)|dx^{z,\bfr}_x|(\xi) |dx^{z,\bfr}|(x)$. We have the following commutative diagram, where $M_{\mu^2}$ is the multiplication operator by the $\O_M^\times(\R^{2n})$ function $(\rx,\zeta)\mapsto \mu_{z,\bfr}^2(\rx)$,  
\[\xymatrix{
    \S(TM,L(E)) \ar[rr]^{j_{TM}} \ar[d]_{T_{z,\bfr,T}} &  \ar[r] &\S'(TM,L(E))  \\
    \S(\R^{2n},L(E_z)) \ar[r]_{M_{\mu^2}} &  \S(\R^{2n},L(E_z)) \ar[r]_j & \S'(\R^{2n},L(E_z)) \ar[u]_{T^*_{z,\bfr,T}}   \\
  }\]
and, in the case of $\S(T^*M,L(E))$ a similar diagram is valid if $M_{\mu^2}$ is replaced by the identity.
\begin{defn} The Fourier transform of $a\in \S(TM,L(E))$ is
$$
\F(a) : (x,\th)\mapsto \int_{T_x(M)} e^{-2\pi i \langle \th,\xi \rangle }\, a(x,\xi)\, d\mu_x(\xi)\, .
$$ 
\end{defn}

\begin{prop} $\F$ is a topological isomorphism from $\S(TM,L(E))$ onto $\S(T^*M,L(E))$ with inverse 
$$
\ol \F (a):= (x,\xi) \mapsto \int_{T^*_x(M)} e^{2\pi i \langle \th,\xi \rangle}\, a(x,\th)\, d\mu^*_x(\th)\, .
$$ 
The adjoint $\ol \F^*$ of $\ol \F$ coincides with $\F$ on $\S(TM,L(E))$, so we still note $\ol \F^*$ by $\F$ and $\F^*$ by $\ol\F$.
\end{prop}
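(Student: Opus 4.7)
The plan is to reduce everything to the standard partial Fourier transform on $\R^{2n}$ via the topological isomorphisms $T_{z,\bfr,T}$ and $T_{z,\bfr,*}$, thus importing the well-known isomorphism and inversion properties from the Euclidean case.

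First, fix a frame $(z,\bfr)$. For $a\in\S(TM,L(E))$ and $x=(n_z^\bfr)^{-1}(\rx)$, I compute directly in coordinates using $\langle \th,\xi\rangle=\langle \wt M^\bfr_{z,x}(\th), M^\bfr_{z,x}(\xi)\rangle_{\R^n}$ (since $\wt M^\bfr_{z,x}={}^t(M^\bfr_{z,x})^{-1}$) and the expression $d\mu_x(\xi)=\mu_{z,\bfr}(\rx)\,d\zeta$ after the change of variables $\zeta=M^\bfr_{z,x}(\xi)$. This yields the key identity
\[
T_{z,\bfr,*}\circ\F \;=\; M_{\mu_{z,\bfr}}\circ \F_n^{(2)}\circ T_{z,\bfr,T},
\]
where $\F_n^{(2)}$ is the partial Fourier transform with respect to the second $\R^n$-variable, and $M_f$ denotes multiplication by $f$. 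Analogously, using $d\mu^*_x(\th)=\mu_{z,\bfr}^{-1}(\rx)\,d\vth$, I obtain
\[
T_{z,\bfr,T}\circ \ol\F \;=\; M_{\mu_{z,\bfr}^{-1}}\circ \ol{\F_n^{(2)}}\circ T_{z,\bfr,*}.
\]

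Now $\F_n^{(2)}$ and its inverse $\ol{\F_n^{(2)}}$ are topological isomorphisms of $\S(\R^{2n},L(E_z))$, and since $\mu_{z,\bfr}\in\O_M^\times(\R^n)$, multiplication by $\mu_{z,\bfr}$ (viewed on $\R^{2n}$ as acting on the $\rx$-variable only) is also a topological isomorphism of $\S(\R^{2n},L(E_z))$. Composing with the topological isomorphisms $T_{z,\bfr,T}$, $T_{z,\bfr,*}$ gives that $\F$ and $\ol\F$ are continuous and of the claimed source and target. For the inversion $\F\circ\ol\F=\Id$ and $\ol\F\circ\F=\Id$, the crucial observation is that $\mu_{z,\bfr}$ depends only on $\rx$ and therefore $M_{\mu_{z,\bfr}^{-1}}$ commutes with $\F_n^{(2)}$; plugging the two identities into each other then reduces to $\F_n^{(2)}\circ\ol{\F_n^{(2)}}=\Id$, which is classical Fourier inversion on $\R^n$ with variable $\zeta$ (parameter $\rx$).

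For the adjoint identification, the statement is that $j_{T^*M}\circ\F=\ol\F^*\circ j_{TM}$ on $\S(TM,L(E))$. Given $a\in\S(TM,L(E))$ and $b\in\S(T^*M,L(E))$, I expand $\langle j_{TM}(a),\ol\F(b)\rangle=\int_{TM}\Tr\bigl(a(x,\xi)\,\ol\F(b)(x,\xi)^*\bigr)\,d\mu^T$, substitute the definition of $\ol\F(b)$, and exchange the $\xi$- and $\th$-integrations on each fiber by Fubini (justified by the rapid decay of $a$ and $b$ in $\S(T_xM)$ and $\S(T_x^*M)$ respectively, uniformly in $x$). The inner exponential integral collapses to $\F(a)(x,\th)$, giving $\int_{T^*M}\Tr(\F(a)\,b^*)\,d\mu^*=\langle j_{T^*M}(\F(a)),b\rangle$.

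The only non-routine step is the change-of-variable bookkeeping in the first identity (the appearance of the extra factor $\mu_{z,\bfr}$ on the $T_{z,\bfr,*}$ side but $\mu_{z,\bfr}^{-1}$ on the $T_{z,\bfr,T}$ side), so the main obstacle is simply to verify that the densities $d\mu_x$ on $T_xM$ and $d\mu_x^*$ on $T_x^*M$, together with the coordinate maps $M^\bfr_{z,x}$ and $\wt M^\bfr_{z,x}$, produce precisely these factors; once that bookkeeping is correct, everything else is standard Schwartz-space and Fourier theory on $\R^n$.
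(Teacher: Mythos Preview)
Your proof is correct and follows essentially the same approach as the paper: both reduce to the partial Fourier transform on $\R^{2n}$ via the frame isomorphisms, observing that the transferred map is $\F_P\circ M_{\mu_{z,\bfr}}=M_{\mu_{z,\bfr}}\circ\F_P$ (your $\F_n^{(2)}$ is the paper's $\F_P$), and both handle the adjoint identity by the fiberwise Parseval/Fubini computation. Your write-up is slightly more explicit about the commutation step and the Fubini argument, but the strategy and all key observations coincide.
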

\begin{proof} Let $(z,\bfr)$ be a frame. It is straightforward to check that the following diagram commutes
\[\xymatrix{
    S(TM,L(E)) \ar[r]^{\F} \ar[d]_{T_{z,\bfr,T}} & \S(T^*M,L(E)) \\
    \S(\R^{2n},L(E_z)) \ar[r]_{\F_{z,\bfr}} &  \S(\R^{2n},L(E_z))\ar[u]_{T_{z,\bfr,*}^{-1}}  \\
  }\]
where $\F_{z,\bfr}= \F_P \circ M_{\mu} = M_{\mu}\circ \F_P$, with $M_{\mu}$ the multiplication operator on $\S(\R^{2n},L(E_z))$ defined by $M_{\mu}(a):=(\rx,\zeta)\mapsto \mu_{z,\bfr}(\rx)\, a(\rx,\zeta)$ and $\F_P$ the partial Fourier transform on the space $\S(\R^{2n},L(E_z))$ (only the variables in the second copy of $\R^{n}$ in $\R^{2n}$ being Fourier transformed). It is clear that $\F_{z,\bfr}$ is a topological isomorphism from $\S(\R^{2n},L(E_z))$ onto itself with inverse $\F_{z,\bfr}^{-1}=M_{1/\mu}\circ \ol \F_P$.
The fact that $\ol \F^*$ coincides with $\F$ on $\S(TM,L(E))$ is a consequence of the following equality 
$$
\int_{TM} \Tr(a (\ol \F(b)) ^*)\,d\mu^T = \int_{T^*M}\Tr(\F(a)b^*)\,d\mu^*
$$
for any $a\in \S(TM,L(E))$ and $b\in \S(T^*M,L(E))$, that is a direct consequence of the Parseval formula for $\F_P$.
\end{proof}

\section{Linearization and symbol maps}

\subsection{Linearization and the $\Phi_{\la}$, $\Ups_t$ diffeomorphisms}

Recall that a linearization (Bokobza-Haggiag \cite{Bokobza}) on a smooth manifold $M$ is defined as a smooth map $\nu$ from $M\times M$ into $TM$ such that $\pi \circ \nu = \pi_1$, $\nu(x,x)=0$ for any $x\in M$ and $(d_y\nu)_{y=x}=\Id_{T_x M}$. Using this map, it is then possible by restricting $\nu$ on a small neighborhood of the diagonal of $M\times M$, to obtain a diffeomorphism onto a neighborhood of the zero section of $TM$ and obtain an isomorphism between symbols (with a local control of the x variables on compact) and pseudodifferential operators modulo smoothing ideals. These isomorphisms depend on the linearization, as shown in \cite[Proposition V.3]{Bokobza}. We follow here the same idea, with a global point of view, since we are interested in the behavior at infinity. We thus consider, on the exponential manifold $(M,\exp,E,d\mu)$ a fixed linearization $\ol \psi$ that comes from an (abstract) exponential map $\psi$ on $M$ (also called linearization map in the following), so that $\ol \psi(x,y) = \psi_x^{-1} y$, and $\psi_x$ is a diffeomorphism from $T_x M$ onto $M$, with $\psi_x (0)=x$, $(d\psi_x)_0=\Id_{T_{x} M}$. For example, $\psi$ may be the exponential map $\exp$.

Let $\la \in [0,1]$ and $\Phi_\la$ be the smooth map from $TM$ onto $M\times M$ defined by 
$$
\Phi_\la: (x,\xi) \mapsto \big(\psi_x(\la \xi), \psi_x(-(1-\la)\xi) \big)\, .
$$
\begin{assum}
\label{assumH}
We suppose from now on that whenever the parameters $\la$, $\la'$, are in $]0,1[$, it is implied that the linearization map $\psi$ satisfies for any $x,y\in M$ and $t\in \R$, $\psi_{x}(t\psi_x^{-1}(y))= \psi_{y}((1-t)\psi_y^{-1}(x))$. This hypothesis, called $(H_\psi)$ in the following, is automatically satisfied if the linearization is derived from a exponential map of a connection on the manifold. 
\end{assum}
A computation shows that $\Phi_\la$ is a diffeomorphism with the following inverse $\Phi_\la^{-1}:(x,y)\mapsto \a'_{yx}(1-\la)$ for $\la\neq 0$ and $\Phi_0^{-1}(x,y):\mapsto -\a'_{xy}(0)$, where $\a_{xy}(t):=\psi_x(t\psi_x^{-1}(y))$.
Noting $\Phi_\la^{-1}(x,y)=:(m_\la(x,y),\xi_\la(x,y))$, we see that $m_\la(x,y)=\a_{xy}(\la)$ and, if $\la\neq 0$, $\xi_{\la}(x,y)=\tfrac{1}{\la}\psi_{m_{\la}(x,y)}^{-1}(x)$, while $\xi_0(x,y)=-\psi_x^{-1}(y)$.
In all the following, we shall use the symbol $W$ (for Weyl) for the value $\la=\half$, so that $m_W:=m_{\half}$, $\Phi_W:=\Phi_\half$, and similar conventions for the other mathematical symbols containing $\la$.  
Note that $m_\la$ is a smooth function from $M\times M$ onto $M$, with $m_\la(x,x)=x$ for any $x\in M$. Moreover, for any $x,y\in M$, $m_{\la}(x,y)=m_{1-\la}(y,x)$, $m_W(x,y)=m_W(y,x)$ (the ``middle point`` of $x$ and $y$), $\xi_{\la}(x,y)=-\xi_{1-\la}(y,x)$, $\xi_W (x,y)=-\xi_W (y,x)$ and $x\mapsto \Phi_\la^{-1} (x,x)$ is the zero section of $TM\to M$. Noting $j$ the involution on $M\times M$ : $(x,y)\mapsto (y,x)$, we have $\Phi_\la = j\circ \Phi_{1-\la}\circ -\Id_{TM}$. 

For any $t\in [-1,1]$ (with the convention that if $(H_\psi)$ is not satisfied, we are restricted to $t\in \set{-1,0,1}$), we define, 
$$
\Ups_t :(x,\xi)\mapsto \big(\psi_x (t\xi), \tfrac{-1}{t} \psi^{-1}_{\psi_x(t\xi)}(x)\big)
$$
with the convention $\tfrac{-1}{t} \psi^{-1}_{\psi_x(t\xi)}(x):= \xi$ if $t=0$, so that $\Ups_0=\Id_{TM}$.
A computation shows that $\Ups_{t}^{-1}=\Ups_{-t}$. The $\Phi_\la$ and $\Ups_t$ diffeomorphisms are related by the following property: for any $\la,\la'\in [0,1]$, $\Phi_{\la}^{-1}\circ \Phi_{\la'} = \Ups_{\la'-\la}$. We will use the shorthand $\Ups_{t,T}(x,\xi):=\tfrac{-1}{t} \psi^{-1}_{\psi_x(t\xi)}(x)$, so that $\Ups_{t}=(\psi\circ\ t\Id_{TM}, \Ups_{t,T} ).$

\begin{rem}
\label{Hhyp}Note that $(H_\psi)$ entails that
$(\Ups_t)_{t\in \R}$ is a one parameter subgroup of $\Diff(TM)$. 
\end{rem}

\begin{rem}
\label{remTP}
Suppose that $\psi$ is the exponential map associated to a connection $\nabla$ on $TM$, and $\a_{x,\xi}$ the unique maximal geodesic such that $\a'_{x,\xi}(0)=(x,\xi).$ It is a standard result of differential geometry (see for instance \cite[Theorem 3.3, p.206]{Lang}) that for any $v:=(x,\eta)\in TM$, and $\xi\in T_x(M)$, there exists an unique curve $\beta_{v}^\xi:\R \to TM$ such that 
$\nabla_{\a'_{v}} \beta_{v}^\xi =0$, $\pi\circ \beta_{v}^{\xi} = \a_{v}$ (in other words, $\beta_{v}^\xi$ is $\a_{v}$-parallel lift of $\a_{v}$)
and $\beta_{v}^\xi(0) = (x,\xi)$. By definition of geodesics, $\beta^{\eta}_{x,\eta}=\a'_{x,\eta}$. Moreover, $\beta_{x,\eta}^{\xi}(1)\in T_{\psi_x^\eta}(M)$, so we can define the following linear isomorphism of tangent fibers:
$P_{x,\eta}\ :\ T_x(M) \to T_{\psi_x^\eta}(M),   \quad \xi\mapsto \beta_{x,\eta}^\xi(1) \, .$
Note that $P_{x,\eta}^{-1}= P_{\psi_x^\eta,\psi_{\psi_x^\eta}^{-1}(x)}= P_{-\Ups_{1}(x,\eta)} = P_{\Ups_{-1}(x,-\eta)}$. The $P_{x,\xi}$ are the parallel transport maps along geodesics on the tangent bundle. These maps are related to the $\Ups_t$ diffeomorphisms, since a computation shows that for any $(x,\eta)\in TM$ and $t\in \R$,  $P_{x,t\eta}(\eta) = \Ups_{t,T}(x,\eta)$.
\end{rem}

If $(z,\bfr)$ is a frame, we define $\Phi_{\la,z,\bfr}:= n_{z,M^2}^{\bfr}\circ \Phi_\la \circ (n_{z,T}^\bfr)^{-1}$ and we note $J_{\la,z,\bfr}$ its Jacobian. We also define $\Ups_{t,z,\bfr}=n_{z,T}^{\bfr}\circ \Ups_{t}\circ (n_{z,T}^{\bfr})^{-1}$ and the smooth maps from $\R^{2n}$ to $\R^{n}$: 
\begin{align*}
&\psi_z^\bfr : (\rx,\zeta) \mapsto n_z^\bfr \circ \psi \circ\, (n_{z,T}^\bfr)^{-1}(\rx,\zeta)\, ,\\
&\ol{\psi_z^\bfr}: (\rx,\ry) \mapsto  M_{z,(n_z^\bfr)^{-1}(\rx)}^\bfr\circ \psi^{-1}_{(n_z^\bfr)^{-1}(\rx)}\circ (n_{z}^\bfr)^{-1}(\ry).
\end{align*}
Noting $\psi_{z,\rx}^{\bfr}(\zeta):=\psi_{z}^\bfr(\rx,\zeta)$ and $\ol{\psi_{z,\rx}^{\bfr}}(\ry):=\ol{\psi_{z}^\bfr}(\rx,\ry)$, we have $(\psi_{z,\rx}^{\bfr})^{-1}=\ol{\psi_{z,\rx}^{\bfr}}$.
A computation shows that for any $(\rx,\zeta,\ry)\in \R^{3n}$,
\begin{equation}\label{Phi_la}
\Phi_{\la,z,\bfr}(\rx,\zeta) = (\psi_{z}^\bfr(\rx,\la\zeta),\psi_z^\bfr(\rx,-(1-\la)\zeta))  \, , \qquad
\Phi_{\la,z,\bfr}^{-1}(\rx,\ry) = (m_{\la,z,\bfr}(\rx,\ry),\xi_{\la,z,\bfr}(\rx,\ry))\, 
\end{equation}
where we defined the following functions: $m_{\la,z,\bfr}(\rx,\ry):= \psi_{z}^\bfr(\rx,\la\,\ol{\psi_{z}^\bfr}(\rx,\ry))$, $\xi_{0,z,\bfr}:= -\ol{\psi_z^\bfr}$ and for $\la\neq 0$, $\xi_{\la,z,\bfr}(\rx,\ry):= \tfrac{1}{\la} \ol{\psi_z^\bfr}(m_{\la,z,\bfr}(\rx,\ry),\rx)$. We also obtain for $t\in [-1,1]$, $(\rx,\zeta)\in \R^{2n}$, 
\begin{equation}\label{Ups_t}
\Ups_{t,z,\bfr}(\rx,\zeta) = \big(\psi_{z}^\bfr(\rx,t\zeta),\tfrac{-1}{t} \ol{\psi_z^\bfr}(\psi_z^\bfr(\rx,t\zeta),\rx)) =:  (\psi_{z}^\bfr(\rx,t\zeta),\Ups^{z,\bfr}_{t,T}(\rx,\zeta) \big)  \, ,
\end{equation}
and $\Ups_{0,z,\bfr}=\Id_{\R^{2n}}$. Note that $\Ups_{t,z,\bfr}(\rx,0)= (\rx,0)$ for any $\rx\in \R^n$ and $\Ups_{t,T}^{z,\bfr}=\tfrac{1}{t}\Ups^{z,\bfr}_{1,T}\circ I_{1,t}$ where $I_{r,r'}$ is the diagonal matrix with coefficients $I_{ii} = r$ for $i\leq n$ for $1\leq i\leq n$ and $I_{ii}=r'$ for $n+1\leq i \leq 2n$.

\subsection{$\O_M$-linearizations}

We intent to use the linearization to define topological isomorphisms between rapidly decaying section on $TM$ and $M\times M$. We thus need a control at infinity over the derivatives of the linearization $\psi$.

We note $\tau^{z,\bfr}=\tau^z\circ (n_{z,M^2}^\bfr)^{-1} \in C^\infty(\R^{2n},L(E_z))$. Remark that for any $(\rx,\ry)\in \R^{2n}$, $\tau^{z,\bfr}(\rx,\ry)$ is an unitary operator on $E_z$. We will also need the following functions parametrized by $t\in \R$: $\tau_t(x,\eta):=\tau_x(\psi_x(t\eta))$ for any $(x,\eta)\in TM$ and $\tau_{t}^{z,\bfr}(\rx,\zeta):=\tau^{z,\bfr}(\rx,\psi_z^\bfr(\rx,t\zeta))$. 

\begin{defn} A linearization $\psi$ on the exponential manifold $(M,\exp,E,d\mu)$ is said to be a $\O_M$-linearization if for any frame $(z,\bfr)$ the functions $\psi_z^\bfr$ and $\ol{\psi_z^\bfr}$ are in in $\O_M(\R^{2n},\R^n)$ and the functions $\tau_{1}^{z,\bfr}$ and $(\tau_1^{z,\bfr})^{-1}$ are in $\O_M(\R^{2n},L(E_z))$. We will say that $(M,\exp,E,d\mu, \psi)$ has a $\O_M$-bounded geometry, if it the case of $(M,\exp,E,d\mu)$ and $\psi$ is a $\O_M$-linearization.
\end{defn}

\begin{lem} 
\label{OMlem}
Suppose that $\psi$ is a $\O_M$-linearization. Then for any frame $(z,\bfr)$, $\la \in [0,1]$ and $t\in [-1,1]$,

\noindent (i) $\Phi_{\la,z,\bfr} \in \O_M^\times(\R^{2n},\R^{2n})$ and $J_{\la,z,\bfr} \in \O_M^\times(\R^{2n})$,

\noindent (ii) $\Ups_{t,z,\bfr} \in \O_M^\times(\R^{2n},\R^{2n})$ and $J(\Ups_{t,z,\bfr}) \in \O_M^\times(\R^{2n})$,

\noindent (iii) $\tau_t^{z,\bfr}$ and $(\tau_t^{z,\bfr})^{-1}$ are in $\O_M(\R^{2n},L(E_z))$.
\end{lem}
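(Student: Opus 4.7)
My plan is to reduce every assertion to the stability of $\O_M$ under composition together with the fact that $\O_M^\times$ is a subgroup of $\Diff$, both recorded in Lemma \ref{Ssigma} $(iii)$, plus the Jacobian statement of Lemma \ref{Ssigma} $(vi)$. The hypothesis of $\O_M$-linearization provides the four ``building blocks'' $\psi_z^\bfr$, $\ol{\psi_z^\bfr}$, $\tau_1^{z,\bfr}$ and $(\tau_1^{z,\bfr})^{-1}$ in $\O_M$; everything else is a combination of these with linear (hence trivially $\O_M$) scaling/projection maps such as $(\rx,\zeta)\mapsto(\rx,t\zeta)$.

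For $(i)$, I would read off from (\ref{Phi_la}) that $\Phi_{\la,z,\bfr}=\bigl(\psi_z^\bfr\circ s_\la,\ \psi_z^\bfr\circ s_{-(1-\la)}\bigr)$ where $s_t(\rx,\zeta):=(\rx,t\zeta)$ is linear. By Lemma \ref{Ssigma} $(iii)$ each component lies in $\O_M(\R^{2n},\R^n)$, so $\Phi_{\la,z,\bfr}\in\O_M(\R^{2n},\R^{2n})$. The same lemma applied to the explicit formula $\Phi_{\la,z,\bfr}^{-1}=(m_{\la,z,\bfr},\xi_{\la,z,\bfr})$, with $m_{\la,z,\bfr}$ and $\xi_{\la,z,\bfr}$ built out of $\psi_z^\bfr$ and $\ol{\psi_z^\bfr}$ by composition and rescaling, gives $\Phi_{\la,z,\bfr}^{-1}\in\O_M(\R^{2n},\R^{2n})$. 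Hence $\Phi_{\la,z,\bfr}\in\O_M^\times(\R^{2n},\R^{2n})$, and Lemma \ref{Ssigma} $(vi)$ yields $J_{\la,z,\bfr}\in\O_M^\times(\R^{2n})$.

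For $(ii)$, I treat $t=0$ separately (where $\Ups_{0,z,\bfr}=\Id_{\R^{2n}}$ is trivially in $\O_M^\times$) and, for $t\neq 0$, use (\ref{Ups_t}): the first component is $\psi_z^\bfr\circ s_t$, and the second component $\Ups_{t,T}^{z,\bfr}$ equals $-\tfrac{1}{t}\,\ol{\psi_z^\bfr}\circ G_t$ where $G_t(\rx,\zeta):=(\psi_z^\bfr(\rx,t\zeta),\rx)$ is the composition of $\ol{\psi_z^\bfr}$ with an $\O_M$-map and a constant multiplier $-1/t$. Thus $\Ups_{t,z,\bfr}\in\O_M(\R^{2n},\R^{2n})$. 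The identity $\Ups_{t,z,\bfr}^{-1}=\Ups_{-t,z,\bfr}$ (noted just before the statement) reduces the inverse to the same argument, so $\Ups_{t,z,\bfr}\in\O_M^\times(\R^{2n},\R^{2n})$, and Lemma \ref{Ssigma} $(vi)$ again handles the Jacobian.

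For $(iii)$, the key observation is that $\tau_t^{z,\bfr}(\rx,\zeta)=\tau^{z,\bfr}(\rx,\psi_z^\bfr(\rx,t\zeta))=\tau_1^{z,\bfr}(\rx,t\zeta)=\tau_1^{z,\bfr}\circ s_t(\rx,\zeta)$, and similarly $(\tau_t^{z,\bfr})^{-1}=(\tau_1^{z,\bfr})^{-1}\circ s_t$ (pointwise matrix inversion in $L(E_z)$). Both follow from the composition inclusion $\O_M(\R^{2n},L(E_z))\circ\O_M(\R^{2n},\R^{2n})\subseteq\O_M(\R^{2n},L(E_z))$, which is the vector-valued form of Lemma \ref{Ssigma} $(iii)$. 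I don't foresee a real obstacle; the only points requiring a line of care are the isolated value $t=0$ in $(ii)$ (where the formal $1/t$ must be replaced by the convention $\Ups_0=\Id$) and the interpretation of $(\tau_t^{z,\bfr})^{-1}$ as a pointwise inverse rather than a functional inverse, both of which are handled as above.
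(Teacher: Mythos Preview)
Your proof is correct and essentially identical to the paper's: both reduce everything to Lemma \ref{Ssigma} $(iii)$ and $(vi)$ by writing $\Phi_{\la,z,\bfr}$, $\Phi_{\la,z,\bfr}^{-1}$, $\Ups_{t,z,\bfr}$ and $\tau_t^{z,\bfr}$ explicitly as compositions of $\psi_z^\bfr$, $\ol{\psi_z^\bfr}$, $\tau_1^{z,\bfr}$, $(\tau_1^{z,\bfr})^{-1}$ with the linear scaling maps $s_t$ (the paper's $I_{1,t}$) and the projection $\pi_1$. The only cosmetic difference is that for (ii) you invoke $\Ups_{t,z,\bfr}^{-1}=\Ups_{-t,z,\bfr}$ to handle the inverse, whereas the paper just cites Lemma \ref{Ssigma} $(iii)$ directly; both are fine.
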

\begin{proof}
\noindent $(i)$ By (\ref{Phi_la}), we have $\Phi_{\la,z,\bfr} = (\psi_{z}^\bfr\circ I_{1,\la},\psi_z^\bfr\circ I_{1,\la-1})$ and $\Phi_{\la,z,\bfr}^{-1}=(m_{\la,z,\bfr},\xi_{\la,z,\bfr}) $ where $m_{\la,z,\bfr}=\psi_z^\bfr\circ I_{1,\la}\circ (\pi_1,\ol{\psi_z^\bfr})$ and 
if $\la \neq 0$, $\xi_{\la}=\tfrac{1}{\la} \ol{\psi_z^\bfr}\circ (m_{\la,z,\bfr},\pi_1)$, while $\xi_{0,z,\bfr}=-\ol{\psi_z^\bfr}$. Thus, the result is a consequence Lemma \ref{Ssigma} $(iii)$ and $(vi)$.

\noindent $(ii)$ By (\ref{Ups_t}), we have for $t\neq 0$, $\Ups_{t,z,\bfr}=(\psi_{z}^\bfr\circ I_{1,t}, \tfrac{-1}{t}\ol{\psi_z^\bfr}\circ (\psi_z^\bfr\circ I_{1,t},\pi_1))$. The result follows again from Lemma \ref{Ssigma} $(iii)$ and $(vi)$. 

\noindent $(iii)$ We have $\tau_{t}^{z,\bfr}=\tau_1^{z,\bfr}\circ I_{1,t}$ and $(\tau_{t}^{z,\bfr})^{-1}=(\tau_1^{z,\bfr})^{-1}\circ I_{1,t}$ so the result follows from Lemma \ref{Ssigma} $(iii)$.  
\end{proof}

The following lemma shows that we can obtain topological isomorphisms on spaces of rapidly decaying functions from the functions $\tau_t$ and $\Phi_{\la}$.

\begin{lem}
\label{Stopolisom}
Let $p\in \N^*$, $\tau\in \O_M^\times(\R^p,GL(E_z))$ and $\Phi \in \O_M^\times (\R^p,\R^p)$. Then the maps $L_{\tau}:= u\mapsto \tau u$, $R_{\tau}:= u\mapsto u\tau$ and $C_\Phi:= u\mapsto u\circ \Phi$ are topological isomorphisms of $\S(\R^p, L(E_z))$.
\end{lem}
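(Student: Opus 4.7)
The plan is to verify continuity of each of the three operators on $\S(\R^p,L(E_z))$ and then exhibit continuous inverses of the same form, so that the topological isomorphism property follows automatically. Recall that the Fr\'{e}chet topology on $\S(\R^p,L(E_z))$ is given by the seminorms $q_{\nu,k}(u):=\sup_{\rx\in \R^p}\langle \rx \rangle^{k}\norm{\del^{\nu} u(\rx)}$ for $\nu$ a $p$-multi-index and $k\in \N$.

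For $L_\tau$ and $R_\tau$, I would apply the Leibniz rule: for any $p$-multi-index $\nu$,
$$
\del^{\nu}(\tau u) = \sum_{\mu \leq \nu} \tbinom{\nu}{\mu}\,\del^{\nu-\mu}\tau\cdot \del^{\mu} u\, .
$$
Because $\tau\in \O_M^\times(\R^p,GL(E_z))$, each derivative $\del^{\nu-\mu}\tau$ is polynomially bounded, say $\norm{\del^{\nu-\mu}\tau(\rx)}\leq C_{\nu,\mu} \langle \rx\rangle^{r_{\nu,\mu}}$ for some $r_{\nu,\mu}\geq 0$, and $\del^{\mu}u$ decays faster than any negative power of $\langle \rx \rangle$. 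Hence for every $k$ we get a bound of the form $q_{\nu,k}(L_\tau u)\leq K_{\nu,k}\sum_{\mu \leq \nu} q_{\mu,k+r_{\nu,\mu}}(u)$, which shows that $L_\tau$ sends $\S$ into itself continuously. The argument is identical for $R_\tau$. Since $\tau^{-1}\in \O_M^\times(\R^p,GL(E_z))$ by definition, $L_{\tau^{-1}}$ and $R_{\tau^{-1}}$ are continuous by the same argument and serve as two-sided inverses, so $L_\tau$ and $R_\tau$ are topological isomorphisms.

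For $C_\Phi$, the natural tool is Theorem \ref{FaaCS}: for any non-zero $p$-multi-index $\nu$,
$$
\del^{\nu}(u\circ \Phi) = \sum_{1\leq |\la|\leq |\nu|} \big((\del^{\la} u)\circ \Phi\big)\,P_{\nu,\la}(\Phi)\, ,
$$
where $P_{\nu,\la}(\Phi)$ is a linear combination of products of derivatives of $\Phi$. Since $\Phi\in \O_M^\times(\R^p,\R^p)$, each such product is polynomially bounded in $\rx$, so there exist $C_{\nu,\la}>0$ and $s_{\nu,\la}\geq 0$ with $\norm{P_{\nu,\la}(\Phi)(\rx)}\leq C_{\nu,\la}\langle \rx \rangle^{s_{\nu,\la}}$. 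The main obstacle is converting rapid decay of $\del^{\la}u$ at $\Phi(\rx)$ into rapid decay in the variable $\rx$. This is resolved by exploiting that $\Phi^{-1}\in \O_M^\times(\R^p,\R^p)$ too: applying the Taylor--Lagrange argument already used in the proof of Lemma \ref{Ssigma}$(ii)$ to the polynomially bounded first derivatives of $\Phi^{-1}$, one obtains $\langle \Phi^{-1}(\rv)\rangle = \mathcal{O}(\langle \rv\rangle^{q})$ for some $q\geq 1$; specializing at $\rv=\Phi(\rx)$ yields a constant $c>0$ with $\langle \Phi(\rx)\rangle \geq c\langle \rx\rangle^{1/q}$ for every $\rx\in \R^p$.

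Consequently, for every $N\in \N$ there exists $K_{\la,N}>0$ such that $\norm{(\del^{\la}u)\circ \Phi(\rx)}\leq K_{\la,N}\langle \Phi(\rx)\rangle^{-N}\leq K'_{\la,N}\langle \rx\rangle^{-N/q}$, so combining with the bound on $P_{\nu,\la}(\Phi)$ we get, for every $k\in \N$,
$$
q_{\nu,k}(C_\Phi u) \leq K_{\nu,k}\sum_{1\leq |\la|\leq |\nu|} q_{\la,q(k+s_{\nu,\la})}(u)\, ,
$$
which proves that $C_\Phi$ is a continuous endomorphism of $\S(\R^p,L(E_z))$. The $\nu=0$ case reduces to $\norm{u\circ \Phi(\rx)}\leq K_N \langle \rx \rangle^{-N/q}$ by the same estimate with $\la=0$ treated separately. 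Since $\Phi^{-1}\in \O_M^\times(\R^p,\R^p)$, the operator $C_{\Phi^{-1}}$ is a continuous endomorphism of $\S(\R^p,L(E_z))$ by the same reasoning, and $C_{\Phi}\circ C_{\Phi^{-1}}=C_{\Phi^{-1}}\circ C_\Phi =\Id$; hence $C_\Phi$ is a topological isomorphism.
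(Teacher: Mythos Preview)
Your proof is correct and follows essentially the same route as the paper: reduce to continuity by noting $L_\tau^{-1}=L_{\tau^{-1}}$, $R_\tau^{-1}=R_{\tau^{-1}}$, $C_\Phi^{-1}=C_{\Phi^{-1}}$; treat $L_\tau,R_\tau$ by Leibniz; and treat $C_\Phi$ via Theorem~\ref{FaaCS} together with the lower bound $\langle \Phi(\rx)\rangle \geq c\langle \rx\rangle^{1/q}$ coming from polynomial growth of $\Phi^{-1}$. One minor remark: your Taylor--Lagrange detour is unnecessary, since $\Phi^{-1}\in \O_M(\R^p,\R^p)$ already means, by definition, that $\Phi^{-1}$ itself (the zeroth derivative) is polynomially bounded, which is exactly the estimate $\langle \Phi^{-1}(\rv)\rangle \leq C\langle \rv\rangle^{q}$ the paper uses directly.
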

\begin{proof} Since $L_{\tau}^{-1}= L_{\tau^{-1}}$, $R_{\tau}^{-1}=R_{\tau^{-1}}$ and $C_\Phi^{-1}=C_{\Phi^{-1}}$, we only need to check the continuity of $L_{\tau}$, $R_{\tau}$ and $C_\Phi$. The continuity of $L_{\tau}$ and $R_{\tau}$ is a direct application of Leibniz formula. Let $\nu$ be a $p$-multi-index and $r\in \N$. Theorem \ref{FaaCS} implies that for any $u\in \S(\R^p,L(E_z))$, 
$$
q_{\nu,N}(u\circ \Phi) \leq \sum_{|\la|\leq |\nu|} \sup_{\rx\in \R^p} \langle \rx \rangle ^{N}|P_{\nu,\la}(\Phi)(\rx)| \norm{(\del^\la u)\circ\Phi (\rx) }_{L(E_z)}
$$
where the functions $P_{\nu,\la}(\Phi)$ are such that $|P_{\nu,\la}(\Phi)(\rx)| \leq C_\nu \langle \rx\rangle ^{q_\nu}$ for a $q_\nu \in \N^*$ and a $C_\nu>0$. Since $\langle \Phi^{-1} (\rx) \rangle \leq C \langle \rx\rangle ^r$ for a $r\in \N^*$ and a $C>0$, we see that there is $C'_\nu>0$ such that $q_{\nu,N}(u\circ \Phi) \leq C'_\nu \sum_{|\la|\leq |\nu|} q_{\la,(q_\nu+N)r}(u)$, which gives the result. 
\end{proof}

\begin{lem} If $(M,\exp,E,d\mu)$ has a $\O_M$-bounded geometry and $\psi$ is a linearization such that there exists $(z_0,\bfr_0)$ such that the functions $\psi_{z_0}^{\bfr_0}$, $\ol \psi_{z_0}^{\bfr_0}$ are in $\O_M(\R^{2n},\R^n)$ and $\tau_1^{z_0,\bfr_0}$, $(\tau_{1}^{z_0,\bfr_0})^{-1}$ are in $\O_M(\R^{2n},L(E_{z_0}))$, then $\psi$ is a $\O_M$-linearization.
\end{lem}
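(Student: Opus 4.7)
The goal is to upgrade a pointwise (one-frame) polynomial control to a uniform control valid at every frame $(z,\bfr)$. My plan is to exploit the fact that, by $\O_M$-bounded geometry, every coordinate change map on $M$, $TM$ and on the parallel transport transitions lies in $\O_M^\times$ (for the base) or has polynomially bounded derivatives (for $E$), so that $\psi_z^\bfr$, $\ol{\psi_z^\bfr}$ and $\tau^{z,\bfr}$ are just compositions and products of $\O_M$ objects with the assumed $\O_M$ data at $(z_0,\bfr_0)$. The whole argument reduces to invoking Lemma \ref{Ssigma} (iii)--(iv) repeatedly.

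First I would handle $\psi_z^\bfr$. Inserting $(n_{z_0}^{\bfr_0})^{-1}n_{z_0}^{\bfr_0}$ and $(n_{z_0,T}^{\bfr_0})^{-1}n_{z_0,T}^{\bfr_0}$ into the definition gives
$$\psi_z^\bfr = \psi_{z,z_0}^{\bfr,\bfr_0}\,\circ\,\psi_{z_0}^{\bfr_0}\,\circ\,\bigl(n_{z_0,T}^{\bfr_0}\circ(n_{z,T}^\bfr)^{-1}\bigr).$$
A direct computation shows $n_{z_0,T}^{\bfr_0}\circ(n_{z,T}^\bfr)^{-1}(\rx,\zeta) = (\psi_{z_0,z}^{\bfr_0,\bfr}(\rx),(d\psi_{z_0,z}^{\bfr_0,\bfr})_\rx(\zeta))$. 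By Lemma \ref{Ssigma} (iv) applied to the $\O_M$-bounded geometry of $(M,\exp)$, both $\psi_{z,z_0}^{\bfr,\bfr_0}$ and $\psi_{z_0,z}^{\bfr_0,\bfr}$ lie in $\O_M^\times(\R^n,\R^n)$, and their Jacobians are thus in $\O_M$; hence the tangent chart change is in $\O_M(\R^{2n},\R^{2n})$, with $\O_M$ inverse. Composing with the given $\psi_{z_0}^{\bfr_0}\in \O_M$ via Lemma \ref{Ssigma} (iii) gives $\psi_z^\bfr\in \O_M(\R^{2n},\R^n)$. The same argument applied to $\ol{\psi_z^\bfr}$ (using $\ol{\psi_{z_0}^{\bfr_0}}\in\O_M$) disposes of the second factor.

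For the transport factor, I would first recover $\tau^{z_0,\bfr_0}$ from the hypothesis: since $\tau_1^{z_0,\bfr_0}(\rx,\zeta)=\tau^{z_0,\bfr_0}(\rx,\psi_{z_0}^{\bfr_0}(\rx,\zeta))$, inverting the substitution yields $\tau^{z_0,\bfr_0}(\rx,\ry)=\tau_1^{z_0,\bfr_0}(\rx,\ol{\psi_{z_0}^{\bfr_0}}(\rx,\ry))$, which is $\O_M$ by Lemma \ref{Ssigma} (iii). Then I use the identity
$$\tau^z(x,y) = \bigl(\tau_z^{-1}\tau_{z_0}\bigr)(y)\,\tau^{z_0}(x,y)\,\bigl(\tau_{z_0}^{-1}\tau_z\bigr)(x),$$
which in coordinates $(n_z^\bfr)^{-1}$ becomes
$$\tau^{z,\bfr}(\rx,\ry) = \bigl[(\tau_z^{-1}\tau_{z_0})\circ(n_z^\bfr)^{-1}\bigr](\ry)\,\tau^{z_0,\bfr_0}\bigl(\psi_{z_0,z}^{\bfr_0,\bfr}(\rx),\psi_{z_0,z}^{\bfr_0,\bfr}(\ry)\bigr)\,\bigl[(\tau_{z_0}^{-1}\tau_z)\circ(n_z^\bfr)^{-1}\bigr](\rx).$$
The outer factors are $\O_M(\R^n)$ by condition $(\O_M 2)$ of the $\O_M$-bounded geometry of $(M,\exp,E)$; the middle factor is $\O_M$ by composing the already-established $\tau^{z_0,\bfr_0}\in\O_M$ with $\psi_{z_0,z}^{\bfr_0,\bfr}\in\O_M^\times$. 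Leibniz and Lemma \ref{Ssigma} (iii) then give $\tau^{z,\bfr}\in\O_M(\R^{2n},L(E_z))$, and composing with $(\rx,\zeta)\mapsto(\rx,\psi_z^\bfr(\rx,\zeta))$ (an $\O_M$ map by the previous paragraph) shows $\tau_1^{z,\bfr}\in\O_M$.

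Finally, for the inverse $(\tau_1^{z,\bfr})^{-1}$: since $\tau_{xy}$ is an isometry, $\tau^z(x,y)$ is unitary on $E_z$ (it is a product of three such isometries), and hence so is $\tau_1^{z,\bfr}(\rx,\zeta)$. Its inverse is therefore the pointwise adjoint, and adjunction commutes with differentiation and preserves all the polynomial bounds, so $(\tau_1^{z,\bfr})^{-1}\in \O_M(\R^{2n},L(E_z))$. I expect no substantive obstacle: everything reduces to bookkeeping of $\O_M^\times$-coordinate changes and the composition rule in Lemma \ref{Ssigma} (iii); the only mildly delicate point is correctly writing the tangent-bundle coordinate change so that the Jacobian $(d\psi_{z_0,z}^{\bfr_0,\bfr})_\rx$ is visibly $\O_M$, which is immediate once one knows that $\psi_{z_0,z}^{\bfr_0,\bfr}\in\O_M^\times$.
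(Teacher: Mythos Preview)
Your proof is correct and follows essentially the same route as the paper: express $\psi_z^\bfr$, $\ol{\psi_z^\bfr}$ and $\tau^{z,\bfr}$ through the corresponding objects at $(z_0,\bfr_0)$ via the transition maps $\psi_{z,z_0}^{\bfr,\bfr_0}$, $\psi_{z_0,z,T}^{\bfr_0,\bfr}$, $\psi_{z_0,z,M^2}^{\bfr_0,\bfr}$ and the bundle transitions $\tau_z^{-1}\tau_{z_0}$, all of which are $\O_M$ by the bounded-geometry assumptions, and then conclude by Lemma~\ref{Ssigma}~(iii). Your use of unitarity of $\tau^{z,\bfr}(\rx,\ry)$ to obtain $(\tau_1^{z,\bfr})^{-1}\in\O_M$ via the pointwise adjoint is a legitimate shortcut that the paper leaves implicit.
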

\begin{proof} The result is a direct consequence of the formulas $\psi_{z}^\bfr=\psi_{z,z_0}^{\bfr,\bfr_0}\circ \psi_{z_0}^{\bfr_0}\circ \psi_{z_0,z,T}^{\bfr_0,\bfr}$, $\ol\psi_{z,\rx}^\bfr(\ry)=(d\psi_{z_0,z}^{\bfr_0,\bfr})^{-1}_\rx \, \ol\psi_{z_0}^{\bfr_0}\circ \psi_{z_0,z,M^2}^{\bfr_0,\bfr}(\rx,\ry)$ and $\tau^{z,\bfr}=(\tau_z^{-1}\tau_{z_0})\circ \pi_2 \circ (n_{z,M^2}^{\bfr})^{-1}\,\tau^{z_0,\bfr_0}\circ \psi_{z_0,z,M^2}^{\bfr_0,\bfr} \, 
(\tau_{z_0}^{-1}\tau_{z})\circ \pi_1 \circ (n_{z,M^2}^{\bfr})^{-1}$.
\end{proof}

\subsection{Symbol maps and $\la$-quantization} \label{quantisec}

\begin{assum} We suppose in this section and in section \ref{moyalsection} that $(M,\exp,E,d\mu, \psi)$ has a $\O_M$-bounded geometry.
\end{assum}

The operator $\F$ is a topological isomorphism from $\S'(TM,L(E))$ onto $\S'(T^*M,L(E))$.
We shall now introduce a topological isomorphism between $\S'(M\times M,L(E))$ and $\S'(TM,L(E))$. 
We define the linear application $\Ga_\la$ from $C^\infty(M\times M,L(E))$ into $C^\infty(TM,L(E)))$:
$$
\Ga_\la(K): v \mapsto K^{\pi(v)}\circ \Phi_\la(v)\, .
$$
 As a consequence, $\Ga_\la(K)= \tau^{-1}_{\la}\,(K\circ \Phi_\la)\,\tau_{\la-1}$ and $\Ga_\la^{-1}(a)=(\tau_{\la}\,a\,\tau_{\la-1}^{-1})\circ \Phi_\la^{-1}$. For a given frame $(z,\bfr)$, we note $\Ga_{\la,z,\bfr}:=T_{z,\bfr,T}\circ \Ga_\la \circ T_{z,\bfr,M^2}^{-1}$. A computation shows that for any smooth function $u\in C^{\infty}(\R^{2n},L(E_z))$, $\Ga_{\la,z,\bfr}(u)=(\tau_{\la}^{z,\bfr})^{-1}(u\circ \Phi_{\la,z,\bfr}) \tau_{\la-1}^{z,\bfr}$.

Let us define the smooth strictly positive functions on $\R^{2n}$ and $M\times M$ respectively:
\begin{equation}
\label{mu_la_def}
\mu_{\la,z,\bfr}(\rx,\ry):= \tfrac{\mu_{z,\bfr}(\rx)\mu_{z,\bfr}(\ry)}{\mu_{z,\bfr}^2(m_{\la,z,\bfr}(\rx,\ry))}\, |J_{\la,z,\bfr}|\circ \Phi_{\la,z,\bfr}^{-1}(\rx,\ry) \qquad \mu_{\la}:=\mu_{\la,z,\bfr}\circ (n_{z}^\bfr,n_z^\bfr).
\end{equation}
It is straithtforward to check that $\mu_{\la}$ is indeed independent of $(z,\bfr)$. Note that $\mu_{1-\la}(x,y)=\mu_\la(y,x)$.
Since $\mu_{\la,z,\bfr}\in \O_M^\times(\R^{2n})$, the operator of multiplication $M_{\mu_{\la}}$ is a topological isomorphism on $\S(M\times M,L(E))$. Note also that $\Ga_\la \circ M_{\mu_\la} = M_{\mu_\la\circ \Phi_\la}\circ \Ga_\la$.

\begin{prop} $\Ga_\la$ is a topological isomorphism from $\S(M\times M,L(E))$ onto $\S(TM,L(E))$. Moreover,
$\wt \Ga_\la \circ j_{M^2}= j_{TM}\circ \Ga_\la \circ M_{\mu_\la}$, where $\wt \Ga_\la:={\Ga_\la^{-1}}^*$.
\end{prop}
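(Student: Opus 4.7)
The plan is to fix a frame $(z,\bfr)$ and transfer everything to $\R^{2n}$ via the topological isomorphisms $T_{z,\bfr,T}$ and $T_{z,\bfr,M^2}$, reducing both assertions to computations on $\S(\R^{2n},L(E_z))$. Unwinding the definitions, in the chart the operator $\Ga_\la$ is represented by $\Ga_{\la,z,\bfr}(u)=(\tau_\la^{z,\bfr})^{-1}\,(u\circ\Phi_{\la,z,\bfr})\,\tau_{\la-1}^{z,\bfr}$.

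For the topological isomorphism assertion I would invoke Lemma \ref{OMlem}: part $(i)$ gives $\Phi_{\la,z,\bfr}\in\O_M^\times(\R^{2n},\R^{2n})$, and part $(iii)$, combined with the fact that parallel transport is fiberwise unitary, places $\tau_\la^{z,\bfr}$, $\tau_{\la-1}^{z,\bfr}$ and their inverses in $\O_M(\R^{2n},GL(E_z))$. Lemma \ref{Stopolisom} then guarantees that left multiplication by $(\tau_\la^{z,\bfr})^{-1}$, right multiplication by $\tau_{\la-1}^{z,\bfr}$, and precomposition with $\Phi_{\la,z,\bfr}$ are each topological isomorphisms of $\S(\R^{2n},L(E_z))$; their composition $\Ga_{\la,z,\bfr}$ is therefore a topological isomorphism, and conjugating back with the $T$-isomorphisms gives the statement for $\Ga_\la$.

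For the identity $\wt\Ga_\la\circ j_{M^2}=j_{TM}\circ\Ga_\la\circ M_{\mu_\la}$, both sides are continuous linear maps $\S(M\times M,L(E))\to\S'(TM,L(E))$, so it suffices to test them against $K\in\S(M\times M,L(E))$ and $a\in\S(TM,L(E))$. Unfolding the pairings and using $\wt\Ga_\la=(\Ga_\la^{-1})^*$, this amounts to showing $\int_{M\times M}\Tr\!\bigl(K(x,y)\,(\Ga_\la^{-1}(a))^*(x,y)\bigr)\,d\mu(x)\,d\mu(y) = \int_{TM}\Tr\!\bigl(\Ga_\la(\mu_\la K)(v)\,a^*(v)\bigr)\,d\mu^T(v)$. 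Writing $d\mu^T=\mu_{z,\bfr}^2\circ n_z^\bfr(\pi\cdot)\,d\rx\,d\zeta$ and $d\mu\otimes d\mu=\mu_{z,\bfr}(\rx)\mu_{z,\bfr}(\ry)\,d\rx\,d\ry$ in the chart and performing the change of variables $(x,y)=\Phi_\la(v)$ in the right-hand integral, a direct comparison with definition (\ref{mu_la_def}) of $\mu_{\la,z,\bfr}$ yields $\int_{TM} g(v)\,d\mu^T(v) = \int_{M\times M} g(\Phi_\la^{-1}(x,y))\,\mu_\la^{-1}(x,y)\,d\mu(x)\,d\mu(y)$, so that the scalar factor $\mu_\la$ contributed by $M_{\mu_\la}$ absorbs exactly the density ratio produced by the change of variables.

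The remaining step is to align the parallel-transport factors on the two sides. Since each $\tau_t$ takes values in isometries, $\tau_t^*=\tau_t^{-1}$; together with $\Ga_\la^{-1}(a)=(\tau_\la\,a\,\tau_{\la-1}^{-1})\circ\Phi_\la^{-1}$ this gives, writing $v=\Phi_\la^{-1}(x,y)$, the pointwise formula $(\Ga_\la^{-1}(a))^*(x,y) = \tau_{\la-1}(v)\,a^*(v)\,\tau_\la^{-1}(v)$. Plugging this into the left side of the identity and $\Ga_\la(\mu_\la K)(v)=\tau_\la^{-1}(v)\,\mu_\la(x,y)\,K(x,y)\,\tau_{\la-1}(v)$ into the right, cyclicity of the trace matches the two integrands. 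The main bookkeeping obstacle is tracking these parallel-transport conjugations simultaneously with the Jacobian; the cancellation goes through precisely because $\mu_\la$ was defined in (\ref{mu_la_def}) with exactly the density factor needed for this identity.
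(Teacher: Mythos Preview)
Your proof is correct and follows essentially the same approach as the paper. For the first assertion both arguments are identical: decompose $\Ga_{\la,z,\bfr}$ as $L_{(\tau_\la^{z,\bfr})^{-1}}\circ R_{\tau_{\la-1}^{z,\bfr}}\circ C_{\Phi_{\la,z,\bfr}}$ and invoke Lemmas \ref{OMlem} and \ref{Stopolisom}. For the duality identity the paper organizes the computation slightly differently---it first establishes the chart relation $\wt\Ga_{\la,z,\bfr}\circ j = j\circ M_{|J_{\la,z,\bfr}|}\circ \Ga_{\la,z,\bfr}$ on $\S(\R^{2n},L(E_z))$ and then pulls back using the commutative diagrams linking $j_{M^2}$, $j_{TM}$ to $j$---whereas you test the identity directly on $M$ against pairs $(K,a)$ and carry out the change of variables $(x,y)=\Phi_\la(v)$ with the intrinsic measures; but the underlying computation (Jacobian absorbed by the definition (\ref{mu_la_def}) of $\mu_\la$, parallel-transport factors cancelled via unitarity and trace cyclicity) is the same.
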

\begin{proof} Let $(z,\bfr)$ be a frame. It suffices to prove that $\Ga_{\la,z,\bfr}$
is a topological isomorphism from $\S(\R^{2n},L(E_z))$ onto itself. Since $\Ga_{\la,z,\bfr}=L_{(\tau_\la^{z,\bfr})^{-1}}\circ R_{\tau_{\la-1}^{z,\bfr}}\circ C_{\Phi_{\la,z,\bfr}}$, the result follows from Lemma \ref{Stopolisom} and Lemma \ref{OMlem} $(i)$ and $(iii)$.
Let $u,v\in \S(\R^{2n},L(E_z))$. We have (with $j$ the canonical inclusion from $\S(\R^{2n},L(E_z))$ into $\S'(\R^{2n},L(E_z))$: 
\begin{align*}
(\wt\Ga_{\la,z,\bfr}\circ j (u))(v)&= \int_{\R^{2n}} \Tr\big(u(\rx,\ry)(\Ga_{\la,z,\bfr}^{-1}(v)(\rx,\ry))^*\big)\,d\rx\, d\ry \,  \\
&=\int_{\R^{2n}} \Tr\big((\tau_{\la}^{z,\bfr})^{-1}\circ \Phi_{\la,z,\bfr}^{-1}(\rx,\ry)\  u(\rx,\ry)\  \tau_{\la-1}^{z,\bfr}\circ \Phi_{\la,z,\bfr}^{-1}(\rx,\ry)\ \\
&\hspace{3cm}  v^*\circ \Phi_{\la,z,\bfr}^{-1}(\rx,\ry)\big)\, d\rx\, d\ry \\ 
&=\int_{\R^{2n}} \Tr(\Ga_{\la,z,\bfr}(u)(m,\zeta) v^*(m,\zeta)) |J_{\la,z,\bfr}|(m,\zeta)\, dm\, d\zeta \\
&= (j\circ M_{|J_{\la,z,\bfr}|}\circ \Ga_{\la,z,\bfr}(u)) (v)
\end{align*} where we used the following change of variables $(m,\zeta):=\Phi_{\la,z,\bfr}^{-1}(\rx,\ry)$.
Thus, we have $\wt\Ga_{\la,z,\bfr}\circ j = j\circ M_{|J_{\la,z,\bfr}|}\circ \Ga_{\la,z,\bfr}$. The relation $\wt \Ga_\la \circ j_{M^2}= j_{TM}\circ \Ga_\la \circ M_{\mu_\la}$ now follows since $M_{|J_{\la,z,\bfr}|}\circ \Ga_{\la,z,\bfr} = \Ga_{\la,z,\bfr}\circ M_{|J_{\la,z,\bfr}|\circ \Phi_{\la,z,\bfr}^{-1}}$, $T_{z,\bfr,T}^*\circ j\circ M_{\mu^2_{z,\bfr}}=j_{TM}\circ T_{z,\bfr,T}^{-1}$ and $T_{z,\bfr,M^2}^*\circ j \circ M_{\mu_{z,\bfr}\ox \mu_{z,\bfr}}=j_{M^2}\circ T_{z,\bfr,M^2}^{-1}$. 
\end{proof}

As a consequence, $\wt \Ga_\la$ is a topological isomorphism from the space of tempered distributional $L(E)$-sections on $M\times M$, $\S'(M\times M,L(E))$ onto $\S'(TM,L(E))$ and when restricted (in the sense of the previous continous inclusions) to $\S(M\times M,L(E))$, is equal to $\Ga_\la \circ M_{\mu_\la}^{-1}$, so provides a topological isomorphism from $\S(M\times M,L(E))$ onto $\S(TM,L(E))$. Fourier transform coupled with $\wt\Ga_\la$ lead us to the following natural isomorphism from $\S'(M\times M,L(E))$ onto $\S'(T^*M,L(E))$. 

\begin{defn} Let $\la\in [0,1]$. The $\la$-symbol map is the topological isomorphism from $\S'(M\times M,L(E))$ onto $\S'(T^*M,L(E))$: $\sigma_\la:=\F\circ \wt \Ga_\la$. The $\la$-quantization map is the inverse of $\sigma_\la$, noted $\mathfrak{Op}_\la$.
\end{defn}

Thus, the data of a tempered distributional section on the cotangent bundle (i.e. an element of $\S'(T^*M,L(E))$) determines in an unique way (for a given $\la$), an operator continuous from $\S$ to $\S'$, and vice versa. Remark that $\sigma_\la \circ j_{M^2}= j_{T^*M}\circ \F\circ \Ga_\la\circ M_{\mu_\la}$ and $\Op_\la\circ j_{T^*M}=j_{M^2}\circ M_{1/\mu_{\la}}\circ \Ga_\la^{-1}\circ \ol\F$. If $(z,\bfr)$ is a frame then, noting $\Op_{\la,z,\bfr}:= \wt T_{z,\bfr,M^2} \circ \Op_\la\circ \wt T_{z,\bfr,*}^{-1}$, we obtain $\Op_{\la,z,\bfr}=\Ga_{\la,z,\bfr}^*\circ M_{\mu_{z,\bfr}}^*\circ \F_P^*$ so that for any $u\in \S(\R^{2n},L(E_z))$ and $b\in \O_M(\R^{2n},L(E_z))$, 
\begin{equation}
\langle \Op_{\la,z,\bfr}(b),u\rangle= \int_{\R^{3n}} e^{2\pi i \langle \zeta,\vth\rangle} \Tr\big(\mu b(\rx,\vth) (\Ga_{\la,z,\bfr}(u))^*(\rx,\zeta)\big)\,d\zeta\,d\vth\,d\rx\, .\label{eq-oplazb}
\end{equation}
where $\mu b:(\rx,\vth)\mapsto  \mu_{z,\bfr}(\rx)\,b(\rx,\vth)$.

\subsection{Moyal product} \label{moyalsection}

The applications $\Op_{0}$, $\Op_{1}$, $\mathfrak{Op}_W:=\Op_\half$ are respectively the normal, antinormal and Weyl quantization maps. Remark that for any $T\in \S'(T^*M,L(E))$, $\mathfrak{Op}_\la(T^*) = (\mathfrak{Op}_{1-\la}(T))^\dag$. In particular
$$
\mathfrak{Op}_0(T^*) = (\mathfrak{Op}_1(T))^\dag\, ,\qquad \mathfrak{Op}_W(T^*) = (\mathfrak{Op}_W(T))^\dag 
$$
where $\dag$ is the topological isomorphism of $\S'(M\times M,L(E))$ defined as $\langle K^\dag, u\rangle:=\ol{\langle K , u^*\circ j\rangle}$ with $j$ the diffeomorphism on $M\times M : (x,y)\mapsto (y,x)$ and $u\in \S(M\times M,L(E))$. The kernel of the adjoint $A^\dag$ of any operator $A\in L(\S,\S')$ is $(K_A)^\dag$.
As a consequence, $\sigma_\la$ is a linear topological isomorphism (and a $*$-isomorphism in the case of the Weyl quantization) from the algebra $\Re(\S)=L(S,S)\cap L(S',S')$ of regular operators onto its image $\mathfrak{M}_\la:=\sigma_\la(\Re(\S))$. We can transport the operator composition in the world of functions, by defining the $\la$-product on $\mathfrak{M}_\la$ as
$$
T\circ_\la T':= \sigma_\la(\mathfrak{Op}_\la(T)\,\mathfrak{Op}_\la(T'))
$$
so that $\mathfrak{M}_\la$ forms an algebra, and $\mathfrak{M}_\la^*=\mathfrak{M}_{1-\la}$. In the case of $\la=\half$, we recover the Moyal $*$-algebra $\mathfrak{M}_W$ and the Moyal product $\circ_W$. The space $\Psi^{-\infty}(M)\simeq \S(M\times M,L(E))$ of isotropic smoothing operators being an $*$-ideal of $\Re(\S)$, the space $\S(T^*M,L(E))=\sigma_{\la}(\Psi^{-\infty}(M))$ forms an ideal of $\mathfrak{M}_\la$. 
Since we will focus on the pseudodifferential calculus over $M$, we shall not investigate in this paper the full analysis of the Moyal product over $T^*M$. Note however the following property on $\S(T^*M):=\S(T^*M,L(M\times \C)) \simeq \S(T^*M,\C)$:

\begin{prop}
\label{la-product}
$(\S(T^*M),\circ_\la)$ is a (noncommutative, nonunital) Fr\'{e}chet algebra. Moreover,
$$ 
a\circ_\la b\,(x,\eta) = \int_{T_x(M) \times M} d\mu_{x}(\xi)d\mu(y)\int_{V^\la_{x,\xi,y}} d\mu_{x,\xi,y}^*(\th,\th')\, g^\la_{x,\xi,y}\,e^{2\pi i \om^\la_{x,\xi,y}(\eta,\th,\th')} a(y^\la_{x,\xi},\th)\,b(y^{1-\la}_{x,-\xi},\th')
$$
where $y_{x,\xi}^{\la}:=m_\la(\psi_x^{\la\xi},z)$, $\ol y_{x,\xi}^{\la}:=\xi_{\la}(\psi_x^{\la\xi},z)$ and 
\begin{align*}
&V_{x,\xi,y}^\la:= T^*_{y_{x,\xi}^\la}(M)\times T^*_{y^{1-\la}_{x,-\xi}}(M)\, ,\qquad d\mu_{x,\xi,y}^*(\th,\th'):=d\mu^*_{y^\la_{x,\xi}}(\th)\,d\mu^*_{y^{1-\la}_{x,-\xi}}(\th')\, , \\
&g_{x,\xi,y}^\la:= \tfrac{\mu_\la(\psi_x^{\la \xi},\psi_x^{(\la-1)\xi})}{\mu_\la(\psi_x^{\la\xi},y)\,\mu_\la(y,\psi_x^{(\la-1)\xi})}\, , \\
&\om_{x,\xi,y}^\la(\eta,\th,\th'):=\langle \th,\ol y_{x,\xi}^\la\rangle -  \langle \th', \ol y_{x,-\xi}^{1-\la}\rangle - \langle \eta ,\xi \rangle \, .
\end{align*}

\end{prop}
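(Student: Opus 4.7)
The plan is to derive the explicit formula by unwinding the chain $\sigma_\la = \F \circ \wt\Ga_\la$ and $\Op_\la = \wt\Ga_\la^{-1} \circ \ol\F$ at the level of Schwartz kernels, and to obtain the Fr\'echet-algebra structure by transporting through $\sigma_\la$ the continuity of kernel composition on $\Psi^{-\infty}(M) \simeq \S(M \times M)$. The stability of $\S(T^*M)$ under $\circ_\la$ is automatic in the scalar case $E = M \times \C$, since $\S(T^*M) = \sigma_\la(\Psi^{-\infty}(M))$ via the topological isomorphism $\sigma_\la$ and $\Psi^{-\infty}$ is a two-sided $*$-ideal of $\Re(\S)$. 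Continuity of the bilinear map $\circ_\la$ then reduces to continuity of $(K,K') \mapsto \int_M K(\cdot,z)K'(z,\cdot)\, d\mu(z)$ as a bilinear map $\S(M\times M) \times \S(M\times M) \to \S(M\times M)$. Transferring to a frame $(z,\bfr)$ via $T_{z,\bfr,M^2}$, this becomes a weighted fiberwise integration of Schwartz functions on $\R^{2n}$ with a Jacobian weight in $\O_M^\times(\R^n)$, whose continuity follows from standard Schwartz estimates controlled via the $\O_M$-bounded geometry.

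For the formula itself, $\Op_\la = M_{1/\mu_\la} \circ \Ga_\la^{-1} \circ \ol\F$ on the Schwartz side gives the kernel of $\Op_\la(a)$ as
\[K_a(x,y) = \frac{1}{\mu_\la(x,y)}\,(\ol\F a)\circ \Phi_\la^{-1}(x,y) = \frac{1}{\mu_\la(x,y)} \int_{T^*_{m_\la(x,y)}(M)} e^{2\pi i \langle \th,\, \xi_\la(x,y)\rangle}\, a(m_\la(x,y),\th)\, d\mu^*_{m_\la(x,y)}(\th),\]
and analogously for $K_b$. The composition has kernel $K_{ab}(x,y) = \int_M K_a(x,z) K_b(z,y)\, d\mu(z)$, and the identity $\sigma_\la = \F \circ \Ga_\la \circ M_{\mu_\la}$ on the Schwartz side yields
\[(a\circ_\la b)(x,\eta) = \int_{T_x M} e^{-2\pi i \langle \eta,\,\xi\rangle}\, \mu_\la(\Phi_\la(x,\xi))\, K_{ab}(\Phi_\la(x,\xi))\, d\mu_x(\xi).\]
Writing $\Phi_\la(x,\xi) = (x_1,x_2)$ with $x_1 := \psi_x(\la\xi)$ and $x_2 := \psi_x((\la-1)\xi)$, and inserting the integral expressions for $K_a$ and $K_b$, produces a quintuple iterated integral over $(\xi, y, \th, \th')$.

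The remaining step is to match the variables of the statement. Using the symmetries $m_\la(y,z) = m_{1-\la}(z,y)$ and $\xi_\la(y,z) = -\xi_{1-\la}(z,y)$, which follow from $\Phi_\la = j \circ \Phi_{1-\la}\circ(-\Id_{TM})$ (valid under $(H_\psi)$ for $\la \in ]0,1[$), one identifies $m_\la(x_1,y) = y^\la_{x,\xi}$, $\xi_\la(x_1,y) = \ol y^\la_{x,\xi}$, $m_\la(y,x_2) = y^{1-\la}_{x,-\xi}$ and $\xi_\la(y,x_2) = -\ol y^{1-\la}_{x,-\xi}$. The three $\mu_\la$ factors then reassemble into
\[g^\la_{x,\xi,y} = \frac{\mu_\la(x_1,x_2)}{\mu_\la(x_1,y)\,\mu_\la(y,x_2)}\]
in agreement with (\ref{mu_la_def}), and the three exponentials combine into $\exp\bigl(2\pi i\, \om^\la_{x,\xi,y}(\eta,\th,\th')\bigr)$. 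Fubini is justified a posteriori by the Schwartz behaviour of $a$ and $b$, since each innermost fiberwise oscillatory integral is Schwartz in the remaining variables, so the iterated integral converges absolutely.

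The main obstacle I foresee is the bookkeeping of variables and the verification that the three $\mu_\la$ weights assemble precisely into $g^\la_{x,\xi,y}$; the symmetry $\mu_{1-\la}(x,y) = \mu_\la(y,x)$ noted after (\ref{mu_la_def}), together with $\Phi_{1-\la} = j \circ \Phi_\la \circ(-\Id_{TM})$, is what converts the $\la$-data on the $K_b$ side into the $(1-\la)$-data appearing in the statement. Hypothesis $(H_\psi)$ is indispensable here for manipulating $m_\la$ and $\xi_\la$ freely for $\la \in ]0,1[$; the boundary cases $\la \in \{0,1\}$ reduce to the normal and antinormal quantization formulas and can be handled separately from the formulas in Section 3.1 without invoking $(H_\psi)$.
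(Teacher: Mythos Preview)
Your proposal is correct and follows essentially the same route as the paper. The paper likewise computes $a\circ_\la b$ as $\F\circ\Ga_\la\circ M_{\mu_\la}$ applied to the Volterra product of the two kernels $M_{\mu_\la}^{-1}\circ\Ga_\la^{-1}\circ\ol\F(a)$ and $M_{\mu_\la}^{-1}\circ\Ga_\la^{-1}\circ\ol\F(b)$, and reduces continuity to the weighted kernel product $K\cdot K'(\rx,\ry)=\int_{\R^n}K(\rx,\rt)K'(\rt,\ry)\,\mu_{z,\bfr}(\rt)\,d\rt$ in a frame, estimated directly by Schwartz seminorms; your unwinding of the formula and your use of the symmetries $m_\la(y,z)=m_{1-\la}(z,y)$, $\xi_\la(y,z)=-\xi_{1-\la}(z,y)$ simply make explicit what the paper compresses into the phrase ``obtained by computation.''
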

\begin{proof} The product $a\circ_\la b$ on $\S(T^*M)$ is obtained by computation of $\F\circ \Ga_\la\circ M_{\mu_\la}\circ \big( (M_{\mu_\la}^{-1}\circ \Ga_{\la}^{-1}\circ \ol \F(a))\circ_V (M_{\mu_\la}^{-1}\circ \Ga_\la^{-1}\circ \ol \F(b))\big)$, where $\circ_V$ is the Volterra product of kernels. Since $\sigma_\la$ is a topological isomorphism between $\S(M^2)$ and $\S(T^*M)$, the continuity of the Moyal product is equivalent to the continuity of  $\circ_V$, which is equivalent to the continuity of the following product on $\S(\R^{2n})$: 
$$
K\cdot K' (\rx,\ry):= \int_{\R^{n}} K(\rx,\rt)K(\rt,\ry) \mu_{z,\bfr}(\rt) d\rt.
$$
The continuity of this product is obtained by the following estimates
$$
q_{p,(\a,\b)}(K\cdot K') \leq C \, q_{2(p+r),(\a,0)}(K)\,q_{p,(0,\b)}(K'),\qquad  q_{p,\nu}(K):=\sup_{(\rx,\ry)\in \R^{2n}} \langle(\rx,\ry) \rangle^p |\del^\nu K (\rx,\ry)|
$$
where $|\mu_{z,\bfr}(\rt)|\leq C_1 \langle \rt \rangle^{r-n-1}$ and $C:=C_1\int_{\R^n} \langle\rt \rangle^{-(n+1)} d\rt$. 
\end{proof}

\begin{rem} $(\S(T^*M),\circ_W)$ is a $*$-algebra since $(a\circ _W b)^*= b^* \circ_W a^*$ for any $a,b\in \S(T^* M)$. We can also construct another $*$-algebra on $\S(T^*M)$ with the product $a \star b := \half( a\circ_0 b + a\circ_1 b)$. This proves that when $(H_\psi)$ (see Assumption \ref{assumH}) is not satisfied (so that no middle point exist in the classical world) we can still have a canonical star-product on $\S(T^*M)$ which satisfies $(a\star b)^*= b^* \star a^*$.
\end{rem}

\section{Symbol calculus of pseudodifferential operators}

\subsection{Symbols}

\begin{assum} Let $\sigma\in [0,1]$. We suppose in this section that $(M,\exp,E)$ has a $S_{\sigma}$-bounded geometry.   
\end{assum}

The algebra $\Re(\S)$ and $\Psi^{-\infty}$ are respectively too big and too small to develop a satisfactory pseudodifferential calculus that allows an efficient utilization of symbol maps. We shall in this section define some spaces of symbols that will be used to define later special algebras of pseudodifferential operators that lie between $\Re(\S)$ and $\Psi^{-\infty}$.

\begin{defn} A symbol of degree $(l,m)\in \R^2$ of type $\sigma$, on $M$ is a smooth section $a\in C^\infty(T^*M,L(E))$ such that for any $(z,\bfr)$ and any $n$-multi-indices $\a$, $\beta$, there exists $K>0$ such that
\begin{equation}\label{symbolIneq}
\norm{ \del^{(\a,\b)}_{z,\bfr} a^{z} (x,\th)}_{L(E_z)}\leq K \langle x\rangle^{\sigma(l- |\a|)}_{z,\bfr}\,\langle \th\rangle^{m-|\b|}_{z,\bfr,x}
\end{equation}
is valid for all $(x,\th)\in T^*M$. The space of symbols of degree $(l,m)$ and type $\sigma$ is noted $S^{l,m}_\sigma$. 
\end{defn}

Remark that $S^{l,m}_0$ is independant of $l$, so we note this space $S^m_0$. We note $S^{-\infty}_\sigma:=\cap_{l,m} S^{l,m}_\sigma$ and in the case $\sigma>0$, we define $S^{-\infty}:= S^{-\infty}_\sigma=\S(T^*M,L(E))$ (it is independant of $\sigma>0$). We set $S^\infty_\sigma := \cup_{l,m} S^{l,m}_\sigma$. We define similarly $S^{l,m}_{\sg,z}:=S^{l,m}_\sigma(\R^{2n},L(E_z))$, without reference to a frame.  

Since $M$ has a $S_{\sigma}$-bounded geometry, we get the following coordinate independance of the previous definition:

\begin{prop} Let $a\in C^\infty(T^*M,L(E))$. Then $a \in S^{l,m}_\sigma$ if and only if there exists a frame $(z,\bfr)$ such that 
$a$ satisfies (\ref{symbolIneq}).
\end{prop}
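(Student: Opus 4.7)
The \emph{only if} direction is immediate. For the \emph{if} direction, I assume that (\ref{symbolIneq}) holds for some frame $(z_0,\bfr_0)$ and propagate the estimate to an arbitrary frame $(z,\bfr)$. The plan is to combine the coordinate-change formula of Lemma \ref{cchange}$(i)$, the parallel-transport bound $(S_\sigma 2)$, and the equivalences $\langle x\rangle_{z,\bfr}\asymp\langle x\rangle_{z_0,\bfr_0}$ and $\langle\th\rangle_{z,\bfr,x}\asymp\langle\th\rangle_{z_0,\bfr_0,x}$ furnished by Lemma \ref{B1-lem}$(i)$.

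First, since $a$ is a section of $L(E)\to T^*M$, the two lifts are related by the pointwise conjugation
$$
a^{z}(x,\th)\;=\;\bigl(\tau_{z}^{-1}\tau_{z_0}\bigr)(x)\,a^{z_0}(x,\th)\,\bigl(\tau_{z_0}^{-1}\tau_{z}\bigr)(x).
$$
The two outer factors depend only on $x$, so Leibniz gives
$$
\del_{z,\bfr}^{(\a,\b)}a^{z}
\;=\;\sum_{\a_1+\a_2+\a_3=\a} c_{\a_1,\a_2,\a_3}\,
\del_{z,\bfr}^{\a_1}\!\bigl(\tau_{z}^{-1}\tau_{z_0}\bigr)\;
\del_{z,\bfr}^{(\a_2,\b)}\bigl(a^{z_0}\bigr)\;
\del_{z,\bfr}^{\a_3}\!\bigl(\tau_{z_0}^{-1}\tau_{z}\bigr),
$$
the $\th$--derivatives landing entirely on the middle factor.

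Next I would apply Lemma \ref{cchange}$(i)$ to the middle factor in order to express it in the $(z_0,\bfr_0)$--coordinates:
$$
\del_{z,\bfr}^{(\a_2,\b)}\bigl(a^{z_0}\bigr)
\;=\;\underset{|\b'|\geq|\b|}{\sum_{|(\a',\b')|\leq|(\a_2,\b)|}} f_{\a_2,\b,\a',\b'}\;\del_{z_0,\bfr_0}^{(\a',\b')}\bigl(a^{z_0}\bigr),
$$
with $|f_{\a_2,\b,\a',\b'}(x,\th)|\leq C\,\langle x\rangle_{z,\bfr}^{\sigma(|\a'|-|\a_2|)}\langle\th\rangle_{z,\bfr,x}^{|\b'|-|\b|}$. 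The symbol hypothesis at $(z_0,\bfr_0)$ and the equivalences $\langle\cdot\rangle_{z,\bfr}\asymp\langle\cdot\rangle_{z_0,\bfr_0}$ (both in $x$ and in $\th$, from Lemma \ref{B1-lem}$(i)$) yield
$$
\norm{\del_{z_0,\bfr_0}^{(\a',\b')}a^{z_0}(x,\th)}_{L(E_{z_0})}\;\leq\;K'\langle x\rangle_{z,\bfr}^{\sigma(l-|\a'|)}\langle\th\rangle_{z,\bfr,x}^{m-|\b'|},
$$
so the product of these two estimates telescopes to
$$
\norm{f_{\a_2,\b,\a',\b'}\,\del_{z_0,\bfr_0}^{(\a',\b')}a^{z_0}}\;\leq\;C K'\langle x\rangle_{z,\bfr}^{\sigma(l-|\a_2|)}\langle\th\rangle_{z,\bfr,x}^{m-|\b|}.
$$

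Finally, $(S_\sigma 2)$ controls the two outer parallel-transport factors by $\langle x\rangle_{z,\bfr}^{-\sigma|\a_1|}$ and $\langle x\rangle_{z,\bfr}^{-\sigma|\a_3|}$ (plus their operator norms, which are bounded since the $\tau_{xy}$ are isometries). Multiplying the three contributions and using $\a_1+\a_2+\a_3=\a$ gives the overall bound
$$
\norm{\del_{z,\bfr}^{(\a,\b)}a^{z}(x,\th)}_{L(E_z)}\;\leq\;K''\,\langle x\rangle_{z,\bfr}^{\sigma(l-|\a|)}\,\langle\th\rangle_{z,\bfr,x}^{m-|\b|},
$$
i.e.\ (\ref{symbolIneq}) at $(z,\bfr)$. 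The only nontrivial bookkeeping is the telescoping of the $\sigma$--exponents in $x$ (gains from $(S_\sigma 2)$ cancel the losses from the $f_{\a_2,\b,\a',\b'}$), but this is the point where the definition of $S_\sigma$--bounded geometry is exactly tuned to make the calculation work; I do not anticipate any other obstacle.
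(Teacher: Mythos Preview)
Your proof is correct and follows essentially the same strategy as the paper's: combine the coordinate-change formula of Lemma \ref{cchange}$(i)(a)$, the parallel-transport decay $(S_\sigma 2)$, and the equivalences of Lemma \ref{B1-lem}$(i)$. The only cosmetic difference is the order of operations: the paper first converts $\del_{z,\bfr}^{\nu}$ into a sum of $f_{\nu,\nu'}\,\del_{z',\bfr'}^{\nu'}$ and then applies Leibniz to $\del_{z',\bfr'}^{\nu'}(a^z)=\del_{z',\bfr'}^{\nu'}\bigl((\tau_z^{-1}\tau_{z'})a^{z'}(\tau_{z'}^{-1}\tau_z)\bigr)$ (this is the identity (\ref{del-cchange})), whereas you apply Leibniz first in the $(z,\bfr)$-frame and then change coordinates on the middle factor. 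Either ordering produces the same telescoping of the $\sigma$-exponents, and the bookkeeping you flag at the end is exactly the same in both versions.
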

\begin{proof} 
Suppose that (\ref{symbolIneq}) is satisfied for $(z',\bfr')$ and let ($z,\bfr$) be another frame. For $(x,\th)\in T^*M$ and $\a,\b$ two $n$-multi-indices with $\nu=(\a,\b)\neq 0$, we get from Equation (\ref{del-cchange}) and Lemma \ref{cchange},
\begin{align*} 
\norm{\del_{z,\bfr}^{\nu} a^z(x,\th)}_{L(E_z)}  \leq K \sum_{\a',\b'} \sum_{\rho\leq \la \leq \nu'}\ 
\langle x\rangle_{z,\bfr}^{\sigma(|\a'|-|\a|)}\langle\th \rangle_{z,\bfr,x}^{|\b'|-|\b|} \langle x \rangle_{z',\bfr'}^{\sigma(|\la^1|-|\a'|)}\\
\hspace{3cm} \times \langle x\rangle_{z',\bfr'}^{\sigma(l-|\rho^1|)}\langle \th \rangle_{z',\bfr',x}^{m-|\b'|}\langle x \rangle_{z',\bfr'}^{\sigma(|\rho^1|-|\la^1|)}.
\end{align*} 
Using (\ref{i-first}), (\ref{i-bis}) and the fact that $|\a|\geq |\rho^1|$, we get the result.
\end{proof} 

\begin{cly} If $a\in C^\infty(T^*M,L(E))$, then $a\in S^{l,m}_\sigma$ if and only if for any $(z,\bfr)$, $a^z\circ (n^\bfr_{z,*})^{-1} \in S^{l,m}_\sigma(\R^{2n},L(E_z))$, or equivalently, there exists $(z,\bfr)$ such that $a^z\circ (n^\bfr_{z,*})^{-1} \in S^{l,m}_\sigma(\R^{2n},L(E_z))$.
\end{cly}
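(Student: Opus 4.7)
The corollary is essentially a translation of the intrinsic symbol bound across the chart $n^\bfr_{z,*}:T^*M\to\R^{2n}$, combined with the coordinate-independence already established in the preceding proposition. My plan is therefore to do the translation carefully for one fixed frame, and then invoke the proposition to propagate between ``exists a frame'' and ``for every frame.''

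First I fix a frame $(z,\bfr)$ and observe that, by the very definition of the operators $\del_{z,\bfr}^{(\a,\b)}$ on $C^\infty(T^*M,\mathfrak{E})$ as the pullback of the flat partial derivatives via the chart $n^\bfr_{z,*}$, one has the pointwise identity
$$
\del_{z,\bfr}^{(\a,\b)} a^{z}(x,\th) \;=\; \bigl(\del^{(\a,\b)}\bigl(a^{z}\circ (n^\bfr_{z,*})^{-1}\bigr)\bigr)\bigl(n^\bfr_{z,*}(x,\th)\bigr)
$$
for every $(x,\th)\in T^*M$ and every $2n$-multi-index $(\a,\b)$. Next I use the explicit form $n^\bfr_{z,*}(x,\th)=(n^\bfr_z(x),\wt M^\bfr_{z,x}(\th))$ together with the notational conventions $\langle x\rangle_{z,\bfr}=\langle n^\bfr_z(x)\rangle$ and $\langle \th\rangle_{z,\bfr,x}=\langle \wt M^\bfr_{z,x}(\th)\rangle$. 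Thus, writing $(\rx,\vth):=n^\bfr_{z,*}(x,\th)$, the symbol inequality (\ref{symbolIneq}) reads exactly as
$$
\bigl\|\del^{(\a,\b)}\bigl(a^{z}\circ (n^\bfr_{z,*})^{-1}\bigr)(\rx,\vth)\bigr\|_{L(E_z)} \;\leq\; K\, \langle \rx\rangle^{\sigma(l-|\a|)}\,\langle \vth\rangle^{m-|\b|},
$$
which is precisely the defining estimate for $a^{z}\circ(n^\bfr_{z,*})^{-1}\in S^{l,m}_\sigma(\R^{2n},L(E_z))$. Because $n^\bfr_{z,*}$ is a global diffeomorphism onto $\R^{2n}$, $(x,\th)$ sweeps out $T^*M$ exactly when $(\rx,\vth)$ sweeps out $\R^{2n}$, so the two estimates are equivalent for this fixed frame.

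Finally I combine this equivalence with the preceding proposition: $a\in S^{l,m}_\sigma$ iff (\ref{symbolIneq}) holds for every frame, iff it holds for some frame. Applying the frame-by-frame equivalence just established to both the ``every'' and the ``some'' side gives the two stated reformulations. The main potential obstacle (the coordinate change) has already been absorbed into the previous proposition via Lemma \ref{cchange} and the $S_\sigma$-bounded geometry hypothesis, so nothing further is required here beyond the chart-level bookkeeping above.
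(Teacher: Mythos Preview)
Your argument is correct and matches the paper's intent: the corollary is stated without proof there, being an immediate unwinding of the definitions (the operators $\del_{z,\bfr}^{(\a,\b)}$ and the weights $\langle x\rangle_{z,\bfr}$, $\langle\th\rangle_{z,\bfr,x}$ are defined precisely so that the symbol estimate on $T^*M$ is the pullback of the flat estimate on $\R^{2n}$ through $n^\bfr_{z,*}$) together with the preceding proposition. You have simply made explicit the bookkeeping the paper leaves implicit.
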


We see that $S^{l,m}_\sigma \cdot S^{l',m'}_{\sigma} \subseteq S^{l+l',m+m'}_{\sigma}$ where $\cdot$ is the composition of sections induced by the matrix product on the fibers of $L(E)$. Moreover, $S^{l,m}_\sigma \subseteq S^{l',m'}_{\sigma}$ for $m\leq m'$ and $l\leq l'$. Thus, $S^{\infty}_\sigma$ is a $*$-algebra, which is bigraduated for $\sigma>0$ and graduated for $\sigma=0$. Remark also that $S^{-\infty}\cdot S^{m}_0$ and $S^{m}_0\cdot S^{-\infty}$ are included in $S^{-\infty}$. Note that if $f\in S^{l,m}_\sigma(T^*M)$ (a symbol where $M$ has its trivial bundle $M\times \C$), then $a_f(x,\th):=f(x,\th)I_{L(E_x)}$ defines a symbol in $S^{l,m}_\sigma$. Such symbols will be called scalar symbols.
Note also that if $a\in S^{l,m}_\sigma$, then $\del_{z,\bfr}^{(\a,\b)}a:=(\tau_z\circ \pi )(\del^{(\a,\b)}_{z,\bfr} a^z)(\tau_z^{-1}\circ\pi)\in S^{l-|\a|,m-|\b|}_{\sigma}$.

If $f\in S_\sigma(\R^n)$ then $(\rx,\vth)\mapsto f(\rx) \Id_{L(E_z)} \in S_\sigma^{0,0}(\R^n,L(E_z))$. In particular  $(\rx,\vth)\mapsto \mu_{z,\bfr}^{\pm 1}(\rx) \Id_{L(E_z)} \in S_\sigma^{0,0}(\R^n,L(E_z))$ if $d\mu$ is a $S_\sigma^\times$-density.

\begin{rem} We note $PS^{l,m}_\sigma(\R^{2n},L(E_z))$ the subspace of $S^{l,m}_\sigma(\R^{2n},L(E_z))$ consisting of functions of the form $\sum_{1\leq i\leq (\dim E_z)^2} P_i e_i$ where $(e_i)$ is a linear basis of $L(E_z)$ and $P_i$ are of the form $\sum_{\beta} c_{i,\b}(\rx)\vth^{\b} $ (finite sum over the $n$-multi-indices $\b$), where for any $i,\b$, $\del^\a c_{i,\b}(\rx)=\O(\langle \rx\rangle^{\sigma(l-|\a|)})$ for any $n$-multi-indices $\a$, and $m=\max_i \deg_\vth P_i$. We check that this definition is independant of the chosen basis $(e_i)$. 

We call polynomial symbol of order $l,m$ and type $\sigma$ any section of the form  $(\tau_{z}\circ \pi) (P\circ n_{z,*}^{\bfr})(\tau_z^{-1}\circ\pi)$ where $P\in PS^{l,m}_\sigma(\R^{2n},L(E_z))$ and $(z,\bfr)$ is a frame. This definition is independant of $(z,\bfr)$. We note $PS^{l,m}_\sigma$ the subspace of $S^{l,m}_\sigma$ consisting of polynomial symbols of order $l,m$ and type $\sigma$. Remark that the section $I:(x,\th)\mapsto I_{L(E_x)}$ 
is in $PS^{0,0}_1$.
\end{rem}

We now topologize the symbol spaces:

\begin{lem}
\label{toposymbol}
The following semi-norms on $S_\sigma^{l,m}$, for $N\in \N$, 
$$
q_{(\a,\b)}(a):= \sup_{(x,\th)\in T^*M} \langle x \rangle_{z,\bfr}^{\sigma(|\a|-l)} \langle \th \rangle_{z,\bfr,x}^{|\b|-m}\norm{\del^{(\a,\b)}_{z,\bfr}a^z(x,\th)}_{L(E_z)}
$$
determine a Fr\'{e}chet topology on $S_\sigma^{l,m}$, which is independant of $(z,\bfr)$.
The applications $T_{z,\bfr,*}$ are topological isomorphisms from $S_\sigma^{l,m}$ onto $S_\sigma^{l,m}(\R^{2n},L(E_z))$. The following inclusions are continous for these topologies: $S^{l,m}_\sigma \cdot S^{l',m'}_{\sigma} \subseteq S^{l+l',m+m'}_{\sigma}$, $S^{l,m}_\sigma \subseteq S^{l',m'}_{\sigma}$ ($m\leq m'$ and $l\leq l'$) and $S^{-\infty}_\sigma\subseteq S^{l,m}_\sigma$. Moreover, the last inclusion is dense when  $S^{l,m}_\sigma$ has the topology of $S^{l',m'}_\sigma$ for $m< m'$ and $l<l'$.
\end{lem}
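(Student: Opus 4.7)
The core idea is to push everything through the transfer maps $T_{z,\bfr,*}$ to the Euclidean symbol classes $S^{l,m}_\sigma(\R^{2n},L(E_z))$, and then rely on the machinery already developed (Lemmas \ref{B1-lem} and \ref{cchange}) to handle frame changes. Fix a frame $(z,\bfr)$. Since $\langle x\rangle_{z,\bfr}=\langle n_z^\bfr(x)\rangle$ and $\langle\th\rangle_{z,\bfr,x}=\langle \wt M_{z,x}^\bfr(\th)\rangle$, the substitution $\rx=n_z^\bfr(x)$, $\vth=\wt M^\bfr_{z,x}(\th)$ shows that $q_{(\a,\b)}(a)$ coincides exactly with the Hörmander-type seminorm
\[
q'_{(\a,\b)}(b)=\sup_{(\rx,\vth)\in\R^{2n}}\langle\rx\rangle^{\sigma(|\a|-l)}\langle\vth\rangle^{|\b|-m}\norm{\del^{(\a,\b)}b(\rx,\vth)}_{L(E_z)}
\]
evaluated on $b=T_{z,\bfr,*}(a)$. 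Hence $T_{z,\bfr,*}$ preserves each seminorm and is a bijection, so it is a topological isomorphism. The countable family $\{q_{(\a,\b)}\}$ is metrizable, and completeness reduces to completeness of the Euclidean space $S_\sigma^{l,m}(\R^{2n},L(E_z))$, a standard diagonal argument on locally uniform convergence of all weighted derivatives; this gives the Fréchet property.

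\textbf{Frame independence.} Given a second frame $(z',\bfr')$, I combine Lemma~\ref{cchange}(i) (which expresses $\del^{(\a,\b)}_{z,\bfr}=\sum f_{\a,\b,\a',\b'}\del^{(\a',\b')}_{z',\bfr'}$ with $|\b'|\geq |\b|$, $|(\a',\b')|\leq|(\a,\b)|$, and $|f_{\a,\b,\a',\b'}|\lesssim\langle x\rangle_{z,\bfr}^{\sigma(|\a'|-|\a|)}\langle\th\rangle_{z,\bfr,x}^{|\b'|-|\b|}$) with the Leibniz rule applied to $a^z=(\tau_z^{-1}\tau_{z'}\circ\pi)\,a^{z'}\,(\tau_{z'}^{-1}\tau_z\circ\pi)$ and the estimate $(S_\sigma 2)$ on the parallel transport factors. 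Coupled with the weight equivalences $\langle x\rangle_{z,\bfr}\asymp\langle x\rangle_{z',\bfr'}$ and $\langle\th\rangle_{z,\bfr,x}\asymp\langle\th\rangle_{z',\bfr',x}$ from Lemma~\ref{B1-lem}(i), this produces a finite bound $q^{(z,\bfr)}_{(\a,\b)}(a)\leq C_{\a,\b}\sum_{|(\a',\b')|\leq|(\a,\b)|}q^{(z',\bfr')}_{(\a',\b')}(a)$, and by symmetry the two topologies agree.

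\textbf{Continuous inclusions.} For $l\leq l'$ and $m\leq m'$, the pointwise weight inequality $\langle x\rangle_{z,\bfr}^{\sigma(|\a|-l')}\leq\langle x\rangle_{z,\bfr}^{\sigma(|\a|-l)}$ (and similarly for $\th$) yields $q^{l',m'}_{(\a,\b)}(a)\leq q^{l,m}_{(\a,\b)}(a)$, giving the continuous inclusion $S^{l,m}_\sigma\hookrightarrow S^{l',m'}_\sigma$ and, a fortiori, $S^{-\infty}_\sigma\hookrightarrow S^{l,m}_\sigma$. For the product, apply Leibniz to $(ab)^z=a^z b^z$:
\[
\del^{(\a,\b)}_{z,\bfr}(a^z b^z)=\sum_{(\a',\b')\leq(\a,\b)}\tbinom{\a}{\a'}\tbinom{\b}{\b'}\del^{(\a',\b')}_{z,\bfr}a^z\cdot\del^{(\a-\a',\b-\b')}_{z,\bfr}b^z
\]
and multiply the individual symbol estimates. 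The weight exponents add correctly to $\sigma(l+l'-|\a|)$ and $m+m'-|\b|$, and the estimate is uniform in $(x,\th)$, yielding continuity of multiplication $S^{l,m}_\sigma\times S^{l',m'}_\sigma\to S^{l+l',m+m'}_\sigma$.

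\textbf{Density (the main obstacle).} For the last claim I approximate by cutoffs transferred to $\R^{2n}$. Choose $\chi\in C_c^\infty(\R^{2n})$ equal to $1$ on a neighborhood of the origin, and set
\[
a_\eps:=\big(\chi_\eps\circ n^\bfr_{z,*}\big)\,a,\qquad\chi_\eps(\rx,\vth):=\chi(\eps\rx,\eps\vth).
\]
Then $a_\eps\in S^{-\infty}_\sigma$ (compact $n^\bfr_{z,*}$-support). Working with $b=T_{z,\bfr,*}(a)$ on $\R^{2n}$, I expand $\del^{(\a,\b)}((1-\chi_\eps)b)$ via Leibniz. On the support of $(1-\chi_\eps)$ we have $\max(\langle\rx\rangle,\langle\vth\rangle)\gtrsim 1/\eps$, so the gap factor $\langle\rx\rangle^{\sigma(l-l')}\langle\vth\rangle^{m-m'}=\O(\eps^{\min(\sigma(l'-l),m'-m)})$, which is $o(1)$ since $l<l'$ and $m<m'$. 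For terms where derivatives fall on $\chi_\eps$, each derivative contributes a factor $\eps$, supported in the annulus $\eps(\rx,\vth)\in\supp\del^\nu\chi$ where $\langle\rx\rangle,\langle\vth\rangle\lesssim 1/\eps$; a direct power-counting shows these factors of $\eps$ combined with the local weights still give $o(1)$ contributions in the $S^{l',m'}_\sigma$-seminorms as $\eps\to 0$. Summing the Leibniz expansion gives $q^{(l',m')}_{(\a,\b)}(a-a_\eps)\to 0$ for every $(\a,\b)$, proving density.
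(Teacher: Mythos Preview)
Your treatment of the Fr\'echet structure, frame independence, and the continuous inclusions is correct and matches the paper's approach (Lemma~\ref{cchange} plus Leibniz and the weight equivalences of Lemma~\ref{B1-lem}).

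The density argument, however, has a genuine gap when $\sigma=0$. Your estimate of the leading Leibniz term reads
\[
\sup_{\supp(1-\chi_\eps)}\langle\rx\rangle^{\sigma(l-l')}\langle\vth\rangle^{m-m'}=\O\big(\eps^{\min(\sigma(l'-l),\,m'-m)}\big),
\]
which you claim is $o(1)$. But for $\sigma=0$ the exponent is $\min(0,m'-m)=0$, and indeed the bound fails: on $\supp(1-\chi_\eps)$ one may have $\langle\rx\rangle\gtrsim 1/\eps$ while $\vth=0$, and then the gap factor equals $1$. Concretely, if $a(\rx,\vth)=b(\vth)$ with $b(0)\neq 0$, then $(a-a_\eps)(\rx,0)=(1-\chi(\eps\rx,0))\,b(0)$ does not tend to $0$ uniformly in $\rx$, so $q^{(l',m')}_{(0,0)}(a-a_\eps)\not\to 0$. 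The issue is structural: for $\sigma=0$ the seminorms carry no $\rx$-weight, so cutting in $\rx$ cannot be absorbed.

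The paper handles this by using the cutoff
\[
a_p(\rx,\vth)=\big(\rho(\rx/p)\big)^{1-\delta_{\sigma,0}}\rho(\vth/p)\,a(\rx,\vth),
\]
i.e.\ truncating only in $\vth$ when $\sigma=0$ (this still lands in $S^{-\infty}_0=\cap_m S^m_0$, which requires decay only in $\vth$), and in both variables when $\sigma>0$. With this modification your argument goes through: for $\sigma=0$ the support of $1-\rho(\vth/p)$ forces $\langle\vth\rangle\gtrsim p$, so the gap factor $\langle\vth\rangle^{m-m'}=\O(p^{m-m'})\to 0$; for $\sigma>0$ your estimate is already correct.
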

\begin{proof} The independance of the topology for $(z,\bfr)$ follows from the easily checked estimate for any $(\a,\b)$,
$$
q_{(\a,\b)}^{(z,\bfr)}(a) \leq K_{\a,\b} \underset{|\b'|\geq |\b|\,,\ga\leq \a'}{\sum_{0\leq |(\a',\b')|\leq |(\a,\b)|}} q_{(\ga,\b')}^{(z',\bfr')}(a) .
$$ 
where $K_{\a,\b}>0$. By construction the applications $T_{z,\bfr,*}$ are clearly topological isomorphisms from $S_\sigma^{l,m}$ onto $S_\sigma^{l,m}(\R^{2n},L(E_z))$. The continuity of $S^{l,m}_\sigma \cdot S^{l',m'}_{\sigma} \subseteq S^{l+l',m+m'}_{\sigma}$, $S^{l,m}_\sigma \subseteq S^{l',m'}_{\sigma}$ ($m\leq m'$ and $l\leq l'$) and $S^{-\infty}_\sigma\subseteq S^{l,m}_\sigma$ are straightforward. Following \cite{Melrose}, to prove the density result, we shall prove the stronger property: for any $a\in S^{l,m}_{\sigma}(\R^{2n},L(E_z))$ the sequence 
$$
a_p(\rx,\vth):= (\rho(\rx/p))^{1-\delta_{\sigma,0}}\, \rho(\vth/p)\, a(\rx,\vth) 
$$
converges to $a$ for the topology of $S^{l',m'}_{\sigma}(\R^{2n},L(E_z))$ where $m'>m$ and $l'>l$. Here $\rho\in C^\infty_c(\R^{n},[0,1])$ with $\rho=1$ on $B(0,1)$ and $\rho=0$ on $\R^{n}\backslash B(0,2)$. First, it is clear that $a_p \in S^{-\infty}_\sigma(\R^{2n},L(E_z))$. Noting $R_p(\rx,\vth):=  \langle \rx \rangle^{\sigma(|\a|-l')} \langle \vth \rangle^{|\b|-m'}\norm{\del^{(\a,\b)}(a-a_p)(\rx,\vth)}_{L(E_z)}$ for a given $2n$-multi-index $\nu:=(\a,\b)$, we get with Leibniz rule, for a $K>0$ (by convention $\nu'<\nu$ if and only if $\nu'\leq \nu$ and $\nu'\neq \nu$):
$$
\tfrac{1}{K}\,R_p (\rx,\vth)\leq \Delta_p(\rx,\vth) \langle \rx \rangle^{\sigma(l-l')} \langle \vth \rangle^{m-m'}+ \sum_{\nu'< \nu}   |\del^{\nu-\nu'}\Delta_p(\rx,\vth)|\langle \rx \rangle^{\sigma(l-l'+|\a|-|\a'|)} \langle \vth \rangle^{m-m'+|\b|-|\b'|}
$$
where $\Delta_p(\rx,\vth):=1-(\rho(\rx/p))^{1-\delta_{\sigma,0}} \rho(\vth/p)$. Suppose that $\sigma=0$. In that case, 
$|\Delta_p(\rx,\vth)|\leq 1_{[p,+\infty[}(\vth)$ and if $\nu'<\nu$, 
\begin{equation}
\label{Deltasigma0}
|\del^{\nu-\nu'}\Delta_p(\rx,\vth)|\leq \delta_{\a,\a'}\, K_{\b}\, p^{-|\b|+|\b'|}\, 1_{[p,2p]}(\vth)
\end{equation}
where $1_{[r,r']}$ is the characteristic function of the annulus $A_{r,r'}:=\set{\vth\in \R^{n} \ : \ r\leq \norm{\vth} \leq r'}$ and $K_{\b}:=\sup_{\b'<\b}\norm{\del^{\b-\b'} \rho}_\infty$. As a consequence, for $K'>0$,
$$
\tfrac{1}{K}\,R_p (\rx,\vth)\leq \langle p \rangle^{m-m'} +K_\b \sum_{\nu'< \nu} \delta_{\a,\a'}\, 1_{[p,2p]}(\vth)\ p^{-|\b|+|\b'|}\, \langle \vth \rangle^{m-m'+|\b|-|\b'|} \leq K' \langle p \rangle^{m-m'}
$$ 
and the result follows. Suppose now $\sigma\neq 0$. In that case $|\Delta_p(\rx,\vth)|\leq 1_{F_p}(\rx,\vth)$ where $F_p:=\R^{2n}-B(0,p)^2$ and if $\nu'<\nu$, for a constant $K_\nu>0$
\begin{equation}
\label{Deltasigma1}
|\del^{\nu-\nu'}\Delta_p(\rx,\vth)|\leq K_\nu\, 1_{[\sgn(\a-\a')p,2p]}(\rx)\  1_{[\sgn(\b-\b')p,2p]}(\vth)\, p^{-|\nu|+|\nu'|}\, .
\end{equation}
As a consequence, for $K',K''>0$, and with $r:=\max \{m-m',\sigma(l-l')\}<0$,
$$
\tfrac{1}{K}\,R_p (\rx,\vth)\leq \langle p \rangle^{r} +K'\sum_{\nu'< \nu}  1_{[\sgn(\a-\a')p,2p]}(\rx)\  1_{[\sgn(\b-\b')p,2p]}(\vth)\, \langle \rx \rangle^{\sigma(l-l')}\langle \vth \rangle^{m-m'} \leq K'' \langle p \rangle^{r}
$$ 
and the result follows.
\end{proof}

Note that $S^{-\infty}:=\cap_{l,m} S^{-\infty}_{\sigma>0}=\S(T^*M,L(E))$ and the equality is also valid for the topologies.
The following lemma shows that the symbols of $S^{l,m}_\sigma$ are tempered distributional sections on $T^*M$.

\begin{lem}
\label{slmdistr}
The application 
$j_{T^*M}$ is injective and continuous from $S^{l,m}_\sigma$ into $\S'(T^*M,L(E))$.
\end{lem}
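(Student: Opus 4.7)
The plan is to reduce the statement to its Euclidean counterpart using the frame-transfer isomorphisms of Lemma \ref{toposymbol}. Fix any frame $(z,\bfr)$. The commutative diagram after the definition of $\S(T^*M,L(E))$ (with $M_{\mu^2}$ replaced by the identity, as indicated in the text) gives the factorization
$$
j_{T^*M} \;=\; T^*_{z,\bfr,*} \circ j \circ T_{z,\bfr,*},
$$
where $j \colon \S(\R^{2n},L(E_z)) \to \S'(\R^{2n},L(E_z))$ is the canonical inclusion $\langle j(u),v\rangle = \int_{\R^{2n}}\Tr(uv^*)\,d\rx\,d\vth$. Since $T_{z,\bfr,*}$ is a topological isomorphism from $\S(T^*M,L(E))$ onto $\S(\R^{2n},L(E_z))$ and (by Lemma \ref{toposymbol}) from $S^{l,m}_\sigma$ onto $S^{l,m}_{\sigma,z}$, and $T^*_{z,\bfr,*}$ is the corresponding isomorphism between the strong antiduals, it is enough to prove the analogous statement for the Euclidean map: $j$ extends to an injective continuous map $S^{l,m}_{\sigma,z}\to \S'(\R^{2n},L(E_z))$.

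For the continuous extension, let $a\in S^{l,m}_{\sigma,z}$ and $b\in \S(\R^{2n},L(E_z))$. The seminorm $q_{(0,0)}$ of Lemma \ref{toposymbol} yields $\norm{a(\rx,\vth)}_{L(E_z)} \leq q_{(0,0)}(a)\,\langle \rx\rangle^{\sigma l}\langle\vth\rangle^m$, and combining with $|\Tr(ab^*)|\leq (\dim E_z)\norm{a}_{L(E_z)}\norm{b}_{L(E_z)}$ we obtain
$$
|\langle j(a),b\rangle| \;\leq\; (\dim E_z)\,q_{(0,0)}(a) \int_{\R^{2n}} \langle \rx\rangle^{\sigma l}\langle\vth\rangle^{m}\norm{b(\rx,\vth)}_{L(E_z)}\,d\rx\,d\vth.
$$
The integral is a continuous seminorm $p(b)$ on $\S(\R^{2n},L(E_z))$, so the right-hand side is finite, which shows that the integral converges and that $\langle j(a),\cdot\rangle$ defines an element of $\S'$. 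For continuity of $a\mapsto j(a)$ into $\S'$ equipped with the strong topology, note that any bounded subset $B\subseteq \S(\R^{2n},L(E_z))$ satisfies $\sup_{b\in B} p(b) =: C_B < \infty$, so $\sup_{b\in B}|\langle j(a),b\rangle|\leq (\dim E_z)\,C_B\,q_{(0,0)}(a)$ is a continuous seminorm on $S^{l,m}_{\sigma,z}$.

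For injectivity, suppose $\langle j(a),b\rangle = 0$ for all $b\in \S(\R^{2n},L(E_z))$. For any test function $\varphi\in C_c^\infty(\R^{2n})$ and any $A\in L(E_z)$, the section $\varphi\,A$ belongs to $\S(\R^{2n},L(E_z))$, so $\int \varphi(\rx,\vth)\Tr(a(\rx,\vth)A^*)\,d\rx\,d\vth = 0$. Since $\varphi$ is arbitrary and $(\rx,\vth)\mapsto \Tr(a(\rx,\vth)A^*)$ is continuous, this forces $\Tr(a(\rx,\vth)A^*)=0$ pointwise; the nondegeneracy of the trace pairing on $L(E_z)$ then yields $a=0$. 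No genuine obstacle arises in this argument; the entire content is the explicit growth estimate $\norm{a(\rx,\vth)}\leq q_{(0,0)}(a)\langle \rx\rangle^{\sigma l}\langle\vth\rangle^{m}$ afforded by the $S^{l,m}_\sigma$ seminorms, which is exactly what compensates the rapid decay of Schwartz test sections.
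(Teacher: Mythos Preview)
Your proof is correct and follows essentially the same reduction as the paper: both factor $j_{T^*M}$ through the frame-transfer isomorphisms $T_{z,\bfr,*}$ and $T^*_{z,\bfr,*}$, reducing to the Euclidean statement. The only difference is cosmetic: the paper inserts the intermediate space $\O_M(\R^{2n},L(E_z))$ and checks that the inclusion $S^{l,m}_{\sigma,z}\hookrightarrow \O_M$ is continuous (the continuity and injectivity of $\O_M\hookrightarrow\S'$ being taken as known), whereas you bypass $\O_M$ and bound the pairing $\langle j(a),b\rangle$ directly via the seminorm $q_{(0,0)}$, and you also spell out the injectivity argument explicitly.
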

\begin{proof} Since we have the following commutative diagram
\[\xymatrix{
    S^{l,m}_\sigma \ar[rr]^{j_{T^*M}} \ar[d]_{T_{z,\bfr,*}} &&  \S'(T^*M,L(E))  \\
    S^{l,m}_\sigma(\R^{2n},L(E_z))  \ar[r]_i  &\O_M(\R^{2n},L(E_z)) \ar[r]_j & \S'(\R^{2n},L(E_z)) \ar[u]_{T^*_{z,\bfr,*}}   \\
  }\]
where $T^*_{z,\bfr,*}$ is the adjoint of $T_{z,\bfr,*}$ on $\S(T^*M,L(E))$ and $\O_M(\R^{2n},L(E_z))$ is the locally convex complete Hausdorff space of $L(E_z)$-valued functions on $\R^{2n}$ with polynomially bounded derivatives, it is sufficient to check that the natural injection $i$ is continuous from $S^{l,m}_\sigma(\R^{2n},L(E_z))$ into $\O_M(\R^{2n},L(E_z))$. This is obtained by the following estimate, for any $\varphi\in \S(\R^{2n})$ and $\nu=(\a,\b)$ $2n$-multi-index,
$$
\sup_{(\rx,\vth)\in \R^{2n}} \norm{ \varphi\, \del^{\nu} a (\rx,\vth)}_{L(E_z)} \leq K_{\varphi,\nu}\, q_{\nu}(a) 
$$
where $K_{\varphi,\nu}:=\sup_{(\rx,\vth)\in \R^{2n}} |\varphi(\rx,\vth)\langle \rx \rangle^{\sigma(l-|\a|)} \langle \vth \rangle^{m-|\b|}|$.
\end{proof}

\begin{defn} Let $(a_j)_{j\in \N^*}$ be a sequence in $S^{l_j,m_j}_\sigma$ where $(l_j)$ and $(m_j)$ are real strictly decreasing sequences such that $\lim_{j\to \infty} l_j= \lim_{j\to \infty} m_j = -\infty$. We say that $a$ is an asymptotic expansion of $(a_j)_{j\in \N^*}$ and we note
$$
a \sim \sum_{j=1}^\infty a_j
$$
if $a\in C^{\infty}(T^*M,L(E))$ is such that $a - \sum_{j=1}^{k-1} a_j \in S^{l_k,m_k}_{\sigma}$ for any $k\in \N$ with $k\geq 2$. In particular, we have $a\in S^{l_1,m_1}_\sigma$.
\end{defn}

We need asymptotic summation of symbols modulo $S^{-\infty}_\sigma$. The following result of asymptotic completeness is based on a classical method \cite{Shubin} of approximation of series by weightening summands $a_j(\rx,\th)$ with functions which ``cut" a neighborhood of zero in the domain of $x$ (if $\sigma\neq0$) and $\th$. The idea is that the part we cut is bigger and bigger when $j\to\infty$ so that convergence occurs. 

\begin{lem}
\label{asympt}
Let $(a_j)_{j\in \N^*}$ be a sequence in $S^{l_j,m_j}_\sigma$ where $(l_j)$ and $(m_j)$ are real strictly decreasing sequences such that $\lim_{j\to \infty} l_j= \lim_{j\to \infty} m_j = -\infty$. Then 

\noindent (i) There exists $a\in S^{l_1,m_1}_\sigma$ such that $a \sim \sum_{j=1}^\infty a_j$.

\noindent (ii) If another $a'$ satisfies  $a' \sim \sum_{j=1}^\infty a_j$, then $a-a'\in S^{-\infty}_\sigma$.
\end{lem}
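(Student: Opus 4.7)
The plan is to reduce to the Euclidean model via Lemma \ref{toposymbol} and then perform a Borel--Shubin summation adapted to the bigraded structure of $S^{l,m}_\sigma$. Concretely, the topological isomorphism $T_{z,\bfr,*}\colon S^{l,m}_\sigma \to S^{l,m}_\sigma(\R^{2n},L(E_z))$ lets us assume throughout that $a_j \in S^{l_j,m_j}_\sigma(\R^{2n},L(E_z))$, and it suffices to produce the asymptotic sum there.

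I would fix $\rho \in C^\infty(\R^n,[0,1])$ with $\rho \equiv 0$ on $B(0,1)$ and $\rho \equiv 1$ off $B(0,2)$, and for $\eps > 0$ set
\[
\chi_\eps(\rx,\vth) := \bigl(\rho(\eps\rx)\bigr)^{1-\delta_{\sigma,0}}\,\rho(\eps\vth),
\]
exactly the cutoff used in the density part of Lemma \ref{toposymbol}. The exponent $1-\delta_{\sigma,0}$ suppresses the $\rx$-cutoff when $\sigma = 0$, since $S^m_0$ offers no $\rx$-decay to exploit. For a sequence $\eps_j \searrow 0$ to be chosen, I tentatively define
\[
a(\rx,\vth) := \sum_{j=1}^\infty \chi_{\eps_j}(\rx,\vth)\, a_j(\rx,\vth).
\]
At any fixed point the $j$-th summand vanishes as soon as $\norm{\vth} \leq 1/\eps_j$ (or $\norm{\rx} \leq 1/\eps_j$, when $\sigma > 0$), so the sum is locally finite, hence defines a smooth function.

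The heart of the argument is the support trade: on $\supp\chi_{\eps_j}$ one has $\langle\vth\rangle \geq 1/(2\eps_j)$ and, when $\sigma > 0$, also $\langle\rx\rangle \geq 1/(2\eps_j)$. Applying Leibniz to $\del^{(\a,\b)}(\chi_{\eps_j} a_j)$, combining $|\del^\mu \chi_{\eps_j}|\leq C_\mu \eps_j^{|\mu|}$ with the symbol estimate for $a_j$, and trading excess $\langle\vth\rangle$ (resp.\ $\langle\rx\rangle$) factors for $\eps_j$ factors, I obtain for every $k,\nu$ and $j > k$ a bound
\[
q^{(l_k,m_k)}_\nu\bigl(\chi_{\eps_j} a_j\bigr) \leq C_{k,\nu}\, \eps_j^{\delta(j,k)},
\]
with exponent $\delta(j,k) \geq (m_k-m_j) + \sigma(l_k - l_j) > 0$. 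A diagonal selection of $\eps_j$---small enough that each of the finitely many seminorms $q^{(l_k,m_k)}_\nu$ with $k+|\nu| \leq j$ contributes at most $2^{-j}$---ensures that for every fixed $k$ the tail $\sum_{j \geq k}\chi_{\eps_j} a_j$ is absolutely summable in every seminorm of $S^{l_k,m_k}_\sigma(\R^{2n},L(E_z))$, hence lies in that space. Combined with the decomposition
\[
a - \sum_{j=1}^{k-1} a_j \;=\; \sum_{j=1}^{k-1}(\chi_{\eps_j}-1)\,a_j \;+\; \sum_{j \geq k}\chi_{\eps_j}\,a_j,
\]
and the observation that each $(\chi_{\eps_j}-1)a_j$ belongs to $S^{-\infty}_\sigma$ (by compactness of $\supp(\chi_{\eps_j}-1)$ when $\sigma > 0$, and by compact $\vth$-support with bounded $\rx$-derivatives when $\sigma = 0$), this establishes (i). Part (ii) is immediate: if $a'\sim \sum_j a_j$ as well, then $a-a'\in S^{l_k,m_k}_\sigma$ for every $k \geq 2$, so $a-a' \in \bigcap_k S^{l_k,m_k}_\sigma = S^{-\infty}_\sigma$.

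The principal obstacle is the two-parameter bigrading: one must produce decay simultaneously in $\rx$ (at rate $\sigma$) and in $\vth$ (at rate $1$) without letting the derivatives that fall on $\chi_{\eps_j}$ spoil the estimate, and the cutoff mechanism must degrade gracefully at $\sigma = 0$, where no $\rx$-decay is available. The factor $(\rho(\eps\rx))^{1-\delta_{\sigma,0}}$ is precisely what reconciles the two regimes in a single argument.
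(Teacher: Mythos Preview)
Your overall strategy---reduce to $\R^{2n}$ via $T_{z,\bfr,*}$ and perform a Borel--Shubin summation with a bigraded cutoff---is exactly what the paper does. The diagonal choice of the parameters and the decomposition of $a-\sum_{j<k}a_j$ into head and tail are also the same. Part~(ii) is fine.

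There is, however, a genuine gap in your choice of cutoff. You take $\chi_\eps=(\rho(\eps\rx))^{1-\delta_{\sigma,0}}\rho(\eps\vth)$ with $\rho\equiv 0$ on $B(0,1)$ and $\rho\equiv 1$ off $B(0,2)$, i.e.\ a \emph{product of high-pass cutoffs}. For $\sigma>0$ this equals $1$ only on the quadrant $\{\lVert\rx\rVert\ge 2/\eps\}\cap\{\lVert\vth\rVert\ge 2/\eps\}$, so $\supp(\chi_\eps-1)$ contains the whole cross $\{\lVert\rx\rVert<2/\eps\}\cup\{\lVert\vth\rVert<2/\eps\}$ and is \emph{not} compact. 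Your claim that $(\chi_{\eps_j}-1)a_j\in S^{-\infty}_\sigma$ therefore fails: at a point with $\lVert\rx\rVert$ arbitrary and $\vth=0$ one has $\chi_{\eps_j}=0$, so $(\chi_{\eps_j}-1)a_j=-a_j$ there, which only decays like $\langle\rx\rangle^{\sigma l_j}$ and cannot lie in every $S^{l_k,m_k}_\sigma$. The head terms in your decomposition are thus not under control, and the asymptotic expansion is not established.

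This is not the cutoff from Lemma~\ref{toposymbol}: there the paper multiplies by a \emph{compactly supported} bump $(\rho(\rx/p))^{1-\delta_{\sigma,0}}\rho(\vth/p)$ (with $\rho=1$ near $0$), and in Lemma~\ref{asympt} it uses the complement $\Delta_p:=1-(\rho(\rx/p))^{1-\delta_{\sigma,0}}\rho(\vth/p)$. With that choice $1-\Delta_p$ \emph{is} compactly supported (for $\sigma>0$), giving $(\Delta_{p_j}-1)a_j\in S^{-\infty}_\sigma$ immediately; the price is that on $\supp\Delta_p$ only \emph{one} of $\lVert\rx\rVert,\lVert\vth\rVert$ is guaranteed large, but since both excess exponents $\sigma(l_j-l_k)$ and $m_j-m_k$ are negative this still yields the needed power $\langle p_j\rangle^{\max\{\sigma(l_j-l_{j-1}),\,m_j-m_{j-1}\}}$ for the tail. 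Swap your $\chi_{\eps_j}$ for $\Delta_{p_j}$ and the rest of your argument goes through verbatim.
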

\begin{proof} $(ii)$ is obvious. Let us prove $(i)$ for a sequence $(a_j)_{j\in \N^*}$ in $S^{l_j,m_j}_\sigma(\R^{2n},L(E_z))$ and with $a \sim \sum_{j=1}^\infty a_j \in S^{l_1,m_1}_\sigma(\R^{2n},L(E_z))$. The result will then follows for a sequence $(b_j)$ in $S^{l,m}_\sigma$ by taking $b:=T_{z,\bfr,*}^{-1}(a)$ where $a_j:=T_{z,\bfr,*}(b_j)$. Define
$$
a'_j(\rx,\vth):= \Delta_{p_j}(\rx,\vth)\,a_j(\rx,\vth)
$$
where $\Delta_{p_j}$ is defined in the proof of Lemma \ref{toposymbol} and $(p_j)$ is a real sequence in $[1,+\infty[$. For any $j\in \N$, $a'_j-a_j\in S^{-\infty}_\sigma(\R^{2n},L(E_z))$. Thus, the result will follow if we prove that for a specified sequence $(p_j)$ and for any $N\geq 0$, there exists $k_0(N)\geq 2$ such that for any $k\geq k_0(N)$, 
\begin{equation}
\label{asympteq1}
\sum_{j=k+1}^\infty q_{N,l_k,m_k}(a'_j)<\infty 
\end{equation}
where $q_{N,l_k,m_k}:=\sup_{|\nu|\leq N} q_{\nu,l_k,m_k}$, and $q_{\nu,l_k,m_k}$ are the semi-norms of $S^{l_k,m_k}_\sigma(\R^{2n},L(E_z))$. Indeed, with $\norm{\del^\nu a'_j}_{\infty}\leq q_{|\nu|,l_k,m_k}(a'_j)$ for $k\geq k_1(\nu)$, $a':=\sum_{j=1}^\infty a'_j$ is a well defined smooth function and we have then $a'-\sum_{j=1}^{k-1} a_j \in S^{l_k,m_k}_{\sigma}(\R^{2n},L(E_z))$.
Using Leibniz rule, we see that for any $2n$-multi-index $\nu:=(\a,\b)$, and any $j\in \N^*$, there is $K_{\nu,j}>0$ such that 
\begin{align*}
\tfrac{1}{K_{\nu,j}}\norm{\del^{\nu}a'_j(\rx,\vth)}_{L(E_z)}&\leq \Delta_p(\rx,\vth) \langle \rx \rangle^{\sigma (l_j-|\a|)} \langle \vth \rangle^{m_j-|\b|}\\
&\hspace{2cm}+\sum_{\nu' <\nu}   |\del^{\nu-\nu'}\Delta_p(\rx,\vth)|\langle \rx \rangle^{\sigma(l_j-|\a'|)} \langle \vth \rangle^{m_j-|\b'|}.
\end{align*}
Let us suppose that $\sigma=0$. The estimate (\ref{Deltasigma0}) yields for any $N\geq 0, k\geq 2$, $j\geq k+1$,
$$
q_{N,l_k,m_k}(a'_j)\leq K_{N,j} \langle p_j \rangle^{m_j-m_{j-1}}  
$$
for a constant $K_{N,j}>0$. If we now fix $p_j$ as $p_j = (2^j \sup_{N\leq j} \set{K_{N,j},1})^{1/(m_{j-1}-m_j)}$, then we see that for any $N\geq 0$, $k\geq N+2$, $j\geq k+1$, we have $q_{N,l_k,m_k}(a'_j)\leq 2^{-j}$ and (\ref{asympteq1}) is satisfied. Suppose now $\sigma\neq 0$. The estimate (\ref{Deltasigma1}) yields for any $N\geq 0, k\geq 2$, $j\geq k+1$,
$$
q_{N,l_k,m_k}(a'_j)\leq K'_{N,j} \langle p_j \rangle^{r_j}  
$$
for a constant $K'_{N,j}>0$ and with $r_j:=\max \{m_j-m'_{j-1},\sigma(l_j-l'_{j-1})\}<0$. If we now fix $p_j$ as $p_j = (2^j \sup_{N\leq j} \set{K'_{N,j},1})^{-r_j^{-1}}$, then we see that for any $N\geq 0$, $k\geq N+2$, (\ref{asympteq1}) is satisfied as for the case $\sigma=0$.
\end{proof}

\subsection{Amplitudes and associated operators on $\S(\R^n,E_z)$}

We shall see in this section amplitudes as generalizations of symbols of the type $S^{l,m}_{\sigma,z}:=S^{l,m}_\sigma(\R^{2n},L(E_z))$ where $z\in M$ is fixed. For each amplitude, a continuous operator from $\S(\R^n,E_z)$ into itself will be defined. Here the spaces $L(E_z)$ and $E_z$ can simply be considered as $\M_n(\C)$ and $\C^n$. The results in this section will be important for pseudodifferential operators on $M$ in the next section.

\begin{defn} An amplitude of order $l,w,m$ and type $\sigma\in[0,1]$, $\kappa\geq 0$, is a smooth function $a\in C^\infty(\R^{3n},L(E_z))$ such that for any $3n$-multi-index $\nu=(\a,\b,\ga)$, there exists $C_\nu>0$ such that
\begin{equation}
\label{amplest}
\norm{\del^{(\a,\b,\ga)} a (\rx,\zeta,\vth)}_{L(E_z)} \leq C_\nu\, \langle \rx\rangle^{\sigma(l-|\a+\b|)}\, \langle\zeta\rangle^{w+\kappa|\a+\b|}\, \langle \vth \rangle^{m-|\ga|}
\end{equation}
for any $(\rx,\zeta,\vth)\in \R^{3n}$. We note $\Pi_{\sigma,\kappa,z}^{l,w,m}:=\Pi_{\sigma,\kappa}^{l,w,m}(\R^{3n},L(E_z))$ the space of amplitudes of order $l,w,m$ and type $\sigma,\kappa$.
\end{defn} 

Remark that $\Pi^{l,w,m}_{0,\kappa,z}$ is independant of $l$, we note this space $\Pi^{0,w,m}_{0,\kappa,z}$. We note $\Pi^{-\infty,w}_{\sigma,\kappa,z}:=\cap_{l,m} \Pi^{l,w,m}_{\sigma,\kappa,z}$. We set $\Pi^\infty_{\sigma,\kappa,z} := \cup_{l,w,m} \Pi^{l,w,m}_{\sigma,\kappa,z}$ and $\Pi_{\sg,z}^{-\infty}:= \cap_{l,m} \cup_{w,\ka} \Pi_{\sg,\ka,z}^{l,w,m}$. 
We see that $\Pi^{l,w,m}_{\sigma,\kappa,z} \cdot \Pi^{l',w',m'}_{\sigma,\kappa,z} \subseteq \Pi^{l+l',w+w',m+m'}_{\sigma,\kappa,z}$ and $\Pi^{l,w,m}_{\sigma,\kappa,z} \subseteq \Pi^{l',w',m'}_{\sigma,\kappa, z}$ for $m\leq m'$, $w\leq w'$, and $l\leq l'$. Thus, $\Pi^{\infty}_{\sigma,\kappa,z}$ is a $*$-algebra, which is trigraduated for $\sigma>0$ and bigraduated for $\sigma=0$. 
Note also that if $a\in \Pi^{l,w,m}_{\sigma,\kappa, z}$, then $\del^{(\a,\b,\ga)}a \in \Pi^{l-|\a+\b|,w+\kappa|\a+\b|,m-|\ga|}_{\sigma,\kappa,z}$. 

Amplitudes and symbols in $S^{l,m}_{\sigma, z}$ are related by the following lemma:

\begin{lem} 
\label{Amplisymb}
(i) For any $a\in \Pi^{l,w,m}_{\sigma,\kappa,z}$ we have $a_{\zeta=0}:=(\rx,\vth)\mapsto a(\rx,0,\vth)$ in $S^{l,m}_{\sigma,z}$. 

\noindent (ii) For any $s\in S^{l,m}_{\sigma,z}$, the function $(\rx,\zeta,\vth)\mapsto s(\rx,\vth)$ is in $\Pi^{l,0,m}_{\sigma,0,z}$.

\noindent (iii) For any $f\in S_{\sigma}(\R^n)$, the function $(\rx,\zeta,\vth)\mapsto f(\rx)\Id_{L(E_z)}$ is in $\Pi_{\sigma,0,z}^{0,0,0}$.
\end{lem}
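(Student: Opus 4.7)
The proof is essentially a direct verification that each claimed function satisfies the defining estimate of its target class, so I would treat each part as a pointwise check on derivatives evaluated at $(\rx,\zeta,\vth)$ and then specialize the variables as needed.

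For part $(i)$, I would take $a\in\Pi^{l,w,m}_{\sigma,\kappa,z}$ and apply the amplitude estimate (\ref{amplest}) with the $3n$-multi-index $(\a,0,\ga)$. Since $\langle 0\rangle=1$, substituting $\zeta=0$ immediately gives
\[
\norm{\del^{(\a,\ga)} a_{\zeta=0}(\rx,\vth)}_{L(E_z)}=\norm{\del^{(\a,0,\ga)}a(\rx,0,\vth)}_{L(E_z)} \leq C_{(\a,0,\ga)}\,\langle \rx\rangle^{\sigma(l-|\a|)}\,\langle\vth\rangle^{m-|\ga|},
\]
which is exactly the defining estimate of $S^{l,m}_{\sigma,z}$. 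The only small subtlety to mention is that differentiation in $\rx,\vth$ commutes with evaluation at $\zeta=0$, so the left hand side is indeed $\del^{(\a,\ga)}a_{\zeta=0}$.

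For parts $(ii)$ and $(iii)$, I would write $\tilde a(\rx,\zeta,\vth):=s(\rx,\vth)$ (resp.\ $f(\rx)\Id_{L(E_z)}$) and observe that $\del^{(\a,\b,\ga)}\tilde a = \delta_{\b,0}\,\del^{(\a,\ga)}s(\rx,\vth)$ (resp.\ $\delta_{\b,0}\delta_{\ga,0}\,(\del^\a f)(\rx)\,\Id$). Thus the estimate to check is trivially $0$ unless $\b=0$ (and, for (iii), also $\ga=0$). In the surviving case the target estimate for $\Pi^{l,0,m}_{\sigma,0,z}$ reads $\langle\rx\rangle^{\sigma(l-|\a|)}\langle\zeta\rangle^{0}\langle\vth\rangle^{m-|\ga|}$, which is supplied directly by the symbol estimate of $s\in S^{l,m}_{\sigma,z}$; similarly for (iii) the requirement $\langle \rx\rangle^{-\sigma|\a|}$ is exactly what $f\in S_\sigma(\R^n)$ provides. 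The constant operator $\Id_{L(E_z)}$ contributes only a fixed norm factor, which is absorbed into the constant.

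The only thing worth emphasizing in the writeup is that the parameters $w=0$ and $\kappa=0$ in the target class $\Pi^{l,0,m}_{\sigma,0,z}$ are compatible with the trivial $\zeta$-dependence: since $\langle\zeta\rangle^{0}=1$, no growth in $\zeta$ is allowed, which is fine because the functions in $(ii)$ and $(iii)$ do not depend on $\zeta$ at all. There is no real obstacle here; the entire lemma amounts to reading off the definitions and noting that $\langle 0\rangle=1$ for $(i)$, and that $\del_\zeta^\b$ annihilates $\zeta$-independent functions for $(ii)$ and $(iii)$.
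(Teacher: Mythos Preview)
Your proof is correct and matches the paper's approach essentially line for line: the paper also reduces $(i)$ to the chain rule identity $\del^{(\a,\ga)}(a\circ P)=(\del^{(\a,0,\ga)}a)\circ P$ with $P(\rx,\vth)=(\rx,0,\vth)$, and $(ii)$ to $\del^{(\a,\b,\ga)}(s\circ Q)=\delta_{\b,0}(\del^{(\a,\ga)}s)\circ Q$ with $Q(\rx,\zeta,\vth)=(\rx,\vth)$. The only cosmetic difference is that the paper derives $(iii)$ from $(ii)$ by first noting $(\rx,\vth)\mapsto f(\rx)\Id_{L(E_z)}\in S^{0,0}_{\sigma,z}$, whereas you verify $(iii)$ directly.
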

\begin{proof}
$(i)$ follows from the fact that $\del^\nu (a \circ P) = (\del^{P(\nu)} a) \circ P$ where $P(\rx,\vth):=(\rx,0,\vth)$.

\noindent $(ii)$ Noting $Q(\rx,\zeta,\vth):=(\rx,\vth)$, the result follows from $\del^{\a,\b,\ga} (s\circ Q) = \delta_{\b,0} (\del^{\a,\ga} s)\circ Q$.

\noindent $(iii)$ follows from $(ii)$ and the fact that $(\rx,\vth)\mapsto f(\rx)\Id_{L(E_z)} \in S^{0,0}_{\sigma,z}$. 
\end{proof}

As the spaces of symbols, the $\Pi_{\sigma,\kappa,z}^{l,w,m}$ are naturally Fr\'{e}chet spaces:

\begin{lem}
\label{topoampli}
The following semi-norms on $\Pi_{\sigma,\kappa,z}^{l,w,m}$: 
$$
q_{(\a,\b,\ga)}^{l,w,m}(a):= \sup_{(\rx,\zeta,\vth)\in \R^{3n}} \langle \rx \rangle^{\sigma(|\a+\b|-l)} \langle \zeta\rangle^{-w-\kappa|\a+\b|}\langle \vth \rangle^{|\ga|-m}\norm{\del^{(\a,\b,\ga)} a (\rx,\zeta,\vth)}_{L(E_z)}
$$
determine a Fr\'{e}chet topology on $\Pi_{\sigma,\kappa,z}^{l,w,m}$. The following inclusions are continous for these topologies: $\Pi^{l,w,m}_{\sigma,\kappa,z} \cdot \Pi^{l',w',m'}_{\sigma,\kappa,z} \subseteq \Pi^{l+l',w+w',m+m'}_{\sigma,\kappa,z}$, $\Pi^{l,w,m}_{\sigma,\kappa,z} \subseteq \Pi^{l',w',m'}_{\sigma,\kappa,z}$ ($m\leq m'$, $w\leq w'$ and $l\leq l'$) and $\Pi^{-\infty,w}_{\sigma,\kappa,z}\subseteq \Pi^{l,w,m}_{\sigma,\kappa,z}$. Moreover, the last inclusion is dense when  $\Pi^{l,w,m}_{\sigma,\kappa,z}$ has the topology of $\Pi^{l',w,m'}_{\sigma,\kappa,z}$ for $m< m'$ and $l<l'$.
\end{lem}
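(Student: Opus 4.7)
My plan is to follow the same blueprint used for Lemma \ref{toposymbol}, since the amplitude seminorms differ from the symbol seminorms only by an extra $\zeta$ weight that is never touched by the arguments. The proof naturally splits into four parts: Fr\'echet-space structure, continuity of the product, continuity of the trivial inclusions, and density of $\Pi^{-\infty,w}_{\sigma,\kappa,z}$ in $\Pi^{l,w,m}_{\sigma,\kappa,z}$ for the coarser topology.

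For the Fr\'echet structure, the seminorms $q^{l,w,m}_{(\a,\b,\ga)}$ are countable and separating (the weights are strictly positive continuous functions, so vanishing of every $q^{l,w,m}_{(\a,\b,\ga)}(a)$ forces $a\equiv 0$). Completeness reduces to the standard fact that a Cauchy sequence $(a_k)$ for these seminorms is Cauchy in $\Coo(\R^{3n},L(E_z))$ for the topology of uniform convergence of all derivatives on compacts (since all weights are bounded away from $0$ and $\infty$ on any compact set), so the pointwise limit $a$ is smooth; the amplitude estimate (\ref{amplest}) passes to the limit by extracting pointwise suprema.

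Continuity of multiplication follows from the Leibniz rule together with the additivity of the weight exponents: for $\nu=(\a,\b,\ga)$ and $\nu'=(\a',\b',\ga')\leq \nu$ one has $|\a'+\b'|+|(\a-\a')+(\b-\b')|=|\a+\b|$ and $|\ga'|+|\ga-\ga'|=|\ga|$, so the product of the individual amplitude estimates gives exactly the estimate of type $(l+l',w+w',m+m')$; the corresponding bound $q^{l+l',w+w',m+m'}_{\nu}(ab)\leq C_\nu \sum_{\nu'\leq\nu} q^{l,w,m}_{\nu'}(a)\,q^{l',w',m'}_{\nu-\nu'}(b)$ gives joint continuity. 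The monotone inclusion $\Pi^{l,w,m}_{\sigma,\kappa,z}\subseteq \Pi^{l',w',m'}_{\sigma,\kappa,z}$ for $l\leq l'$, $w\leq w'$, $m\leq m'$ is even simpler: since $\sigma\geq 0$ and all the $\langle\cdot\rangle$'s are $\geq 1$, enlarging the exponents $l,w,m$ only weakens the estimates, so $q^{l',w',m'}_\nu \leq q^{l,w,m}_\nu$ directly. The inclusion $\Pi^{-\infty,w}_{\sigma,\kappa,z}\subseteq \Pi^{l,w,m}_{\sigma,\kappa,z}$ is just a special case.

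The main work is the density statement, which I would prove by the cutoff technique already used in Lemma \ref{toposymbol}. Given $a\in \Pi^{l,w,m}_{\sigma,\kappa,z}$, fix $\rho\in C^\infty_c(\R^n,[0,1])$ with $\rho=1$ on $B(0,1)$ and $\supp\rho\subseteq B(0,2)$, and define
\[
a_p(\rx,\zeta,\vth):= \bigl(\rho(\rx/p)\bigr)^{1-\delta_{\sigma,0}}\,\rho(\vth/p)\,a(\rx,\zeta,\vth).
\]
Since the cutoffs depend only on $\rx$ and $\vth$ they do not alter the polynomial growth in $\zeta$, so $a_p\in \Pi^{-\infty,w}_{\sigma,\kappa,z}$ (when $\sigma=0$ one uses that $\Pi^{l,w,m}_{0,\kappa,z}$ is $l$-independent, so only the $\vth$-cutoff is needed). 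To show $a_p\to a$ in the topology of $\Pi^{l',w,m'}_{\sigma,\kappa,z}$ when $l<l'$ and $m<m'$, I would apply Leibniz to $\del^{\nu}(a-a_p)=\del^\nu((1-\rho(\rx/p)^{1-\delta_{\sigma,0}}\rho(\vth/p))a)$ and reuse verbatim the two key estimates (\ref{Deltasigma0}) (for $\sigma=0$) and (\ref{Deltasigma1}) (for $\sigma>0$) on the derivatives of $\Delta_p:=1-\rho(\rx/p)^{1-\delta_{\sigma,0}}\rho(\vth/p)$, noting that their supports lie outside $\{\|\vth\|<p\}$ (and outside $\{\|\rx\|<p\}$ when $\sigma>0$). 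The extra factor $\langle\zeta\rangle^{-w-\kappa|\a+\b|}$ in $q^{l',w,m'}_\nu$ exactly cancels the $\langle\zeta\rangle^{w+\kappa|\a+\b|}$ from the amplitude estimate for $a$, so the $\zeta$-variable plays no role in the bookkeeping; the decay in $p$ comes entirely from the gaps $l'-l>0$ and $m'-m>0$, exactly as in Lemma \ref{toposymbol}. The main obstacle, as in the symbol case, is keeping the two regimes $\sigma=0$ and $\sigma>0$ cleanly separated in the cutoff estimates.
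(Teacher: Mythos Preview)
Your proposal is correct and follows essentially the same approach as the paper: the continuity statements are dismissed as straightforward, and the density is proved via the same cutoff sequence $a_p(\rx,\zeta,\vth)=(\rho(\rx/p))^{1-\delta_{\sigma,0}}\rho(\vth/p)\,a(\rx,\zeta,\vth)$, reusing the estimates (\ref{Deltasigma0}) and (\ref{Deltasigma1}) from Lemma~\ref{toposymbol}. One minor sharpening: your claim that the $\langle\zeta\rangle$ weights ``exactly cancel'' is only literally true for the top Leibniz term; for the cross terms $\nu'<\nu$ one has $\b'=\b$ (since $\Delta_p$ is independent of $\zeta$) and the residual factor is $\langle\zeta\rangle^{\kappa(|\a'|-|\a|)}\leq 1$, which is still harmless---the paper records this explicitly.
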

\begin{proof} The continuity results are straightforward. For the density result, we prove as in Lemma \ref{toposymbol}, that for any $a\in \Pi^{l,w,m}_{\sigma,\kappa,z}$ the sequence
$$
a_p(\rx,\zeta,\vth):= (\rho(\rx/p))^{1-\delta_{\sigma,0}}\, \rho(\vth/p)\, a(\rx,\zeta,\vth) =:(1-\Delta_{p}(\rx,\vth))\, a(\rx,\zeta,\vth)
$$
converges to $a$ for the topology of $\Pi^{l',w,m'}_{\sigma,\kappa}(\R^{2n},L(E_z))$ where $m'>m$ and $l'>l$. First note that the application $(\rx,\zeta,\vth)\mapsto  (\rho(\rx/p))^{1-\delta_{\sigma,0}}\, \rho(\vth/p)\, \, \Id_{L(E_z)}$ is an amplitude in $\Pi^{-\infty,0}_{\sigma,0,z}$. Thus, $(a_p)_{p\in \N^*}$ is a sequence in  $\Pi_{\sigma,\kappa,z}^{-\infty,w}$. We define the function $R_p$ such that $q_{(\a,\b,\ga)}^{l',w,m'}(a-a_p)=\sup_{(\rx,\zeta,\vth)\in \R^{3n}}R_p(\rx,\zeta,\vth)$, where $m'>m$ and $l'>l$. For a given $3n$-multi-index $\nu:=(\a,\b,\ga)$, we get with Leibniz rule, for a $K>0$,
\begin{align*}
\tfrac{1}{K}\,R_p (\rx,\zeta,\vth)\leq &\ \Delta_p(\rx,\vth)\, \langle \rx \rangle^{\sigma(l-l')}\, \langle \vth \rangle^{m-m'} + \sum_{\nu'< \nu}   |\del^{\nu-\nu'}\Delta_p(\rx,\vth)|\\
&\hspace{1cm}\times\langle \rx \rangle^{\sigma(l-l'+|\a+\b|-|\a'+\b'|)}\langle \zeta \rangle^{\kappa(|\a'+\b'|-|\a+\b|)} \langle \vth \rangle^{m-m'+|\ga|-|\ga'|}\, .
\end{align*}
Suppose that $\sigma=0$. In that case, 
$|\Delta_p(\rx,\vth)|\leq 1_{[p,+\infty[}(\vth)$ and if $\nu'<\nu$, 
\begin{equation*}
|\del^{\nu-\nu'}\Delta_p(\rx,\vth)|\leq \delta_{\a,\a'}\, \delta_{\b,\b'}\,K_{\ga}\, p^{-|\ga|+|\ga'|}\, 1_{[p,2p]}(\vth)\,.
\end{equation*}
As a consequence we find
$R_p (\rx,\zeta,\vth) = \O_{p\to \infty} (\langle p \rangle^{m-m'})$, as in Lemma \ref{toposymbol}. 
Suppose now $\sigma\neq 0$. In that case $|\Delta_p(\rx,\vth)|\leq 1_{F_p}(\rx,\vth)$ where $F_p:=\R^{2n}-B_{n}(0,p)\times B_n(0,p)$ and if $\nu'<\nu$, for a constant $K_\nu>0$
\begin{equation*}
|\del^{\nu-\nu'}\Delta_p(\rx,\vth)|\leq \delta_{\b-\b',0} K_\nu\, 1_{[\sgn(\a-\a')p,2p]}(\rx)\  1_{[\sgn(\ga-\ga')p,2p]}(\vth)\, p^{-|\nu|+|\nu'|}\, .
\end{equation*}
As a consequence, we find
$R_p (\rx,\zeta,\vth) = \O_{p\to \infty} (\langle p \rangle^{r})$ where $r:=\max \{m-m',\sigma(l-l')\}<0$ and the result follows.
\end{proof}

We shall note $\Delta_\zeta$ the differential operator $\sum_{i=1}^n \del_{\zeta_i}^2$. 
The following formula is valid for any $\vth,\zeta\in \R^n$ and $p\in \N$,
\begin{equation}
\label{e2pii}
\langle \vth\rangle^{2p}e^{2\pi i \langle \vth,\zeta\rangle} = (1-(2\pi)^{-2} \Delta_{\zeta})^{p}\,e^{2\pi i \langle \vth,\zeta\rangle}=: L_{\zeta}^p \,e^{2\pi i \langle \vth,\zeta\rangle}
\end{equation} 
A computation shows that $(1-(2\pi)^{-2} \Delta_{\zeta})^{p} = \sum_{0\leq |\b|\leq p} c_{p,\b}\,\del^{2\b}_\zeta$, where the summation is on $n$-multi-indices $\b$ and $c_{p,\b}:= \tbinom{p}{|\b|}(-1)^{|\b|}(2\pi)^{-2|\b|}\b!$. 
We shall also use the following useful formula valid for any $\vth\in \R^n$, $\zeta\in \R^{n}\backslash\set{0}$ and $p\in \N$,
\begin{equation}
\label{Mformula}
e^{2\pi  i \langle \vth,\zeta\rangle} = \sum_{|\b|=p} \la_\b\,\tfrac{\zeta^\b}{\norm{\zeta}^{2p}}\, \del^{\b}_\vth \,e^{2\pi  i \langle \vth,\zeta\rangle} =: M_{\vth}^{p,\zeta} \,e^{2\pi  i \langle \vth,\zeta\rangle} 
\end{equation}  
where $\la_\b :=\b!(2\pi)^{-|\b|}i^{|\b|}$. We define $^t M_{\vth}^{p,\zeta}:= \sum_{|\b|=p} \la_\b (-1)^p \tfrac{\zeta^\b}{\norm{\zeta}^{2p}} \del^{\b}_\vth$. 

\begin{defn}
\label{OFZ}
We note $\O_{f,z}$, where $f_1,f_2,f_3:\N^{3n}\to \R$, and $f:=(f_1,f_2,f_3)$, the space of smooth functions in $C^{\infty}(\R^{3n},L(E_z))$ such that for any $3n$-multi-index $\nu=(\a,\b,\ga)$, there is $C_\nu>0$ such that 
$$
\norm{\del^\nu a (\rx,\zeta,\vth)}_{L(E_z)} \leq C_\nu \langle \rx \rangle^{f_1(\nu)}\langle \zeta\rangle ^{f_2(\nu)} \langle \vth\rangle ^{f_3(\nu)}
$$
uniformly in $(\rx,\zeta,\vth)\in \R^{3n}$. 
\end{defn}

The vector space $\O_{f,z}$ has a natural family of seminorms $q_{\nu}^{f}$ given by the best constants $C_\nu$ in the previous estimate. With this family, $\O_{f,z}$ is a Fr\'{e}chet space. Obviously, amplitudes in $\Pi_{\sg,\ka,z}^{l,w,m}$ form an $\O_{f,z}$ space where $f_1(\nu):=\sg(l-|\a+\b|)$, $f_2(\nu):=w+\kappa|\a+\b|$ and $f_3(\nu):=m-|\ga|$. For a given triple $f:=(f_1,f_2,f_3)$ and $\rho\in \R$, we will note $f_{3,\rho,\a,\ga}:=\sup_{\b} f_3(\a,\b,\ga)-\rho|\b|$, $f_{2,\rho,\a,\b}:=\sup_{\ga} f_2(\a,\b,\ga)-\rho|\ga|$ and $f_{1,\rho,\a,\b}:=\sup_{\ga} f_1(\a,\b,\ga)-\rho|\ga|$.

\begin{prop}
\label{ampliOP} Let $\Ga$ a continuous linear operator on the space $\S(\R^{2n},L(E_z))$, and $f:=(f_1,f_2,f_3)$ a triple such that there exists $\rho<1$ such that $f_{3,\rho,0,0}<\infty$.

\noindent (i) For any function $a\in \O_{f,z}$ the following antilinear form on $\S(\R^{2n},L(E_z))$ 
$$
\langle \Op_{\Ga}(a) , u\rangle :=  \int_{\R^{3n}} e^{2\pi i \langle \vth,\zeta \rangle } \Tr(a(\rx,\zeta,\vth)\, \Ga(u)^*(\rx,\zeta)) \, d\zeta\,d\vth\,  d\rx
$$
is in $\S'(\R^{2n},L(E_z))$.

\noindent (ii) For any given $u\in \S(\R^{2n},L(E_z))$, the linear form $L_{u,\Ga}:= a\mapsto \langle \Op_{\Ga}(a) , u\rangle$ is continuous on $\O_{f,z}$. In particular $L_{u,\Ga}$ is continuous on any amplitude space $\Pi_{\sigma,\kappa,z}^{l,w,m}$. 
\end{prop}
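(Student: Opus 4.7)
The integrand is not absolutely integrable in general, so the plan is to recast $\langle \Op_{\Ga}(a),u\rangle$ as a finite sum of absolutely convergent integrals via integration by parts in $\zeta$, regularizing first to justify the manipulation. Fix $\phi\in\S(\R^n)$ with $\phi(0)=1$ and set $a_\eps(\rx,\zeta,\vth):=\phi(\eps\vth)\,a(\rx,\zeta,\vth)$. For $\eps>0$ the integral
\[
I_\eps(a,u) := \int_{\R^{3n}} e^{2\pi i\langle\vth,\zeta\rangle}\Tr\bigl(a_\eps(\rx,\zeta,\vth)\,\Ga(u)^*(\rx,\zeta)\bigr)\,d\zeta\,d\vth\,d\rx
\]
is absolutely convergent: $\phi(\eps\vth)$ has rapid decay in $\vth$ that dominates the polynomial growth of $a$ in $\vth$, while $\Ga(u)^*\in\S(\R^{2n},L(E_z))$ controls the $(\rx,\zeta)$ behaviour. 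The candidate definition is $\langle \Op_{\Ga}(a),u\rangle:=\lim_{\eps\to 0}I_\eps(a,u)$.

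For each $\eps>0$, I apply identity (\ref{e2pii}) to write $e^{2\pi i\langle\vth,\zeta\rangle}=\langle\vth\rangle^{-2p}L_\zeta^p\,e^{2\pi i\langle\vth,\zeta\rangle}$ and integrate by parts in $\zeta$; boundary terms vanish by the rapid $\zeta$-decay of $\Ga(u)^*$, yielding
\[
I_\eps(a,u) = \sum_{|\b|\le p} c_{p,\b}\int_{\R^{3n}} \langle\vth\rangle^{-2p}\,e^{2\pi i\langle\vth,\zeta\rangle}\phi(\eps\vth)\,\Tr\bigl(\del^{2\b}_\zeta[a\,\Ga(u)^*]\bigr)\,d\zeta\,d\vth\,d\rx.
\]
The crucial step is a uniform-in-$\eps$ absolute bound for the right-hand side. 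The hypothesis $f_{3,\rho,0,0}<\infty$ gives $f_3(0,\b',0)\le f_{3,\rho,0,0}+\rho|\b'|$ for all $\b'$, so after expanding $\del^{2\b}_\zeta[a\,\Ga(u)^*]$ by Leibniz with multi-indices $\b_1+\b_2=2\b$ (and $|\b_1|\le 2p$), the $\vth$-integrand is pointwise bounded by $C\langle\vth\rangle^{2p(\rho-1)+f_{3,\rho,0,0}}$; since $\rho<1$, choosing $p$ so that $2p(1-\rho)>f_{3,\rho,0,0}+n$ ensures $\vth$-integrability. The remaining factors pair a polynomial-growth function of $(\rx,\zeta)$ coming from $\del^{\b_1}_\zeta a$ with $\del^{\b_2}_\zeta \Ga(u)^*$, which decays faster than any polynomial in $(\rx,\zeta)$, so the $(\rx,\zeta)$-integration is also absolutely convergent. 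Dominated convergence then lets $\eps\to 0$ through the IBP formula, producing $\langle\Op_\Ga(a),u\rangle$ independently of the choices of $p$ (large enough) and $\phi$.

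Continuity is read directly off the same bound: summing over $\b$ gives an estimate of the form
\[
|\langle \Op_{\Ga}(a),u\rangle| \le C_p\sum_{|\nu|\le N_p} q^{f}_{\nu}(a)\cdot Q\bigl(\Ga(u)\bigr)
\]
for some finite $N_p$ and some continuous seminorm $Q$ on $\S(\R^{2n},L(E_z))$. Continuity of $\Ga$ bounds $Q(\Ga(u))$ by a Schwartz seminorm of $u$, proving both (i) $\Op_\Ga(a)\in\S'(\R^{2n},L(E_z))$ and, reading the bound as linear in $a$, (ii) the continuity of $L_{u,\Ga}$ on $\O_{f,z}$. For an amplitude $a\in\Pi^{l,w,m}_{\sigma,\kappa,z}$, taking $f_1(\a,\b,\ga):=\sigma(l-|\a+\b|)$, $f_2(\a,\b,\ga):=w+\kappa|\a+\b|$, $f_3(\a,\b,\ga):=m-|\ga|$ identifies $\Pi^{l,w,m}_{\sigma,\kappa,z}$ with $\O_{f,z}$ (with matching seminorms by Lemma \ref{topoampli}), and $f_{3,\rho,0,0}=m$ trivially satisfies the hypothesis for every $\rho\in(0,1)$, giving the final clause of (ii).

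The main obstacle is precisely this $\vth$-integrability bookkeeping: because $f_{3,\rho,0,0}<\infty$ constrains the $\vth$-growth of derivatives of $a$ only when the derivation is purely in $\zeta$ (at $\a=\ga=0$), the IBP must be carried out exclusively in $\zeta$ using $L_\zeta^p$, and the $(\rx,\zeta)$-integrability must then be borne entirely by the Schwartz decay of $\Ga(u)^*$ — which is exactly what the hypothesis that $\Ga$ acts continuously on $\S(\R^{2n},L(E_z))$ provides. Any attempt to also integrate by parts in $\vth$ (via $M^{p,\zeta}_\vth$) would require a bound on $f_3(0,0,\ga)$ that the hypothesis does not offer.
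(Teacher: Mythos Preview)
Your proof is correct and follows essentially the same approach as the paper: both gain the needed $\vth$-decay by integrating by parts in $\zeta$ via $L_\zeta^p$, rely on the Schwartz decay of $\Ga(u)^*$ for the $(\rx,\zeta)$-integrability, and read off the continuity estimates from the resulting bound. The only cosmetic difference is that the paper interprets the oscillatory integral directly as an iterated integral (the inner $\zeta$-integral being absolutely convergent for each fixed $(\rx,\vth)$, since $u$ is Schwartz in $\zeta$) and then establishes integrability in $(\rx,\vth)$ after IBP, whereas you first regularize with $\phi(\eps\vth)$ and pass to the limit by dominated convergence.
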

\begin{proof} $(i)$ We have $\Op_{\Ga}(a) = I(a) \circ \Ga$, where $I(a)$ is the antilinear form on  
$\S(\R^{2n},L(E_z))$:
$$
\langle I(a) , u\rangle :=  \int_{\R^{3n}} e^{2\pi i \langle \vth,\zeta \rangle } \Tr(a(\rx,\zeta,\vth)\, u^*(\rx,\zeta)) \,  d\zeta\, d\vth\, d\rx\,.
$$
We shall prove that $I(a)\in \S'(\R^{2n},L(E_z))$, which will give the result. Let $u\in \S(\R^{2n},L(E_z))$ and let us fix for now $\rx$ and $\vth \in \R^n$. We can check that the map $\zeta\mapsto a(\rx,\zeta,\vth)\, u^*(\rx,\zeta)$ is in $\S(\R^n, L(E_z))$. As a consequence, with (\ref{e2pii}) 
and integration by parts, we get with $R(\rx,\vth):= \int_{\R^n} e^{2\pi i \langle \vth,\zeta \rangle } a(\rx,\zeta,\vth)\,u^*(\rx,\zeta) \,d\zeta$,
\begin{align*}
 R(\rx,\vth)&= \int_{\R^n} e^{2\pi i \langle \vth,\zeta \rangle }\langle \vth\rangle^{-2p}  (1-(2\pi)^{-2} \Delta_{\zeta})^{p}\, a(\rx,\zeta,\vth)\, u^*(\rx,\zeta)\, d\zeta \\
 &= \sum_{0\leq |\beta| \leq p}\,\sum_{\b'\leq 2\b} c_{p,\b}\,\tbinom{2\b}{\b'}\langle \vth\rangle^{-2p}\int_{\R^n} e^{2\pi i \langle \vth,\zeta \rangle }(\del^{(0,\b',0)}\, a(\rx,\zeta,\vth))\,(\del^{(0,2\b-\b')} u^*(\rx,\zeta))\, d\zeta\, .
\end{align*}
Thus, for any $\rx,\vth\in \R^n$, we get by fixing $p$ such that $2(\rho-1)p+f_{3,\rho,0,0}\leq -2n$ (this is possible since $\rho<1$) that for any $N\in \N$,
$$
\norm{R(\rx,\vth)}_{L(E_z)} \leq C_p \langle \vth\rangle^{-2n} \int_{\R^{n}}\langle \rx,\zeta \rangle^{-N+r_p}\, d\zeta \sum_{0\leq |\beta|\leq p}\sum_{\b'\leq 2\b} q_{0,\b',0}^{f}(a)\, q_{N,(0,2\b-\b')}(u)\,   
$$
for a $C_p>0$, where $r_p:=\max_{|\b'|\leq 2p} |f_1(0,\b',0)| + |f_2(0,\b',0)|$.  If we now fix $N$ such that $-N+r_p\leq -4n$, we see, using the inequality $\langle \rx,\zeta \rangle^{-2} \leq \langle \rx \rangle^{-1} \langle \zeta\rangle^{-1}$, that there is $C_{\rho,f}>0$ such that 
\begin{equation}
\label{Iaueq}
|\langle I(a), u\rangle |\leq C_{\rho,f} \sum_{0\leq |\beta|\leq p}\sum_{\b'\leq 2\b}  q_{0,\b',0}^{f}(a)\,q_{N,(0,2\b-\b')}(u)
\end{equation}
which yields the result. 

\noindent $(ii)$ The continuity of $L_{u,\Ga}$ on $\O_{f,z}$ follows directly from (\ref{Iaueq}) since $L_{u,\Ga}(a)=\langle I(a),\Ga(u)\rangle$. Since $\Pi^{l,w,m}_{\sigma,\kappa,z}= \O_{f,z}$ for a triple $f=(f_1,f_2,f_3)$ such that $f_{3,0,0,0}<\infty$, $L_{u,\Ga}$ is continous on any amplitude space.
\end{proof}

For any amplitude $a$, we will also note $\Op_{\Ga}(a)$ the continous linear map from $\S(\R^{n},E_z)$ into $\S'(\R^n,E_z)$, associated to the tempered distribution $u\mapsto \langle \Op_{\Ga}(a),u\rangle$. 

\begin{rem} If $(M,\exp,E,d\mu,\psi)$ has a $\O_M$-bounded geometry, we saw that for any frame $(z,\bfr)$ and $\la\in [0,1]$, the $\Ga_{\la,z,\bfr}$ maps are topological isomorphisms on $\S'(\R^{2n},L(E_z))$. Thus, Lemma \ref{ampliOP} implies that for a given $a\in \Pi_{\sigma,\kappa,z}^{l,w,m}$, we can define a family indexed by $\la\in [0,1]$ of operators $\Op_{\Ga_{\la,z,\bfr}}(a)$ which are continous from  $\S(\R^{n},E_z)$ into $\S'(\R^n,E_z)$.
\end{rem}

\begin{rem}
\label{Oplien} Suppose that $(M,\exp,E,d\mu)$ has a $\S_{\sigma}$ bounded geometry and that $\psi$ is a $\O_M$-linearization. We deduce from (\ref{eq-oplazb}) that if $s$ is a symbol in $S^{l,m}_\sigma$ and $\la\in [0,1]$, we have $(\Op_\la(s))_{z,\bfr}=\Op_{\Ga_{\la,z,\bfr}}(\mu s_{z,\bfr})$ where $(z,\bfr)$ is a frame, $s_{z,\bfr}:=T_{z,\bfr,*}(s)$ and $\mu s_{z,\bfr}:=(\rx,\zeta,\vth)\mapsto \mu_{z,\bfr}(\rx)\, s_{z,\bfr}(\rx,\vth) \in \Pi^{l,0,m}_{\sigma,0,z}$. We will also note $\mu^{-1} s_{z,\bfr}(\rx,\zeta,\vth):= \mu_{z,\bfr}^{-1}(\rx) s_{z,\bfr}(\rx,\vth) \in \Pi^{l,0,m}_{\sigma,0,z}$.
\end{rem}

We now establish a sufficient condition on $\Ga$ and $a$ in order to have $\Op_\Ga(a)$ stable (and continuous) on $\S(\R^n,E_z)$.
The result will be used to establish regularity of pseudodifferential operators.

\begin{lem}
\label{amplContinu} Let $\Ga$ be a continuous linear operator on $\S(\R^{2n},L(E_z))$ of the form $\Ga= L_{\tau_1}\circ R_{\tau_2}\circ C_{\Phi}$, where $\tau_i\in \O_M(\R^{2n},L(E_z))$ (for $1\leq i\leq 2$), and $\Phi:=(\pi_1,\psi)\in C^{\infty}(\R^{2n},\R^{2n})$ is such that $\psi\in \O_M(\R^{2n},\R^n)$ and there exist $c,\eps,r >0$, such that for any $(\rx,\zeta)\in \R^{2n}$, $\langle \psi(\rx,\zeta)\rangle\geq c \langle \rx \rangle^{\eps} \langle \zeta \rangle^{-r}$ and for any $\rx\in \R^n$, there is $c_\rx >0$ such that $\langle\psi(\rx,\zeta)\rangle\geq c_\rx \langle \zeta \rangle^{\eps}$ uniformly in $\zeta\in \R^n$.

Suppose that $f=(f_1,f_2,f_3)$ is such that there exist $(\rho_1,\rho_2,\rho_3)\in \R^3$ such that $\rho_3<1$, $(r/\eps)\rho_1+\rho_2<1$ and for any $2n$-multi-index $\mu$, $f_{1,\rho_1,\mu}<\infty$, $f_{2,\rho_2,\mu}<\infty$, $f_{3,\rho_3,\mu}<\infty$ and for any $n$-multi-index $\a$ $f_{3,\rho_3,\a}:=\sup_{\ga}f_{3,\rho_3,\a,\ga}<\infty$. Then for any function $a\in \O_{f,z}$, the operator $\mathfrak{Op}_{\Ga}(a)$ is continuous from $\S(\R^{n},E_z)$ into itself. In particular, this is the case for any amplitude $a\in \Pi^{l,w,m}_{\sigma,\kappa,z}$.
\end{lem}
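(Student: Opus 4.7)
The strategy is to produce an explicit pointwise formula for $\Op_\Ga(a)v$, show by repeated integration by parts that the resulting oscillatory integral defines a Schwartz function, and then read off continuity. Testing the definition of $\Op_\Ga(a)$ in Proposition~\ref{ampliOP} against rank-one kernels $u(\rx,\ry)=w(\rx)v(\ry)^*$, the identity $\Ga(u)^*(\rx,\zeta)=\tau_2^*(\rx,\zeta)\,v(\psi(\rx,\zeta))\,w(\rx)^*\,\tau_1^*(\rx,\zeta)$ and cyclicity of the trace yield the candidate formula
$$F_a(\rx):=\int_{\R^{2n}} e^{2\pi i\langle\vth,\zeta\rangle}\,h(\rx,\zeta,\vth)\,v(\psi(\rx,\zeta))\,d\zeta\,d\vth,\quad h:=\tau_1^*\,a\,\tau_2^*,$$
interpreted as an oscillatory integral, and the plan is to prove $F_a\in\S(\R^n,E_z)$ and $\Op_\Ga(a)v=F_a$ in $\S'$. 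Since $\tau_1,\tau_2\in\O_M(\R^{2n},L(E_z))$ are $\vth$-independent, $h$ belongs to some $\O_{\tilde f,z}$ with $\tilde f_3=f_3$ and $\tilde f_1,\tilde f_2$ shifted only by polynomial amounts in $\rx,\zeta$, so all finiteness hypotheses on $f$ transfer to $\tilde f$ with the same constants $\rho_1,\rho_2,\rho_3$.

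To regularize $F_a$, introduce a cutoff $\chi\in C^\infty_c(\R^n,[0,1])$ with $\chi=1$ on $B(0,1)$ and $\supp\chi\subset B(0,2)$, and split $F_a=F_a^\chi+F_a^{1-\chi}$. On the piece with $1-\chi$ (where $\|\zeta\|\ge 1$) apply (\ref{Mformula}) in the form $e^{2\pi i\langle\vth,\zeta\rangle}={}^tM_\vth^{p,\zeta}e^{2\pi i\langle\vth,\zeta\rangle}$ and integrate by parts in $\vth$, producing a factor of order $\|\zeta\|^{-p}$ and derivatives $\del_\vth^\b h$ with $|\b|=p$. On both pieces apply (\ref{e2pii}) in the form $\langle\vth\rangle^{2q}e^{2\pi i\langle\vth,\zeta\rangle}=L_\zeta^q e^{2\pi i\langle\vth,\zeta\rangle}$ and integrate by parts in $\zeta$, producing the factor $\langle\vth\rangle^{-2q}$ at the price of $\del_\zeta^{2\ga}$ ($|\ga|\le q$) applied to the integrand, which is expanded by Leibniz and by the Fa\`{a}~di~Bruno formula (Theorem~\ref{FaaCS}) for $v\circ\psi$.

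Each resulting term is then dominated by
$$C\,\langle\vth\rangle^{f_3(0,\delta,\b)-2q}\,\|\zeta\|^{-p}\,\langle\zeta\rangle^{f_2(0,\delta,\b)+kr+C_q}\,\langle\rx\rangle^{f_1(0,\delta,\b)-k\eps+C_q'},$$
with $|\delta|\le 2q$, $|\b|\le p$, where the product $\langle\rx\rangle^{-k\eps}\langle\zeta\rangle^{kr}$ bounds $\|(\del^\mu v)(\psi(\rx,\zeta))\|\le C_k\langle\psi\rangle^{-k}\le C_k c^{-k}\langle\rx\rangle^{-k\eps}\langle\zeta\rangle^{kr}$ via the first lower bound on $\langle\psi\rangle$, and the constants $C_q,C_q'$ come from Fa\`a~di~Bruno using only that $\psi\in\O_M$. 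Using $f_3(0,\delta,\b)\le f_{3,\rho_3,0}+\rho_3|\delta|$ and $f_i(0,\delta,\b)\le f_{i,\rho_i,0,\delta}+\rho_i|\b|$ for $i=1,2$, one chooses $q$ so that $2q(1-\rho_3)>n+f_{3,\rho_3,0}$ (possible since $\rho_3<1$), then $k$ so that $k\eps\ge N+\rho_1 p+\text{const}(q)$ (giving $\rx^{-N}$-decay), then $p$ so that $p\bigl(1-\rho_2-(r/\eps)\rho_1\bigr)>n+(Nr/\eps)+\text{const}(q)$ (giving $\zeta$-integrability); the last inequality is solvable because $(r/\eps)\rho_1+\rho_2<1$. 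This yields $\sup_\rx\langle\rx\rangle^N\|F_a(\rx)\|<\infty$ for every $N\in\N$.

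For derivatives $\del_\rx^\a F_a$ one differentiates under the integral (legitimate a posteriori from the uniform bounds) and runs the same balancing with $f_{3,\rho_3,\a}<\infty$ in place of $f_{3,\rho_3,0}$, so $F_a\in\S(\R^n,E_z)$; the distributional identity $\Op_\Ga(a)v=F_a$ then follows from Fubini in $\int w^*F_a\,d\rx$, valid by the established absolute convergence. All estimates depend linearly on finitely many seminorms of $a\in\O_{f,z}$ and $v\in\S(\R^n,E_z)$, whence continuity of $v\mapsto\Op_\Ga(a)v$. The amplitude case $a\in\Pi^{l,w,m}_{\sigma,\kappa,z}$ is checked by observing that $f_1,f_2$ are then $\ga$-independent and $f_3(\a,\b,\ga)=m-|\ga|$, so the hypotheses hold with $\rho_1=\rho_2=0$ and any $\rho_3\in(0,1)$. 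The main obstacle is the combinatorial bookkeeping of the many multi-indices produced by successive integrations by parts, Leibniz expansions and Fa\`a~di~Bruno substitutions for $v\circ\psi$, and verifying that the $\O_M$-polynomial factors coming from derivatives of $\psi$ only contribute $q$-dependent constants and do not perturb the critical inequality $(r/\eps)\rho_1+\rho_2<1$.
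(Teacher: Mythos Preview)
Your overall strategy is the same as the paper's: reduce to the scalar oscillatory integral $g(\rx)=\int e^{2\pi i\langle\vth,\zeta\rangle}a'(\rx,\zeta,\vth)\,v(\psi(\rx,\zeta))\,d\zeta\,d\vth$ with $a'=\tau_1^*a\tau_2^*$, perform two rounds of integration by parts (one in $\zeta$, one in $\vth$), and balance exponents so that the critical constraint $(r/\eps)\rho_1+\rho_2<1$ gives simultaneous $\rx$-decay and $(\zeta,\vth)$-integrability. The exponent bookkeeping you outline is correct, and your verification of the amplitude case is fine.

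Where your write-up differs from the paper is in the \emph{order} of integration by parts, and this creates a genuine justification gap. You first apply $M_\vth^{p,\zeta}$ (formula~(\ref{Mformula})) on the region $\|\zeta\|\ge 1$ and integrate by parts in $\vth$, then apply $L_\zeta^q$. But before any regularisation the $\vth$-integrand $e^{2\pi i\langle\vth,\zeta\rangle}h(\rx,\zeta,\vth)v(\psi(\rx,\zeta))$ has polynomial growth in $\vth$ (through $f_3$), so the $\vth$-integral diverges and the $\vth$-IBP is not defined. The paper avoids this by reversing the order: it first integrates by parts in $\zeta$ via $L_\zeta^p$, using the \emph{second} lower bound $\langle\psi(\rx,\zeta)\rangle\ge c_\rx\langle\zeta\rangle^\eps$ to show that $\zeta\mapsto a'(\rx,\zeta,\vth)\,v(\psi(\rx,\zeta))$ lies in $\S(\R^n)$ for each fixed $(\rx,\vth)$. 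This produces the factor $\langle\vth\rangle^{-2p}$, after which the $\vth$-integral is absolutely convergent and a further IBP in $\vth$ (via $L_\vth^q$) is legitimate. You never invoke this second lower bound, and ``interpreted as an oscillatory integral'' does not by itself fix the order-of-operations problem.

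Your scheme can be repaired: either reverse the order of IBP as the paper does (and then you will need the pointwise bound $\langle\psi\rangle\ge c_\rx\langle\zeta\rangle^\eps$), or insert an additional cutoff $\chi_R(\vth)$, carry out your $M_\vth^{p,\zeta}$ and $L_\zeta^q$ steps for finite $R$, and pass to the limit by dominated convergence --- the cross terms carrying derivatives of $\chi_R$ are supported where $\|\vth\|\sim R$ and vanish under the same condition $2q(1-\rho_3)>n+f_{3,\rho_3,\a}$ you already impose. Either route closes the gap with no change to your exponent balancing.
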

\begin{proof} Let $u,v\in \S(\R^{n},E_z)$. By definition, $\langle\mathfrak{Op}_{\Ga}(a) (v) ,u\rangle = \Op_{\Ga}(a)(u\otimes \ol v)$ and $\Ga(K)= \tau_1\, (K\circ \Phi)\, \tau_2$. Noting $a'(\rx,\zeta,\vth):=\tau_1^*(\rx,\zeta)\,a(\rx,\zeta,\vth)\,\tau_2^*(\rx,\zeta)$, we obtain
\begin{align*}
\langle\mathfrak{Op}_{\Ga}(a) (v) ,u\rangle&:=  \int_{\R^{3n}} e^{2\pi i \langle \vth,\zeta \rangle } \big(\,a'(\rx,\zeta,\vth)\, \,v(\psi(\rx,\zeta))\, \big| \,u(\rx)\,\big) \,  d\zeta\,d\vth\,d\rx \, \\
& =  \int_{\R^{n}} \big(\,g(\rx) \big| \,u(\rx)\,\big) \,  d\rx\ 
\end{align*}
where $g(\rx):=\int_{\R^{2n}} e^{2\pi i \langle \vth,\zeta \rangle } \,a'(\rx,\zeta,\vth)\, \,v\circ \psi(\rx,\zeta)\,d\zeta\,d\vth$.

A computation with the Faa di Bruno formula shows that for any $2n$-multi-index $\nu$, any $N\in \N$ and any $\rx\in \R^n$ there is $C_{\rx,N,\nu}>0$ such that $\norm{\del^\nu (v\circ \psi) (\rx,\zeta)}_{E_z}\leq C_{\rx,N,\nu}\langle \zeta\rangle^{-N}$ uniformly in $\zeta\in \R^n$. As a consequence, the map $\zeta \mapsto \del^{\a',0}a'(\rx,\zeta,\vth)\,\del^{\a-\a'}(v\circ \psi)(\rx,\zeta)$ is in $\S(\R^n,E_z)$. We can thus successively integrate by parts in $g(\rx)$ so that for any $p\in \N^*$,
\begin{align*}
&g(\rx) =\int_{\R^{2n}}e^{2\pi i \langle \vth,\zeta \rangle } \langle\vth \rangle^{-2p}L_{\zeta}^p (a'(v\circ \psi))(\rx,\zeta,\vth)\,d\zeta\,d\vth \, .
\end{align*}
By taking $p$ such that $(\rho_3-1)2p +c_0 \leq -2n$ where $c_\a:=\sup_{\a'\leq \a} f_{3,\rho_3,\a'}$, we see that the previous integrand is absolutely integrable, and we can permute the order of integrations $d\zeta d\vth\to d\vth d\zeta$. Since all the successive $\vth$-derivatives of $\langle\vth \rangle^{-2p}L_{\zeta}^p (a'(v\circ \psi))(\rx,\zeta,\vth)$ converges to 0 when $\langle \vth\rangle$ goes to infinity, we can then integrate by parts in $\vth$ so that for any $q\in \N$ and $p\geq p_0$
$$
g(\rx) =\int_{\R^{2n}}e^{2\pi i \langle \vth,\zeta \rangle }\langle \zeta\rangle^{-2q} L_{\vth}^q(\langle\vth \rangle^{-2p}L_{\zeta}^p (a'(v\circ \psi)))(\rx,\zeta,\vth)\,d\zeta\,d\vth \, .
$$
Noting $h_{p,q}$ the previous integrand, we see that for any $n$-multi-index $\a$,  $\del^\a h_{p,q}$ is a linear combination of terms of the form
$$
e^{2\pi i\langle \vth,\zeta\rangle} \langle \zeta\rangle^{-2q} \langle \vth\rangle^{-2p-|\ga-\ga'|} \del^{\a',\b',\ga'} a' \del^{\a-\a',\b-\b'} v\circ \psi
$$
where $|\ga|\leq 2p$, $\ga'\leq \ga$, $|\b|\leq 2q$, $\b'\leq \b$ and $\a'\leq \a$. A computation with the Faa di Bruno formula shows that for any $2n$-multi-index $\nu$ there is $r_{\nu}\in\N^*$ such that for any $N>0$, there is $C_{\nu,N}>0$ such that for any $w\in \S(\R^n,E_z)$ and any $(\rx,\zeta)\in \R^{2n}$, $\norm{\del^\nu (w\circ \psi)(\rx,\zeta)}_{E_z}\leq C_{\nu,N}\langle\rx,\zeta\rangle^{r_\nu-N}\langle \zeta\rangle^{r_\nu+(r/\eps)N}\sum_{|\nu'|\leq |\nu|} q_{[N/\eps]+1,\nu'}(w)$. Moreover, we check that there is $K_{\a,p}>0$ such that 
\begin{equation*}
\norm{\del^{(\a',\b',\ga')} a'(\rx,\zeta,\vth)}_{L(E_z)} \leq C_{\a,p,q} \langle \rx\rangle^{K_{\a,p}+ \rho_1 2q}\langle \zeta\rangle^{K_{\a,p}+\rho_2 2q} \langle \vth\rangle^{c_\a +\rho_3 2p} \, .
\end{equation*}
As a consequence, we get the estimate
$$
\norm{\del^\a h_{p,q}} \leq C_{\a,p,q,N} \langle\rx \rangle^{K'_{\a,p}+\rho_1 2q -N} \langle \zeta\rangle^{K'_{\a,p}+(\rho_2-1)2q+(r/\eps)N} \langle \vth\rangle^{c_\a+(\rho_3-1)2p} \sum_{|\nu'|\leq |\nu|} q_{[N/\eps]+1,\nu'}(v)\, .
$$
or equivalently, replacing $K'_{\a,p}+\rho_1 2q -N$ by $-N$, 
\begin{align*}
&\norm{\del^\a h_{p,q}} \leq C_{\a,p,q,N} \langle\rx \rangle^{-N} \langle \zeta\rangle^{K''_{\a,p}+(\rho_2-1+(r/\eps)\rho_1)2q+(r/\eps)N} \langle \vth\rangle^{c_\a+(\rho_3-1)2p}\\ 
&\hspace{3cm} \sum_{|\nu'|\leq |\nu|} q_{[N+K'_{\a,p}+\rho_1 2q/\eps]+1,\nu'}(v)\, .
\end{align*}
Fixing now, for a given $N$, $p$ such that $(\rho_3-1)2p+ c_\a \leq -2n$ and $q$ such that $K''_{\a,p}+(\rho_2-1+(r/\eps)\rho_1)2q+(r/\eps)N \leq -2n$, we obtain the result.  
\end{proof}

The following lemma gives a characterization of smoothing kernels in the cases $\sigma=0$ and $\sigma\neq 0$. If $s$ is in a space of symbols and $\Ga$ is a continuous linear map on $\S(\R^{2n},L(E_z))$, we will note $\Op_\Ga(s):=\Op_\Ga((\rx,\zeta,\vth)\mapsto s(\rx,\vth))$. We shall use the Fr\'{e}chet space $\O_{\sg,f,z}^{l,m}$ of smooth functions $a$ in $C^{\infty}(\R^{3n},L(E_z))$ such that for any $\nu:=(\mu,\ga)\in \N^{2n}\times \N^n$ 
$$
\norm{\del^\nu a (\rx,\zeta,\vth)}_{L(E_z)}\leq C_\nu \langle \rx\rangle^{\sg(l+f_1(\mu))} \langle \zeta\rangle^{f_2(\nu)} \langle \vth\rangle^{m+f_3(\mu)}\, .
$$
We will note $\O_{0,f,z}^{l,m}=:\O_{f_2,f_3,z}^{m}$.
Clearly, $\Op_{\Ga}(a)$ (see Lemma \ref{ampliOP}) is defined as an antilinear form on $\S(\R^{2n},L(E_z))$ whenever $a\in \O_{f,z}^{l,m}$ with $m+f_3(0)< -n$.
We note $F$ the set of functions $f_2:\N^{3n}\to \R$ such that there is $\rho<1$ such that for any $(\a,\b)\in \N^{2n}$ $f_{2,\rho,\a,\b}:=\sup_{\ga} f_2(\a,\b,\ga)-\rho|\ga|<\infty$.

\begin{lem}
\label{noyauReste}
Let $K\in \S'(\R^{2n},L(E_z))$, and $\Ga$ a topological isomorphim on $\S(\R^{2n},L(E_z))$ of the form $\Ga=L_{\tau_1}\circ R_{\tau_2}\circ C_\Phi$ with $\tau_1,\tau_2\in \O_M^\times(\R^{2n},GL(E_z))$, $\Phi\in\O^\times_M(\R^{2n},\R^{2n})$. Then 

\noindent (i) Case $\sigma=0$. The following are equivalent:
 
(i-1) There is $f_3:\N^{2n}\to \R$ such that for any $m\leq-f_3(0) -2n$, there exist $f_{2,m}\in F$, $a_m \in \O_{f_{2,m},f_3,z}^{m}$ such that $K=\Op_{\Ga}(a_m)$.

(i-2) $K \in C^\infty(\R^{2n},L(E_z))$ and for any $2n$-multi-index $\nu$, $N\in \N$, there is $C_{\nu,N}>0$ such that for any $(\rx,\zeta)\in \R^{2n}$, $\norm{\del^{\nu} K_{\Ga} (\rx,\zeta)}_{L(E_z)} \leq C_{\nu,N} \langle \zeta \rangle^{-N}$, where $K_{\Ga}:=K\circ \Ga=\wt\tau_1\,K\circ \Phi\,\wt \tau_2\,|J(\Phi)|$. 

(i-3) There is $s\in S^{-\infty}_{0,z}$ such that $K=\Op_{\Ga}(s)$.

\noindent (ii) Case $\sigma>0$. The following are equivalent:

(ii-1) There is $f_1,f_3:\N^{2n}\to \R$ such that for any  $m\leq -f_3(0)-2n$, there exist $f_{2,m}\in F$ and $a_{m} \in \O_{\sg,f_1,f_{2,m},f_3,z}^{m,m}$ such that $K=\Op_\Ga(a_{m})$.

(ii-2) $K\in \S(\R^{2n},L(E_z))$.

(ii-3) There is $s\in S^{-\infty}_z$ such that $K=\Op_{\Ga}(s)$.
\end{lem}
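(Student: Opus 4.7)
The plan is to prove each part through the cyclic implications (i-3) $\Rightarrow$ (i-1) $\Rightarrow$ (i-2) $\Rightarrow$ (i-3) in case $\sigma=0$, and (ii-3) $\Rightarrow$ (ii-1) $\Rightarrow$ (ii-2) $\Rightarrow$ (ii-3) in case $\sigma>0$; the two cases share the same structure, with $\sigma>0$ requiring in addition the $\rx$-decay that upgrades the conclusion ``smooth with rapid $\zeta$-decay'' into $K\in\S$. The implications (i-3) $\Rightarrow$ (i-1) and (ii-3) $\Rightarrow$ (ii-1) are immediate: given $s$ in $S^{-\infty}_{0,z}$ (resp.\ in $S^{-\infty}_z$), Lemma \ref{Amplisymb}(ii) shows that the $\zeta$-independent function $a_m(\rx,\zeta,\vth):=s(\rx,\vth)$ lies in $\Pi^{0,0,m}_{0,0,z}$ (resp.\ in $\Pi^{l,0,m}_{\sigma,0,z}$) for every $m$, so choosing $f_{2,m}\equiv 0$ (which belongs to $F$ with $\rho=0$) together with fixed $f_1,f_3$ compatible with the amplitude estimate supplies the required $\O$-type bounds, and $\Op_\Ga(a_m)=\Op_\Ga(s)=K$ by construction.

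For (i-1) $\Rightarrow$ (i-2) and (ii-1) $\Rightarrow$ (ii-2), write $K=\Op_\Ga(a_m)=I(a_m)\circ\Ga$ as in the proof of Proposition \ref{ampliOP}; modulo the explicit $\wt\tau_1,\wt\tau_2,|J(\Phi)|$ factors coming from $\Ga$, the function $K_\Ga$ is the partial inverse Fourier integral
$$(\rx,\zeta)\longmapsto\int_{\R^n} e^{2\pi i \langle \vth,\zeta\rangle}\, a_m(\rx,\zeta,\vth)\,d\vth\,.$$
Smoothness in $(\rx,\zeta)$ follows by differentiating under the integral, and arbitrary $\langle\zeta\rangle^{-N}$-decay is extracted by iteratively applying the operator $L_\zeta^p$ from (\ref{e2pii}) to trade $\langle\vth\rangle^{2p}$-weights for $\zeta$-derivatives falling on $a_m$. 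This is where the main technical care is needed: the $F$-condition on $f_{2,m}$ ensures that $\zeta$-derivatives of $a_m$ grow at most like $\langle\zeta\rangle^{f_{2,m,\rho,\a,\b}+\rho|\ga|}$ with $\rho<1$, so each step of integration by parts gains a $\langle\zeta\rangle^{-2p}$ factor that strictly dominates this polynomial $\zeta$-growth, while taking $m$ sufficiently negative exploits the $\langle\vth\rangle^{m+f_3(\mu)}$-decay to guarantee $\vth$-integrability uniformly over the $\rx$- and $\zeta$-differentiations needed. In case $\sigma>0$, the amplitude estimate additionally contains the factor $\langle\rx\rangle^{\sigma(m+f_1(\mu))}$; letting $m\to-\infty$ then produces arbitrary $\rx$-decay of every $\del^\nu K_\Ga$, and hence $K\in\S(\R^{2n},L(E_z))$.

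The steps (i-2) $\Rightarrow$ (i-3) and (ii-2) $\Rightarrow$ (ii-3) proceed by Fourier inversion: define $s(\rx,\vth):=\int_{\R^n} e^{-2\pi i \langle\vth,\zeta\rangle}\, K_\Ga(\rx,\zeta)\,d\zeta$. The uniform rapid $\zeta$-decay of $K_\Ga$ and all its derivatives legitimizes differentiation under the integral for $\rx$- and $\vth$-derivatives of $s$, and repeated integration by parts in $\zeta$ transfers the weight $\vth^\ga$ onto $K_\Ga$ as $\del_\zeta^\ga K_\Ga$; one obtains for all multi-indices $\a,\b,\ga$ that $\norm{\vth^\ga\del^{(\a,\b)}s(\rx,\vth)}_{L(E_z)}$ is bounded uniformly in $(\rx,\vth)$, whence $s\in S^{-\infty}_{0,z}$. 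In the case $\sigma>0$, the Schwartz property of $K$ combined with the $\O_M$-type bounds on $\tau_1,\tau_2,\Phi$ (by an argument as in Lemma \ref{Stopolisom}) propagates to $K_\Ga$, so $s=\F_\zeta[K_\Ga]\in\S(\R^{2n},L(E_z))=S^{-\infty}_z$. Finally, by Fourier inversion $\ol\F_\vth[s]=K_\Ga$, so $\Op_\Ga(s)=I(s)\circ\Ga$ produces the same $K_\Ga$ we started with, and the invertibility of the transform $K\mapsto K_\Ga$ (a direct consequence of $\Ga$ being a topological isomorphism of $\S(\R^{2n},L(E_z))$) forces $K=\Op_\Ga(s)$, concluding the proof.
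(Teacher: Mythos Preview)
Your overall strategy is exactly the paper's: the cyclic implications $(\cdot\text{-}3)\Rightarrow(\cdot\text{-}1)\Rightarrow(\cdot\text{-}2)\Rightarrow(\cdot\text{-}3)$, with the trivial first step, the oscillatory-integral analysis for the second, and Fourier inversion for the third. The implications $(\cdot\text{-}3)\Rightarrow(\cdot\text{-}1)$ and $(\cdot\text{-}2)\Rightarrow(\cdot\text{-}3)$ are fine as you wrote them.

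However, in the core step $(\cdot\text{-}1)\Rightarrow(\cdot\text{-}2)$ you have the roles of $\zeta$ and $\vth$ reversed, and as written the argument does not produce the claimed $\langle\zeta\rangle^{-N}$ decay. Applying $L_\zeta^p$ to $e^{2\pi i\langle\vth,\zeta\rangle}$ and integrating by parts in $\zeta$ yields a factor $\langle\vth\rangle^{-2p}$ and puts $\zeta$-derivatives on $a_m$: this helps only with $\vth$-integrability, not with decay in $\zeta$. What the argument actually requires (and what the paper does) is the dual relation $\langle\zeta\rangle^{2p}e^{2\pi i\langle\vth,\zeta\rangle}=L_\vth^p\,e^{2\pi i\langle\vth,\zeta\rangle}$ followed by integration by parts in $\vth$, which produces the factor $\langle\zeta\rangle^{-2p}$ at the cost of $\vth$-derivatives $\del_\vth^\ga a_m$. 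This is exactly what the $F$-condition is designed for: $f_{2,m,\rho,\a,\b}=\sup_\ga\big(f_{2,m}(\a,\b,\ga)-\rho|\ga|\big)<\infty$ with $\rho<1$ says that $\vth$-differentiation inflates the $\langle\zeta\rangle$-exponent by at most $\rho|\ga|$, so after $|\ga|=2p$ such derivatives the net $\zeta$-exponent is at most $f_{2,m,\rho,\a,\b}+(\rho-1)2p$, which can be made $\leq -N$. Your own bound ``$\langle\zeta\rangle^{f_{2,m,\rho,\a,\b}+\rho|\ga|}$'' is the right one, but note that $|\ga|$ there counts $\vth$-derivatives, not $\zeta$-derivatives. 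In the paper's proof the $\vth$-integrability is handled first by choosing $m$ sufficiently negative (so that $m+f_3(\mu)\leq -2n$ survives the differentiations in $(\rx,\zeta)$, which bring down powers of $\vth$), and only afterwards does one integrate by parts in $\vth$ to extract $\langle\zeta\rangle^{-N}$; in the $\sigma>0$ case the same choice of $m$ simultaneously forces $\langle\rx\rangle^{\sigma m+\cdots}\leq\langle\rx\rangle^{-N}$. Once you swap $L_\zeta\leftrightarrow L_\vth$ and ``$\zeta$-derivatives''$\leftrightarrow$``$\vth$-derivatives'' in your middle paragraph, the sketch is correct and coincides with the paper's argument.
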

\begin{proof}

$(i)$ The implication \emph{(i-3)} $\Rightarrow$ \emph{(i-1)} is trivial. We will prove \emph{(i-1)} $\Rightarrow$ \emph{(i-2)} $\Rightarrow$ \emph{(i-3)}. Suppose \emph{(i-1)}. Thus, for any $m\leq -2n-f_3(0)$, there is $f_{2,m}\in F$, $a_m \in \O_{f_{2,m},f_3,z}^{m}$ such that for any $u\in \S(\R^{2n},L(E_z))$, 
$$
\langle K\circ \Ga^{-1},u\rangle = \int_{\R^{3n}} e^{2\pi i \langle\vth ,\zeta\rangle} \Tr\big(a_m(\rx,\zeta,\vth)\, u^*(\rx,\zeta) \big) \, d\zeta\,d\vth\,d\rx\, .
$$
Since $m\leq -2n-f_3(0)$, the preceding integral is absolutely convergent and we can permute the order of integration. As a consequence, we get 
$\langle K\circ \Ga^{-1},u\rangle = \int_{\R^{2n}} \Tr\big(U_m(\rx,\zeta)\, u^*(\rx,\zeta) \big) \, d\zeta\,d\rx$ where $U_m(\rx,\zeta):= \int_{\R^{n}} e^{2\pi i \langle \vth,\zeta \rangle}\,a_m(\rx,\zeta,\vth)\,d\vth$, we check easily that $U_m$ is a continous function on $\R^{2n}$, so we deduce that $U_m=:U$ is independant of $m$ and $K\circ \Ga^{-1}$ is a distribution which is continous function equal to $U$. Noting $b_m:= e^{2\pi i \langle \vth,\zeta \rangle}\,a_m(\rx,\zeta,\vth)$ we see that for any $2n$-multi-index $\mu:=(\a,\b)$, $\del^\mu_{\rx,\zeta} b_m = e^{2\pi i\langle \vth,\zeta\rangle}\sum_{\b'\leq \b}\tbinom{\b}{\b'} (2\pi i \vth)^{\b-\b'} \del^{\a,\b',0}a_m$ and we have then the estimates
$$
\norm{\del^{\mu} b_m } \leq C_{\mu,m}  \langle \zeta\rangle^{\sup_{\b'\leq \b} f_{2,m}(\a,\b',0)} \langle \vth\rangle^{m+c_\mu}
$$
where $c_\mu=\sup_{\b'\leq \b} f_{3}(\a,\b')+|\b|$. Defining $m_{\mu}:= -2n-\sup_{|\mu'|\leq |\mu|} c_{\mu'}$, we see that $U$ is smooth and 
$$
\del^\mu U = \int_{\R^{2n}} \del^\mu b_{m_\mu} d\vth = \sum_{\b'\leq \b}\tbinom{\b}{\b'}(2\pi i)^{|\b-\b'|} \int_{\R^{n}} e^{2\pi i\langle \vth,\zeta\rangle} \vth^{\b-\b'} \del^{\a,\b',0} a_{m_{\mu}}(\rx,\zeta,\vth) \, d\vth\, .
$$
All the $\vth$-derivatives of $\vth\mapsto \vth^{\b-\b'} \del^{\a,\b',0} a_{m_{\mu}}(\rx,\zeta,\vth)$ converge to zero when $\norm{\vth}\to \infty$ so we can we integrate by parts in $\vth$ so that for any $p\in \N$:
$$
\del^\mu U= \sum_{\b'\leq \b}\tbinom{\b}{\b'}(2\pi i)^{|\b-\b'|}  \int_{\R^{n}} e^{2\pi i\langle \vth,\zeta\rangle} \langle \zeta\rangle^{-2p} L_{\vth}^p\big( \vth^{\b-\b'} \del^{\a,\b',0} a_{m_{\mu}}\big)(\rx,\zeta,\vth) \, d\vth\, .
$$
Since $a_{m_{\mu}} \in \O_{f_{2,m_{\mu}},f_3,z}^{m_\mu}$ and $f_{2,m_\mu,\rho_\mu,\la}<\infty$ for a $\rho_\mu<1$, we see that the integrand $h_{p}$ of the previous integral satisfies the estimate
$$
\norm{h_{p}(\rx,\zeta,\vth)} \leq C_{p,\mu} \langle \zeta\rangle^{-2p+\sup_{\b'\leq \b} f_{2,m_{\mu},\rho_\mu,\a,\b'}+2p\rho_\mu} \langle \vth \rangle^{-2n}\, .
$$
Given $N>0$ and fixing $p$ such that $(\rho_\mu-1)2p+\sup_{\b'\leq \b} f_{2,m_{\mu},\rho_\mu,\a,\b'}\leq -N$, we finally obtain that $K\circ \Ga^{-1}=U$ is smooth and satisfies for any $\mu \in \N^{2n}$ and $N>0$, $\norm{\del^\mu K\circ \Ga^{-1} (\rx,\zeta)}_{L(E_z)}\leq C_{\mu,N} \langle \zeta\rangle^{-N}$. We also have for any $u\in \S(\R^{2n},L(E_z))$, $\langle K,u\rangle = \langle U,\Ga(u)\rangle = \int_{\R^{2n}} \Tr(U'(\rx,\zeta) u^* \circ \Phi(\rx,\zeta))d\rx\,d\zeta$ where $U'(\rx,\zeta):=\tau_1^*(\rx,\zeta)U(\rx,\zeta) \tau_2^*(\rx,\zeta)$. 
Using the change of variables provided by the diffeomorphism $\Phi$, we get $\langle K,u\rangle= \int_{\R^{2n}} \Tr(K(\rx,\ry)\,u^*(\rx,\ry))\,d\rx\,d\ry$ where $K(\rx,\ry):= (|J(\Phi^{-1})|(\rx,\ry)) U'\circ \Phi^{-1}(\rx,\ry)$. The result follows. 

\noindent Suppose now \emph{(i-2)}. It is not difficult to see that $\F_P$ sends $S^{-\infty}_{0,z}$ (seen as a subspace of $\S'(\R^{2n},L(E_z))$) into $S^{-\infty}_{0,z}$. In particular, we have $s:=\F_P(K_\Ga) \in S^{-\infty}_{0,z}$. A computation shows that 
$\langle K,u\rangle = \langle \Op_\Ga(s),u\rangle$ for any $u\in \S(\R^{2n},L(E_z))$.

\noindent $(ii)$ Suppose \emph{(i-1)}. Following the proof of $(i)$, we see that it is sufficient to prove that $U$ is in $\S(\R^{2n},L(E_z))$, where $U(\rx,\zeta):=\int_{\R^n} e^{2\pi i \langle \vth,\zeta\rangle} a_{m}(\rx,\zeta,\vth)\, d\vth$ (independant of $m$). Let us fix $N>0$.
For any $2n$-multi-index $\mu:=(\a,\b)$, $\del^\mu_{\rx,\zeta} b_m = e^{2\pi i\langle \vth,\zeta\rangle}\sum_{\b'\leq \b}\tbinom{\b}{\b'} (2\pi i \vth)^{\b-\b'} \del^{\a,\b',0}a_m$ and we have the estimates
$$
\norm{\del^{\mu} b_m } \leq C_{\mu,m} \langle\rx\rangle^{\sg m+\sg d_\mu} \langle \zeta\rangle^{\sup_{\b'\leq \b} f_{2,m}(\a,\b',0)} \langle \vth\rangle^{m+c_\mu}
$$
where $c_\mu=\sup_{\b'\leq \b} f_{3}(\a,\b')+|\b|$ and $d_\mu:=\sup_{\b'\leq \b}f_1(\a,\b')$. Defining 
$$
m_{\mu,N}:= \min\{-2n-\sup_{|\mu'|\leq |\mu|} c_{\mu'},-N/\sg-\sup_{|\mu'|\leq |\mu|}d_{\mu'}\}
$$ 
we see that $U$ is smooth and 
$$
\del^\mu U = \int_{\R^{2n}} \del^\mu b_{m_{\mu,N}} d\vth = \sum_{\b'\leq \b}\tbinom{\b}{\b'}(2\pi i)^{|\b-\b'|} \int_{\R^{n}} e^{2\pi i\langle \vth,\zeta\rangle} \vth^{\b-\b'} \del^{\a,\b',0} a_{m_{\mu,N}}(\rx,\zeta,\vth) \, d\vth\, .
$$
All the $\vth$-derivatives of $\vth\mapsto \vth^{\b-\b'} \del^{\a,\b',0} a_{m_{\mu,N}}(\rx,\zeta,\vth)$ converge to zero when $\norm{\vth}\to \infty$ so we can we integrate by parts in $\vth$ so that for any $p\in \N$:
$$
\del^\mu U= \sum_{\b'\leq \b}\tbinom{\b}{\b'}(2\pi i)^{|\b-\b'|}  \int_{\R^{n}} e^{2\pi i\langle \vth,\zeta\rangle} \langle \zeta\rangle^{-2p} L_{\vth}^p\big( \vth^{\b-\b'} \del^{\a,\b',0} a_{m_{\mu,N}}\big)(\rx,\zeta,\vth) \, d\vth\, .
$$
Since $a_{m_{\mu,N}} \in \O_{\sg,f_1,f_{2,m_{\mu,N}},f_3,z}^{m_{\mu,N},m_{\mu,N}}$ and $f_{2,m_{\mu,N},\rho_{\mu,N},\la}<\infty$ for a $\rho_{\mu,N}<1$, we see that the integrand $h_{p}$ of the previous integral satisfies the estimate
$$
\norm{h_{p}(\rx,\zeta,\vth)} \leq C_{p,\mu,N} \langle \rx\rangle^{-N} \langle \zeta\rangle^{-2p+\sup_{\b'\leq \b} f_{2,m_{\mu,N},\rho_{\mu,N},\a,\b'}+2p\rho_{\mu,N}} \langle \vth \rangle^{-2n}\, .
$$
Fixing $p$ such that $(\rho_{\mu,N}-1)2p+\sup_{\b'\leq \b} f_{2,m_{\mu,N},\rho_{\mu,N},\a,\b'}\leq -N$, we finally obtain the following estimate $\norm{\del^\mu U}_{L(E_z)}\leq C_{\mu,N} \langle \rx\rangle^{-N}\langle \zeta\rangle^{-N}$, which yields \emph{(i-2)}. The other implications are straightforward.
\end{proof}

\begin{cly}
\label{correste} Same hypothesis. We have (for $\sg=0$ or $\sg> 0$), $\Op_{\Ga}(S^{-\infty}_{\sigma,z})=\cap_{l,m}\cup_{w,\ka} \Op_{\Ga}(\Pi_{\sg,\ka,z}^{l,w,m})=\Op_{\Ga}(\Pi_{\sg,z}^{-\infty})$.
\end{cly}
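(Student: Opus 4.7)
The plan is to reduce the three-fold equality to a direct application of Lemma~\ref{noyauReste} by splitting it into the three inclusions (a) $\Op_{\Ga}(S^{-\infty}_{\sg,z})\subseteq \Op_{\Ga}(\Pi^{-\infty}_{\sg,z})$, (b) $\Op_{\Ga}(\Pi^{-\infty}_{\sg,z})\subseteq \cap_{l,m}\cup_{w,\ka}\Op_{\Ga}(\Pi^{l,w,m}_{\sg,\ka,z})$, and (c) $\cap_{l,m}\cup_{w,\ka}\Op_{\Ga}(\Pi^{l,w,m}_{\sg,\ka,z})\subseteq \Op_{\Ga}(S^{-\infty}_{\sg,z})$.

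Inclusion (b) is immediate from the definition $\Pi^{-\infty}_{\sg,z}=\cap_{l,m}\cup_{w,\ka}\Pi^{l,w,m}_{\sg,\ka,z}$. Inclusion (a) is handled by Lemma~\ref{Amplisymb}(ii): given $s\in S^{-\infty}_{\sg,z}$, the $\zeta$-independent amplitude $\tilde s(\rx,\zeta,\vth):=s(\rx,\vth)$ lies in $\Pi^{l,0,m}_{\sg,0,z}$ for every $(l,m)$, hence in $\Pi^{-\infty}_{\sg,z}$; and $\Op_{\Ga}(\tilde s)=\Op_{\Ga}(s)$ by direct inspection of the defining integral.

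Inclusion (c) is where the actual content lies. Given $K$ in its left-hand side, for each $(l,m)$ one selects parameters $w_{l,m},\ka_{l,m}$ and an amplitude $a_{l,m}\in\Pi^{l,w_{l,m},m}_{\sg,\ka_{l,m},z}$ with $K=\Op_{\Ga}(a_{l,m})$. I would then verify hypothesis (i-1) of Lemma~\ref{noyauReste} when $\sg=0$, and hypothesis (ii-1) when $\sg>0$. In both situations, set $f_3(\mu):=0$, and when $\sg>0$ also set $f_1(\mu):=-|\mu|$. For each $m\leq -2n=-2n-f_3(0)$ take $a_m:=a_{m,m}$ (the choice $l=m$) when $\sg>0$, and $a_m:=a_{0,m}$ when $\sg=0$ (any value of $l$ being suitable since $\Pi^{l,w,m}_{0,\ka,z}$ is independent of $l$); then define $f_{2,m}(\nu):=w_{l,m}+\ka_{l,m}|\a+\b|$. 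Since $f_{2,m}$ does not depend on $\ga$, it lies in the class $F$ for every $\rho\in(0,1)$. The amplitude estimate defining $\Pi^{l,w,m}_{\sg,\ka,z}$, combined with the trivial bound $\langle\vth\rangle^{m-|\ga|}\leq\langle\vth\rangle^{m}$ (valid since $m\leq -2n<0$ and $\langle\vth\rangle\geq 1$), then places $a_m$ in $\O^{m,m}_{\sg,f_1,f_{2,m},f_3,z}$, respectively $\O^{m}_{f_{2,m},f_3,z}$ in the flat case. Lemma~\ref{noyauReste} thereby delivers a single $s\in S^{-\infty}_{\sg,z}$ with $K=\Op_{\Ga}(s)$.

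The main subtlety, and the reason inclusion (c) is not tautological, is that the amplitudes $a_{l,m}$ may differ from one $(l,m)$ to another, whereas the sought symbol $s$ must simultaneously witness all orders. This uniformization is precisely what is carried out by the implications (i-1)$\Rightarrow$(i-3) and (ii-1)$\Rightarrow$(ii-3) of Lemma~\ref{noyauReste}, so no new analytic estimate is required beyond packaging the amplitude bounds into the language of the $\O_{f,z}$-spaces used there.
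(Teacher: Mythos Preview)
Your argument is correct and is precisely the intended one: the paper states this corollary without proof because it follows directly from Lemma~\ref{noyauReste}, and you have spelled out exactly how the amplitude estimates of $\Pi^{l,w,m}_{\sg,\ka,z}$ fit into the $\O^{l,m}_{\sg,f,z}$ framework (with $f_3\equiv 0$, $f_1(\mu)=-|\mu|$, and the diagonal choice $l=m$ when $\sg>0$). One cosmetic remark: the bound $\langle\vth\rangle^{m-|\ga|}\le\langle\vth\rangle^{m}$ holds for every real $m$ since $\langle\vth\rangle\ge 1$, so the parenthetical ``valid since $m\le -2n<0$'' is unnecessary.
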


\begin{lem}
\label{amplireste}
Let $u \in S(\R^{2n},L(E_z))$ and $\b$ a $n$-multi-index.

\noindent (i) For any triple $f:=(f_1,f_2,f_3)$ such that there exists $\rho<1$ such that for any $2n$-multi-index $(\a,\ga)$, $f_{3,\rho,\a,\ga}<\infty$, 
the following linear forms are continuous on $\O_{f,z}$
\begin{align*}
&R_{\b,u} : a\mapsto \int_{\R^{3n}} \zeta^\b e^{2\pi i \langle \vth,\zeta\rangle} \Tr(a(\rx,\zeta,\vth)\,u(\rx,\zeta))\,d\zeta\,d\vth\,d\rx\, ,\\
&S_{\b,u} : a\mapsto  (i/2\pi)^{|\b|}\int_{\R^{3n}} e^{2\pi i\langle \vth,\zeta\rangle} \Tr(\del_{\vth}^\b a(\rx,\zeta,\vth)\,u(\rx,\zeta))\,d\zeta\,d\vth\,d\rx\, .
\end{align*}
\noindent (ii) $R_{\b,u}=S_{\b,u}$ on any $\Pi^{l,w,m}_{\sigma,\kappa,z}$ space.
\end{lem}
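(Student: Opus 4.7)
The plan is to derive both continuity assertions in part $(i)$ by reducing $R_{\b,u}$ and $S_{\b,u}$ to instances of the oscillatory form $\langle I(\tilde a),u\rangle$ whose continuity has already been established in Proposition \ref{ampliOP}, and then to obtain part $(ii)$ by honest integration by parts on the dense subspace $\Pi^{-\infty,w}_{\sigma,\kappa,z}\subseteq \Pi^{l,w,m}_{\sigma,\kappa,z}$ supplied by Lemma \ref{topoampli}.

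For $R_{\b,u}$, I would consider the linear map $T_\b: a\mapsto (\rx,\zeta,\vth)\mapsto \zeta^\b a(\rx,\zeta,\vth)$. By Leibniz, multiplication by the polynomial $\zeta^\b$ sends $\O_{f,z}$ continuously into $\O_{f',z}$, where $f'_1=f_1$, $f'_3=f_3$, and $f'_2(\nu)=f_2(\nu)+|\b|$. Since $f'_{3,\rho,0,0}=f_{3,\rho,0,0}<\infty$, Proposition \ref{ampliOP} $(ii)$ applies to $T_\b a$ and yields $R_{\b,u}=L_{u,\mathrm{Id}}\circ T_\b$ continuous on $\O_{f,z}$. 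For $S_{\b,u}$, the analogous reduction uses $D_\b: a\mapsto \del_\vth^\b a$, which is continuous from $\O_{f,z}$ to $\O_{f'',z}$ with $f''_i(\nu):=f_i(\nu+(0,0,\b))$. The hypothesis that $f_{3,\rho,0,\b}<\infty$ is precisely what is needed to get $f''_{3,\rho,0,0}<\infty$, so Proposition \ref{ampliOP} again gives the continuity of $S_{\b,u}=(i/2\pi)^{|\b|}L_{u,\mathrm{Id}}\circ D_\b$.

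For part $(ii)$, I would fix $a\in \Pi^{l,w,m}_{\sigma,\kappa,z}$ together with $l'>l$, $m'>m$, and invoke the density statement of Lemma \ref{topoampli} to obtain a sequence $(a_p)\subseteq \Pi^{-\infty,w}_{\sigma,\kappa,z}$ converging to $a$ in the $\Pi^{l',w,m'}_{\sigma,\kappa,z}$ topology. Both $R_{\b,u}$ and $S_{\b,u}$ are continuous on $\Pi^{l',w,m'}_{\sigma,\kappa,z}$ by part $(i)$ (specialised to the triple $f$ associated with that amplitude space), so it suffices to prove $R_{\b,u}(a_p)=S_{\b,u}(a_p)$ for each $p$. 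For $a\in \Pi^{-\infty,w}_{\sigma,\kappa,z}$, choosing $m$ very negative and $l$ very negative (when $\sigma>0$) makes the integrand of $R_{\b,u}(a)$ absolutely integrable on $\R^{3n}$: $a$ supplies rapid decay in $\vth$ and (when $\sigma>0$) in $\rx$, while the polynomial factor $\zeta^\b\langle\zeta\rangle^w$ is absorbed by the Schwartz decay of $u$ in the $\rx,\zeta$ variables. Then Fubini and the identity $\zeta^\b e^{2\pi i\langle\vth,\zeta\rangle}=(-1)^{|\b|}(i/2\pi)^{|\b|}\del_\vth^\b e^{2\pi i\langle\vth,\zeta\rangle}$ together with integration by parts in $\vth$ (the boundary terms vanishing thanks to the uniform decay of $\del_\vth^{\b'}a$ in $\vth$) yield $R_{\b,u}(a)=S_{\b,u}(a)$.

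The main technical obstacle is bookkeeping: one must verify that the hypothesis on $f_{3,\rho,\a,\ga}$ transports cleanly under both $T_\b$ (which does not touch the $\vth$ estimate) and $D_\b$ (which shifts the $\vth$ multi-index slot), so that Proposition \ref{ampliOP}'s single hypothesis $f_{3,\rho,0,0}<\infty$ continues to hold after the reduction. Here the strength of the hypothesis — that $f_{3,\rho,\a,\ga}<\infty$ for \emph{all} $(\a,\ga)$, with the supremum in $\b$ already built in — is exactly what makes the argument work for arbitrary $\b$. Once the continuity in $(i)$ is in place, part $(ii)$ is essentially a standard density/integration-by-parts argument.
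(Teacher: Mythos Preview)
Your proposal is correct and follows essentially the same route as the paper. The only cosmetic difference is in handling $R_{\b,u}$: you absorb $\zeta^\b$ into the amplitude via $T_\b$ and land in a shifted $\O_{f',z}$, whereas the paper absorbs it into the test function, writing $R_{\b,u}=L_{u_\b,\Id}$ with $u_\b(\rx,\zeta):=\zeta^\b u(\rx,\zeta)\in\S(\R^{2n},L(E_z))$, so no modification of $f$ is needed at all. Both reductions invoke Proposition~\ref{ampliOP} in the same way, and your treatment of $S_{\b,u}$ and of part $(ii)$ (integration by parts on $\Pi^{-\infty,w}_{\sigma,\kappa,z}$ plus density from Lemma~\ref{topoampli}) matches the paper exactly.
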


\begin{proof} $(i)$ The continuity of $R_{\b,u}$ is a direct consequence of Proposition \ref{ampliOP} since $R_{\b,u}=L_{u_\b,\Id}$ where $u_\b(\rx,\zeta):=\zeta^\b u(\rx,\zeta)$. Suppose that $\nu_0$ is a $3n$-multi-index, we note $f^{\nu_0}:=\nu\mapsto f(\nu+\nu_0)$. A computation shows for any $\rho$, and $n$-multi-indices $\a,\ga$, $f_{3,\rho,\a,\ga}^{\nu_0} \leq f_{3,\rho,\a+\a_0,\ga+\ga_0} + \rho|\b_0|$. Thus if there is $\rho<1$ such that for any $2n$-multi-index $(\a,\ga)$, $f_{3,\rho,\a,\ga}<\infty$, then for any $2n$-multi-index $(\a,\ga)$, $f^{\nu_0}_{3,\rho,\a,\ga}<\infty$. If $a \in \O_{f,z}$ then $\del^{\nu_0} a \in \O_{f^{\nu_0},z}$ and the linear map $a\mapsto \del^{\nu_0} a$ is continuous.  As a consequence, since $S_{\b,u}=L_{u,\Id}\circ D_\b$, where $D_\b:=(i/2\pi)^{\b}\del^\b_{\vth}$, the continuity of $S_{\b,u}$ on $\O_{f,z}$ follows from Proposition \ref{ampliOP}.

\noindent $(ii)$ The equality is easily obtained on $\Pi_{\sigma,\kappa,z}^{-\infty,w}$ by an integration by parts in $\vth$ and permutations of the order of integration $d\zeta d\vth \to d\vth d\zeta$ in $R_{\b,u}(a)$ (authorized for $a\in \Pi_{\sigma,\kappa,z}^{-\infty,w}$). The result now follows from $(i)$ and the density result of Lemma \ref{topoampli}.
\end{proof}

If $N\geq 1$ and $\b,\ga,$ $n$-multi-indices, we note for any amplitude $a\in \Pi^{l,w,m}_{\sigma,\kappa,z}$, the smooth function $a_{\b,\ga,N}$ as $a_{\b,\ga,N}(\rx,\zeta,\vth):=\int_{0}^1 (1-t)^{N} (\del^{(0,\b,\ga)}a) (\rx,t\zeta,\vth) \,dt$.
It is straightforward to check that the linear map $a\mapsto a_{\b,\ga,N}$ is continuous from $\Pi_{\sg,\ka,z}^{l,w,m}$ into $\Pi_{\sg,\ka,z}^{l-|\b|,|w|+\ka|\b|,m-|\ga|}$.

The following lemma shows that $\la$-quantization of amplitudes and symbols yields the same operators. This result of ``reduction" of amplitudes to symbols will be important for Theorem \ref{lambdainv} and thus, for a $\la$-invariant definition of pseudodifferential operators.  

\begin{lem}
\label{reduction} 
\noindent (i) For any $a\in \Pi_{\sigma,\kappa,z}^{l,w,m}$, $(\del^{0,\b,\b} a)_{\zeta=0} \in S^{l-|\b|,m-|\b|}_{\sigma,z}$ for any $n$-multi-index $\b$.  

\noindent (ii) Let $\Ga$ be as in Lemma \ref{noyauReste} and let $a\in \Pi_{\sigma,\kappa,z}^{l,w,m}$. Then for any symbol $s\in S^{l,m}_{\sigma,z}$ such that $s\sim \sum_{\b} \tfrac{(i/2\pi)^{|\b|}}{\b!} (\del^{0,\b,\b} a)_{\zeta=0}$, there is $r \in S^{-\infty}_{\sigma,z}$ such that $\Op_{\Ga}(a)=\Op_{\Ga}(s+r)$. In particular there exists an unique symbol $s(a)\in S^{l,m}_{\sg,z}$ such that $\Op_{\Ga}(a)=\Op_{\Ga}(s(a))$.  Moreover, we have $s(a)\sim \sum_{\b} \tfrac{(i/2\pi)^{|\b|}}{\b!} (\del^{0,\b,\b} a)_{\zeta=0}$.

\noindent (iii) Suppose that $(M,\exp,E,d\mu )$ has a $S_\sigma$-bounded geometry and $\psi$ is a $\O_M$-linearization. 
Let $a\in \Pi_{\sigma,\kappa,z}^{l,w,m}$, $\la\in [0,1]$ and $(z,\bfr)$ be given a frame. Then there exists an unique symbol $s_\la(a)\in S^{l,m}_{\sigma}$ such that $\Op_{\Ga_{\la,z,\bfr}}(a)=(\Op_{\la}(s_\la(a))_{z,\bfr}$. Moreover, we have $T_{z,\bfr,*}(s_\la(a))\sim \sum_{\b} \tfrac{(i/2\pi)^{|\b|}}{\b!} \mu^{-1} (\del^{0,\b,\b} a)_{\zeta=0}$.
\end{lem}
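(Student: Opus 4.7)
For $(i)$, the plan is to use the amplitude estimate directly. Writing $b:=(\del^{(0,\b,\b)}a)_{\zeta=0}$, observe that $\del^{(\a',\ga')}b(\rx,\vth)=(\del^{(\a',\b,\b+\ga')}a)(\rx,0,\vth)$; plugging this into the amplitude estimate \eqref{amplest} at $\zeta=0$ (where $\langle 0\rangle=1$) gives a bound with exponents $\sigma(l-|\b|-|\a'|)$ and $m-|\b|-|\ga'|$, confirming $b\in S^{l-|\b|,m-|\b|}_{\sigma,z}$.

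The heart is $(ii)$. The strategy will be a Taylor-with-integral-remainder expansion of $a$ in $\zeta$ around $\zeta=0$, combined with the identity $R_{\b,u}=S_{\b,u}$ of Lemma~\ref{amplireste}$(ii)$ which trades, inside $\Op_\Ga$, each factor $\zeta^\b$ for $(i/2\pi)^{|\b|}\del^\b_\vth$. For $N\geq1$ I would write
\[
a(\rx,\zeta,\vth)=\sum_{|\b|<N}\tfrac{\zeta^\b}{\b!}(\del^{(0,\b,0)}a)(\rx,0,\vth)+N\sum_{|\b|=N}\tfrac{\zeta^\b}{\b!}a_{\b,0,N-1}(\rx,\zeta,\vth),
\]
and apply the integration-by-parts identity together with $\del^\b_\vth a_{\b,0,N-1}=a_{\b,\b,N-1}$ to obtain
\[
\Op_\Ga(a)=\sum_{|\b|<N}\tfrac{(i/2\pi)^{|\b|}}{\b!}\Op_\Ga\big((\del^{(0,\b,\b)}a)_{\zeta=0}\big)+N\sum_{|\b|=N}\tfrac{(i/2\pi)^{|\b|}}{\b!}\Op_\Ga(a_{\b,\b,N-1}),
\]
where the Taylor symbols belong to $S^{l-|\b|,m-|\b|}_{\sigma,z}$ by $(i)$ and each $a_{\b,\b,N-1}$ lies in $\Pi^{l-N,|w|+\ka N,m-N}_{\sigma,\ka,z}$ by the continuity statement preceding the lemma. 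Using Lemma~\ref{asympt} (applied to the series regrouped by $|\b|$ so as to obtain strictly decreasing orders), I will pick $s\in S^{l,m}_{\sigma,z}$ asymptotic to the claimed sum; viewing the symbol $s$ as a $\zeta$-independent amplitude via Lemma~\ref{Amplisymb}$(ii)$, the difference $\Op_\Ga(a)-\Op_\Ga(s)$ will equal $\Op_\Ga$ of an amplitude in $\Pi^{l-N,|w|+\ka N,m-N}_{\sigma,\ka,z}$ for every $N$. Corollary~\ref{correste} will then supply $r\in S^{-\infty}_{\sigma,z}$ with $\Op_\Ga(a-s)=\Op_\Ga(r)$, so $s(a):=s+r\in S^{l,m}_{\sigma,z}$ satisfies $\Op_\Ga(a)=\Op_\Ga(s(a))$ and inherits the asymptotic of $s$. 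Uniqueness will follow from injectivity of $\Op_\Ga$ on symbols: pairing $\Op_\Ga(t)=0$ against $u\in\S(\R^{2n},L(E_z))$ shows that $t(\rx,\vth)$ annihilates every $v(\rx,\vth):=\int e^{2\pi i\langle\vth,\zeta\rangle}\Ga(u)^*(\rx,\zeta)\,d\zeta$, and as $u$ varies these $v$ sweep a dense subspace of $\S(\R^{2n},L(E_z))$ (since $\Ga$ is a topological isomorphism and partial Fourier transform in $\zeta$ is bijective), forcing $t\equiv 0$.

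For $(iii)$, I would transport $(ii)$ through the frame $(z,\bfr)$. By Lemma~\ref{OMlem}, $\Ga_{\la,z,\bfr}$ satisfies the hypotheses of Lemma~\ref{noyauReste} and hence of $(ii)$, which produces a unique $s(a)\in S^{l,m}_{\sigma,z}$ with the stated expansion. Since $d\mu$ is a $S^\times_\sigma$-density, $\mu^{-1}_{z,\bfr}\in S^{0,0}_{\sigma,z}$; thus $(s_\la(a))_{z,\bfr}:=\mu^{-1}_{z,\bfr}\,s(a)\in S^{l,m}_{\sigma,z}$ and I set $s_\la(a):=T_{z,\bfr,*}^{-1}((s_\la(a))_{z,\bfr})\in S^{l,m}_\sigma$. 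Remark~\ref{Oplien} then yields
\[
(\Op_\la(s_\la(a)))_{z,\bfr}=\Op_{\Ga_{\la,z,\bfr}}\!\big(\mu_{z,\bfr}\,(s_\la(a))_{z,\bfr}\big)=\Op_{\Ga_{\la,z,\bfr}}(s(a))=\Op_{\Ga_{\la,z,\bfr}}(a),
\]
with uniqueness of $s_\la(a)$ inherited from that of $s(a)$; and since multiplication by $\mu^{-1}_{z,\bfr}\in S^{0,0}_{\sigma,z}$ stabilizes $S^{-\infty}_{\sigma,z}$, it preserves asymptotic expansions, giving the claimed formula for $T_{z,\bfr,*}(s_\la(a))$.

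The hard part will be the bookkeeping in $(ii)$: at each step $N$ the residual amplitude must belong to a class whose $l$- and $m$-orders tend to $-\infty$, while the middle $\zeta$-growth component increases like $|w|+\ka N$. It is precisely this pattern that Corollary~\ref{correste} is designed to absorb, via its union over $(w,\ka)$ taken \emph{before} the intersection over $(l,m)$. Secondary technical points to verify are the hypothesis $f_{3,\rho,\a,\ga}<\infty$ for some $\rho<1$ required by Lemma~\ref{amplireste}$(ii)$ on each Taylor piece (including the $\zeta$-independent coefficients), and the differentiation under the integral defining $a_{\b,0,N-1}$, which is justified by the uniform amplitude bounds on $\del^{(0,\b,\b)}a$.
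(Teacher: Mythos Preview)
Your proposal is correct and follows essentially the same route as the paper: Taylor expansion in $\zeta$, conversion of the $\zeta^\b$ factors to $\vth$-derivatives via Lemma~\ref{amplireste}$(ii)$, asymptotic summation, and Corollary~\ref{correste} to absorb the residual amplitudes. The only cosmetic difference is in the uniqueness step: the paper observes in one line that $\Op_\Ga=\Ga^*\circ\F_P^*$ as a map on $\S'(\R^{2n},L(E_z))$, which is a composition of topological isomorphisms and hence injective on symbols; your density argument unpacks exactly this fact.
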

\begin{proof}
$(i)$ is a direct consequence of Lemma \ref{Amplisymb} $(i)$.

\noindent $(ii)$ Using a Taylor expansion of $a$ at $\zeta=0$, we find that for any $u\in \S(\R^{2n},L(E_z))$, $N\in \N^*$, $\langle \Op_{\Ga}(a),u\rangle=\sum_{0\leq |\b|\leq N} I_{\b} + \sum_{|\b|=N+1}\tfrac{N+1}{\b!} R_{\b,N}$ where 
\begin{align*}
&I_\b:= \int_{\R^{3n}} \zeta^\b e^{2\pi i \langle\vth,\zeta \rangle } \Tr\big(\tfrac{1}{\b!}(\del^{(0,\b,0)}a)_{\zeta=0}(\rx,\vth) \Ga(u)^*(\rx,\zeta)\big) d\zeta\,d\vth\,d\rx \, ,\\
&R_{\b,N}:= \int_{\R^{3n}} \zeta^\b e^{2\pi i \langle\vth,\zeta \rangle } \Tr\big(a_{\b,0,N}(\rx,\zeta,\vth)\, \Ga(u)^*(\rx,\zeta)\big) d\zeta\,d\vth\,d\rx \, .
\end{align*}
We get from Lemma \ref{amplireste} $(ii)$,
$$
I_\b= \int_{\R^{3n}} e^{2\pi i \langle\vth,\zeta \rangle } \Tr\big(\tfrac{(i/2\pi)^{|\b|}}{\b!}(\del^{(0,\b,\b)}a)_{\zeta=0}(\rx,\vth) \Ga(u)^*(\rx,\zeta)\big) d\zeta\,d\vth\,d\rx\, .
$$
Let $s\in S^{l,m}_{\sigma,z}$ be a symbol such that $s\sim \sum_{\b} \tfrac{(i/2\pi)^{|\b|}}{\b!} (\del^{0,\b,\b} a)_{\zeta=0}$. Then noting $s_N:=s-\sum_{|\b |\leq N} \tfrac{(i/2\pi)^{|\b|}}{\b!} (\del^{0,\b,\b} a)_{\zeta=0}\in S^{l-(N+1),m-(N+1)}_{\sigma,z}$, we find with Lemma \ref{amplireste} $(ii)$ that  $\Op_{\Ga}(a-s) = \Op_{\Ga}(r_N)$ where 
$$
r_N:= \sum_{|\b|=N+1} \tfrac{(N+1)(i/2\pi)^{N+1}}{\b!} a_{\b,\b,N} -s_N\, .
$$
We check that $r_N\in \Pi_{\sigma,\kappa,z}^{l-(N+1),w_{N},m-(N+1)}$ where $w_N=|w|+\kappa(N+1)$. Corollary \ref{correste} applied to $\Op_{\Ga}(a-s)$ now implies that there is $r \in S^{-\infty}_{\sigma,z}$ such that $\Op_{\Ga}(a)=\Op_{\Ga}(s+r)$. As a consequence, there exists $s(a)\in S^{l,m}_{\sg,z}$ such that $\Op_{\Ga}(a)=(\Op_{\Ga}(s(a))$. The unicity is a direct consequence of the fact that $\Op_{\Ga}=\Ga^*\circ \F_P^*$ on $\S'(\R^{2n},L(E_z))$. 

\noindent $(iii)$ Direct consequence of $(ii)$ and that fact that $(\Op_{\la}(s))_{z,\bfr}=\Op_{\Ga_{\la,z,\bfr}}(\mu_{z,\bfr} s_{z,\bfr})$.
\end{proof}

\subsection{$S_\sigma$-linearizations}\label{Ssigsec}

In order to have a full symbol-operator isomorphism, a polynomial control at infinity on the linearization is not enough. As we shall see, a stronger, ``amplitude-like" control on the $\psi_{z}^{\bfr}$ maps and a local equivalent of the $P_{x,\xi}$ parallel transport linear isomorphisms (see Remark \ref{remTP}) appears to be crucial for pseudodifferential calculus on $(M,\exp,E)$ and the $\la$-invariance (see Theorem \ref{lambdainv}).

We define $H_{\sigma,\kappa}^w(\mathfrak{E})$ (resp. $E_{\sigma,\kappa}^w(\mathfrak{E})$), where $w\in\R$, $\sigma\in [0,1]$ and $\kappa\geq 0$, as the space of smooth functions $g$ from $\R^{2n}$ into $\mathfrak{E}$ such that for any $2n$-multi-index $\nu$, there exists $C_\nu>0$
such that for any $(\rx,\zeta)\in \R^{2n}$, $\norm{\del^{\nu} g (\rx,\zeta)}\leq C_{\nu} \langle \rx\rangle^{-\sigma (|\nu|-1)}\langle \zeta \rangle^{w+\kappa(|\nu|-1)} $  (if $\nu\neq0$) (resp. $\norm{\del^{\nu} g (\rx,\zeta)}\leq C_{\nu} \langle \rx\rangle^{-\sigma |\nu|}\langle \zeta \rangle^{w+\kappa|\nu|})$. We note $H_{\sigma,\kappa}(\mathfrak{E}):=\cup_{w\in \R} H_{\sigma,\kappa}^w(\mathfrak{E})$, $H_{\sigma}(\mathfrak{E}):=\cup_{\ka\geq 0 } H_{\sigma,\kappa}(\mathfrak{E})$, $E_{\sigma,\kappa}(\mathfrak{E})=\cup_{w\in \R} E_{\sigma,\kappa}^w(\mathfrak{E})$ and $E_{\sigma}(\mathfrak{E})=\cup_{\ka\geq 0} E_{\sigma,\kappa}(\mathfrak{E})$.
Remark that by Leibniz rule, $E_{\sigma,\kappa}(\R)$ and $E_{\sigma,\kappa}(\mathcal{M}_{p}(\R))$ are $\R$-algebras (graduated by the parameter $w$) while $E_{\sigma,\kappa,z}:=E_{\sigma,\kappa}(L(E_z))$ is a $\C$-algebra (under pointwise matricial product). Thus, if $P\in E_{\sigma,\kappa}(\M_p(\R))$, then $\det P \in E_{\sg,\ka}(\R)$. Note also that $f\in H_{\sg,\ka}(\mathfrak{E})$ if and only if for any $i\in \set{1,\cdots,2n}$, $\del_i f \in E_{\sg,\ka}(\mathfrak{E})$. In particular, $f\in H_{\sg,\ka}(\R^p)$ if and only if $df:=(\rx,\zeta)\mapsto (df)_{\rx,\zeta}$ is in $E_{\sg,\ka}(\M_{p,2n}(\R))$. As a consequence, if $f\in H_{\sg,\ka}(\R^{2n})$, its Jacobian determinant $J(g)$ is in $E_{\sg,\ka}(\R)$. Note that any function in $E_{\sg,\ka}^0(\mathfrak{E})$ is bounded and if $f\in H_{\sg,\ka}^0(\mathfrak{E})$ then there is $C>0$ such that $\norm {f(\rx,\zeta)}_{\mathfrak{E}} \leq C \langle \rx,\zeta\rangle$ for any $(\rx,\zeta)\in \R^{2n}$.
The following lemma will give us the behaviour of the  $E_{\sg,\ka}$ and $H_{\sg,\ka}$ spaces under composition.

\begin{lem}
\label{lemGsigma}
(i) Let $f\in H_{\sigma,\kappa}^{w'}(\mathfrak{E})$ (resp. $E^{w'}_{\sigma,\kappa}(\mathfrak{E})$) and $g\in H^w_{\sigma,\kappa}(\R^{2n})$ such that there exists $C,c>0$, $r\geq 0$, such that $\langle g_1(\rx,\zeta) \rangle \geq c \langle \rx\rangle \langle \zeta \rangle^{-r}$ (if $\sg\neq 0$) and $\langle g_2(\rx,\zeta)\rangle \leq C\langle \zeta \rangle$ for any $(\rx,\zeta)\in \R^{2n}$, where $g=:(g_1,g_2)$. Then $f\circ g \in H_{\sg,\ka+|w|+r\sg}^{|w|+|w'|}(\mathfrak{E})$ (resp. $E_{\sg,\ka+|w|+r\sg}^{|w'|}(\mathfrak{E})$).

\noindent (ii) If $P\in E^w_{\sg,\ka}(\M_{n}(\R))$, then $(\rx,\zeta)\mapsto P_{\rx,\zeta}(\zeta) \in H^{w+\ka+1}_{\sg,\ka}(\R^n)$.

\noindent (iii) Let $f\in G_{\sigma}(\R^{n},\mathfrak{E})$  and $g\in H^w_{\sigma,\kappa}(\R^n)$ such that there exists $c>0$, $r\geq 0$, such that, if $\sg\neq 0$, $\langle g(\rx,\zeta) \rangle \geq c \langle \rx\rangle \langle \zeta \rangle^{-r}$ for any $(\rx,\zeta)\in \R^{2n}$. Then $f\circ g\in H_{\sg,\max\set{r\sg,\ka}+|w|}^{|w|}(\mathfrak{E})$. Moreover, if $f\in G_{\sigma}(\R^n,\R^p)$, then $df\circ g \in E_{\sg,\max\set{r\sg,\ka}+|w|}^{0}(\M_{p,n}(\R))$.
\end{lem}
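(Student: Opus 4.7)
The plan is to apply the multivariate Faà di Bruno formula of Theorem \ref{FaaCS} in all three cases and then carefully combine the $H^w_{\sigma,\kappa}$/$E^w_{\sigma,\kappa}$ estimates for $g$ with the estimates on $f$, using the lower bound $\langle g_1\rangle \geq c\langle \rx\rangle \langle\zeta\rangle^{-r}$ and the upper bound $\langle g_2\rangle\leq C\langle\zeta\rangle$ together with the trivial inequality $\langle g_2\rangle\geq 1$ (for negative exponents).

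For part (i), I would write $\del^\nu(f\circ g) = \sum_{1\leq|\lambda|\leq|\nu|}(\del^\lambda f)\circ g\cdot P_{\nu,\lambda}(g)$, where $P_{\nu,\lambda}(g)$ is a linear combination of products $\prod_{j=1}^s(\del^{l^j}g)^{k^j}$ with $\sum|k^j|=|\lambda|$, $\sum|k^j||l^j|=|\nu|$. The $H^{w'}_{\sg,\ka}$ estimate on $f$ gives $\|(\del^\lambda f)\circ g\|\leq C_\lambda\langle g_1\rangle^{-\sigma(|\lambda|-1)}\langle g_2\rangle^{w'+\kappa(|\lambda|-1)}$, and inserting the bounds on $g_1,g_2$ (using $\langle g_2\rangle^{w'+\kappa(|\lambda|-1)}\leq C'\langle\zeta\rangle^{\max(0,w'+\kappa(|\lambda|-1))}\leq C'\langle\zeta\rangle^{|w'|+\kappa(|\lambda|-1)}$) majorizes this by $\langle \rx\rangle^{-\sigma(|\lambda|-1)}\langle\zeta\rangle^{|w'|+(r\sigma+\kappa)(|\lambda|-1)}$. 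The factor $P_{\nu,\lambda}(g)$ is bounded by $\langle\rx\rangle^{-\sigma(|\nu|-|\lambda|)}\langle\zeta\rangle^{w|\lambda|+\kappa(|\nu|-|\lambda|)}$ using $\sum(|l^j|-1)|k^j|=|\nu|-|\lambda|$. Multiplying, the $\rx$-exponent telescopes to $-\sigma(|\nu|-1)$, and the $\zeta$-exponent collapses via $\kappa(|\lambda|-1)+\kappa(|\nu|-|\lambda|)=\kappa(|\nu|-1)$ and $w|\lambda|\leq|w|+|w|(|\nu|-1)$ to at most $|w|+|w'|+(\kappa+|w|+r\sigma)(|\nu|-1)$, which is the target. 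The $E$-case is analogous, working with $|\lambda|$ rather than $|\lambda|-1$, and the $\nu=0$ estimate for $E$ is just $\|f\circ g\|\leq C_0\langle g_2\rangle^{w'}\leq C\langle\zeta\rangle^{|w'|}$.

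For part (ii), I would simply use Leibniz: $\tilde P(\rx,\zeta):=P(\rx,\zeta)\zeta$ has $\del^\nu\tilde P = (\del^\nu P)\zeta + \sum_i \binom{\nu_{n+i}}{1}(\del^{\nu-e_{n+i}}P)e_i$, since only zeroth and first $\zeta$-derivatives of the identity $\zeta\mapsto\zeta$ survive. The first term is $O(\langle\rx\rangle^{-\sigma|\nu|}\langle\zeta\rangle^{w+1+\kappa|\nu|})$ and the second is $O(\langle\rx\rangle^{-\sigma(|\nu|-1)}\langle\zeta\rangle^{w+\kappa(|\nu|-1)})$; both are majorized by $\langle\rx\rangle^{-\sigma(|\nu|-1)}\langle\zeta\rangle^{(w+\ka+1)+\kappa(|\nu|-1)}$, which is the $H^{w+\ka+1}_{\sg,\ka}$ estimate.

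For part (iii), I would again apply Faà di Bruno, now with $f\in G_\sigma(\R^n,\mathfrak{E})$: $\|(\del^\lambda f)\circ g\|\leq C_\lambda\langle g\rangle^{-\sigma(|\lambda|-1)}\leq C_\lambda'\langle\rx\rangle^{-\sigma(|\lambda|-1)}\langle\zeta\rangle^{r\sigma(|\lambda|-1)}$ (no $\zeta$-growth from $f$ itself, as $f$ depends only on the image of $g$). Combining with the bound on $P_{\nu,\lambda}(g)$ gives $\|\del^\nu(f\circ g)\|\leq C\langle\rx\rangle^{-\sigma(|\nu|-1)}\langle\zeta\rangle^{r\sigma(|\lambda|-1)+w|\lambda|+\kappa(|\nu|-|\lambda|)}$; bounding $r\sigma(|\lambda|-1)+\kappa(|\nu|-|\lambda|)\leq \max(r\sigma,\kappa)(|\nu|-1)$ and $w|\lambda|\leq |w|+|w|(|\nu|-1)$ yields the claim. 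For the moreover clause, notice that each component of $df$ satisfies $\|\del^\nu(df)(\rv)\|\leq C_\nu\langle\rv\rangle^{-\sigma|\nu|}$ for all $\nu$ (including $\nu=0$, since $df$ is bounded), so the same Faà di Bruno argument with $|\lambda|$ replacing $|\lambda|-1$ in the $f$-estimate gives the $E^0_{\sg,\max(r\sg,\ka)+|w|}$ bound.

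The bookkeeping of the exponents is the only delicate point: one must combine $\kappa(|\lambda|-1)$ from the $f$-factor with $\kappa(|\nu|-|\lambda|)$ from the $g$-product factor \emph{before} bounding, otherwise one picks up a spurious $2\kappa$; similarly, $|w|$ (rather than $w$) appears because the Japanese bracket only allows one-sided estimates when the exponent is negative. No step is conceptually difficult, but the final estimate must be produced with the constants in the correct order.
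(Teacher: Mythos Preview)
Your proof is correct and follows essentially the same approach as the paper's: Fa\`a di Bruno for parts (i) and (iii), Leibniz for part (ii), with the same bookkeeping of the $\langle\rx\rangle$- and $\langle\zeta\rangle$-exponents. The paper's estimate on $P_{\nu,\lambda}(g)$ and on $(\partial^\lambda f)\circ g$ matches yours, as does the combination $r\sigma(|\lambda|-1)+\kappa(|\nu|-|\lambda|)\leq\max\{r\sigma,\kappa\}(|\nu|-1)$ in (iii) and the handling of $df$ via the $E$-type estimate.
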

\begin{proof}$(i)$ The Faa di Bruno formula yields for any $2n$-multi-index $\nu\neq 0$,
\begin{equation}
\label{eqFdB}
\del^\nu (f\circ g) = \sum_{1\leq |\la|\leq |\nu|} (\del^\la f)\circ g\  P_{\nu,\la}(g)
\end{equation}
where $P_{\nu,\la}(g)$ is a linear combination (with coefficients independant of $f$ and $g$) of functions of the form $\prod_{j=1}^s (\del^{l^j} g)^{k^j}$ where $s\in \set{1,\cdots ,|\nu|}$.  
The $k^j$ and $l^j$ are $2n$-multi-indices (for $1\leq j\leq s$) such that $|k^j|>0$, $|l^j|>0$, $\sum_{j=1}^s k^j = \la$ and $\sum_{j=1}^s |k^j| l^j= \nu$. As a consequence, since $g\in H_{\sigma,\kappa}^w(\R^{2n})$, we see that for each $\nu,\la$ with $1\leq |\la|\leq |\nu|$ there exists $C_{\nu,\la}>0$ such that for any $(\rx,\zeta)\in \R^n$,
\begin{equation}\label{pnulag}
|P_{\nu,\la}(g) (\rx,\zeta)|\leq C_{\nu,\la} \langle \rx\rangle^{-\sigma(|\nu|-|\la|)} \langle\zeta \rangle^{w|\la|+\kappa(|\nu|-|\la|)} \, . 
\end{equation}
Moreover, since $f\in H_{\sigma,\kappa}^{w'}(\R^{2n})$ (resp. $E_{\sigma,\kappa}^{w'}(\R^{2n})$), there is $C'_\la>0$ such that for any $(\rx,\zeta)\in \R^{2n}$, the estimate $\norm{(\del^\la f) \circ g(\rx,\zeta)}\leq C'_\la \langle \rx\rangle ^{-\sigma(|\la|-1)} \langle \zeta \rangle^{|w'|+(\kappa+r\sg)(|\la|-1)}$ (resp.  
 $\norm{(\del^\la f) \circ g(\rx,\zeta)}\leq C'_\la \langle \rx\rangle ^{-\sigma|\la|} \langle \zeta \rangle^{|w'|+(\kappa+r\sg)|\la|}$) is valid. We deduce then from (\ref{eqFdB}) and (\ref{pnulag}) that $f\circ g$ belongs to $H_{\sg,\ka+|w|+r\sg}^{w+|w'|}(\mathfrak{E})$ (resp. $E_{\sg,\ka+|w|+r\sg}^{|w'|}(\mathfrak{E})$).
 
\noindent $(ii)$ We note $P^{i,j}_{\rx,\zeta}$ the matrix entries of $P_{\rx,\zeta}$. Each component $(f^{i})_{1\leq i \leq n}$ of the map $f:=(\rx,\zeta)\mapsto P_{\rx,\zeta}(\zeta)$ is of the form $f^i= \sum_{j=1}^n P^{i,j}\, \zeta_j$. It is straightforward to check that the applications $(\rx,\zeta)\mapsto \zeta_j$ satify for any $\nu\in \N^{2n}$, $\del^\nu \zeta_j = \O(\langle \zeta\rangle^{1-|\nu|}\langle \rx\rangle^{\sg(1-|\nu|)})$. The result now follows from an application of the Leibniz rule.

 \noindent $(iii)$ Following the proof of $(i)$, (\ref{pnulag}) is still valid, this time with $\la$ as $n$-multi-indices and $\nu$ as $2n$-multi-indices with $1\leq |\la|\leq |\nu|$. Using the fact that $\langle g(\rx,\zeta) \rangle \geq c \langle \rx\rangle \langle \zeta \rangle^{-r}$ for any $(\rx,\zeta)\in \R^{2n}$, we obtain the following estimate
 $$
 \norm{(\del^\la f) \circ g(\rx,\zeta)}\leq C'_\la \langle \rx\rangle ^{-\sigma(|\la|-1)} \langle \zeta \rangle^{r\sigma(|\la|-1)}\leq C'_\la \langle \rx\rangle ^{-\sigma(|\la|-1)} \langle \zeta \rangle^{\max\set{r\sg,\ka}(|\la|-1)}
 $$
which, with (\ref{pnulag}) and (\ref{eqFdB}), yields $f\circ g$ belongs to $H_{\sg,\max\set{r\sg,\ka}+|w|}^{|w|}(\mathfrak{E})$. The fact that $df\circ g$ is in $E_{\sg,\max\set{r\sg,\ka}+|w|}^{0}(\M_{p,n}(\R))$ when $f\in G_{\sg}(\R^n,\R^p)$ is based on the same argument.
\end{proof}

The $H_{\sg,\ka}$ and $E_{\sg,\ka}$ spaces are related to the symbol and amplitude spaces by the following lemma.

\begin{lem}
\label{HEamp}
\noindent (i) If $f\in E^w_{\sg,\ka,z}$, then $(\rx,\zeta,\vth)\mapsto f(\rx,\zeta)$ is in $\Pi^{0,w,0}_{\sg,\ka,z}$.

\noindent (ii) Let $s\in S^{l,m}_{\sigma,z}$, $m\in H^w_{\sg,\ka}(\R^n)$ such that there exist $C,c,r>0$ such that, if $\sg\neq 0$, for any $(\rx,\zeta)\in \R^{2n}$, $ c \langle \rx\rangle \langle \zeta \rangle^{-r}\leq \langle m(\rx,\zeta)\rangle \leq C \langle \rx \rangle \langle \zeta\rangle^{r}$, and $P\in E^{0}_{\sg,\ka}(\M_n(\R))$ such that such that for any $(\rx,\zeta,\vth)\in \R^{3n}$, $\langle P_{\rx,\zeta}(\vth)\rangle \geq c \langle \vth \rangle$. Then $(\rx,\zeta,\vth)\mapsto s(m(\rx,\zeta),P_{\rx,\zeta}(\vth))$ is in $\Pi^{l,\sigma r|l|,m}_{\sigma,\ka+|\sg r-\ka+w|,z}$.

\noindent (iii) If $s\in S_\sigma(\R^n)$, $m\in H^w_{\sg,\ka}(\R^n)$ such that, if $\sg\neq 0$, there exists $c,r>0$ such that for any $(\rx,\zeta)\in \R^{2n}$ $\langle m(\rx,\zeta)\rangle \geq c \langle \rx\rangle \langle \zeta \rangle^{-r}$, then $(\rx,\zeta,\vth)\mapsto s(m(\rx,\zeta)) \Id_{L(E_z)}$ is in $\Pi_{\sg,\ka+|\sg r-\ka+w|,z}^{0,0,0}$.

\noindent (iv) If $a \in \Pi^{l,w,m}_{\sg,\ka,z}$ and $P\in E^{0}_{\sg,\ka}(\M_n(\R))$ is such that such that there is $c> 0$ such that for any $(\rx,\zeta,\vth)\in \R^{3n}$, $\langle P_{\rx,\zeta}(\vth)\rangle \geq c \langle \vth \rangle$, then $a_P:(\rx,\zeta,\vth)\mapsto a(\rx,\zeta,P_{\rx,\zeta}(\vth)) \in \Pi^{l,w,m}_{\sg,\ka,z}$.
\end{lem}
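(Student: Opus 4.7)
Each of (i)--(iv) amounts to a direct verification of the amplitude bound defining the target $\Pi$-space, via Theorem \ref{FaaCS} (Faa di Bruno) applied to the appropriate composition, together with the hypotheses on $m$ and $P$ and the algebraic features of the $H_{\sg,\ka}$ and $E_{\sg,\ka}$ spaces established earlier. Part (i) is immediate: setting $\tilde f(\rx,\zeta,\vth):=f(\rx,\zeta)$ one has $\del^{(\a,\b,\ga)}\tilde f=0$ when $\ga\neq 0$, and $\del^{(\a,\b,0)}\tilde f=\del^{(\a,\b)}f$, so the $\Pi^{0,w,0}_{\sg,\ka,z}$-estimate reduces verbatim to the defining estimate for $E^w_{\sg,\ka,z}$.

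For (iv), write $a_P=a\circ \Phi$ with $\Phi(\rx,\zeta,\vth):=(\rx,\zeta,P_{\rx,\zeta}(\vth))$ and apply Theorem \ref{FaaCS}. For $\la=(\la_1,\la_2,\la_3)\in\N^{3n}$ with $1\leq|\la|\leq|\mu|$ the $\Pi^{l,w,m}_{\sg,\ka,z}$-bound on $a$, combined with the two-sided control $c\langle\vth\rangle\leq\langle P_{\rx,\zeta}(\vth)\rangle\leq C\langle\vth\rangle$ (the upper bound following from $P\in E^0_{\sg,\ka}$ via Lemma \ref{lemGsigma}(ii)), gives
\[
\norm{(\del^\la a)\circ\Phi(\rx,\zeta,\vth)}_{L(E_z)}\leq C\,\langle\rx\rangle^{\sg(l-|\la_1+\la_2|)}\langle\zeta\rangle^{w+\ka|\la_1+\la_2|}\langle\vth\rangle^{m-|\la_3|}.
\]
The components of $\Phi$ are $\rx_i$, $\zeta_i$ (trivial derivatives: $\del^{l^j}\rx_i$ and $\del^{l^j}\zeta_i$ are nonzero only when $l^j$ is the appropriate unit multi-index), and $Q^i:=(P_{\rx,\zeta}(\vth))_i$, for which
\[
|\del^{l^j}Q^i(\rx,\zeta,\vth)|\leq C\,\langle\rx\rangle^{-\sg|(l_1^j,l_2^j)|}\langle\zeta\rangle^{\ka|(l_1^j,l_2^j)|}\langle\vth\rangle^{1-|l_3^j|}
\]
for $|l_3^j|\leq 1$, and $0$ otherwise. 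Forming the product $\prod_j(\del^{l^j}\Phi)^{k^j}$ and invoking $\sum_j k^j=\la$ and $\sum_j|k^j|l^j=\mu=(\a,\b,\ga)$ collapses all exponents to the announced $\Pi^{l,w,m}_{\sg,\ka,z}$-bound.

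Parts (ii) and (iii) follow the same template with $\Phi(\rx,\zeta,\vth):=(m(\rx,\zeta),P_{\rx,\zeta}(\vth))$ in (ii) and $\Phi:=m$ in (iii). The new ingredient is
\[
\langle m(\rx,\zeta)\rangle^{\sg(l-|\la_1|)}\leq K\,\langle\rx\rangle^{\sg(l-|\la_1|)}\langle\zeta\rangle^{\sg r|l-|\la_1||},
\]
obtained by using $\langle m\rangle\leq C\langle\rx\rangle\langle\zeta\rangle^r$ when $l-|\la_1|\geq 0$ and $\langle m\rangle\geq c\langle\rx\rangle\langle\zeta\rangle^{-r}$ otherwise; this generates the base factor $\langle\zeta\rangle^{\sg r|l|}$ appearing as the $w$-parameter of the target amplitude class. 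Derivatives $\del^{l^j}m$ are nonzero only when the $\vth$-part of $l^j$ vanishes and are then bounded by $\langle\rx\rangle^{-\sg(|l^j|-1)}\langle\zeta\rangle^{w+\ka(|l^j|-1)}$, while the $Q$-factors are bounded as in (iv). A case analysis on the sign of $\sg r-\ka+w$ (distinguishing whether $P$-derivatives or $m$-derivatives dominate the $\zeta$-growth per unit of derivative order) yields the exponent $\ka+|\sg r-\ka+w|$ per $|\a+\b|$. Part (iii) is the scalar specialization where only the $m$-composition occurs and the constraint $f\in G_\sg(\R^n,\mathfrak{E})$ gives $|(\del^\la f)\circ m|\leq C\langle m\rangle^{-\sg|\la|}\leq C'\langle\rx\rangle^{-\sg|\la|}\langle\zeta\rangle^{\sg r|\la|}$.

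The main obstacle is the combinatorial accounting in (ii): tracking how the derivative orders $|l^j|$ split between the $m$- and $P$-slots of $\Phi$ (using that $\del^{l^j}m=0$ whenever the $\vth$-part of $l^j$ is nonzero, which forces enough $\vth$-derivatives onto the $P$-factors to match $|\ga|$), and identifying, among the allowed splittings, the one realizing the worst-case $\zeta$-growth. The rigid constraints $\sum_jk^j=\la$ and $\sum_j|k^j|l^j=\mu$ are precisely what close the bound at the announced orders $(l,\sg r|l|,m)$ in (ii) and $(l,w,m)$ in (iv); once these are carefully indexed the estimates follow mechanically.
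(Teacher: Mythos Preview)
Your proposal is correct and follows essentially the same route as the paper: apply the Faa di Bruno formula (Theorem~\ref{FaaCS}) to the composite map $g(\rx,\zeta,\vth)=(m(\rx,\zeta),P_{\rx,\zeta}(\vth))$ in (ii) (resp.\ $g=(\rx,\zeta,P_{\rx,\zeta}(\vth))$ in (iv)), exploit the linearity of $\vth\mapsto P_{\rx,\zeta}(\vth)$ to force $|l^{j,3}|\leq 1$ in the nonvanishing terms, and collapse the resulting products using the constraints $\sum_j k^j=\la$, $\sum_j|k^j|l^j=\nu$ together with the two-sided bound on $\langle m\rangle$ and the one on $\langle P_{\rx,\zeta}(\vth)\rangle$. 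Two minor slips: in (iii) you write $f\in G_\sg$ but the hypothesis is $s\in S_\sigma(\R^n)$ (the decay bound you then use, $|(\del^\la s)\circ m|\leq C\langle m\rangle^{-\sg|\la|}$, is in fact the $S_\sigma$ one, so the argument is unaffected), and the upper bound $\langle P_{\rx,\zeta}(\vth)\rangle\leq C\langle\vth\rangle$ comes directly from the boundedness of $P\in E^0_{\sg,\ka}$ rather than from Lemma~\ref{lemGsigma}(ii).
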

\begin{proof}
\noindent $(i)$ is straightforward.

\noindent $(ii)$ Let us note $g(\rx,\zeta,\vth):=(m(\rx,\zeta),P_{\rx,\zeta}(\vth))$. For any $i,j\in \set{1,\cdots, n}$, we note $P_{\rx,\zeta}^{i,j}$ the $(i,j)$ matrix entry of $P_{\rx,\zeta}$. Since $P\in E^{0}_{\sg,\ka}(\mathcal{M}_n(\R))$, we have $P_{\cdot,\cdot}^{i,j}\in E^{0}_{\sg,\ka}(\R)$. Faa di Bruno formula in Theorem \ref{FaaCS} yields for any $\nu\neq0$ 
\begin{equation}
\label{delnusg}
\del^{\nu} (s\circ g)  =  \sum_{1\leq |\la|\leq |\nu|} (P_{\nu,\la}(g)) \ (\del^{\la}s)\circ g  
\end{equation}
where $P_{\nu,\la}(g)$ is a linear combination of terms of the form $\prod_{j=1}^{s} (\del^{l^j} g)^{k^j} $, where $1\leq s\leq |\nu|$,  
the $k^j$ (resp. $l^j$) are $2n$-multi-indices (resp. $3n$-multi-indices) with $|k^j|>0$, $|l^j|>0$, $\sum_{j=1}^s k^j = \la$ and $\sum_{j=1}^s|k^j|l^j=\nu$. 
Let us note $l^{j}=:(l^{j,1},l^{j,2},l^{j,3})$, $k^{j}=:(k^{j,1},k^{j,2})$ where $l^{j,1},l^{j,2},l^{j,3},k^{j,1},k^{j,2}$ are $n$-multi-indices. We have, noting $Q(\rx,\zeta,\vth):=(\rx,\zeta)$,
$$
(\del^{l^j}g)^{k^j}= \prod_{i=1}^n (\delta_{l^{j,3},0}(\del^{(l^{j,1},l^{j,2})} m)_i\circ Q )^{k^{j,1}_i}\ \prod_{i=1}^n \big(\sum_{k=1}^n \del^{(l^{j,1},l^{j,2})}P^{i,k}_{\cdot,\cdot}\ \del^{l^{j,3}} \vth_{k}\big)^{k^{j,2}_i}
$$
and we get, for a given $s$, $(l^{j})$, $(k^j)$ such that $(\del^{l^j} g)^{k^j} \neq 0$ for all $1\leq j\leq s$,
\begin{align*}
&\text{ if } l^{j,3}=0\, ,  \qquad   (\del^{l^j}g)^{k^j} = \O(\langle \rx\rangle^{-\sigma|l^j||k^{j}|+\sigma|k^{j,1}|}\langle \zeta\rangle^{\ka|l^j||k^{j}|-\ka|k^{j,1}|+w|k^{j,1}|}\langle \vth\rangle^{|k^{j,2}|})\, , \\ 
&\text{ if } |l^{j,3}|=1\, ,\qquad k^{j,1}=0 \text{ and } \    (\del^{l^j}g)^{k^j} = \O(\langle \rx \rangle^{-\sigma |l^j||k^{j}|+\sg|k^j|}\langle \zeta\rangle^{\ka |l^j||k^j|-\ka|k^j|})\, .
\end{align*}

The case is $|l^{j,3}|>1$ is excluded since $k^{j}\neq 0$  and $(\del^{l^j}g)^{k^j} \neq 0$.
By permutation on the $j$ indices, we can suppose as in the proof of Lemma \ref{cchange} that for $1\leq j\leq j_1-1$, we have $l^{j,3}=0$ and for $j_1\leq j\leq s$, we have $|l^{j,3}|=1$, where $1\leq j_1\leq s+1$. Thus, we get 
\begin{align*}
&\prod_{j=1}^s (\del^{l^j} g)^{k^j} = \O(\langle\rx \rangle^{-\sg(\sum_{j=1}^{s}(|l^j|-1)|k^j| + \sum_{j=1}^{j_1-1}|k^{j,2}|)}\\
&\hspace{3cm}\times\langle \zeta\rangle^{w\sum_{j=1}^{s}|k^{j,1}|+\kappa(\sum_{j=1}^{s}(|l^j|-1)|k^j| + \sum_{j=1}^{j_1-1}|k^{j,2}|)}\langle \vth \rangle^{\sum_{j=1}^{j_1-1}|k^{j,2}|})\, .
\end{align*}
We check that $\sum_{j=1}^{j_1-1} |k^{j,2}|=|\la^2|-|\ga|$ and $\sum_{j=1}^{s}(|l^j|-1)|k^j| = |\nu|-|\la|$ where $\la=(\la^1,\la^2)$ and $\nu=(\a,\b,\ga)$. As a consequence, 
\begin{equation}
\label{pnu2}
P_{\nu,\la}(g) = \O(\langle \rx\rangle^{-\sigma(|\a+\b|-|\la^1|)}\langle \zeta\rangle^{w|\la^1|+\ka(|\a+\b|-|\la^1|)} \langle \vth \rangle^{|\la^2|-|\ga|}) \, .
\end{equation}
Since there exist $C,c>0$ such that for any $(\rx,\zeta)\in \R^{2n}$ $\langle m(\rx,\zeta)\rangle \leq C \langle \rx\rangle \langle\zeta\rangle^{r}$ and $\langle m(\rx,\zeta)\rangle \geq c \langle \rx\rangle \langle \zeta\rangle^{-r}$, we see that there is $K_\nu>0$ such that for any $1\leq |\la|\leq |\nu|$ and any $(\rx,\zeta)\in \R^{2n}$,
$\langle m(\rx,\zeta)\rangle^{\sigma(l-|\la^1|)}\leq K_\nu \langle \rx \rangle^{\sg(l-|\la^1|)} \langle \zeta \rangle^{\sg r|l|+\sg r|\la^1|}$. As a consequence, we see that there is $C_\nu>0$ such that for any $1\leq |\la|\leq |\nu|$ and any $(\rx,\zeta,\vth)\in \R^{3n}$,
$$
\norm{(\del^\la s)\circ g (\rx,\zeta,\vth)}_{L(E_z)} \leq C_\nu \langle \rx \rangle^{\sg(l-|\la^1|)} \langle \zeta \rangle^{\sg r|l|+\sg r|\la^1|} \langle \vth \rangle^{m-|\la^2|}.
$$
so, since we can reduce the sum in (\ref{delnusg}) to $2n$-multi-indices $\la$ such that $|\la^2|\geq |\ga|$ (and thus $|\la^1|\leq |\a+\b|$), we obtain the result from (\ref{pnu2}) and a straightforward verification of the case $\nu=0$. 

\noindent $(iii)$ is obtain exactly as $(ii)$ (with $P_{\rx,\zeta}=\Id$), since $(\rx,\zeta)\mapsto \mu_{z,\bfr}(\rx)\Id_{L(E_z)} \in S^{0,0}_{\sigma,z}$. The hypothesis $ m(\rx,\zeta) = \O (\langle \rx\rangle \langle \zeta\rangle^r)$ is not necessary since $l=0$ here.

\noindent $(iv)$ We have, noting $g(\rx,\zeta,\vth):=(\rx,\zeta,P_{\rx,\zeta}(\vth))$, for any $3n$-multi-indices $\nu\neq 0$, $1\leq |\nu'|\leq |\nu|$, 
$P_{\nu,\nu'}(g)$ as a linear combination of terms of the form $\prod_{j=1}^s (\del^{l^j}g)^{k^j}$, with $\sum_{j=1}^{s}|k^j|l^j=\nu$ and $\sum_{j=1}^s k^j=\nu'$, noting $k^j=(k^{j,1},k^{j,2}), l^{j}=(l^{j,1},l^{j,2})$, where $k^{j,1}$ and $l^{j,1}$ are $2n$-multi-indices, we get, following the proof of $(ii)$, 
$$
P_{\nu,\nu'}(g)=\O(\langle \rx\rangle^{-\sigma(|\a+\b|-|\a'+\b'|)}\langle \zeta\rangle^{\ka(|\a+\b|-|\a'+\b'|)} \langle \vth \rangle^{|\ga'|-|\ga|})\, .
$$
Since $P_{\rx,\zeta}=\O(1)$ and $\langle P_{\rx,\zeta}(\vth)\rangle \geq \eps\langle \vth\rangle$ we get the result.
\end{proof}

\begin{defn}
\label{sprime} Let $\sigma \in [0,1]$ and $\psi$ a linearization on $(M,\exp,E,d \mu)$. We say that $\psi$ is a $S_\sg$-linearization if for any frame $(z,\bfr)$, 
there is $\ka_{z,\bfr}\geq 0$ such that

\noindent $(i)$ $\psi_z^\bfr \in H_{\sg,\ka_{z,\bfr}}(\R^n)$ with ${\psi_z^\bfr}(\rx,\zeta)=\O(\langle \rx\rangle \langle \zeta\rangle^{r})$ for a $r\geq 1$ and $\ol{\psi_z^\bfr} \in \O_M(\R^{2n},\R^n)$ ,

\noindent $(ii)$ there is $P^{z,\bfr}\in C^\infty(\R^{2n}, GL_n(\R))$ such that $P^{z,\bfr}$ and  $(P^{z,\bfr})^{-1}$ are in $E^0_{\sg,\ka_{z,\bfr}}(\M_n(\R))$, and for any $(\rx,\zeta)\in \R^{2n}$, $P_{\rx,\zeta}^{z,\bfr}(\zeta)=\Ups_{1,T}^{z,\bfr}(\rx,\zeta)$ and $P_{\rx,0}^{z,\bfr}=\Id_{\R^n}$. 

\noindent $(iii)$ $\tau_{1}^{z,\bfr}$ and $(\tau_1^{z,\bfr})^{-1}$ are in $E^0_{\sg,\ka_{z,\bfr}}(L(E_z))$.

\noindent We shall say that the combo $(M,\exp,E,d\mu,\psi)$ has a $S_{\sigma}$-bounded geometry if this is the case of $(M,\exp,E,d\mu)$ and $\psi$ is a $S_\sg$-linearization. 
\end{defn}

It is clear that a $S_{\sg}$-linearization is also a $\O_M$-linearization. Moreover, we can check, in case of $S_\sg$ bounded geometry, we check the properties $(i)$, $(ii)$ and $(iii)$ in just one frame:

\begin{lem} If $(M,\exp,E,d\mu)$ has a $S_\sg$-bounded geometry and $\psi$ is a linearization such that there exists $(z_0,\bfr_0)$, $\ka_{z_0,\bfr_0}\geq 0$, such that the functions $\psi_{z_0}^{\bfr_0}$, ${\ol \psi}_{z_0}^{\bfr_0}$ satisfy $(i)$, $(ii)$ and $(iii)$, then $\psi$ is a $S_\sg$-linearization.
\end{lem}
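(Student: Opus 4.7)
The plan is to verify each of conditions $(i)$, $(ii)$, $(iii)$ of Definition~\ref{sprime} at an arbitrary frame $(z,\bfr)$ by transporting the known properties at $(z_0,\bfr_0)$ through the frame-change identities recalled just before the lemma, combined with the composition closures of the classes $G_\sg^\times$, $H_{\sg,\ka}$ and $E_{\sg,\ka}$ from Lemmas~\ref{Ssigma} and~\ref{lemGsigma}. The preliminary observation is that the $S_\sg$-bounded geometry of $(M,\exp,E)$ forces each frame-change diffeomorphism $\psi_{z,z_0}^{\bfr,\bfr_0}$, $\psi_{z_0,z}^{\bfr_0,\bfr}$ to lie in $G_\sg^\times(\R^n)$ (Lemma~\ref{Ssigma}(iv)), so that the induced tangent-bundle change of coordinates takes the explicit form
\[
\psi_{z_0,z,T}^{\bfr_0,\bfr}(\rx,\zeta)=\big(\psi_{z_0,z}^{\bfr_0,\bfr}(\rx),\,A^-(\rx)\zeta\big),\qquad A^-(\rx):=(d\psi_{z_0,z}^{\bfr_0,\bfr})_\rx;
\]
its first component belongs to $H_{\sg,0}^{0}(\R^n)$ and is $\asymp\Id_{\R^n}$, while its second component belongs to $H_{\sg,0}^{1}(\R^n)$ with $|A^-(\rx)\zeta|\leq C\langle\zeta\rangle$. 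The analogous statements hold for $\psi_{z,z_0,T}^{\bfr,\bfr_0}$ and $\psi_{z_0,z,M^2}^{\bfr_0,\bfr}$, and the bundle transitions $\tau_z^{-1}\tau_{z_0}$ read in the frame $(z,\bfr)$ belong to $E_{\sg,0}^{0}$-type classes by $(S_\sg 2)$.

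Condition~$(i)$ then follows from two successive applications of Lemma~\ref{lemGsigma}: first Lemma~\ref{lemGsigma}(i) composes $\psi_{z_0}^{\bfr_0}\in H_{\sg,\ka_0}(\R^n)$ with $\psi_{z_0,z,T}^{\bfr_0,\bfr}$, then Lemma~\ref{lemGsigma}(iii) precomposes with $\psi_{z,z_0}^{\bfr,\bfr_0}\in G_\sg^\times$. The upper bound $\psi_z^\bfr(\rx,\zeta)=\O(\langle\rx\rangle\langle\zeta\rangle^{r})$ is immediate from $\psi_{z,z_0}^{\bfr,\bfr_0}\asymp\Id$, the assumed bound on $\psi_{z_0}^{\bfr_0}$, and $|A^-(\rx)\zeta|\leq C\langle\zeta\rangle$. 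The membership $\ol{\psi_z^\bfr}\in\O_M(\R^{2n},\R^n)$ is deduced identically from its transformation formula together with Lemma~\ref{Ssigma}(iii) applied to $\psi_{z_0,z}^{\bfr_0,\bfr}$ and $\psi_{z_0,z,M^2}^{\bfr_0,\bfr}$. Condition~$(iii)$ uses the identity $\tau_1^{z,\bfr}(\rx,\zeta)=\tau^{z,\bfr}(\rx,\psi_z^\bfr(\rx,\zeta))$ and the transformation formula for $\tau^{z,\bfr}$ to write $\tau_1^{z,\bfr}$ as the pointwise product of elements of $E_{\sg,0}^{0}(L(E_z))$ coming from the bundle transitions and the composition $\tau_1^{z_0,\bfr_0}\circ\psi_{z_0,z,T}^{\bfr_0,\bfr}$, which lies in $E_{\sg,\ka'}^{0}(L(E_z))$ by Lemma~\ref{lemGsigma}(i); the same computation applied to $(\tau_1^{z_0,\bfr_0})^{-1}$ yields $(\tau_1^{z,\bfr})^{-1}\in E_{\sg,\ka'}^{0}(L(E_z))$ after invoking the closure of $E_{\sg,\ka}^{0}$ under pointwise matrix product.

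For condition~$(ii)$, the map $P^{z,\bfr}$ must be constructed rather than read off a composition. Since $\Ups_1$ is intrinsic on $TM$, in coordinates $\Ups_{1,z,\bfr}=\psi_{z,z_0,T}^{\bfr,\bfr_0}\circ\Ups_{1,z_0,\bfr_0}\circ\psi_{z_0,z,T}^{\bfr_0,\bfr}$. Reading off the tangent component and using $\Ups_{1,T}^{z_0,\bfr_0}(\rx',\zeta')=P^{z_0,\bfr_0}_{\rx',\zeta'}\zeta'$ suggests the definition
\[
P^{z,\bfr}_{\rx,\zeta}:=A^+\big(\psi_{z_0}^{\bfr_0}(\psi_{z_0,z,T}^{\bfr_0,\bfr}(\rx,\zeta))\big)\,P^{z_0,\bfr_0}_{\psi_{z_0,z,T}^{\bfr_0,\bfr}(\rx,\zeta)}\,A^-(\rx),\qquad A^+:=d\psi_{z,z_0}^{\bfr,\bfr_0},
\]
which satisfies $P^{z,\bfr}_{\rx,\zeta}\zeta=\Ups_{1,T}^{z,\bfr}(\rx,\zeta)$; since $\psi_{z_0}^{\bfr_0}(\cdot,0)=\Id$, $P^{z_0,\bfr_0}_{\cdot,0}=\Id$, and the chain rule gives $A^+\circ\psi_{z_0,z}^{\bfr_0,\bfr}=(A^-)^{-1}$, one obtains $P^{z,\bfr}_{\rx,0}=\Id_{\R^n}$. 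The membership of $P^{z,\bfr}$ and its inverse in $E^{0}_{\sg,\ka'}(\M_n(\R))$ then follows from Lemma~\ref{lemGsigma}(i) combined with the algebra structure of $E_{\sg,\ka}$ under pointwise matrix product and pointwise inversion.

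The main technical obstacle is the lower-bound hypothesis $\langle g(\rx,\zeta)\rangle\geq c\langle\rx\rangle\langle\zeta\rangle^{-r}$ required by Lemma~\ref{lemGsigma}(iii) when composing with a map in $G_\sg$; this reduces to a polynomial lower bound on $\psi_{z_0}^{\bfr_0}(\rx,\zeta)$, which I expect to extract from the assumption $\ol{\psi_{z_0}^{\bfr_0}}\in\O_M(\R^{2n},\R^n)$: since $\ol{\psi_{z_0}^{\bfr_0}}$ is the inverse of $\psi_{z_0}^{\bfr_0}$ in the second variable, a polynomial upper bound on $\ol{\psi_{z_0}^{\bfr_0}}$ translates into the required inverse polynomial lower bound on $\psi_{z_0}^{\bfr_0}$. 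The remaining work is a bookkeeping of the parameters $w$ and $\ka$ through the chain of compositions, which fixes the final $\ka_{z,\bfr}$.
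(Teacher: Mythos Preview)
Your overall strategy matches the paper's: the paper's proof is the single line ``This follows from applications of Lemma~\ref{lemGsigma}'', and you have correctly unpacked what those applications are --- the frame-change identities for $\psi_z^\bfr$, $\ol{\psi_z^\bfr}$ and $\tau^{z,\bfr}$ together with the composition closures of $G_\sg^\times$, $H_{\sg,\ka}$ and $E_{\sg,\ka}$. Your explicit construction of $P^{z,\bfr}$ from the intrinsicality of $\Ups_1$ is exactly right, and the verification $P^{z,\bfr}_{\rx,0}=\Id$ via the chain rule is correct.

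There is, however, a genuine gap at the point you yourself flag as the ``main technical obstacle''. You claim that the lower bound $\langle\psi_{z_0}^{\bfr_0}(\rx,\zeta)\rangle\geq c\langle\rx\rangle\langle\zeta\rangle^{-r}$ can be extracted from $\ol{\psi_{z_0}^{\bfr_0}}\in\O_M(\R^{2n},\R^n)$. This does not work: a polynomial upper bound $|\ol{\psi_{z_0}^{\bfr_0}}(\rx,\ry)|\leq C\langle(\rx,\ry)\rangle^q$ applied at $\ry=\psi_{z_0}^{\bfr_0}(\rx,\zeta)$ only yields $\langle\psi_{z_0}^{\bfr_0}(\rx,\zeta)\rangle\geq c\langle\zeta\rangle^{1/q}\langle\rx\rangle^{-1}$, which is the wrong shape (the $\langle\rx\rangle$ factor appears with the wrong sign). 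The correct route uses condition~$(ii)$ at $(z_0,\bfr_0)$ rather than condition~$(i)$: since $\Ups_{1,z_0,\bfr_0}$ is a diffeomorphism with inverse $\Ups_{-1,z_0,\bfr_0}(\rx',\zeta')=(\psi_{z_0}^{\bfr_0}(\rx',-\zeta'),P^{z_0,\bfr_0}_{\rx',-\zeta'}\zeta')$, and both components of $\Ups_{-1,z_0,\bfr_0}$ are $\O(\langle\rx'\rangle\langle\zeta'\rangle^{r})$ by the assumed upper bound on $\psi_{z_0}^{\bfr_0}$ and the boundedness of $P^{z_0,\bfr_0}\in E^0_{\sg,\ka_0}$, one gets $\langle(\rx,\zeta)\rangle\leq C\langle\psi_{z_0}^{\bfr_0}(\rx,\zeta)\rangle\langle P^{z_0,\bfr_0}_{\rx,\zeta}\zeta\rangle^{r}\leq C'\langle\psi_{z_0}^{\bfr_0}(\rx,\zeta)\rangle\langle\zeta\rangle^{r}$. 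This is precisely the argument of Lemma~\ref{lem-Phi-la}(ii), run at the single frame $(z_0,\bfr_0)$ using only the hypotheses there. With this correction in place, your proof goes through.
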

\begin{proof} This follows from applications of Lemma \ref{lemGsigma}.
\end{proof}

\begin{rem} The condition $(ii)$ in Definition \ref{sprime} encodes an abstract parallel transport isomorphisms in normal coordinates. Indeed, in the case where the linearization $\psi$ is derived from a connection on $M$, the $GL_n(\R)$-valued smooth functions on $\R^{2n}$: $P^{z,\bfr}:=(\rx,\zeta)\mapsto  M^\bfr_{z,\exp\circ (n_{z,T}^\bfr)^{-1}(\rx,\zeta)} P_{(n_{z,T}^\bfr)^{-1}(\rx,\zeta)} ( M^\bfr_{z,(n_{z}^\bfr)^{-1}(\rx)})^{-1}$ where the applications $P_{x,\xi}$ are the parallel transport isomorphisms on the tangent bundle (see Remark \ref{remTP}), satisfy for any $(\rx,\zeta)\in \R^{2n}$, $P_{\rx,\zeta}^{z,\bfr}(\zeta)=\Ups_{1,T}^{z,\bfr}(\rx,\zeta)$ and $P_{\rx,0}^{z,\bfr}=\Id_{\R^n}$. Thus, in this case, $(ii)$ is satisfied if  $P^{z,\bfr}$ and  $(P^{z,\bfr})^{-1}$ are in $E_{\sg,\ka_{z,\bfr}}^0(\M_n(\R))$ for a $\ka_{z,\bfr}\geq 0$.
\end{rem}

Remark that for any $t\in \R$ and $(\rx,\zeta)\in \R^{2n}$, if $P^{z,\bfr}\in C^\infty(\R^{2n}, GL_n(\R))$ satisfies $(ii)$, then $P^{z,\bfr}_{\rx,t\zeta}(\zeta)= \Ups_{t,T}^{z,\bfr}(\rx,\zeta)$. We shall note $P^{z,\bfr}_t:=(\rx,\zeta)\mapsto P^{z,\bfr}_{\rx,t\zeta}$, so that $P_1^{z,\bfr}=P^{z,\bfr}$ and $P_0^{z,\bfr}=\Id_{\R^n}$. Thus, $\Ups_{t,z,\bfr}(\rx,\zeta)=(\psi_{z}^\bfr(\rx,t\zeta),P^{z,\bfr}_{t,\rx,\zeta}(\zeta))$ and we define the following diffeomorphism on $\R^{3n}$, 
\begin{equation}
\label{xidef} \Xi_{t,z,\bfr}:=(\rx,\zeta,\vth)\mapsto (\Ups_{t,z,\bfr}(\rx,\zeta),\wt P_{t,\rx,\zeta}^{z,\bfr}(\vth))\, .
\end{equation}
We also define the $\R^{2n}$-valued function $\wh\Xi_{t,z,\bfr}:(\rx,\zeta,\vth)\mapsto (\psi_z^\bfr(\rx,t\zeta),\wt P_{t,\rx,\zeta}^{z,\bfr}(\vth))$.
We check that $J(\Xi_{t,z,\bfr}) =J(\Ups_{t,z,\bfr})\, (\det (P_{t}^{z,\bfr})^{-1})$ and $J(\Xi_{t,z,\bfr}^{-1})=J(\Ups_{-t,z,\bfr})\, (\det (P_{t}^{z,\bfr}\circ \Ups_{-t,z,\bfr}))$. Note also that for any $(\rx,\ry)\in \R^{2n}$, $\ol{\psi_{z}^\bfr}(\ry,\rx)= -P_{\rx,\ol{\psi_{z}^\bfr}(\rx,\ry)}^{z,\bfr}(\ol{\psi_{z}^\bfr}(\rx,\ry))$.
\begin{lem}
\label{lem-Phi-la}
Let $(z,\bfr)$ be a given frame, $\la, \la'\in[0,1]$ and $t\in [-1,1]$. Suppose also that $(M,\exp,E, d\mu ,\psi)$ has a $S_\sigma$-bounded geometry. Then

\noindent (i) $P_{t}^{z,\bfr}$, $(P_{t}^{z,\bfr})^{-1}$ are in $E_{\sigma,\kappa_{z,\bfr}}^0(\mathcal{M}_n(\R))$, and $\tau_{t}^{z,\bfr}$, $(\tau_{t}^{z,\bfr})^{-1}$ are in $E_{\sigma,\kappa_{z,\bfr}}^0(L(E_z))$.

\noindent (ii) $m_{t}^{z,\bfr}:=\psi_z^\bfr\circ I_{1,t} \in H_{\sigma,\kappa_{z,\bfr}}(\R^n)$ and there is $c>0$, $r\geq 1$ such that for any $(\rx,\zeta)\in \R^{2n}$, $\langle m_{t}^{z,\bfr} (\rx,\zeta)\rangle \geq c \langle \rx \rangle \langle \zeta\rangle^{-r}$. 

\noindent (iii) There is $c,\eps>0$ such that for any $(\rx,\zeta)\in \R^{2n}$, $\langle \psi_z^\bfr(\rx,\zeta) \rangle \geq c \langle \zeta \rangle^{\eps} \langle \rx \rangle^{-1}$.  

\noindent (iv) $\Phi_{\la,z,\bfr}\in H_{\sigma,\kappa_{z,\bfr}}(\R^{2n})$. In particular $J_{\la,z,\bfr} \in E_{\sigma,\kappa_{z,\bfr}}(\R)$.

\noindent (v) $\Ups_{t,z,\bfr} \in H_{\sigma,\kappa_{z,\bfr}}(\R^{2n})$. In particular $J({\Ups_{t,z,\bfr}}) \in E_{\sigma,\kappa_{z,\bfr}}(\R)$. Moreover, there is $C>0$ such that $\langle (\Ups_{t,T}^{z,\bfr})(\rx,\zeta)\rangle \leq C\langle \zeta \rangle$ for any $(\rx,\zeta)\in \R^{2n}$.

\noindent (vi) $J(\Xi_{t,z,\bfr})$ and $J(\Xi_{t,z,\bfr}^{-1})$ are in $E_{\sigma,\kappa_{z,\bfr}}(\R)$.
\end{lem}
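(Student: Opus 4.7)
The strategy is to establish each part in the stated order, exploiting Definition \ref{sprime} together with the composition rules of Lemma \ref{lemGsigma} and the fact that the classes $E^w_{\sigma,\kappa}$ and $H^w_{\sigma,\kappa}$ behave well under pointwise operations and composition with ``nice'' maps.

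For $(i)$, I observe that $P_t^{z,\bfr}=P^{z,\bfr}\circ I_{1,t}$ and $\tau_t^{z,\bfr}=\tau_1^{z,\bfr}\circ I_{1,t}$, where $I_{1,t}:(\rx,\zeta)\mapsto(\rx,t\zeta)$ is linear. Since $I_{1,t}\in H^0_{\sigma,0}(\R^{2n})$ with $(I_{1,t})_1=\rx$ and $\langle(I_{1,t})_2\rangle\leq(1+|t|)\langle\zeta\rangle$, Lemma \ref{lemGsigma}(i) applied with each of $f\in\{P^{z,\bfr},(P^{z,\bfr})^{-1},\tau_1^{z,\bfr},(\tau_1^{z,\bfr})^{-1}\}\subset E^0_{\sigma,\kappa_{z,\bfr}}$ yields membership in $E^0_{\sigma,\kappa_{z,\bfr}}$. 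For $(ii)$, the same lemma with $f=\psi_z^\bfr$ gives $m_t^{z,\bfr}\in H_{\sigma,\kappa_{z,\bfr}}(\R^n)$. For the lower bound I would use $\psi_z^\bfr(\rx,0)=\rx$ together with the fundamental theorem of calculus
\[
\psi_z^\bfr(\rx,t\zeta)-\rx=\int_0^1(d_\zeta\psi_z^\bfr)(\rx,st\zeta)\cdot t\zeta\, ds;
\]
since $\psi_z^\bfr\in H^w_{\sigma,\kappa_{z,\bfr}}(\R^n)$ for some $w$, the integrand is bounded by $C\langle\zeta\rangle^{\max(w,0)}$, so $\norm{\psi_z^\bfr(\rx,t\zeta)-\rx}\leq C\langle\zeta\rangle^{\max(w,0)+1}$. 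A case split (whether $\langle\rx\rangle$ is large or small compared to this error term) then yields $\langle\psi_z^\bfr(\rx,t\zeta)\rangle\geq c\langle\rx\rangle\langle\zeta\rangle^{-r}$ with $r=\max(w,0)+1\geq 1$.

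For $(iii)$, the hypothesis $\ol{\psi_z^\bfr}\in\O_M(\R^{2n},\R^n)$ provides $q\geq 1$ and $C>0$ with $\norm{\ol{\psi_z^\bfr}(\rx,\ry)}\leq C\langle\rx\rangle^q\langle\ry\rangle^q$; substituting $\ry=\psi_z^\bfr(\rx,\zeta)$ into the identity $\ol{\psi_z^\bfr}(\rx,\psi_z^\bfr(\rx,\zeta))=\zeta$ rearranges to the desired estimate with $\eps=1/q$. For $(iv)$, writing $\Phi_{\la,z,\bfr}=(m_\la^{z,\bfr},m_{\la-1}^{z,\bfr})$ and invoking $(ii)$ componentwise immediately gives $\Phi_{\la,z,\bfr}\in H_{\sigma,\kappa_{z,\bfr}}(\R^{2n})$; the remark preceding Lemma \ref{lemGsigma} stating $f\in H_{\sigma,\kappa}(\R^{2n})\Rightarrow J(f)\in E_{\sigma,\kappa}(\R)$ then delivers the Jacobian claim.

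For $(v)$, the first component of $\Ups_{t,z,\bfr}$ is $m_t^{z,\bfr}$, controlled by $(ii)$; the second is $(\rx,\zeta)\mapsto P^{z,\bfr}_{t,\rx,\zeta}(\zeta)$, which Lemma \ref{lemGsigma}(ii) places in $H_{\sigma,\kappa_{z,\bfr}}(\R^n)$ using $P_t^{z,\bfr}\in E^0_{\sigma,\kappa_{z,\bfr}}(\M_n(\R))$ from $(i)$. The uniform bound $\langle\Ups^{z,\bfr}_{t,T}(\rx,\zeta)\rangle\leq C\langle\zeta\rangle$ follows directly from the boundedness of $P^{z,\bfr}\in E^0_{\sigma,\kappa_{z,\bfr}}$, and the Jacobian claim again comes from the $H\Rightarrow E$ remark. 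For $(vi)$, I use the explicit formulas $J(\Xi_{t,z,\bfr})=J(\Ups_{t,z,\bfr})\det(P_t^{z,\bfr})^{-1}$ and $J(\Xi_{t,z,\bfr}^{-1})=J(\Ups_{-t,z,\bfr})\det(P_t^{z,\bfr}\circ\Ups_{-t,z,\bfr})$ recorded in the paper. The algebra structure of $E_{\sigma,\kappa_{z,\bfr}}$ together with $(i)$ and $(v)$ closes the first equality; for the second, the only non-trivial step is $P_t^{z,\bfr}\circ\Ups_{-t,z,\bfr}\in E^0_{\sigma,\kappa_{z,\bfr}}(\M_n(\R))$, which follows from Lemma \ref{lemGsigma}(i) applied with $f=P_t^{z,\bfr}$ and $g=\Ups_{-t,z,\bfr}$, the required bounds on $g_1,g_2$ being precisely those furnished by $(ii)$ and the uniform estimate of $(v)$.

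The main obstacle is the lower bound in $(ii)$: the $H_{\sigma,\kappa_{z,\bfr}}$ hypothesis controls only derivatives of $\psi_z^\bfr$, not its magnitude from below, so neither the composition lemma nor direct application of $\O_M$-polynomial bounds suffices. The Taylor-at-$\zeta=0$ argument, which exploits the algebraic identity $\psi_z^\bfr(\rx,0)=\rx$, is what bridges the linearization's infinitesimal behaviour near the zero section and the globally required lower bound. Once $(ii)$ is in hand, parts $(iv)$--$(vi)$ reduce to bookkeeping within the $E_{\sigma,\kappa}/H_{\sigma,\kappa}$ calculus.
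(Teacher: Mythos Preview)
Your argument is correct and parts $(i)$, $(iii)$--$(vi)$ match the paper's proof essentially line for line: the same invocations of Lemma~\ref{lemGsigma}, the same use of the algebra/Jacobian remark preceding it, and the same explicit formulas for $J(\Xi_{t,z,\bfr})^{\pm 1}$.

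The one genuine difference is the lower bound in $(ii)$. The paper does \emph{not} Taylor-expand at $\zeta=0$. Instead it observes that $\Ups_{t,z,\bfr}=(m_t^{z,\bfr},P^{z,\bfr}_{t,\cdot,\cdot}(\cdot))$ is a diffeomorphism with inverse $\Ups_{-t,z,\bfr}$; since Definition~\ref{sprime}$(i)$ gives $\psi_z^\bfr(\rx,\zeta)=\O(\langle\rx\rangle\langle\zeta\rangle^r)$ and $P_t$ is bounded, one has $\langle\Ups_{-t,z,\bfr}(\rx',\zeta')\rangle=\O(\langle\rx'\rangle\langle\zeta'\rangle^r)$, and substituting $(\rx',\zeta')=\Ups_{t,z,\bfr}(\rx,\zeta)$ yields $\langle m_t\rangle\langle P_t(\zeta)\rangle^r\gtrsim\langle\rx,\zeta\rangle$; the bound $\langle P_t(\zeta)\rangle\lesssim\langle\zeta\rangle$ then isolates $\langle m_t\rangle$. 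Your Taylor argument is more self-contained (it uses only $\psi_z^\bfr(\rx,0)=\rx$ and the $H_{\sigma,\kappa}$ derivative bound, not the $P_t$ or $\Ups_t$ machinery), while the paper's route exploits the group property $\Ups_{-t}\circ\Ups_t=\Id$ together with the growth hypothesis $\psi_z^\bfr=\O(\langle\rx\rangle\langle\zeta\rangle^r)$ from Definition~\ref{sprime}$(i)$, which you never invoke. Both reach the same estimate; yours is arguably more elementary, the paper's makes the role of the $\Ups_t$ inversion explicit, which is thematically consistent with how $(vi)$ is handled.

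One minor caveat worth flagging (present in the paper as well): in $(vi)$, applying Lemma~\ref{lemGsigma}$(i)$ to $P_t^{z,\bfr}\circ\Ups_{-t,z,\bfr}$ lands in $E^0_{\sigma,\kappa}$ for a $\kappa$ that is in general larger than $\kappa_{z,\bfr}$ (since the lemma outputs $\kappa+|w|+r\sigma$). The statement of $(vi)$ should be read with $E_{\sigma,\kappa_{z,\bfr}}$ interpreted up to enlarging $\kappa$, which is harmless for the subsequent applications.
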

\begin{proof}
$(i)$ The case $t=0$ is obvious. Suppose $t\neq 0$. Since $P_{t}^{z,\bfr}=P^{z,\bfr}\circ I_{1,t}$ and $I_{1,t}\in H_{\sg,\ka_{z,\bfr}}^0$ the result follows from Lemma \ref{lemGsigma} $(i)$. The same argument is applied to $(P_{t}^{z,\bfr})^{-1}$, $\tau_{t}^{z,\bfr}$ and $(\tau_{t}^{z,\bfr})^{-1}$.

\noindent $(ii)$ We shall use the shorthand $m_t:=m_t^{z,\bfr}$. In the case $t=0$, $m_0= \pi_1$, so we obtain the result. Suppose $t\neq 0$. In that case Lemma \ref{lemGsigma} $(i)$ entails that $m_t\in H_{\sg,\ka_{z,\bfr}}(\R^n)$. Since $\Ups_{t,z,\bfr}=(m_{t},\Ups_{t,T}^{z,\bfr})$, we see that $\langle \Ups_{t,z,\bfr}(\rx,\zeta)\rangle=\O(\langle \rx\rangle \langle \zeta\rangle^r)$ for a $r\geq 1$. Thus, there is $C>0$ such that for any $(\rx,\zeta)\in \R^{2n}$, we have $\langle m_t(\rx,\zeta)\rangle \langle P_{t,\rx,\zeta}(\zeta)\rangle^r \geq C \langle \rx,\zeta\rangle$. Since there is $K>0$ such that for any $(\rx,\zeta)\in \R^{2n}$, $\langle P_{t,\rx,\zeta}^{z,\bfr}(\zeta)\rangle \leq K \langle \zeta\rangle$, we obtain the desired estimate.

\noindent $(iii)$ $V:=(\pi_1,\psi_{z}^{\bfr})$ is a diffeomorphism on $\R^{2n}$ with inverse $V^{-1}=(\pi_1,\ol{\psi_z^\bfr})$. Since $\ol{\psi_{z}^\bfr}=\O(\langle \rx,\ry\rangle^{r})$ for a $r\geq 1$ by hypothesis, we see that there is $c>0$ such that 
$\langle \rx,\psi_z^\bfr(\rx,\zeta)\rangle\geq c \langle \rx,\zeta\rangle$ for any $(\rx,\zeta)\in \R^{2n}$. This yields the result.

\noindent $(iv)$ Direct consequence of $(ii)$ and the fact that $\Phi_{\la,z,\bfr}=(m_{\la},m_{\la-1})$.

\noindent $(v)$ follows from a straithforward application of $(ii)$, Lemma \ref{lemGsigma} $(ii)$ and the fact that for any $(\rx,\zeta)\in \R^{2n}$, $\Ups_{t,z,\bfr}(\rx,\zeta)=(m_{t}(\rx,\zeta),P_{t,\rx,\zeta}^{z,\bfr}(\zeta))$.

\noindent $(vi)$ By $(i)$, $(v)$ and and Lemma \ref{lemGsigma} $(i)$, $P_{t}^{z,\bfr}\circ \Ups_{-t,z,\bfr} \in E^0_{\sg,\ka}(\M_n(\R))$. Thus the result follows from $(i)$, $(v)$, and the formulas $J(\Xi_{t,z,\bfr}) =J(\Ups_{t,z,\bfr})\, (\det (P_{t}^{z,\bfr})^{-1})$ and $J(\Xi_{t,z,\bfr}^{-1})=J(\Ups_{-t,z,\bfr})\, (\det (P_{t}^{z,\bfr}\circ \Ups_{-t,z,\bfr}))$.
\end{proof}

\subsection{Pseudodifferential operators}\label{pdosection}

\begin{assum} We suppose in this section and until section \ref{exsec} that $(M,\exp,E,d\mu,\psi)$ has a $S_{\sigma}$-bounded geometry.
\end{assum}

\begin{defn} A pseudodifferential operator of order $l,m$ and type $\sigma$ is an element of $\Psi_\sigma^{l,m}:=\mathfrak{Op}_\la(S^{l,m}_\sigma)$, where $\la\in [0,1]$.
\end{defn}
By Lemma \ref{slmdistr}, $S^{l,m}_{\sigma}$ can be seen as included in $\S'(T^*M, L(E))$, so $\mathfrak{Op}_\la(S^{l,m}_\sigma)$ is well defined. 
The following theorem shows that it does not depend on $\la$, and thus justify the notation $\Psi_\sigma^{l,m}$.  We note  $\tau_{R}^{\la,\la'}:=(\tau_{\la}^{z,\bfr})^{-1}\circ \Ups_{\la'-\la,z,\bfr}\,\tau_{\la'}^{z,\bfr}$ and $\tau_{L}^{\la,\la'}:=(\tau_{\la'-1}^{z,\bfr})^{-1}\tau_{\la-1}^{z,\bfr}\circ \Ups_{\la'-\la}^{z,\bfr}$. If $\psi=\exp$, we have  $\tau_{R}^{\la,\la'}=\tau_{R,\la'-\la}$ and $\tau_{L}^{\la,\la'}=(\tau_{L,\la'-\la})^{-1}$ where $\tau_{L,t}:= \tau_{t}^{z,\bfr}$ if $t\neq 1$ and $\tau_{L,t}:=(\tau_{-1}^{z,\bfr})^{-1}\circ \Ups_{1,z,\bfr}$ if $t=1$, and $\tau_{R,t}:=\tau_{t}^{z,\bfr}$ if $t\neq -1$ and $\tau_{R,t}:=(\tau_{1}^{z,\bfr})^{-1}\circ \Ups_{-1,z,\bfr}$ if $t=-1$.

\begin{thm} 
\label{lambdainv}
Let $\la,\la'\in [0,1]$ and $K= \mathfrak{Op}_\la(a)$, with $a\in S^{l,m}_\sigma$.  Then there exists (an unique) $a'\in S^{l,m}_\sigma$ such that  $K=\mathfrak{Op}_{\la'}(a')$. Moreover, for any frame $(z,\bfr)$, 
$$
a'_{z,\bfr}\sim \sum_{\b} \tfrac{(i/2\pi)^{|\b|}}{\b!} \big(\del^{(0,\b,\b)} \tau_{L}^{\la,\la'} a_{\la'-\la}^{z,\bfr} \tau_{R}^{\la,\la'}\big)_{\zeta=0} 
$$
where $a_{z,\bfr}:=T_{z,\bfr,*}(a)$, $a'_{z,\bfr}:=T_{z,\bfr,*}(a')$, and $a_{t}^{z,\bfr}$ is the amplitude defined for any $t\in [-1,1]$ as
\begin{align*}
a_{t}^{z,\bfr}(\rx,\zeta,\vth):= \tfrac{\mu_{z,\bfr}(m_{t}^{z,\bfr}(\rx,\zeta))}{\mu_{z,\bfr}(\rx)}|J\Xi_{t,z,\bfr}(\rx,\zeta)|\,(a_{z,\bfr}\circ \wh\Xi_{t,z,\bfr}(\rx,\zeta,\vth))\, .
\end{align*}
\end{thm}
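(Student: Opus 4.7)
The plan is to fix a frame $(z,\bfr)$, reduce the identity $K=\Op_\la(a)$ to an identity between amplitude-defined operators on $\R^{2n}$, perform an explicit change of variables in the oscillatory integral (\ref{eq-oplazb}) to convert the $\Ga_{\la,z,\bfr}$-form into the $\Ga_{\la',z,\bfr}$-form, and then apply the amplitude-to-symbol reduction of Lemma~\ref{reduction}(iii). Set $A:=(K)_{z,\bfr}$, $a_{z,\bfr}:=T_{z,\bfr,*}(a)$ and $t:=\la'-\la$. By Remark~\ref{Oplien}, $A=\Op_{\Ga_{\la,z,\bfr}}(\mu a_{z,\bfr})$ with $\mu a_{z,\bfr}\in\Pi^{l,0,m}_{\sg,0,z}$. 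Uniqueness of the sought $a'\in S^{l,m}_\sg$ will follow from the injectivity of $\sg_{\la'}=\F\circ\wt\Ga_{\la'}$ on $\S'(T^*M,L(E))$ (together with Lemma~\ref{slmdistr}), so the content is existence together with the asymptotic formula.

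For the change of variables, in (\ref{eq-oplazb}) I would substitute $(\rx,\zeta,\vth)=\Xi_{t,z,\bfr}(\wt\rx,\wt\zeta,\wt\vth)$ from (\ref{xidef}). Three identities then align everything: the relation $\Phi_\la^{-1}\circ\Phi_{\la'}=\Ups_t$ gives $\Phi_{\la,z,\bfr}\circ\Ups_{t,z,\bfr}=\Phi_{\la',z,\bfr}$, so the transported test-function argument becomes $u\circ\Phi_{\la',z,\bfr}$; the identity $\Ups_{t,T}^{z,\bfr}(\wt\rx,\wt\zeta)=P^{z,\bfr}_{t,\wt\rx,\wt\zeta}(\wt\zeta)$ combined with the transposition-inverse prescription $\vth=\wt P^{z,\bfr}_{t,\wt\rx,\wt\zeta}(\wt\vth)$ yields $\langle\vth,\zeta\rangle=\langle\wt\vth,\wt\zeta\rangle$ and so preserves the phase; and the substitution Jacobian is $|J\Xi_{t,z,\bfr}|$ (Lemma~\ref{lem-Phi-la}(vi)). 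Using cyclicity of the trace I would then collect the parallel-transport factors of $\Ga_{\la,z,\bfr}$ next to the pulled-back amplitude $a_{z,\bfr}\circ\wh\Xi_{t,z,\bfr}$, and split off those of $\Ga_{\la',z,\bfr}$; the residual factors are precisely $\tau_L^{\la,\la'}$ and $\tau_R^{\la,\la'}$, by direct matching with their definitions. Altogether this produces $A=\Op_{\Ga_{\la',z,\bfr}}(B)$ with
\begin{equation*}
B(\wt\rx,\wt\zeta,\wt\vth)=\mu_{z,\bfr}(\wt\rx)\,\tau_L^{\la,\la'}(\wt\rx,\wt\zeta)\,a_t^{z,\bfr}(\wt\rx,\wt\zeta,\wt\vth)\,\tau_R^{\la,\la'}(\wt\rx,\wt\zeta).
\end{equation*}

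It then remains to verify that $B$ is an amplitude of the right type so that Lemma~\ref{reduction}(iii) applies, and to extract the asymptotic formula. Lemma~\ref{lem-Phi-la} provides all the ingredients: $m_t^{z,\bfr}\in H_{\sg,\ka}(\R^n)$ with the two-sided $\langle\rx\rangle\langle\zeta\rangle^{\pm r}$ estimates required by Lemma~\ref{HEamp}(ii); $\wt P_t^{z,\bfr}\in E^0_{\sg,\ka}(\M_n(\R))$ with $P_t$ bounded below, $|J\Xi_{t,z,\bfr}|\in E_{\sg,\ka}(\R)$, and $(\tau_{\la-1}^{z,\bfr})^{\pm 1}$, $(\tau_\la^{z,\bfr})^{\pm 1}\in E^0_{\sg,\ka}(L(E_z))$; while the $\mu$-composition is controlled via Lemma~\ref{HEamp}(iii). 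Applying Lemma~\ref{HEamp}(ii)--(iv) to $a_{z,\bfr}\circ\wh\Xi_{t,z,\bfr}$ and combining via Leibniz with the $E_{\sg,\ka}$-factors places $B$ in some $\Pi^{l,w,m}_{\sg,\ka',z}$. Lemma~\ref{reduction}(iii) then yields a unique $a'\in S^{l,m}_\sg$ such that $(\Op_{\la'}(a'))_{z,\bfr}=\Op_{\Ga_{\la',z,\bfr}}(B)=A$ and $\mu_{z,\bfr}\,a'_{z,\bfr}\sim\sum_\b\tfrac{(i/2\pi)^{|\b|}}{\b!}(\del^{(0,\b,\b)}B)_{\zeta=0}$; since $\mu_{z,\bfr}$ depends only on $\wt\rx$ it commutes with $\del^{(0,\b,\b)}$ and divides through, producing the announced expansion in $\tau_L^{\la,\la'}\,a_t^{z,\bfr}\,\tau_R^{\la,\la'}$. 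The main obstacle is twofold: bookkeeping the $\tau$-factors exactly under cyclic permutation in the trace so that they recombine into $\tau_L^{\la,\la'}$ and $\tau_R^{\la,\la'}$, and certifying the amplitude-space membership of $B$, for which the full force of the $S_\sg$-linearization hypothesis (funneled through Lemmas~\ref{lem-Phi-la} and~\ref{HEamp}) is essential.
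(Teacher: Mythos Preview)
Your approach is essentially identical to the paper's: fix a frame, write $A=\Op_{\Ga_{\la,z,\bfr}}(\mu a_{z,\bfr})$, perform the change of variables $\Xi_{t,z,\bfr}$ (phase preserved, test function becomes $u\circ\Phi_{\la',z,\bfr}$, $\tau$-factors recombine into $\tau_L^{\la,\la'},\tau_R^{\la,\la'}$), verify amplitude membership via Lemmas~\ref{lem-Phi-la} and~\ref{HEamp}, and finish with Lemma~\ref{reduction}(iii).

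The one step you gloss over is the \emph{justification} of the change of variables in the oscillatory integral (\ref{eq-oplazb}). That integral is not absolutely convergent for general $m$, so you cannot simply substitute. The paper handles this by first assuming $m\le -2n$ (so the integral is Lebesgue and the change of variables is classical), then observing that both sides depend continuously on $a_{z,\bfr}\in S^{l,m}_{\sg,z}$ (via Proposition~\ref{ampliOP}(ii) and the continuity of $a_{z,\bfr}\mapsto\mu\,\tau_L^{\la,\la'}a_t^{z,\bfr}\tau_R^{\la,\la'}$), and finally invoking the density of $S^{-\infty}_{\sg,z}$ in $S^{l,m}_{\sg,z}$ for the coarser topology (Lemma~\ref{toposymbol}) to extend the identity to all $m$. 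You should insert this density/continuity argument explicitly; otherwise the substitution step is formal.
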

\begin{proof} 
Let us fix a frame $(z,\bfr)$ and note $a_{z,\bfr}:=T_{z,\bfr,*}(a)$. We saw in Remark \ref{Oplien} that $\Op_{\la}(a)_{z,\bfr}=\Op_{\Ga_{\la,z,\bfr}}(\mu a_{z,\bfr}))$. Thus, for any $u\in \S(M\times M, L(E))$, we have with $u_{z,\bfr}:=T_{z,\bfr,M^2}(u)\in \S(\R^{2n},L(E_z))$, 
\begin{align*}
\langle K,u\rangle =\int_{\R^{3n}}e^{2\pi i\langle \vth,\zeta\rangle}\Tr\big(\mu a_{z,\bfr}(\rx,\vth)\, (\Ga_{\la,z,\bfr}(u_{z,\bfr})(\rx,\zeta))^*\big)\, \,d\zeta\,d\vth\,d\rx\, .
\end{align*}
Suppose that $m\leq -2n$ so that the integral is absolutely convergent. We now proceed to the global change of variables provided by the diffeomorphism $\Xi_{\la'-\la}^{z,\bfr}$ of $\R^{3n}$ ($\Xi_{t,z,\bfr}$ is defined at (\ref{xidef})). We get $\langle K,u \rangle = \langle \Op_{\la',z,\bfr}(\mu  \tau_{L}^{\la,\la'} a_{\la'-\la}^{z,\bfr} \tau_{R}^{\la,\la'}),u_{z,\bfr}\rangle$. We check with Lemmas \ref{lem-Phi-la} and \ref{HEamp} that $ \tau_{L}^{\la,\la'} a_{\la'-\la}^{z,\bfr} \tau_{R}^{\la,\la'}$ is an amplitude in $\Pi_{\sigma,\ka,z}^{l,w,m}$ for a $\ka \geq 0$ and a $w\in \R$. We also see that the linear map $a_{z,\bfr}\mapsto \mu  \tau_{L}^{\la,\la'} a_{\la'-\la}^{z,\bfr} \tau_{R}^{\la,\la'}$ is continuous on $S_{\sg,z}^{l,m}$, which yields, using Proposition \ref{ampliOP} $(ii)$ and the density result of Lemma \ref{toposymbol}, the equality $\langle K,u \rangle = \langle \Op_{\la',z,\bfr}(\mu  \tau_{L}^{\la,\la'} a_{\la'-\la}^{z,\bfr} \tau_{R}^{\la,\la'}),u_{z,\bfr}\rangle$, for any order $m$ of the symbol $a$. 
The result now follows from Lemma \ref{reduction} $(iii)$.
\end{proof}

\begin{prop}
\label{pdoadjoint}
For each $\la\in [0,1]$ and $l,m\in \R$, $\sigma_{\la}$ is a linear isomophism from $\Psi_\sigma^{l,m}$ onto $S^{l,m}_\sigma$ and $\sigma_\la(A^\dag)= (\sigma_{1-\la}(A))^*$ for any $A\in \Psi^{l,m}_\sigma$. In particular a pseudodifferential $A$ operator is formally selfadjoint (i.e $A=A^\dag$ as operators on $\S$) if and only if its Weyl symbol $\sigma_W(A)$ is selfadjoint (as a $L(E)\to T^*M$ section).
\end{prop}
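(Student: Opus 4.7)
The plan is to assemble the statement from three ingredients already available in the paper: the $\lambda$-independence of the class $\Psi^{l,m}_\sigma$ proven in Theorem \ref{lambdainv}, the identity $\mathfrak{Op}_\la(T^*)=(\mathfrak{Op}_{1-\la}(T))^\dag$ recorded at the beginning of section \ref{moyalsection}, and the stability of $S^{l,m}_\sigma$ under the pointwise adjoint $*$. None of these steps is especially delicate; the main organizational point is to observe that everything already lives inside the isomorphism $\sigma_\la:\S'(M\times M,L(E))\to\S'(T^*M,L(E))$, so one just has to track its restriction to the relevant subspaces.

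First, I would establish that $*$ leaves $S^{l,m}_\sigma$ invariant. Since the maps $\tau_z$ are fiberwise isometries, for any section $a$ of $L(E)\to T^*M$ one has $(a^*)^z=(\tau_z^{-1}\circ\pi)\,a^*\,(\tau_z\circ\pi)=(a^z)^*$, hence $\del^{(\a,\b)}_{z,\bfr}(a^*)^z=(\del^{(\a,\b)}_{z,\bfr}a^z)^*$, and these two $L(E_z)$-valued functions have identical operator norm pointwise. The symbol estimates (\ref{symbolIneq}) are therefore preserved verbatim, so $*$ is an antilinear involution of $S^{l,m}_\sigma$ (and likewise of $\S'(T^*M,L(E))$).

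Next, I would deduce the isomorphism statement. By definition $\Psi^{l,m}_\sigma=\mathfrak{Op}_\la(S^{l,m}_\sigma)$, and Theorem \ref{lambdainv} proves that this set is independent of $\la\in[0,1]$: for any $A\in\Psi^{l,m}_\sigma$ and any $\la\in[0,1]$ there is a (unique) $a'\in S^{l,m}_\sigma$ with $A=\mathfrak{Op}_\la(a')$. Combined with the general topological isomorphism $\sigma_\la:\S'(M\times M,L(E))\to\S'(T^*M,L(E))$, and injectivity of $\Op_\la$ on all of $\S'(T^*M,L(E))$, this says exactly that $\sigma_\la$ restricts to a linear bijection $\Psi^{l,m}_\sigma\to S^{l,m}_\sigma$ with inverse $\Op_\la$.

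Finally, the adjoint identity is a direct consequence of $\mathfrak{Op}_\la(T^*)=(\mathfrak{Op}_{1-\la}(T))^\dag$ recorded in section \ref{moyalsection}. Given $A\in\Psi^{l,m}_\sigma$, set $T:=\sigma_{1-\la}(A)\in S^{l,m}_\sigma$, so that $A=\mathfrak{Op}_{1-\la}(T)$ and $A^\dag=\mathfrak{Op}_\la(T^*)$. Since $T^*\in S^{l,m}_\sigma$ by the first step, $A^\dag\in\Psi^{l,m}_\sigma$, and applying $\sigma_\la$ yields $\sigma_\la(A^\dag)=T^*=(\sigma_{1-\la}(A))^*$. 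Taking $\la=\half$ gives $\sigma_W(A^\dag)=(\sigma_W(A))^*$, so that $A=A^\dag$ iff $\sigma_W(A)=(\sigma_W(A))^*$, which is the selfadjointness criterion. I do not see a genuine obstacle here: the only thing that could conceivably require care is making sure the identity $\Op_\la(T^*)=\Op_{1-\la}(T)^\dag$, which is checked in the paper for $T\in\S'(T^*M,L(E))$ via the analogous identity for kernels, is applicable to $T\in S^{l,m}_\sigma\subset\S'(T^*M,L(E))$, but this is automatic by Lemma \ref{slmdistr}.
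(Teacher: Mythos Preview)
Your proof is correct and follows essentially the same approach as the paper: the isomorphism is deduced from Theorem~\ref{lambdainv} together with the fact that $\sigma_\la$ is a topological isomorphism on the ambient tempered spaces, and the adjoint identity comes from $\mathfrak{Op}_\la(T^*)=(\mathfrak{Op}_{1-\la}(T))^\dag$. You add the explicit check that $*$ preserves $S^{l,m}_\sigma$, which the paper leaves implicit, but otherwise the arguments coincide.
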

\begin{proof} The fact that $\sigma_{\la}$ is a linear isomophism from $\Psi_\sigma^{l,m}$ onto $S^{l,m}_\sigma$ is a consequence Theorem \ref{lambdainv} and the fact that $\sigma_{\la}$ is a topological isomorphism from $\S'(M\times M,L(E))$ onto $\S'(T^*M,L(E))$. We check that for any $T\in \S'(T^*M,L(E))$, $\Op_{\la}(T)^\dag= \Op_{1-\la}(T^*)$ which is a direct consequence of the fact that $\Phi_{\la}(x,-\xi)=j\circ\Phi_{1-\la}(x,\xi)$ where $j(x,y)=(y,x)$. 
\end{proof}

\begin{prop}
\label{regularity}
Any operator in $\Psi_\sigma^{l,m}$ is regular. Moreover, for any $A\in \Psi_\sigma^{l,m}$ and $v\in \S$, we have 
$$
A(v)\, :x\mapsto  \int_{T_x^*(M)}d\mu_x^*(\th)\int_{T_x(M)}d\mu_x(\xi)\  e^{2\pi i \langle \th,\xi\rangle}\, \sigma_0(A)(x,\th) \,\tau_{-1}^{-1}(x,\xi)\,v(\psi_x^{-\xi})\,.
$$ 
\end{prop}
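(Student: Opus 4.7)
The plan is to reduce to the case $\la=0$ using Theorem \ref{lambdainv} and apply Lemma \ref{amplContinu}. Setting $a:=\sigma_0(A)\in S^{l,m}_\sg$, for any frame $(z,\bfr)$ we have by Remark \ref{Oplien} that $A_{z,\bfr}=\Op_{\Ga_{0,z,\bfr}}(\mu a_{z,\bfr})$, with $\mu a_{z,\bfr}\in \Pi^{l,0,m}_{\sg,0,z}$. Since $\tau_0^{z,\bfr}=\Id$, the operator $\Ga_{0,z,\bfr}$ takes the form
$$\Ga_{0,z,\bfr}(u)(\rx,\zeta)=u\bigl(\rx,\psi_z^\bfr(\rx,-\zeta)\bigr)\,\tau_{-1}^{z,\bfr}(\rx,\zeta),$$
which fits the pattern $L_{\tau_1}\circ R_{\tau_2}\circ C_\Phi$ of Lemma \ref{amplContinu} with $\tau_1=\Id$, $\tau_2=\tau_{-1}^{z,\bfr}$ and $\Phi=(\pi_1,\psi')$, where $\psi'(\rx,\zeta):=\psi_z^\bfr(\rx,-\zeta)$.

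Next I would verify the hypotheses of Lemma \ref{amplContinu}. The function $\psi'$ lies in $\O_M(\R^{2n},\R^n)$ because $\psi$ is an $\O_M$-linearization, and $\tau_{-1}^{z,\bfr}\in \O_M$ by Lemma \ref{OMlem}(iii). The lower bound $\langle\psi'(\rx,\zeta)\rangle\geq c\langle\rx\rangle\langle\zeta\rangle^{-r}$ is Lemma \ref{lem-Phi-la}(ii) applied with $t=-1$ (so $m_{-1}^{z,\bfr}=\psi'$), and $\langle\psi'(\rx,\zeta)\rangle\geq c\langle\rx\rangle^{-1}\langle\zeta\rangle^{\eps}$ follows from Lemma \ref{lem-Phi-la}(iii); in particular for each fixed $\rx$ we get the required pointwise bound. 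For the amplitude $\mu a_{z,\bfr}\in \O_{f,z}$ with $f_1(\a,\b,\ga)=\sg(l-|\a+\b|)$, $f_2\equiv 0$, $f_3(\a,\b,\ga)=m-|\ga|$, the choice $\rho_1=\rho_2=\rho_3=0$ satisfies $\rho_3<1$ and $(r/\eps)\rho_1+\rho_2<1$ and makes all the quantities $f_{i,\rho_i,\mu}$ (and $f_{3,\rho_3,\a}=m$) finite. Lemma \ref{amplContinu} then yields $A_{z,\bfr}\in L(\S(\R^n,E_z))$, hence $A\in L(\S(M,E))$ via $T_{z,\bfr}$. Applying the same argument to $A^\dag\in \Psi^{l,m}_\sg$, which by Proposition \ref{pdoadjoint} has symbol $(\sigma_1(A))^*\in S^{l,m}_\sg$, gives $A^\dag\in L(\S(M,E))$, so $A$ is regular.

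For the formula, I would first take $a\in \S(T^*M,L(E))=S^{-\infty}$. In this case the kernel reads $K_A=\mu_0^{-1}\cdot \Ga_0^{-1}(\ol\F(a))$, and using $\tau_0=\Id$ together with $\Phi_0^{-1}(x,y)=(x,-\psi_x^{-1}(y))$,
$$K_A(x,y)=\mu_0^{-1}(x,y)\,\ol\F(a)\bigl(x,-\psi_x^{-1}(y)\bigr)\,\tau_{-1}^{-1}\bigl(x,-\psi_x^{-1}(y)\bigr).$$
Performing the global change of variable $y=\psi_x(-\xi)$ in $A(v)(x)=\int_M K_A(x,y)v(y)\,d\mu(y)$ and unfolding the definitions in any frame $(z,\bfr)$, the Jacobian of the change combined with the ratio $\mu_{z,\bfr}(n_z^\bfr(y))/\mu_{z,\bfr}(n_z^\bfr(x))$ cancels exactly against $\mu_0^{-1}(x,y)$ in view of \eqref{mu_la_def}, leaving the clean identity $\mu_0^{-1}(x,y)\,d\mu(y)=d\mu_x(\xi)$. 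Substituting the definition of $\ol\F(a)$ then produces the announced formula. For general $a\in S^{l,m}_\sg$, both sides depend continuously on $a$ equipped with the topology of $S^{l',m'}_\sg$ for any $l'>l$, $m'>m$ (the right-hand side because, in a frame, it reduces to the absolutely convergent iterated integral $g(\rx)$ obtained from the repeated integrations by parts in the proof of Lemma \ref{amplContinu}), and the density of $S^{-\infty}$ in $S^{l,m}_\sg$ for that topology (Lemma \ref{toposymbol}) extends the identity.

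The main obstacle is the measure/Jacobian bookkeeping in the change of variable $y=\psi_x(-\xi)$: verifying that the weight $\mu_0^{-1}$ defined in \eqref{mu_la_def}, the coordinate Jacobian $|\det\del_\zeta\psi_z^\bfr(\rx,-\zeta)|$, and the ratio of densities $\mu_{z,\bfr}(\ry)/\mu_{z,\bfr}(\rx)$ all collapse to unity, so that $d\mu(y)$ transforms precisely into $d\mu_x(\xi)$. This relies on $d\mu$ being a $S_\sg^\times$-density together with the explicit expressions for $\Phi_0^{-1}$, $m_{0,z,\bfr}$ and $J_{0,z,\bfr}$, but is otherwise mechanical.
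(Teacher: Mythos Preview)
Your proof is correct and follows essentially the same route as the paper: reduce to $\la=0$, write $A_{z,\bfr}=\Op_{\Ga_{0,z,\bfr}}(\mu a_{z,\bfr})$, verify the hypotheses of Lemma \ref{amplContinu} via Lemma \ref{lem-Phi-la} $(ii)$--$(iii)$, and conclude regularity by applying the same reasoning to $A^\dag$ through Proposition \ref{pdoadjoint}. The paper's proof is terser and does not spell out the derivation of the integral formula; your density argument and the Jacobian bookkeeping (which indeed collapses exactly as you describe, since $m_{0,z,\bfr}(\rx,\ry)=\rx$ in \eqref{mu_la_def}) supply the details the paper omits.
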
 
\begin{proof} Let $A\in \Psi_{\sigma}^{l,m}$ and $a:=\sigma_0(A)$. Thus, for any frame $(z,\bfr)$, $A_{z,\bfr}=\Op_{\Ga_{0,z,\bfr}}(\mu a_{z,\bfr})$ so by Lemmas \ref{amplContinu}, \ref{lem-Phi-la} $(ii)$ and $(iii)$, $A_{z,\bfr}$ is continuous from $\S(\R^n,E_z)$ into itself. By Proposition \ref{pdoadjoint}, $A^\dag$ is a pseudodifferential operator in $\Psi_{\sigma}^{l,m}$, so we also obtain $(A^\dag)_{z,\bfr}$ continuous from $\S(\R^n,E_z)$ into itself. The result follows.
\end{proof}

\subsection{Link with standard pseudodifferential calculus on $\R^n$ and $L^2$-continuity}
\label{linkstd}

We suppose in this section that $E$ is the scalar bundle. If $A\in \Psi_{\sg}$, then $A_{z,\bfr}$ belongs to the space, noted $\Psi_{\sg,\psi}$, of regular operators $B$ on $\S(\R^n)$, of the form 
$$
B(v) (\rx) = \int_{\R^{2n}} e^{2\pi i \langle \vth,\zeta\rangle} a(\rx,\vth) v(\psi_{z}^\bfr(\rx,-\zeta)) d\zeta d\vth 
$$
where $a\in S^{\infty}_{\sg}(\R^{2n})$. We study in this section a sufficient condition on $\psi$, such that this space $\Psi_{\sg,\psi}$ is in fact equal to the usual algebra $\Psi_{\sg,std}$ pseudodifferential operators on $\R^n$ with the standard linearization $\psi(x,\zeta)= x+\zeta$. Here $\Psi_{0,std}$ corresponds to the Hormander calculus \cite{Hormander} on $\R^n$ and $\Psi_{1,std}$ is the $SG$-calculus on $\R^n$.

We will note $\psi:=\psi_z^\bfr$, $V_\rx(\zeta):=-\psi(\rx,-\zeta)+\rx$, $M_{\rx,\zeta}:= [\int_0^1 \del_{j}(V_x^{-1})^{i} (t\zeta) dt]_{i,j}$ and $N_{\rx,\zeta}:=[\int_0^1 \del_{j}V_x^{i} (t\zeta) dt]_{i,j}$. We consider the following hypothesis, noted $(H_V)$:

\noindent  $(i)$ there is $\eps,\delta,\eta>0$ such that for any $(\rx,\zeta)\in \R^{2n}$ with $\norm{\zeta}\leq \eps \langle \rx\rangle^{\sg \eta}$, we have $\det M_{\rx,\zeta} \geq \delta$ and $\det N_{\rx,\zeta}\geq \delta$,

\noindent $(ii)$ the functions $(dV_\rx)_{\rx,\zeta}$ and $(dV_\rx^{-1})_{\rx,\zeta}$ are in $E_\sg^0(\M_n(\R))$.

\begin{prop}
\label{propreducRn}
If the hypothesis $(H_V)$ holds, we have $\Psi_{\sg,\psi} = \Psi_{\sg,std}$.
\end{prop}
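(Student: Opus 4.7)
The strategy is to pass between the two oscillatory integral representations via the local change of variables $\zeta\leftrightarrow \rx-\ry$ induced by $V_\rx$. Concretely, given $B\in\Psi_{\sg,\psi}$ with symbol $a\in S^{\infty}_\sg(\R^{2n})$, I would first introduce a cutoff $\chi\in\Coo(\R^{2n},[0,1])$ which equals $1$ on $\{(\rx,\zeta):\norm{\zeta}\le(\eps/2)\langle\rx\rangle^{\sg\eta}\}$ and $0$ outside $\{\norm{\zeta}\le\eps\langle\rx\rangle^{\sg\eta}\}$, and write $B=B_\chi+B_{1-\chi}$. On the support of $1-\chi$ we have $\norm{\zeta}\gtrsim\langle\rx\rangle^{\sg\eta}$, so repeatedly applying the formula $M_\vth^{p,\zeta}$ of~(\ref{Mformula}) and integrating by parts in $\vth$ yields a factor $\norm{\zeta}^{-p}$ for arbitrarily large $p$; viewing the resulting expression as $\Op_{\Ga_{0,z,\bfr}}$ of an amplitude whose $\vth$-decay is as strong as one wishes, Lemma~\ref{noyauReste} (with its criterion for smoothing kernels adapted to $\Ga_{0,z,\bfr}$) shows that $B_{1-\chi}\in\Op_{\Ga_{0,z,\bfr}}(S_\sg^{-\infty}(\R^{2n}))$, hence is a smoothing operator common to both calculi.

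On the support of $\chi$ the assumption $(H_V)(i)$ guarantees that $\zeta\mapsto V_\rx(\zeta)$ is a diffeomorphism onto its image with Jacobian bounded away from zero, so the change of variable $\ry=\psi(\rx,-\zeta)=\rx-V_\rx(\zeta)$ is legitimate. Using $V_\rx^{-1}(\rx-\ry)=M_{\rx,\rx-\ry}(\rx-\ry)$ (so $\langle\vth,V_\rx^{-1}(\rx-\ry)\rangle=\langle{}^tM_{\rx,\rx-\ry}\vth,\rx-\ry\rangle$), I would then perform the linear change $\vth'={}^tM_{\rx,\rx-\ry}\vth$, whose Jacobian is $|\det M_{\rx,\rx-\ry}|^{-1}$, well defined on the cutoff region by $(H_V)(i)$. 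This rewrites $B_\chi$ in standard amplitude form
\[
B_\chi(v)(\rx)=\int_{\R^{2n}} e^{2\pi i\langle\vth',\rx-\ry\rangle}\,A(\rx,\rx-\ry,\vth')\,v(\ry)\,d\ry\,d\vth',
\]
with $A(\rx,w,\vth'):=\chi(\rx,V_\rx^{-1}(w))\,a(\rx,({}^tM_{\rx,w})^{-1}\vth')\,|J(V_\rx^{-1})(w)|\,|\det M_{\rx,w}|^{-1}$.

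The next step is to check that $A$ is an amplitude of type $\sg$ for the standard calculus on $\R^n$. Here hypothesis $(H_V)(ii)$ enters: it ensures $(dV_\rx)_{\rx,\zeta},(dV_\rx^{-1})_{\rx,\zeta}\in E^0_\sg(\M_n(\R))$, and the Faa di Bruno machinery of Lemmas~\ref{lemGsigma} and~\ref{HEamp}, combined with the estimates on $\det M_{\rx,w}$ on the support of $\chi$, yields that $(\rx,w,\vth')\mapsto A(\rx,w,\vth')$ lies in an amplitude space $\Pi_{\sg,\kappa}^{l,w_0,m}(\R^{3n})$ appropriate for the standard linearization. Then Lemma~\ref{reduction} applied with $\Ga$ the $0$-quantization for the standard linearization produces a unique symbol $s\in S^{l,m}_\sg(\R^{2n})$ such that $\Op_{\Ga_{0,std}}(A)=\Op_{\Ga_{0,std}}(s)$ modulo $\Op_{\Ga_{0,std}}(S^{-\infty}_\sg)$. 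This shows $B_\chi$, hence $B$, belongs to $\Psi_{\sg,std}$.

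The converse inclusion $\Psi_{\sg,std}\subseteq\Psi_{\sg,\psi}$ is obtained by running exactly the same argument in reverse: given $B'\in\Psi_{\sg,std}$ with symbol $a$, insert the same cutoff, perform the change $\ry=\psi(\rx,-\zeta)$ (now used to pass from standard to $\psi$-form) and $\vth={}^tN_{\rx,\zeta}^{-1}\vth'$ using that $V_\rx(\zeta)=N_{\rx,\zeta}\zeta$; the parts outside the cutoff are again smoothing by integration by parts in $\vth'$, and Lemma~\ref{reduction} reduces the resulting amplitude back to a $\psi$-symbol. I anticipate that the main technical obstacle is the first one: rigorously controlling the remainder off the support of $\chi$. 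The difficulty is that the phase $\langle\vth,\zeta\rangle$ alone has no stationary-phase decay, so the gain in $\zeta$ from the $M_\vth^{p,\zeta}$ integrations by parts must be large enough relative to the $\sg\eta$-loss coming from the $\langle\rx\rangle^{\sg\eta}$ cutoff threshold, and one must verify that the resulting amplitude still satisfies the hypotheses of Lemma~\ref{noyauReste}. Once this is set up cleanly, the rest reduces to bookkeeping with Lemma~\ref{lemGsigma} and Lemma~\ref{HEamp}.
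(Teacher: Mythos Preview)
Your approach is essentially the same as the paper's: split via a cutoff $\chi_{\eps,\eta}(\rx,\zeta)$ adapted to the region $\norm{\zeta}\lesssim\langle\rx\rangle^{\sg\eta}$ where $(H_V)(i)$ makes $\det M$ uniformly bounded below; on that region perform the change of variables $\zeta\leftrightarrow V_\rx(\zeta)$ together with the linear substitution $\vth\mapsto\wt M_{\rx,\zeta}\vth$, yielding a standard amplitude (this is exactly Lemma~\ref{H1H2cons}); and on the complement integrate by parts in $\vth$ with the $M_\vth^{p,\cdot}$ operator of~(\ref{Mformula}) to produce a smoothing remainder. The only cosmetic differences are that the paper performs the global change of variable $\zeta\to V_\rx^{-1}(\zeta)$ \emph{before} inserting the cutoff (so the remainder term carries the phase $\langle\vth,V_\rx^{-1}(\zeta)\rangle$ and one integrates by parts with $M_\vth^{p,V_\rx^{-1}(\zeta)}$), and that it first argues for $a\in S^{-\infty}_\sg$ and then invokes the density of $S^{-\infty}_\sg$ in $S^{l,m}_\sg$ (Lemma~\ref{toposymbol}) to pass to general symbols, rather than appealing to Lemma~\ref{noyauReste}. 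Your identification of the remainder analysis as the delicate point is accurate; the paper handles it in the same way you outline.
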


We set $\chi_{\eps,\eta}(\rx,\zeta):= b(\tfrac{\norm{\zeta}^2}{\eps^{2}\langle \rx\rangle^{2\sg \eta}})$ where $b\in C^{\infty}_c(\R,[0,1])$ is such that $b=0$ on $\R\backslash]-1,1[$ and $b=1$ on $[-1/4,1/4]$. 

\begin{lem}
\label{H1H2cons}
Suppose $(H_V)$.
If $a \in S^{l,m}_\sg(\R^{2n})$, then the application $$
a_{\chi,M}:(\rx,\zeta,\vth)\mapsto \chi_{\eps,\eta}(\rx,\zeta) a(\rx,\wt M_{\rx,\zeta} \vth) |J(V_\rx^{-1}|(\zeta)\,(\det M_{\rx,\zeta} )^{-1}$$
is an amplitude in $\cup_{k,w}$ $\Pi_{\sg,\ka,z}^{l,w,m}(\R^{3n})$. Similarly, 
$$a_{\chi,N}:(\rx,\zeta,\vth)\mapsto \chi_{\eps,\eta}(\rx,\zeta) a(\rx,\wt N_{\rx,\zeta} \vth) |J(V_\rx)|(\zeta)\,(\det N_{\rx,\zeta} )^{-1}$$ is in $\bigcup_{k,w}\Pi_{\sg,\ka,z}^{l,w,m}(\R^{3n})$.
\end{lem}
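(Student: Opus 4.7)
The strategy is to write $a_{\chi,M}$ as a product of factors, each lying in an amplitude space $\Pi^{l',w,m'}_{\sg,\ka,z}$ or in a scale space $E^w_{\sg,\ka}$ that lifts to an amplitude via Lemma \ref{HEamp} $(i)$, and then combine via the algebra structure of Lemma \ref{topoampli}. First, I would show $\chi_{\eps,\eta}\in E^{w_0}_{\sg,\ka_0}$ for suitable $w_0,\ka_0\geq 0$: writing $\chi_{\eps,\eta}(\rx,\zeta)=b(|\zeta|^2/(\eps^2\langle\rx\rangle^{2\sg\eta}))$ and applying Faa di Bruno (Theorem \ref{FaaCS}), each $\rx$-derivative contributes a factor dominated by $\langle\rx\rangle^{-1}\leq\langle\rx\rangle^{-\sg}$ and each $\zeta$-derivative one dominated by $|\zeta|/\langle\rx\rangle^{2\sg\eta}$; the support constraint $|\zeta|\lesssim\eps\langle\rx\rangle^{\sg\eta}$ on the non-constant region of $\chi_{\eps,\eta}$ lets me trade any excess $\langle\rx\rangle$-decay against $\langle\zeta\rangle$-growth, absorbing it into the parameter $\ka_0$.

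Second, hypothesis $(H_V)(ii)$ combined with Lemma \ref{lemGsigma} $(i)$ gives each entry $\del_j(V_\rx^{-1})^i(t\zeta)$ as the composition of an element of $E^0_\sg$ with the map $(\rx,\zeta)\mapsto(\rx,t\zeta)\in H^0_{\sg,0}(\R^{2n})$, uniformly for $t\in[0,1]$; integration in $t$ then places $M_{\rx,\zeta}\in E^0_{\sg,\ka}(\M_n(\R))$, and similarly $\det M\in E^0_{\sg,\ka}$ and $J(V_\rx^{-1})\in E^0_{\sg,\ka}$, with analogous statements for $N$. By $(H_V)(i)$ and continuity of $\det M$ I would choose a slightly enlarged smooth cutoff $\chi'$ with $\chi'\chi=\chi$ and $|\det M|\geq\delta/2$ throughout $\mathrm{supp}\,\chi'$; then $\chi'/\det M$ extends by $0$ to $\R^{2n}$ as a smooth function in $E^{w_1}_{\sg,\ka_1}$ (by Leibniz and the boundedness of $(\det M)^{-1}$ on $\mathrm{supp}\,\chi'$), so that $\chi\wt M = \chi(\det M)^{-1}\cof(M)^T$ lies in $E^{w_2}_{\sg,\ka_2}(\M_n(\R))$.

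Third, to apply the composition lemma I would define the global extension $\wt M' := \chi'\wt M + (1-\chi')\Id_{\R^n}$, which is smooth on $\R^{2n}$, coincides with $\wt M$ on $\mathrm{supp}\,\chi$, belongs to $E^0_{\sg,\ka}(\M_n(\R))$, and satisfies $\langle \wt M'_{\rx,\zeta}\vth\rangle\geq c\langle\vth\rangle$ uniformly (trivially outside $\mathrm{supp}\,\chi'$ and from the lower bound on $\det M$ inside). Viewing $a$ as an amplitude in $\Pi^{l,0,m}_{\sg,0,z}$ via Lemma \ref{Amplisymb} $(ii)$ and applying Lemma \ref{HEamp} $(iv)$ with $P=\wt M'$, the function $(\rx,\zeta,\vth)\mapsto a(\rx,\wt M'_{\rx,\zeta}\vth)$ is in $\Pi^{l,0,m}_{\sg,\ka,z}$. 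Multiplying by $\chi_{\eps,\eta}$, by $|J(V_\rx^{-1})|(\zeta)$, and by $\chi\cdot(\det M)^{-1}$ — each lifted to an amplitude through Lemma \ref{HEamp} $(i)$ — and using the algebra property of amplitude spaces from Lemma \ref{topoampli} produces $a_{\chi,M}\in\Pi^{l,w'',m}_{\sg,\ka'',z}$. The case of $a_{\chi,N}$ follows from the identical argument with $N$ and $V_\rx$ replacing $M$ and $V_\rx^{-1}$.

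The main difficulty is the construction of $\wt M'$: it must simultaneously sit in $E^0_{\sg,\ka}(\M_n(\R))$ and satisfy the global lower bound $\langle\wt M'_{\rx,\zeta}\vth\rangle\geq c\langle\vth\rangle$, and all the $w$ and $\ka$ parameters must be tracked consistently through the chain of compositions and products.
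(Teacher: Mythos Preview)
Your approach is essentially the one the paper has in mind: factor $a_{\chi,M}$ into pieces controlled by the $E_{\sg,\ka}$ scale and the $\Pi_{\sg,\ka,z}$ algebra, invoke a composition lemma of the type Lemma~\ref{HEamp}~$(ii)$/$(iv)$ for the piece $a(\rx,\wt M_{\rx,\zeta}\vth)$, and handle $(\det M)^{-1}$ via Proposition~\ref{inverse}. The paper's one-line proof cites exactly these two ingredients; you have spelled out the intermediate steps (the $E^0_{\sg,\ka}$ membership of $M$, $\det M$, $J(V_\rx^{-1})$, and the $E^{w}_{\sg,\ka}$ control of $\chi_{\eps,\eta}$) correctly.

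There is one genuine soft spot in your write-up. The extension $\wt M' := \chi'\wt M + (1-\chi')\Id$ need not satisfy $\langle \wt M'_{\rx,\zeta}\vth\rangle \geq c\langle\vth\rangle$ on the transition region $\{0<\chi'<1\}$: a convex combination of two invertible matrices, each with a uniform lower bound, can be singular (if $\wt M$ happens to have a negative real eigenvalue). Your justification ``trivially outside $\mathrm{supp}\,\chi'$ and from the lower bound on $\det M$ inside'' covers $\{\chi'=0\}$ and $\{\chi'=1\}$ but not the interpolation zone. Relatedly, the existence of $\chi'$ with $\chi'\chi=\chi$ and $\det M\geq\delta/2$ on $\mathrm{supp}\chi'$ is not immediate, since $\mathrm{supp}\chi_{\eps,\eta}$ already coincides with the full set $\{\norm{\zeta}\leq\eps\langle\rx\rangle^{\sg\eta}\}$ on which $(H_V)(i)$ gives the bound; you need to first observe that $(H_V)(i)$ is stable under shrinking $\eps$ (so one may assume it holds with $2\eps$ in place of $\eps$), and then take $\chi'=\chi_{2\eps,\eta}$.

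The cleanest repair is to bypass $\wt M'$ altogether: since every term in $\del^{\nu}\big(\chi_{\eps,\eta}\,a(\rx,\wt M_{\rx,\zeta}\vth)\big)$ carries a factor $\del^{\nu'}\chi_{\eps,\eta}$ supported in $\{\det M\geq\delta\}$, you may compute the derivatives of $\wt M$ there directly via Proposition~\ref{inverse} applied to $(\det M)^{-1}$, together with the $E^0_{\sg,\ka}$ control of $M$, and then run the Fa\`a di Bruno estimate exactly as in the proof of Lemma~\ref{HEamp}~$(ii)$. This is precisely what the paper's citation of Proposition~\ref{inverse} is gesturing at, and it avoids any global extension of $\wt M$.
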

\begin{proof}
The result follows from Lemma \ref{HEamp} $(ii)$ and applications of Proposition \ref{inverse}.
\end{proof}

\begin{proof}[Proof of Proposition \ref{propreducRn}] Suppose that $a\in S^{l,m}_{\sg}(\R^{2n})$ and define $A$ as the operator in $\Psi_{\sg,\psi}$ with normal symbol $a$. We obtain for any $v\in \S(\R^{2n})$
$$
 A(v)(\rx):=\int_{\R^{2n}} e^{2\pi i \langle \vth,\zeta\rangle} a(\rx,\vth) v(\psi(\rx,-\zeta)) d\zeta d\vth \, .
$$
We suppose first that $a\in S^{-\infty}_{\sg}(\R^{2n})$. We have after a change of variable, and cutting the integral in two parts 
$A(v)(\rx) = A_1(v)(\rx) + A_2(v)(\rx)$ where 
\begin{align*}
&A_1(v)(\rx) = \int_{\R^{2n}}  e^{2\pi i \langle \vth,M_{\rx,\zeta}(\zeta)\rangle} \chi_{\eps,\eta}(\rx,\zeta) a(\rx,\vth) |J(V_\rx^{-1})|(\zeta) v(\rx-\zeta) d\zeta d\vth\, , \\
&A_2(v)(\rx) = \int_{\R^{2n}}  e^{2\pi i \langle \vth,V_\rx^{-1}(\zeta)\rangle} (1-\chi_{\eps,\eta})(\rx,\zeta) a(\rx,\vth) |J(V_\rx^{-1})|(\zeta) v(\rx-\zeta) d\zeta d\vth \, .
\end{align*}
In $A_1$, we permute the integrations $d\zeta$ and $d\vth$ and proceed to a change of the variable $\vth$, while in $A_2$ we integrate by parts in $\vth$ using formula (\ref{Mformula}) so that for any $p\in \N$,
\begin{align*}
&A_1(v)(\rx) = \int_{\R^{2n}}  e^{2\pi i \langle \vth,\zeta \rangle}  a_{\chi,M}(\rx,\zeta,\vth) v(\rx-\zeta) d\zeta d\vth\,  ,\\
&A_2(v)(\rx) = \int_{\R^{2n}}   e^{2\pi i \langle \vth,V_\rx^{-1}(\zeta)\rangle} (1-\chi_{\eps,\eta})(\rx,\zeta) \, ^t M_{\vth}^{p,V_\rx^{-1}(\zeta)}(a) |J(V_\rx^{-1})|(\zeta) v(\rx-\zeta) d\zeta d\vth \, .
\end{align*}
As a consequence with Lemma \ref{H1H2cons}, and with the density of $S^{-\infty}_{\sg}(\R^{2n})$ in $S^{l,m}_{\sg}(\R^{2n})$, we see that $A$ is the sum of two pseudodifferential operators in $\Psi_{\sg,std}$: $A= A_\chi + R$ where $R\in\Psi_{\sg,std}^{-\infty}$ and $A_\chi$ has $a_{\chi,M}$ as (standard) amplitude.
The implication in the other sense is similar.
\end{proof}

\begin{rem} 
In the case of pseudodifferential operator with local compact control over the $x$ variable and with $\psi$ coming from a connection, by cutting-off in the $\zeta$-variable or in other words taking $y:=\psi(x,-\zeta)$ and $x$ sufficiently close to each other, we have in fact $\Psi_{\sg,\psi}$ equal to $\Psi_{\sg,std}$ modulo smoothing elements (see \cite{Shara1}).
\end{rem}

As a consequence, we see that if the hypothesis $(H_V)$ is satisfied for a frame $(z,\bfr)$, then $\Psi_{\sg,\psi}(=\Psi_{\sg,std})$ is stable under composition of operators and the symbol composition formula is then given by a quadruple asympotic summation modulo smoothing symbols. 

We will show in the next section that we can also obtain stability under composition directly, without using a reduction to the standard calculus on $\R^n$. We shall obtain with this method a simpler symbol composition formula on $\Psi_{\sg,\psi}$, analog to the usual one on $\Psi_{\sg,std}$.

As a direct consequence of the previous proposition, we have the following $L^2$-continuity result for pseudodifferential operators on $M$.

\begin{prop} 
\label{L2cont}
If $(H_V)$ is satisfied for the function $V_{\rx}^{-1}$ in a frame $(z,\bfr)$, then any pseudodifferential operators on $M$ of order $(0,0)$ extends as a bounded operator on $L^2(M,d\mu)$.
\end{prop}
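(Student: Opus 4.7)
Let $A \in \Psi_\sigma^{0,0}$ and work in the frame $(z,\bfr)$ of the hypothesis. The transfer $A_{z,\bfr} = T_{z,\bfr}\circ A\circ T_{z,\bfr}^{-1}$ is, by Remark \ref{Oplien} and Theorem \ref{lambdainv} in the scalar case, an element of $\Psi_{\sigma,\psi}$ (with $\psi = \psi_z^\bfr$) with normal symbol $\mu_{z,\bfr}\cdot \sigma_0(A)_{z,\bfr}\in S_\sigma^{0,0}(\R^{2n})$ (note that $\mu_{z,\bfr}\in S_\sigma^\times(\R^n)$ gives a symbol of order $(0,0)$). The strategy is to reduce to the standard pseudodifferential calculus on $\R^n$ and then invoke the classical $L^2$-boundedness result there.

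First, I would invoke Proposition \ref{propreducRn}: the hypothesis $(H_V)$ for $V_\rx^{-1}$ gives the identification $\Psi_{\sigma,\psi} = \Psi_{\sigma,std}$, so $A_{z,\bfr}\in \Psi_{\sigma,std}^{0,0}$ is a standard pseudodifferential operator of order $(0,0)$ on $\R^n$. The $L^2(\R^n,d\rx)$-continuity of such operators is classical: for $\sigma=0$ it is the Calder\'{o}n--Vaillancourt theorem (\cite{Hormander,Shubin3}), and for $\sigma=1$ it is the standard $L^2$-boundedness for the $SG$-calculus (\cite{Cordes1,Schrohe}). For general $\sigma\in[0,1]$, the symbol $\mu_{z,\bfr}\cdot \sigma_0(A)_{z,\bfr}$ lies in $S^{0,0}_\sigma\subseteq S^0_0$ since decay in $\rx$ is only used with a non-positive power, so Calder\'{o}n--Vaillancourt applies uniformly. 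This gives $A_{z,\bfr}\in B(L^2(\R^n,d\rx))$.

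Finally I would transfer back to $M$: the linear map $T_{z,\bfr}:u\mapsto u\circ(n_z^\bfr)^{-1}$ is a unitary isomorphism from $L^2(M,d\mu)$ onto $L^2(\R^n,\mu_{z,\bfr}\,d\rx)$, and since $\mu_{z,\bfr}\in S^\times_\sigma(\R^n)$ implies that both $\mu_{z,\bfr}$ and $1/\mu_{z,\bfr}$ are bounded on $\R^n$, the norms of $L^2(\R^n,\mu_{z,\bfr}\,d\rx)$ and $L^2(\R^n,d\rx)$ are equivalent. The boundedness of $A_{z,\bfr}$ on $L^2(\R^n,d\rx)$ thus transfers to the boundedness of $A$ on $L^2(M,d\mu)$, as desired.

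The only real delicate point is matching the scale of symbol classes $S^{0,0}_\sigma$ (as used in this paper) with the appropriate standard $L^2$-continuity theorem on $\R^n$; everything else is a routine transfer through the frame. Once Proposition \ref{propreducRn} is in place, the global boundedness is a direct consequence of its Euclidean counterpart.
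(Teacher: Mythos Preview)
Your proof is correct and follows the same route as the paper: transfer $A$ to the frame, use the reduction of Proposition~\ref{propreducRn} to land in $\Psi_{\sigma,std}^{0,0}$, invoke the classical $L^2$-boundedness on $\R^n$, and transfer back. One small imprecision: with only the half of $(H_V)$ concerning $V_\rx^{-1}$, the \emph{proof} of Proposition~\ref{propreducRn} yields only the inclusion $\Psi_{\sigma,\psi}^{0,0}\subseteq\Psi_{\sigma,std}^{0,0}$ (not the full equality), which is exactly what the paper states and is all you need here.
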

\begin{proof} Since $(H_V)$ are satisfied for $V_{\rx}^{-1}$, the proof of the previous proposition entails that $\Psi_{\sg,\psi}^{0,0} \subseteq \Psi_{\sg,std}^{0,0}$, so the result follows from the $L^2$-continuity of standard pseudodifferential operators \cite{Hormander}.
\end{proof}

\subsection{Composition of pseudodifferential operators}
\label{composec}
The goal of this section is to prove that pseudodifferential operators of $\Psi_\sigma^\infty$ are stable under composition without using the hypothesis of the previous section, and to obtain an adapated symbol composition formula. We shall adapt to our situation a technique used for Fourier integral operators in Coriasco \cite{Coriasco}, Ruzhansky and Sugimoto \cite{Ruzhansky3,Ruzhansky}.

Let us note for $(x,\xi)\in TM$ and $\xi'\in T_{\psi_x^{-\xi}}(M)$,
$\psi_{x,\xi,\xi'}:=\psi_{\psi_x^{-\xi}}^{-\xi'}$, $r_x(\xi,\xi'):= \psi_x^{-1}(\psi_{x,\xi,\xi'})$ and $q_x(\xi,\xi'):=\psi^{-1}_{\psi_{x,\xi,\xi'}}(\psi_x^{-\xi})$.
 We define $V_x$ the $2n$ dimensional smooth manifold as $V_x:=\set{(\xi,\xi')\in T_x(M)\times \cup_{y\in M}T_y(M) \ | \ \xi'\in T_{\psi_x^{-\xi}}(M)}$. Each $V_x$ manifold is diffeomorphic to $\R^{2n}$ via the map, defined for any fixed frame $(z,\bfr)$, $n_{z,V_x}^\bfr(\xi,\xi'):=( M_{z,x}^\bfr(\xi), M_{z,\psi_x^{-\xi}}^{\bfr}(\xi'))$, and has a canonical involutive diffeomorphism $R_x$ defined as 
$$
R_x : (\xi,\xi')\mapsto (r_{x}(\xi,\xi'), q_x(\xi,\xi')) \, .
$$ 
In all the following we fix a frame $(z,\bfr)$, and note also $\psi$ the function $m_{-1}^{z,\bfr}$. We note $\rx^{\zeta,\zeta'}:=\psi(\psi(\rx,\zeta),\zeta')$. For each $\rx\in \R^n$, $R_\rx:=n_{z,V_{(n_z^\bfr)^{-1}(\rx)}}^{\bfr}\circ R_{(n_z^\bfr)^{-1}(\rx)} \circ (n_{z,V_{(n_z^\bfr)^{-1}(\rx)}}^{\bfr})^{-1}$ is a diffeomorphism on $\R^{2n}$, and we define $R_{\rx}=:(r_\rx,q_\rx)$, $r=r^{z,\bfr}:=(\rx,\zeta,\zeta')\mapsto r_\rx(\zeta,\zeta')$ and $q=q^{z,\bfr}:=(\rx,\zeta,\zeta')\mapsto q_{\rx}(\zeta,\zeta')$.
Remark that $r_{\rx}(\zeta,\zeta')=-\ol{\psi_z^\bfr}(\rx,\rx^{\zeta,\zeta'})=:\ol\psi_\rx \circ \psi_{\psi_\rx(\zeta)}(\zeta')$ and $q_{\rx}(\zeta,\zeta')=-P^{z,\bfr}_{-1,\psi(\rx,\zeta),\zeta'} (\zeta')$. The map $r_{\rx,\zeta}:\zeta'\mapsto r_{\rx}(\zeta,\zeta')$ is a diffeomorphism on $\R^n$ such that $r_{\rx,\zeta}^{-1}=r_{\psi_\rx(\zeta),\ol\psi_{\psi(\rx,\zeta)}(\rx)}$ so that $(dr_{\rx,\zeta})_{\zeta'}^{-1}=(dr_{{\psi_\rx(\zeta),\ol\psi_{\psi(\rx,\zeta)}(\rx)}})_{r_{\rx,\zeta}(\zeta')}$. We will use the shorthand $\tau:=(\tau_{-1}^{z,\bfr})^{-1}$.

We note $s(\rx,\zeta,\zeta'):=r(\rx,\zeta,\zeta')-\zeta$. We have $s(\rx,\zeta,\zeta')=s_{\rx,\zeta}(\zeta')$ where $s_{\rx,\zeta}=T_{-\zeta}\circ \ol\psi_\rx \circ \psi_{\psi_{\rx}(\zeta)}$ is a diffeomorphism on $\R^n$ such that $s_{\rx,\zeta}(0)=0$.
We also define 
$$
\varphi_{\rx,\zeta}(\zeta'):=r_{\rx,\zeta}(\zeta')-\zeta-(dr_{\rx,\zeta})_0(\zeta')
$$
so that $\varphi_{\rx,\zeta}(0)=0$ and $(d\varphi_{\rx,\zeta})_0=0$, and 
$$
V(\rx,\zeta,\zeta'):= (dr_{\rx,\zeta})_{\zeta'}
$$
as a smooth function from $\R^{3n}$ into $\M_n(\R)$. We shall note $(\rx,\zeta) \mapsto L_{\rx,\zeta}:=-\,^t(dr_{\rx,\zeta})_0$.

We define $ \O_{\sigma,\kappa,\eps_0,\eps_1,c}^{l,w_0,w_1}(\mathfrak{E})$, where $c\in \N$, $l\in \R$, $w:=(w_0,w_1)\in\R^2_+$, $\eps:=(\eps_0,\eps_1)$, $\eps_0\geq 0$, $\eps_1>0$, $\sigma\in [0,1]$ and $\kappa\geq 0$, as the space of smooth functions $g$ from $\R^{3n}$ into $\mathfrak{E}$ such that for any $3n$-multi-index $\nu=(\mu,\ga)\in \N^{2n}\times \N^n$, there exists $C_\nu>0$
such that for any $(\rx,\zeta,\zeta')\in \R^{3n}$, $\norm{\del^{\nu} g (\rx,\zeta,\zeta')}\leq C_{\nu} \langle \rx\rangle^{\sigma(l- |\mu|-\eps_1 |\ga|_c)}\langle \zeta\rangle^{w_0+\ka|\mu|+\eps_0|\ga|} \langle\zeta' \rangle^{w_1+\kappa|\nu|}$. Here, we noted $|\ga|_c:= 0$ if $|\ga|<c$ and $|\ga|_c:=|\ga|-c$ if $|\ga|\geq c$. We note $ \O_{\sg,\ka,\eps}(\mathfrak{E}):=\cup_{c,l,w} \O_{\sigma,\kappa,\eps,c}^{l,w}(\mathfrak{E})$. We check that for any multi-indices $\ga,\ga'$ and $c,c'\in \N$, $|\ga|_c+|\ga'|_c\geq |\ga+\ga'|_{c+c'}$, and $|\ga+\ga'|_c\geq |\ga|_c+|\ga'|_c$.
Thus, $ \O_{\sg,\ka,\eps}(\R)$, $\O_{\sg,\ka,\eps}(\mathcal{M}_{p}(\R))$ and $\O_{\sg,\ka,\eps,z}:=\O_{\sg,\ka,\eps}(L(E_z))$ are algebras (graduated by the parameters $c$, $l$, $w_0$ and $w_1$) and $\del^\nu \O_{\sg,\ka,\eps,c}^{l,w}(\mathfrak{E})\subseteq \O_{\sg,\ka,\eps,c}^{l-|\mu|-\eps_1|\ga|_c,w_0+\ka|\mu|+\eps_0|\ga|,w_1+\ka|\nu|}(\mathfrak{E})$.
If $f\in \O_{\sg,\ka,\eps,c}^{0,w}(\mathfrak{E})$, then $(\rx,\zeta)\mapsto f(\rx,\zeta,0)\in E^{w_0}_{\sg,\ka}(\mathfrak{E})$, and if $f\in \O_{\sg,\ka,\eps,c,z}^{l,w}$, then $(\rx,\zeta,\vth)\mapsto f(\rx,\zeta,0) \in \Pi_{\sg,\ka,z}^{l,w_0,0}$. Remark that any monomial of the form $(\rx,\zeta,\zeta')\mapsto \zeta'^\b$ where $\b \in \N^n$, is in $\O_{\sg,\ka,\eps,|\b|}^{0,0,|\b|}(\R)$ for any $\ka\geq 0$ and $\eps_0\geq 0$, $\eps_1>0$.

In the definition of $S'_\sg$ bounded geometry, we only require a polynomial control over the $\ol\psi_{z}^\bfr$ functions. It appears that for the theorem of composition, a stronger control over these functions is important. We thus introduce the following:

\begin{defn}
\label{Csigma}
We shall say that $(C_\sg)$ is satisfied if there is a frame $(z,\bfr)$, $(\ka_v, w_v)\in \R^2_+$ with $\ka_v\geq 1$, and $\eps_v\in ]0,1[$, such that 
\begin{align}
\label{CHyp1}
V \in \O_{\sg,\ka_v,\eps_v,\eps_v,0}^{0,0,w_v}(\M_n(\R))\,,  \quad\, \text{and}\quad (d\psi_{z,\rx}^\bfr)_\zeta ,\,(d\ol\psi_{z,\rx}^\bfr)_\ry = \O(1)\, . 
\end{align}
\end{defn}
\noindent In particular $(C_\sg)$ entails that $(dr_{\rx,\zeta})_{0}$ and thus $L$ are in $E_{\sg,\ka_v}^0(\M_n(\R))$. 

We note $\RR_{\sg,\ka,\eps_1}^{w_0,w_1}(\mathfrak{E})$ ($\eps_1>0$) as the space of smooth functions $g$ such that for any nonzero $\nu=(\mu,\ga)\in \N^{2n}\times \N^n$, $\del^\nu g  = \O(\langle \rx\rangle^{\sg(1-|\mu|-\eps_1|\ga|)}\langle \zeta\rangle^{w_0 + \ka(|\nu|-1)} \langle \zeta'\rangle^{w_1+\ka(|\nu|-1)})$. It follows from $(C_\sg)$ that $r\in \cup_{w_0,w_1}\RR_{\sg,\ka_v,\eps_v/2}^{w_0,w_1}(\R^n)$.

The following lemma will give us the link between the the $\O$, $\RR$, $H$, $E$ spaces and the behaviour under composition.

\begin{lem}
\label{htilde}

$(i)$ Let $f\in H_{\sg,\ka}^w(\mathfrak{E})$ (resp. $E_{\sg,\ka}^w(\mathfrak{E})$) and $g\in  \RR_{\sg,\ka,\eps_1}^{w_0,w_1}(\R^{2n})$ such that $g_2(\rx,\zeta,\zeta')=\O(\langle\zeta \rangle^{k_2}\langle \zeta'\rangle^{k'_2})$ for a $(k_2,k'_2)\in \R^2_+$ and, if $\sg\neq 0$, $\langle g_1(\rx,\zeta,\zeta')\rangle\geq c\langle \rx\rangle \langle \zeta\rangle^{-k_1}\langle \zeta'\rangle^{-k_1'}$, for a $(k_1,k'_1)\in \R^2_+$ and $c>0$. Then, $f\circ g\in \RR_{\sg,\ka_H,\eps_1}^{w_0+k_2w,w_1+k'_2 w}(\mathfrak{E})$ (resp. $\O_{\sg,\ka_E,\ka_E,\eps_1,0}^{0,k_2 w,k'_2 w}(\mathfrak{E})$) where $\ka_H:=\ka+\max\set{|w_0+k_1\sg +k_2\ka|,|w_1+k'_1\sg +k'_2\ka|}$ and $\ka_E:= \ka+\max\set{|w_0+k_1\sg+(k_2-1)\ka|,|w_1+k'_1\sg+(k'_2-1)\ka|}$.

\noindent $(ii)$ $(\rx,\zeta,\zeta')\mapsto (\psi(\rx,\zeta),\zeta') \in \RR_{\sg,\ka_\psi,1}^{w_\psi,0}(\R^{2n})$ and $(\rx,\zeta,\zeta')\mapsto \rx^{\zeta,\zeta'} \in \RR_{\sg,\ka_\psi,1}$ for a $(\ka_\psi,w_\psi)\in \R^2_+$.

\noindent $(iii)$ The functions $q$, $(\rx,\zeta,\zeta')\mapsto (P^{z,\bfr}_{-1,\psi(\rx,\zeta),\zeta'})^{-1}$ and $(\rx,\zeta,\zeta')\mapsto \det (P^{z,\bfr}_{-1,\psi(\rx,\zeta),\zeta'})^{-1}$ are respectively in $\RR_{\sg,\ka_q,1}(\R^n)$, $\O^{0,0,0}_{\sg,\ka_q,\ka_q,1,0}(\M_n(\R))$, and $\O^{0,0,0}_{\sg,\ka_q,\ka_q,1,0}(\R)$, for a $\ka_q\geq 0$.
Moreover, there exists $C>0$ such that for any $(\rx,\zeta,\zeta')\in \R^{3n}$, $\norm{q_\rx(\zeta,\zeta')}\leq C\langle \zeta' \rangle$.

\noindent $(iv)$ $(\rx,\zeta,\zeta')\mapsto \tau(\rx^{\zeta,\zeta'},q_\rx(\zeta,\zeta'))$ is in $\O^{0,0,0}_{\sg,\ka_\tau,\ka_\tau,1,0,z}$ for a $\ka_\tau\geq 0$.
\end{lem}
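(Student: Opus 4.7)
The plan is to reduce everything to (i), which is a Faà di Bruno computation in the style of Lemma \ref{lemGsigma}, and then apply (i) repeatedly using the $S_\sg$-linearization hypothesis and the property $(C_\sg)$.

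For (i), I would differentiate $f\circ g$ via the multivariate Faà di Bruno formula of Theorem \ref{FaaCS} with a $3n$-multi-index $\nu=(\mu,\ga)$, obtaining for $\nu\neq 0$
\[
\del^{\nu}(f\circ g) = \sum_{1\leq |\la|\leq |\nu|} (\del^{\la} f)\circ g\ P_{\nu,\la}(g),
\]
where each $P_{\nu,\la}(g)$ is a linear combination of monomials $\prod_{j=1}^s (\del^{l^j} g)^{k^j}$ with $\sum_j k^j =\la$ and $\sum_j |k^j| l^j =\nu$. Writing $l^j=:(\ell^j,m^j)$ with $\ell^j\in\N^{2n}$, $m^j\in\N^n$, the $\RR$-estimates on $g$ give for nonzero $l^j$
\[
|(\del^{l^j} g)(\rx,\zeta,\zeta')| = \O\bigl(\langle\rx\rangle^{\sg(1-|\ell^j|-\eps_1|m^j|)}\langle\zeta\rangle^{w_0+\ka(|l^j|-1)}\langle\zeta'\rangle^{w_1+\ka(|l^j|-1)}\bigr),
\]
so $P_{\nu,\la}(g)=\O(\langle\rx\rangle^{\sg(|\la|-|\mu|-\eps_1|\ga|)}\langle\zeta\rangle^{w_0|\la|+\ka(|\nu|-|\la|)}\langle\zeta'\rangle^{w_1|\la|+\ka(|\nu|-|\la|)})$ by collecting the exponents (using $\sum_j|k^j|=|\la|$ and $\sum_j|k^j||l^j|=|\nu|$). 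The lower bound $\langle g_1\rangle\geq c\langle\rx\rangle\langle\zeta\rangle^{-k_1}\langle\zeta'\rangle^{-k_1'}$ and the upper bound $\langle g_2\rangle\leq C\langle\zeta\rangle^{k_2}\langle\zeta'\rangle^{k'_2}$ let me estimate $(\del^\la f)\circ g$ using the $H_{\sg,\ka}^w$ (resp.\ $E_{\sg,\ka}^w$) bounds on $f$; multiplying and simplifying then gives $f\circ g\in \RR_{\sg,\ka_H,\eps_1}^{w_0+k_2 w,w_1+k'_2 w}(\mathfrak{E})$ (resp.\ $\O_{\sg,\ka_E,\ka_E,\eps_1,0}^{0,k_2 w,k'_2 w}(\mathfrak{E})$), the constants $\ka_H$ and $\ka_E$ being dictated by the worst-case exponents.

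For (ii), the $S_\sg$-linearization hypothesis gives $\psi=\psi_z^\bfr\in H_{\sg,\ka_{z,\bfr}}(\R^n)$ with $\psi(\rx,\zeta)=\O(\langle\rx\rangle\langle\zeta\rangle^r)$, and Lemma \ref{lem-Phi-la}(ii) yields the lower bound $\langle\psi(\rx,\zeta)\rangle\geq c\langle\rx\rangle\langle\zeta\rangle^{-r}$. Viewing $(\rx,\zeta,\zeta')\mapsto(\psi(\rx,\zeta),\zeta')$ as a function only of the first two variables (the $\zeta'$ slot is trivial), I obtain the $\RR_{\sg,\ka_\psi,1}^{w_\psi,0}$ membership directly from the $H$-estimates. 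For $\rx^{\zeta,\zeta'}=\psi(\psi(\rx,\zeta),\zeta')$, I apply (i) with $f=\psi\in H_{\sg,\ka_{z,\bfr}}(\R^n)$ and $g=(\psi(\rx,\zeta),\zeta')$ now regarded as an $\RR$-function, which gives the required $\RR_{\sg,\ka_\psi,1}$ membership after adjusting constants.

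For (iii), I write $q_\rx(\zeta,\zeta')=-P^{z,\bfr}_{-1,\psi(\rx,\zeta),\zeta'}(\zeta')$ and view the map $(\rx,\zeta,\zeta')\mapsto P^{z,\bfr}_{-1,\psi(\rx,\zeta),\zeta'}$ as a composition of $P^{z,\bfr}_{-1}\in E^0_{\sg,\ka_{z,\bfr}}(\M_n(\R))$ (by Lemma \ref{lem-Phi-la}(i)) with $(\psi(\rx,\zeta),\zeta')$ from part (ii); (i) then places this composition in the appropriate $\O$-space, and taking the $\zeta'$-contracted product $P(\cdot,\cdot)(\zeta')$ costs one extra power of $\langle\zeta'\rangle$, yielding the index shift to $\RR_{\sg,\ka_q,1}(\R^n)$ for $q$. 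The same method, applied to $(P^{z,\bfr}_{-1})^{-1}$ and the scalar $\det(P^{z,\bfr}_{-1})^{-1}$ (which are in $E^0_{\sg,\ka_{z,\bfr}}$ by Definition \ref{sprime}(ii) and Lemma \ref{lem-Phi-la}(i)), gives the two remaining assertions of (iii); the uniform bound $\|q_\rx(\zeta,\zeta')\|\leq C\langle\zeta'\rangle$ follows from the boundedness of $P^{z,\bfr}_{-1}$ as a function on $\R^{2n}$. Finally, for (iv), I apply (i) once more with $f=\tau=(\tau^{z,\bfr}_{-1})^{-1}\in E^0_{\sg,\ka_{z,\bfr}}(L(E_z))$ and $g=(\rx^{\zeta,\zeta'},q_\rx(\zeta,\zeta'))\in\RR_{\sg,\max(\ka_\psi,\ka_q),1}$ established in (ii) and (iii); the uniform bound $\|q_\rx(\zeta,\zeta')\|\leq C\langle\zeta'\rangle$ from (iii) supplies the $\langle\zeta'\rangle$-control needed to invoke the $E$-branch of (i), yielding membership in $\O^{0,0,0}_{\sg,\ka_\tau,\ka_\tau,1,0,z}$.

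The main obstacle is the bookkeeping in (i): the mixed $\RR$/$\O$ dichotomy means I must carefully distinguish the Faà di Bruno term $|\la|=0$ (where only $f$ contributes, so the $H$ versus $E$ behaviour surfaces as the single power of $\langle\rx\rangle$) from the terms $|\la|\geq 1$, and track how the exponents $k_1,k'_1,k_2,k'_2$ propagate through the lower/upper bounds on $g_1,g_2$ to produce the claimed $\ka_H$ and $\ka_E$. Once (i) is set up with the correct combinatorics, parts (ii)--(iv) are essentially applications with specific choices of $f,g$.
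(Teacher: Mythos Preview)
Your proposal is correct and follows essentially the same route as the paper: part (i) is a Fa\`a di Bruno computation yielding exactly the estimate $P_{\nu,\la}(g)=\O(\langle\rx\rangle^{\sg(|\la|-|\mu|-\eps_1|\ga|)}\langle\zeta\rangle^{w_0|\la|+\ka(|\nu|-|\la|)}\langle\zeta'\rangle^{w_1|\la|+\ka(|\nu|-|\la|)})$ together with the bound on $(\del^\la f)\circ g$ coming from the lower/upper bounds on $g_1,g_2$, and parts (ii)--(iv) are successive applications of (i) with the specific building blocks $\psi$, $P^{z,\bfr}_{-1}$, and $\tau$. One small wording issue: in your last paragraph you speak of ``the Fa\`a di Bruno term $|\la|=0$'', but for $\nu\neq 0$ the sum runs over $1\leq|\la|\leq|\nu|$; the $H$/$\RR$ versus $E$/$\O$ distinction is about whether the zeroth-order estimate ($\nu=0$, i.e.\ $f\circ g$ itself) is required, not about a $|\la|=0$ summand. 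Also, in (iv) you should state explicitly the lower bound $\langle\rx^{\zeta,\zeta'}\rangle\geq c\langle\rx\rangle\langle\zeta\rangle^{-k}\langle\zeta'\rangle^{-k}$ (obtained by iterating Lemma \ref{lem-Phi-la}(ii)), which is the $g_1$-hypothesis needed to invoke (i); you already have the ingredients but only mention the $g_2$-bound on $q$.
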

\begin{proof}
$(i)$ If $\nu=(\a,\b,\ga)\neq 0$ is a $3n$-multi-index, we have $\del^\nu f\circ g = \sum_{1\leq |\nu'|\leq |\nu|} P_{\nu,\nu'}(g) (\del^{\nu'}f)\circ g$, with $P_{\nu,\nu'}(g)$ a linear combination of terms of the form $\prod_{j=1}^s (\del^{l^j}g)^{k^j}$, with $1\leq s \leq |\nu|$, $\sum_{1}^s l^j |k^j|=\nu$, $\sum_1^s k^j=\nu'$.
As a consequence, we get the following estimate for any $1\leq |\nu|\leq |\nu'|$, $P_{\nu,\nu'}(g)=\O(\langle \rx\rangle^{\sg(|\nu'|-|\mu|-\eps_1|\ga|)} \langle \zeta\rangle^{w_0|\nu'|+\ka(|\nu|-|\nu'|)}\langle \zeta'\rangle^{w_1|\nu'|+\ka(|\nu|-|\nu'|)})$.
Moreover, for any $1\leq |\nu'|\leq |\nu|$, there is $C_\nu>0$ such that for any $(\rx,\zeta,\zeta')\in \R^{3n}$, the following estimate is valid $\norm{(\del^{\nu'}f) \circ g(\rx,\zeta,\zeta')}\leq C_\nu \langle\rx\rangle^{-\sg(|\nu'|-1)} \langle \zeta\rangle^{(k_1\sg+k_2\ka)(|\nu'|-1)+k_2w}\langle \zeta'\rangle^{(k_1'\sg+k'_2\ka)(|\nu'|-1)+k'_2w}$ (resp. $\norm{(\del^{\nu'}f) \circ g(\rx,\zeta,\zeta')}\leq C_\nu \langle\rx\rangle^{-\sg|\nu'|} \langle \zeta\rangle^{(k_1\sg+k_2\ka)|\nu'|+k_2w}\langle \zeta'\rangle^{(k_1'\sg+k'_2\ka)|\nu'|+k'_2w}$). The result follows.

\noindent $(ii)$ By hypothesis, $\psi\in H^{w_\psi}_{\sg,\ka_\psi}$. We deduce that $(\rx,\zeta,\zeta')\mapsto \psi(\rx,\zeta)\in \RR^{w_\psi,0}_{\sg,\ka_\psi,1}$ and the first statement now follows from $(\rx,\zeta,\zeta')\mapsto \zeta'\in \RR^{0,0}_{\sg,\ka_\psi,1}$. The second statement follows from $(i)$.

\noindent $(iii)$ Since $q_{\rx}(\zeta,\zeta')=-P^{z,\bfr}_{-1,\psi(\rx,\zeta),\zeta'}(\zeta')$, the fact that $q_\rx\in  \RR_{\sg,\ka_q,1}(\R^n)$ for a $\ka_q\geq 0$  is a consequence of $(i)$, $(ii)$ and Lemma \ref{lemGsigma} $(iii)$. We also have by $(i)$ and $(ii)$, $(P^{z,\bfr}_{-1,\psi(\rx,\zeta),\zeta'})^{-1} \in  \O^{0,0,0}_{\sg,\ka_q,\ka_q,1,0}(\M_n(\R))$.

\noindent $(iv)$ Since $\tau\in E^0_{\sg,\ka}(L(E_z))$ for a $\ka\geq 0$,
the result follows $(i)$, $(ii)$, $(iii)$ and the estimate $\langle \rx^{\zeta,\zeta'}\rangle \geq c \langle \rx\rangle \langle \zeta\rangle^{-k} \langle \zeta' \rangle^{-k}$ for $c,k>0$.
\end{proof}

\begin{lem}
\label{Csgr}
Suppose $(C_\sg)$. Then 

\noindent (i) $s,\varphi \in \O_{\sg,\ka_v,\eps_v,\eps_v,1}^{0,0,w_s}(\R^n)$ and $\varphi\in \O_{\sg,\ka_v,\eps_v,\eps_v,2}^{-\eps_v,\eps_v,w_\varphi}(\R^n)$ where $w_s:=w_v+1$ and $w_\varphi:=2+w_v+\ka_v$.

\noindent (ii) $V=(dr_{\rx,\zeta})_{\zeta'}$  and  $(dr_{\rx,\zeta})^{-1}_{\zeta'}$ are bounded on $\R^{3n}$.

\noindent (iii) The function $J(R):(\rx,\zeta,\zeta')\mapsto J(R_\rx)(\zeta,\zeta')$ is in $\cup_{\ka,w_0,w_1,\eps_0,\eps_1}\O_{\sg,\ka,\eps_0,\eps_1,0}^{0,w_0,w_1}(\R)$ and $(\rx,\zeta,\zeta')\mapsto \tau (\rx,r_\rx(\zeta,\zeta'))$ is in $\O_{\sg,\ka_\tau,\ka_\tau,\eps_v/2,0,z}^{0,0,0}$ for $\ka_\tau\geq 0$.
\end{lem}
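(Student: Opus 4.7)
My plan is to address (ii) first, then derive (i) from it via Taylor integral remainders, and finally handle (iii) by reducing to parts already proved together with Lemmas \ref{lemGsigma} and \ref{htilde}. For \emph{(ii)}: the chain rule applied to $r_\rx(\zeta,\zeta')=\ol{\psi_\rx}\circ \psi_{\psi_\rx(\zeta)}(\zeta')$ gives
\begin{equation*}
V(\rx,\zeta,\zeta')=(d\ol{\psi_\rx})_{\rx^{\zeta,\zeta'}}\circ (d\psi_{\psi_\rx(\zeta)})_{\zeta'},
\end{equation*}
and both factors are $\O(1)$ by the second part of $(C_\sg)$, so $V$ is bounded on $\R^{3n}$. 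The inverse is handled via the identity $(dr_{\rx,\zeta})_{\zeta'}^{-1}=(dr_{\psi_\rx(\zeta),\ol{\psi_{\psi(\rx,\zeta)}}(\rx)})_{r_{\rx,\zeta}(\zeta')}$ recalled just before Definition \ref{Csigma}, using the same chain-rule argument at the shifted base point.

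For \emph{(i)}: since $r_\rx(\zeta,0)=\ol{\psi_\rx}(\psi(\rx,\zeta))=\zeta$, we have $s_{\rx,\zeta}(0)=0$, and Taylor with integral remainder in $\zeta'$ gives $s^i(\rx,\zeta,\zeta')=\sum_k \zeta'_k\,\wt V^{i,k}$ with $\wt V^{i,k}:=\int_0^1 V^{i,k}(\rx,\zeta,t\zeta')\,dt$. Differentiating under the integral sign, using $V\in\O^{0,0,w_v}_{\sg,\ka_v,\eps_v,\eps_v,0}(\M_n(\R))$ from $(C_\sg)$ together with the trivial bound $\int_0^1 t^{|\ga|}\langle t\zeta'\rangle^{w_v+\ka_v|\nu|}\,dt\leq \langle\zeta'\rangle^{w_v+\ka_v|\nu|}$, yields $\wt V\in \O^{0,0,w_v}_{\sg,\ka_v,\eps_v,\eps_v,0}(\M_n(\R))$. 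Since each monomial $\zeta'_k$ lies in $\O^{0,0,1}_{\sg,\ka,\eps_0,\eps_1,1}(\R)$ for any admissible parameters, the algebra property of $\O$-spaces places $s$ in $\O^{0,0,w_s}_{\sg,\ka_v,\eps_v,\eps_v,1}(\R^n)$ with $w_s=w_v+1$. The decomposition $\varphi=s-(dr_{\rx,\zeta})_0(\zeta')$ combined with $(dr_{\rx,\zeta})_0\in E^0_{\sg,\ka_v}(\M_n(\R))$ (from (ii) evaluated at $\zeta'=0$) shows $\varphi$ lies in the same class. For the sharper estimate, apply Taylor \emph{twice} in $\zeta'$: writing $V(\rx,\zeta,t\zeta')-V(\rx,\zeta,0)=\sum_i t\zeta'_i\int_0^1(\del_{\zeta'_i}V)(\rx,\zeta,st\zeta')\,ds$ produces $\varphi^j=\sum_{i,k}\zeta'_i\zeta'_k\,W^{j,i,k}$ for a double integral $W$ of $\del_{\zeta'_i}V^{j,k}$. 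The differentiation rule puts $\del_{\zeta'_i}V\in\O^{-\eps_v,\eps_v,w_v+\ka_v}_{\sg,\ka_v,\eps_v,\eps_v,0}(\M_n(\R))$, hence the same integration argument yields $W$ in this class, and multiplying by $\zeta'_i\zeta'_k\in\O^{0,0,2}_{\sg,\ka_v,\eps_v,\eps_v,2}(\R)$ gives $\varphi\in\O^{-\eps_v,\eps_v,w_\varphi}_{\sg,\ka_v,\eps_v,\eps_v,2}(\R^n)$ with $w_\varphi=2+w_v+\ka_v$.

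For \emph{(iii)}: expand $J(R_\rx)$ as the determinant of the $2n\times 2n$ block matrix with entries $\del_\zeta r_\rx,\ \del_{\zeta'}r_\rx,\ \del_\zeta q_\rx,\ \del_{\zeta'}q_\rx$. The block $\del_{\zeta'}r_\rx=V$ is already controlled; $\del_\zeta r_\rx$ is handled by differentiating $r_\rx(\zeta,\zeta')=\ol{\psi_\rx}(\psi(\psi(\rx,\zeta),\zeta'))$ via the chain rule, combining $(C_\sg)$ with Lemma \ref{lemGsigma}; the $q_\rx$-blocks and the inverse $(P^{z,\bfr}_{-1,\psi(\rx,\zeta),\zeta'})^{-1}$ are treated by Lemma \ref{htilde}(iii). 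The Leibniz expansion of the determinant, together with the algebra structure of the $\O$-spaces, then places $J(R)$ in $\O^{0,w_0,w_1}_{\sg,\ka,\eps_0,\eps_1,0}(\R)$ for suitable parameters. For the last statement, since $r_\rx(\zeta,\zeta')=\zeta+s(\rx,\zeta,\zeta')$ with $s$ handled in (i), the map $(\rx,\zeta,\zeta')\mapsto(\rx,r_\rx(\zeta,\zeta'))$ satisfies the hypotheses of Lemma \ref{htilde}(i); applying that lemma to $\tau^{z,\bfr}\in E^0_{\sg,\ka_{z,\bfr}}(L(E_z))$ yields the stated $\O^{0,0,0}_{\sg,\ka_\tau,\ka_\tau,\eps_v/2,0,z}$ membership. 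The main technical obstacle is part (iii): the $|\ga|_c$ cut-off in the $\O$-exponents interacts nontrivially with the many factors in the Leibniz expansion, and the free parameters $(\ka,w_0,w_1,\eps_0,\eps_1)$ must be chosen so that all exponents close simultaneously.
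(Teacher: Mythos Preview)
Your proposal is correct and follows essentially the same route as the paper: chain-rule factorisation for (ii), first- and second-order Taylor integral remainders of $r_{\rx,\zeta}$ in $\zeta'$ for (i), and for (iii) reduction to the $\RR$/$\O$ membership of $r$, $q$ and $\tau$ via Lemma~\ref{htilde}. The paper proves (i) directly from the Taylor formulas $s_{\rx,\zeta}(\zeta')=\sum_i\zeta'_i\int_0^1\del_{\zeta'_i}r_{\rx,\zeta}(t\zeta')\,dt$ and $\varphi_{\rx,\zeta}(\zeta')=\sum_{|\b|=2}\tfrac{2}{\b!}(\zeta')^\b\int_0^1(1-t)\,\del_{\zeta'}^\b r_{\rx,\zeta}(t\zeta')\,dt$, rather than via your nested integrals, but this is the same computation; for the second statement of (iii) the paper also applies Lemma~\ref{htilde}(i) to $\tau\in E^0_{\sg,\ka}$, using $r_\rx(\zeta,\zeta')=\O(\langle\zeta\rangle\langle\zeta'\rangle^{w_v})$ and $r\in\RR_{\sg,\ka_v,\eps_v/2}$.

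Two small remarks. First, in your treatment of $J(R)$ the reference to $(P^{z,\bfr}_{-1,\psi(\rx,\zeta),\zeta'})^{-1}$ is extraneous: the Jacobian blocks are $\del_\zeta r,\del_{\zeta'}r,\del_\zeta q,\del_{\zeta'}q$, and $q_\rx(\zeta,\zeta')=-P^{z,\bfr}_{-1,\psi(\rx,\zeta),\zeta'}(\zeta')$ involves $P$, not $P^{-1}$; the paper handles $J(R)$ simply by noting that $r\in\RR_{\sg,\ka_v,\eps_v/2}$ and $q\in\RR_{\sg,\ka_q,1}$ (Lemma~\ref{htilde}), so their first $(\zeta,\zeta')$-derivatives lie in $\O^{0,\cdot,\cdot}_{\sg,\cdot,\cdot,\cdot,0}$ spaces, and the determinant is a polynomial in those entries. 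Second, the function you compose in the last step is $\tau=(\tau_{-1}^{z,\bfr})^{-1}$ (this is the shorthand fixed at the start of the section), not $\tau^{z,\bfr}$; your worry about the $|\ga|_c$ cut-off in (iii) is unfounded, since here $c=0$ and $|\ga|_0=|\ga|$.
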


\begin{proof}
\noindent $(i)$ We have $s_{\rx,\zeta}(\zeta')=\sum_{i=1}^n\zeta'_i \int_0^1 \del_{\zeta'_i} r_{\rx,\zeta}(t\zeta')\,dt$. Since $V \in \O_{\sg,\ka_v,\eps_v,0}^{0,0,w_v}(\M_n(\R))$ each function $(\rx,\zeta,\zeta')\mapsto\int_0^1 \del_{\zeta'_i} r_{\rx,\zeta}(t\zeta')\,dt$ is in $\O_{\sg,\ka_v,\eps_v,0}^{0,0,w_v}(\R^n)$ and thus, since $(\rx,\zeta,\zeta')\mapsto \zeta'_i \in \O_{\sg,\ka_v,\eps_v,1}^{0,0,1}(\R)$, we see that $s \in \O_{\sg,\ka_v,\eps_v,1}^{0,0,w_s}(\R^n)$. We have also $\varphi_{\rx,\zeta}(\zeta')=\sum_{|\b|=2} \tfrac{2}{\b!} (\zeta')^\b \int_{0}^1(1-t)\, \del_{\zeta'}^\b r_{\rx,\zeta}(t\zeta')\,dt$ and each function $(\rx,\zeta,\zeta')\mapsto\int_0^1 (1-t)\, \del_{\zeta'}^\b r_{\rx,\zeta}(t\zeta')\,dt$ is in $\O_{\sg,\ka_v,\eps_v,0}^{-\eps_v,\eps_v,w_v+\ka_v}(\R^n)$. With $(\rx,\zeta,\zeta')\mapsto (\zeta')^\b \in \O_{\sg,\ka_v,\eps_v,2}^{0,0,2}(\R)$, we get $\varphi \in \O_{\sg,\ka_v,\eps_v,2}^{-\eps_v,\eps_v,w_\varphi}(\R^n)$.

\noindent $(ii)$ Direct consequence of $(C_\sg)$ and the following equalities for any $(\rx,\zeta,\zeta')\in \R^{3n}$, $(dr_{\rx,\zeta})_{\zeta'} =(d\ol\psi_{\rx})_{\rx^{\zeta,\zeta'}}(d\psi_{\psi_\rx(\zeta)})_{\zeta'}$ and $(dr_{\rx,\zeta})^{-1}_{\zeta'}=(d\ol\psi_{\psi_\rx(\zeta)})_{\rx^{\zeta,\zeta'}}(d\psi_\rx)_{r_{\rx,\zeta}(\zeta')}$. 

\noindent $(iii)$ The first statement follows from Lemma \ref{htilde} $(ii)$. The second statement follows from Lemma \ref{htilde} $(i)$ and the estimate $r_{\rx}(\zeta,\zeta')=\O(\langle \zeta \rangle \langle \zeta'\rangle^{w_v})$.
\end{proof}

We shall use a generalization to four variables of the $\Pi_{\sg,\ka,z}^{l,w,m}$ spaces of amplitude. We define $\wt \Pi_{\sg,\ka,\eps_1,z}^{l,w_0,w_1,m}$ ($0<\eps_1\leq 1$) as the space of smooth functions $a\in C^{\infty}(\R^{4n},L(E_z))$ such that for any $4n$-multi-index $(\nu,\delta)\in \N^{3n}\times \N^n$, (with $\nu=(\mu,\ga)\in \N^{2n}\times \N^n$) there is $C_{\nu,\delta}>0$  such that for any $(\rx,\zeta,\zeta',\vth)\in \R^{4n}$, 
$$
\norm{\del^{(\nu,\delta)} a (\rx,\zeta,\zeta',\vth)}_{L(E_z)}\leq C_{\nu,\delta} \langle \rx\rangle^{\sg(l-|\mu|-\eps_1|\ga|)} \langle \zeta \rangle^{w_0+\ka|\nu|} \langle \zeta'\rangle^{w_1+\ka|\nu|} \langle \vth\rangle^{m-|\delta|}\, .
$$ 
These spaces have natural Fr\'{e}chet topologies and form a graded topological algebra under pointwise composition.

\begin{lem}
\label{amptilde}
$(i)$ If $a \in \wt \Pi_{\sg,\ka,\eps_1,z}^{l,w_0,w_1,m}$, then $a_{\zeta'=0}:(\rx,\zeta,\vth)\mapsto a(\rx,\zeta,0,\vth)$ is in $\Pi_{\sg,\ka,z}^{l,w_0,m}$.
 
\noindent $(ii)$ If $h\in  \O_{\sg,\ka,\eps_0,\eps_1,0,z}^{l,w_0,w_1}$, then $(\rx,\zeta,\zeta',\vth)\mapsto h(\rx,\zeta,\zeta')$ is in $\wt\Pi_{\sg,\max\set{\ka,\eps_0},\eps_1,z}^{l,w_0,w_1,0}$.

\noindent $(iii)$ There is $\ka_\Xi,k_1 \geq 0$ such that for any $b\in S^{l,m}_{\sg,z}$, the application $b\circ \wt \Xi$, where $\wt \Xi(\rx,\zeta,\zeta',\vth):=(\rx^{\zeta,\zeta'},-\wt P_{-1,\psi(\rx,\zeta),\zeta'}^{z,\bfr}(\vth))$, is in $\wt\Pi_{\sg,\ka_{\Xi},1,z}^{l,\sg k_1|l|,\sg k_1|l|,m}$.

\end{lem}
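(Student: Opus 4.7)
Parts (i) and (ii) are essentially book-keeping, while (iii) is the substantive step and is handled by Fa\`a di Bruno (Theorem~\ref{FaaCS}) together with the structural results of Lemmas~\ref{lemGsigma}, \ref{HEamp}, \ref{lem-Phi-la}, and \ref{htilde}. For (i), one observes that for a $3n$-multi-index $(\a,\b,\delta)$, $\del^{(\a,\b,\delta)} a_{\zeta'=0}(\rx,\zeta,\vth) = \del^{((\a,\b),0,\delta)} a(\rx,\zeta,0,\vth)$, and the defining estimate for $\wt\Pi^{l,w_0,w_1,m}_{\sg,\ka,\eps_1,z}$ with vanishing $\zeta'$-multi-index collapses $\langle\zeta'\rangle^{w_1+\ka|\nu|}$ to $1$ at $\zeta'=0$, leaving exactly the $\Pi^{l,w_0,m}_{\sg,\ka,z}$ bound. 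For (ii), since $(\rx,\zeta,\zeta',\vth)\mapsto h(\rx,\zeta,\zeta')$ is constant in $\vth$, all $\vth$-derivatives of order $|\delta|\geq 1$ vanish. The remaining $\delta=0$ case uses the $\O$-bound (with $c=0$, so $|\ga|_0=|\ga|$) to give $C\,\langle\rx\rangle^{\sg(l-|\mu|-\eps_1|\ga|)}\langle\zeta\rangle^{w_0+\ka|\mu|+\eps_0|\ga|}\langle\zeta'\rangle^{w_1+\ka|\nu|}$, which after the elementary bounds $\ka|\mu|+\eps_0|\ga|\le \max\{\ka,\eps_0\}\,|\nu|$ and $\ka|\nu|\le \max\{\ka,\eps_0\}\,|\nu|$ matches the definition of $\wt\Pi^{l,w_0,w_1,0}_{\sg,\max\{\ka,\eps_0\},\eps_1,z}$.

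For (iii), write $\wt\Xi=(g_1,g_2)$ with $g_1(\rx,\zeta,\zeta'):=\rx^{\zeta,\zeta'}=\psi(\psi(\rx,\zeta),\zeta')$ and $g_2(\rx,\zeta,\zeta',\vth):=-\wt P^{z,\bfr}_{-1,\psi(\rx,\zeta),\zeta'}(\vth)$. The plan is to apply Fa\`a di Bruno to $b\circ\wt\Xi$ for $b\in S^{l,m}_{\sg,z}$, exploiting three structural facts. First, by Lemma~\ref{htilde}(ii), $g_1\in \RR^{w_\psi,0}_{\sg,\ka_\psi,1}(\R^n)$; iterating the $\O_M$-control of $\psi$ twice gives an upper bound $\langle g_1\rangle\leq C\langle\rx\rangle\langle\zeta\rangle^{k_1}\langle\zeta'\rangle^{k_1}$, while the matching lower bound $\langle g_1\rangle\geq c\langle\rx\rangle\langle\zeta\rangle^{-k_1}\langle\zeta'\rangle^{-k_1}$ is obtained exactly as in the proof of Lemma~\ref{lem-Phi-la}(ii), by expressing $\rx$ through two successive applications of $\ol\psi$ (whose $\O_M$-growth is controlled by hypothesis). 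Second, by (the transpose-inverse of) Lemma~\ref{htilde}(iii), both $\wt P^{z,\bfr}_{-1,\psi(\rx,\zeta),\zeta'}$ and its inverse lie in $\O^{0,0,0}_{\sg,\ka_q,\ka_q,1,0}(\M_n(\R))$; in particular they are uniformly bounded, so $\langle g_2\rangle\asymp\langle\vth\rangle$. Third, $g_2$ is linear in $\vth$, so only derivatives of order $0$ or $1$ in $\vth$ enter the Fa\`a di Bruno expansion, and the order-$1$ piece is precisely this bounded matrix.

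Fa\`a di Bruno then expresses $\del^{(\mu,\ga),\delta}(b\circ\wt\Xi)$ as a finite sum of terms $P_{(\nu,\delta),\la}(\wt\Xi)\,(\del^{(\la_1,\la_2)} b)\circ\wt\Xi$, where the multi-index sums over $|\la_1|\leq|\mu|+|\ga|$ and $|\la_2|\leq|\delta|$. The factor $(\del^{(\la_1,\la_2)} b)\circ\wt\Xi$ is bounded by $\langle g_1\rangle^{\sg(l-|\la_1|)}\langle g_2\rangle^{m-|\la_2|}$; using the lower bound on $g_1$ when $\sg(l-|\la_1|)\leq 0$ and the upper bound when $\geq 0$, this becomes $\langle\rx\rangle^{\sg(l-|\la_1|)}\langle\zeta\rangle^{\sg k_1|l|}\langle\zeta'\rangle^{\sg k_1|l|}\langle\vth\rangle^{m-|\la_2|}$, which is where the symmetric exponents $\sg k_1|l|$ appear. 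The derivative products of $g_1$ contribute factors of the form $\langle\rx\rangle^{-\sg(|\mu|+|\ga|-|\la_1|)}\langle\zeta\rangle^{\ka'|\nu|}\langle\zeta'\rangle^{\ka'|\nu|}$ coming from the $\RR$-class, while the $g_2$-derivatives contribute uniformly bounded $(\rx,\zeta,\zeta')$-factors together with the correct $\langle\vth\rangle^{-(|\delta|-|\la_2|)}$ from the linear $\vth$-dependence. Accumulating exponents exactly as in the proofs of Lemma~\ref{HEamp}(ii) and Lemma~\ref{cchange}, one obtains the $\wt\Pi^{l,\sg k_1|l|,\sg k_1|l|,m}_{\sg,\ka_\Xi,1,z}$ estimate for suitable $\ka_\Xi$ and $k_1$ depending only on the constants of $\psi$ and $P^{z,\bfr}$. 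The main obstacle is isolating a \emph{single} $k_1$ that makes the $\zeta$- and $\zeta'$-exponents symmetric and proportional to $|l|$, which requires the two-sided bound on $g_1$ above; once that is in place the Fa\`a di Bruno chase, though tedious, is routine.
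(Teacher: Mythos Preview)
Your proof is correct and follows essentially the same approach as the paper. Parts (i) and (ii) are indeed just book-keeping (the paper dismisses them as ``direct''), and for (iii) you use exactly the same machinery: Fa\`a di Bruno applied to $b\circ\wt\Xi$, the $\RR$-class control of $\rx^{\zeta,\zeta'}$ from Lemma~\ref{htilde}(ii), the $\O^{0,0,0}$-control of the matrix $\wt P^{z,\bfr}_{-1,\psi(\rx,\zeta),\zeta'}$ from Lemma~\ref{htilde}(iii), the linearity of $g_2$ in $\vth$, and the two-sided estimate $c\langle\rx\rangle\langle\zeta\rangle^{-k_1}\langle\zeta'\rangle^{-k_1}\le\langle\rx^{\zeta,\zeta'}\rangle\le C\langle\rx\rangle\langle\zeta\rangle^{k_1}\langle\zeta'\rangle^{k_1}$ to convert $\langle\rx^{\zeta,\zeta'}\rangle^{\sg(l-|\la_1|)}$ into the claimed form. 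One minor imprecision: your displayed bound $\langle\zeta\rangle^{\sg k_1|l|}\langle\zeta'\rangle^{\sg k_1|l|}$ for $(\del^{(\la_1,\la_2)}b)\circ\wt\Xi$ omits a $\sg k_1|\la_1|$ term (the paper writes $(\langle\zeta\rangle\langle\zeta'\rangle)^{\sg k_1|l|+\sg k_1|\a'|}$), but since $|\la_1|\le|\nu|$ this is correctly absorbed into the ``suitable $\ka_\Xi$'' you invoke at the end.
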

\begin{proof}
$(i)$ and $(ii)$ are direct.

\noindent $(iii)$ If $\mu=(\nu,\delta)\neq 0$ is a $4n$-multi-index, we have $\del^{\mu}(b\circ \wt \Xi)=\sum_{1\leq |\mu'|\leq |\mu| }P_{\mu,\mu'}(\wt \Xi) \, (\del^{\mu'} b )\circ \wt \Xi$ with $P_{\mu,\mu'}(\wt \Xi)$ a linear combination of terms of the form $\prod_{j=1}^s (\del^{l^j}\wt \Xi)^{k^j}$, with $1\leq s \leq |\mu|$, $l^j=(l^{j,1},l^{j,2})\in \N^{3n}\times \N^n$, $k^{j}=(k^{j,1},k^{j,2})\in \N^{n}\times \N^n$, such that $l^{j,2}=0$ for $1\leq j\leq j_1 \leq s$, and $\sum_{1}^s l^j |k^j|=\mu$, $\sum_1^s k^j=\mu'$. We have
$$
(\del^{l^j}\wt\Xi)^{k^j}= \prod_{i=1}^n (\delta_{l^{j,2},0}(\del^{l^{j,1}} \rx^{\zeta,\zeta'})_i )^{k^{j,1}_i}\ \prod_{i=1}^n \big(\sum_{k=1}^n \del^{l^{j,1}}P^{i,k}\ \del^{l^{j,2}} \vth_{k}\big)^{k^{j,2}_i}
$$
where $P^{i,k}$ are the matrix entries of $-\wt P_{-1,\psi(\rx,\zeta),\zeta'}^{z,\bfr}$. By Lemma \ref{htilde} $(ii)$ and $(iii)$, $\rx^{\zeta,\zeta'} \in \RR_{\sg,\ka_\psi,1}^{w_0,w_1}(\R^n)$ and the $P^{i,k}$ are in $\O^{0,0,0}_{\sg,\ka_\psi,\ka_\psi,1,0}(\R)$ for a $(\ka_\psi,w_0,w_1)\in \R^3_+$. We obtain thus the following estimate 
$$
|P_{\mu,\mu'}(\wt \Xi)(\rx,\zeta,\zeta',\vth) |\leq C_\mu \langle \rx\rangle ^{-\sg(|\nu|-|\a'|)} \langle \zeta \rangle^{w_0|\a'|+\ka_\psi(|\nu|-|\a'|)}\langle \zeta'\rangle^{w_1|\a'|+\ka_\psi(|\nu|-|\a'|)} \langle \vth\rangle^{|\b'|-|\delta|}
$$
with $\mu'=:(\a',\b')$. Since $b\in S^{l,m}_{\sg,z}$ we also have the estimate 
$$
\norm{(\del^{\mu'} b) \circ \wt \Xi (\rx,\zeta,\zeta')} \leq C'_{\mu} \langle \rx^{\zeta,\zeta'} \rangle^{\sg(l-|\a'|)} \langle \vth \rangle^{m-|\b'|}  
$$
so the result follows now from the estimate $\langle \rx^{\zeta,\zeta'} \rangle^{\sg(l-|\a'|)} =\O( \langle \rx\rangle^{\sg(l-|\a'|)} (\langle \zeta\rangle \langle \zeta'\rangle)^{\sg k_1 |l|+\sg k_1 |\a'|})$, with $\ka_{\Xi}:=\ka_{\psi}+\max\set{|w_0+\sg k_1-\ka_\psi|,|w_1+\sg k_1 -\ka_\psi|}$.
\end{proof}

\begin{lem}
\label{derivphase}
Let $s\in C^\infty(\R^p,\R^n)$. Then for any $p+n$-multi-index $\nu=(\a,\b)\neq 0$, we have 
$$
\del_{\rx,\vth}^\nu\, e^{i\langle \vth,s(\rx)\rangle } = P_{\nu}(\rx,\vth)\, e^{i\langle \vth,s(\rx)\rangle }
$$
where $P_{\nu}$ is of the form $\sum_{|\ga|\leq |\a|} \vth^\ga \, T_{\nu,\ga}(\rx)$, and $T_{\nu,\ga}$ is a linear combination of terms of the form $\prod_{j=1}^m (\del^{l^j} s)^{\mu^j}$ where $1\leq m\leq |\nu|$, $(l^j)$ are $p$-multi-indices  and $(\mu^j)$ are $n$-multi-indices. Moreover, they satisfy $|\mu^j|>0$, $\sum_{j=1}^m |\mu^j|=|\ga|+|\b|$, $\sum_{j=1}^{m} |\mu^j||l^j|=|\a|$ and if $|\b|=0$, then $|l^j|>0$ and $|\ga|>0$.
\end{lem}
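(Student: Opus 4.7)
The plan is to prove the formula by induction on $|\nu|$. For the base case $|\nu|=1$, two direct computations suffice: if $\nu=(e_i,0)$ then $\del_{\rx_i}e^{i\langle\vth,s(\rx)\rangle}=i\sum_{k=1}^n\vth_k\,(\del_{\rx_i}s^k)(\rx)\,e^{i\langle\vth,s(\rx)\rangle}$, which has the required form with $|\ga|=1$, $m=1$, $l^1=e_i$, $\mu^1=e_k$, satisfying $|\mu^1|=|\ga|+|\b|=1$, $|\mu^1||l^1|=|\a|=1$, and both $|l^1|,|\ga|>0$ as required when $|\b|=0$; and if $\nu=(0,e_j)$ then $\del_{\vth_j}e^{i\langle\vth,s(\rx)\rangle}=is^j(\rx)\,e^{i\langle\vth,s(\rx)\rangle}$, which is of the desired form with $\ga=0$, $m=1$, $l^1=0$, $\mu^1=e_j$, and no constraint on $|l^j|,|\ga|$ since $|\b|=1$.

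For the inductive step, I assume $\del^\nu e^{i\langle\vth,s(\rx)\rangle}=P_\nu(\rx,\vth)\,e^{i\langle\vth,s(\rx)\rangle}$ with $P_\nu$ of the claimed structure, and compute the two possible unit increments:
\begin{align*}
P_{\nu+(e_i,0)}&=\del_{\rx_i}P_\nu+i\,\langle\vth,\del_{\rx_i}s\rangle\,P_\nu\,,\\
P_{\nu+(0,e_j)}&=\del_{\vth_j}P_\nu+is^j(\rx)\,P_\nu\,.
\end{align*}
Writing $P_\nu=\sum_{|\ga|\leq|\a|}\vth^\ga\,T_{\nu,\ga}(\rx)$ with $T_{\nu,\ga}$ a linear combination of products $\prod_{j=1}^m(\del^{l^j}s)^{\mu^j}$, it is immediate that $\del_{\rx_i}P_\nu$ preserves the $\vth$-degree bound $|\a|$ while promoting one factor $\del^{l^j}s$ to $\del^{l^j+e_i}s$, which keeps $\sum|\mu^j|=|\ga|+|\b|$ unchanged and increases $\sum|\mu^j||l^j|$ from $|\a|$ to $|\a|+1$. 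The term $i\langle\vth,\del_{\rx_i}s\rangle P_\nu=i\sum_k\vth_k(\del_{\rx_i}s^k)P_\nu$ instead raises the $\vth$-degree by one (matching the new bound $|\a|+1$), appends an extra factor of type $\del^{e_i}s^k$ (thus $l^{m+1}=e_i$, $\mu^{m+1}=e_k$), increases $\sum|\mu^j|$ by one (matching $|\ga|+1+|\b|$) and $\sum|\mu^j||l^j|$ by one (matching $|\a|+1$); the requirements $|l^j|>0$, $|\ga|>0$ in the case $|\b|=0$ are preserved because the new factor has $|l^{m+1}|=1$ and contributes $\vth_k$ with $|\ga|\geq 1$.

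For the $\vth$-increment, $\del_{\vth_j}P_\nu$ lowers the $\vth$-degree by one but leaves $|\a|$ unchanged (so the $\vth$-degree bound $|\a|$ still holds), while the count $\sum|\mu^j|=|\ga|+|\b|$ is preserved since both $|\ga|$ and $|\b|$ decrease and increase by one respectively. The term $is^j(\rx)P_\nu$ appends a factor $s^j=\del^0 s^j$ (i.e.\ $l^{m+1}=0$, $\mu^{m+1}=e_j$), which, since $|\b|$ has just become $\geq 1$, removes the restriction $|l^j|>0$ and $|\ga|>0$; it raises $\sum|\mu^j|$ by one (matching the new $|\ga|+|\b|+1$) while leaving $\sum|\mu^j||l^j|=|\a|$ unchanged. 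In both cases the number of factors $m$ increases by at most $1$ and remains bounded by $|\nu|+1$, completing the induction. The argument is essentially a careful bookkeeping exercise, so the only mild subtlety is tracking how the "if $|\b|=0$" constraint on $|l^j|$ and $|\ga|$ opens up once a $\vth$-derivative is applied; this is handled automatically by the fact that the extra factor $s^j$ coming from $\del_{\vth_j}$ acts on the exponential rather than on the polynomial.
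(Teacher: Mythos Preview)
Your induction argument is correct and complete: the base cases are verified directly, and in the inductive step you carefully track how each of the four contributions (coming from differentiating $P_\nu$ or the exponential, in either the $\rx$ or $\vth$ direction) affects $|\ga|$, $\sum|\mu^j|$, $\sum|\mu^j||l^j|$, and the side condition when $|\b|=0$. One small imprecision: when you write that $\del_{\rx_i}$ ``promotes one factor $\del^{l^j}s$ to $\del^{l^j+e_i}s$'', applying $\del_{\rx_i}$ to $(\del^{l^j}s)^{\mu^j}$ may in fact split the factor as $(\del^{l^j}s)^{\mu^j-e_k}(\del^{l^j+e_i}s)^{e_k}$, possibly raising $m$ by one; but your bookkeeping of $\sum|\mu^j|$ and $\sum|\mu^j||l^j|$ remains valid in that case, and the bound $m\le|\nu|+1$ still holds.

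The paper proceeds differently: it applies the multivariate Fa\`a di Bruno formula (Theorem \ref{FaaCS}) to $e^{ig}$ with $g(\rx,\vth)=\langle\vth,s(\rx)\rangle$, obtaining $P_\nu$ as a combination of products $\prod_j(\del^{l^j}g)^{k^j}$, and then exploits the bilinearity of $g$ (so that $\del^{l^j}g$ is either $\langle\vth,\del^{l^{j,1}}s\rangle$ or a component of $\del^{l^{j,1}}s$, with $|l^{j,2}|\le 1$) to expand these into the claimed form $\sum_\ga\vth^\ga T_{\nu,\ga}$. Your inductive route is more elementary and self-contained, avoiding the Fa\`a di Bruno machinery; the paper's approach is more structural, reusing a general tool already in play elsewhere. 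Both arrive at the same combinatorial constraints.
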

\begin{proof}
We note $g(\rx,\vth):=\langle \vth,s(\rx)\rangle$. By Theorem \ref{FaaCS}, we get the following equality for any $\nu\neq 0$, $\del_{\rx,\vth}^\nu\, e^{i\langle \vth,s(\rx)\rangle } = P_{\nu}(\rx,\vth) e^{i\langle \vth,s(\rx)\rangle}$ where $P_{\nu}(\rx,\vth)=\sum_{1\leq k\leq |\nu|} P_{\nu,k}(g)$ and $P_{\nu,k}$ is a linear combination of terms of the form $\prod_{j=1}^{m} (\del^{l^j} g)^{k^j}$ such that $|l^{j}|>0$, $k^j>0$, $\sum_{1}^m k^{j}= k$ and $\sum_1^m k^j l^j = \nu$. If we suppose that the term $\prod_{j=1}^{m} (\del^{l^j} g)^{k^j}$ is non-zero, then $|l^j|\leq 1$ and if we define $j_1$ such that for any $1\leq j \leq j_1$, $l^{j,2}=0$, we obtain, noting $l^{j}=(l^{j,1},l^{j,2})$,
\begin{align*}
\prod_{j=1}^{m} (\del^{l^j} g)^{k^j}&= \prod_{j=1}^{j_1} \langle \vth, \del^{l^{j,1}} s\rangle^{k^j} \prod_{j=j_1+1}^m (\del^{l^{j,1}} s^{q_j})^{k^j} \\
&= \sum_{|\ga^j|=k^j,\,1\leq j\leq j_1} \ga^1!\cdots \ga^j!\ \vth^{\sum_1^{j_1} \ga^j} \prod_{j=1}^{j_1} (\del^{l^{j,1}}s)^{\ga^j} \prod_{j=j_1+1}^m (\del^{l^{j,1}} s^{q_j})^{k^j} \, .
\end{align*}
Thus, we have $P_{\nu,k}=\sum_{|\ga|=k-|\b|} \vth^\ga \, T_{\nu,\ga,k}(\rx)$ where $T_{\nu,\ga,k}$ is a linear combination of terms of the form $\prod_{j=1}^{j_1} (\del^{l^{j,1}} s)^{\mu^{j}} \prod_{j=j_1+1}^m (\del^{l^{j,1} } s^{q_j})^{k^j}$, where $1\leq q_j\leq n$, $1\leq j\leq m \leq |\nu|$, $1\leq j_1\leq m$, $l^{j,1}\in \N^p$, $k^j\in \N^*$, $\la^j\in \N^n$ are such that $\sum_{1}^m k^j=k$, $\sum_{1}^{j_1} |\la^j||l^{j,1}| + \sum_{j_1
+1}^{m} k^j |l^{j,1}+1|=|\nu|$ and $\sum_{j_1+1}^m k^j = |\b|$. The result follows.
\end{proof}

\begin{lem}
\label{phasecontrol}
Suppose that $(C_\sg)$ is satisfied. Then

\noindent (i) Representing by $\ru$ the letter $s$ or $\varphi$, for any $3n$-multi-index $\nu=(\mu,\ga)\in \N^{2n}\times \N^n$, we have the equality $\del^\nu_{\rx,\zeta,\vth} e^{2\pi  i \langle \vth,\ru_{\rx,\zeta}(\zeta')\rangle} = (\sum_{|\om|\leq |\mu|} \vth^\om T_{\nu,\om,\ru}(\rx,\zeta,\zeta'))\, e^{2\pi  i \langle \vth,\ru_{\rx,\zeta}(\zeta')\rangle}$ where each term $T_{\nu,\om,s}\in \O_{\sg,\ka_v,\eps_v,\eps_v,|\om+\ga|}^{-|\mu|,\ka_v|\mu|,w_s|\om+\ga|+\ka_v|\mu|}(\R)$ and $T_{\nu,\om,\varphi}\in \O_{\sg,\ka_v,\eps_v,\eps_v,2|\om+\ga|}^{-|\mu|-\eps_v|\om+\ga|,\eps_v|\om+\ga|+\ka_v|\mu|,w_\varphi|\om+\ga|+\ka_v|\mu|}(\R)$. In particular, it satisfies the following estimate valid for any $(\rx,\zeta,\zeta')\in \R^{3n}$, and any $n$-multi-index $\rho$,
\begin{align*}
&|\del^\rho_{\zeta'} T_{\nu,\om,s}(\rx,\zeta,\zeta') |\leq C_{\nu,\om,\rho}\langle \rx\rangle^{-\sg(|\mu|+\eps_v |\rho|_{|\om+\ga|})}\langle \zeta \rangle^{\ka_v|\mu|+\eps_v |\rho|} \langle \zeta'\rangle^{w_s|\om+\ga|+\ka_v(|\mu|+|\rho|)}\, , \\
& |\del^\rho_{\zeta'} T_{\nu,\om,\varphi }(\rx,\zeta,\zeta') |\leq C_{\nu,\om,\rho}\langle \rx\rangle^{-\sg(|\mu|+(\eps_v/2)|\rho|)}\langle \zeta \rangle^{\eps_v|\om+\ga|+\ka_v|\mu|+\eps_v|\rho|} \langle \zeta'\rangle^{w_\varphi|\om+\ga|+\ka_v(|\mu|+|\rho|)}\, .
\end{align*}

\noindent (ii) For any $n$-multi-index $\b$, we have $\del^{\b}_{\zeta'} e^{2\pi i \langle \vth, \varphi_{\rx,\zeta}(\zeta')\rangle} = P_{\b,\varphi}(\rx,\zeta,\zeta',\vth) e^{2\pi i \langle \vth, \varphi_{\rx,\zeta}(\zeta')\rangle}$ where $P_{\b,\varphi}(\rx,\zeta,\zeta',\vth)$ is a linear combination of terms of the form $\vth^\om \zeta'^\la t_{\om,\la}(\rx,\zeta,\zeta')$ where $\om$ and $\la$ are $n$-multi-indices satifying $|\om|\leq |\b|$, $(2|\om|-|\b|)_+\leq |\la|\leq |\om|$, and $t_{\om,\la}$ are functions in $\O_{\sg,\ka_v,\eps_v,\eps_v,|\b|}^{-\eps_v|\b|/2,2\eps_v,w'_s|\b|}(\R)$. In particular they are estimated by 
$$
t_{\om,\la}(\rx,\zeta,\zeta') = \O(\langle \rx\rangle^{-\sg\eps_v|\b|/2} \langle \zeta\rangle^{2\eps_v|\b|} \langle \zeta'\rangle^{w'_s|\b|})
$$
where  $w'_s:=w_s+2\ka_v$.
Moreover, $(\rx,\zeta,\vth)\mapsto P_{\b,\varphi}(\rx,\zeta,0,\vth)\, 1_{L(E_z)}\in  \Pi_{\sg,\ka_v,z}^{-\eps_v|\b|/2,\eps_v|\b|,|\b|/2}$.

\noindent (iii) If $\b\in \N^n$ and $f\in \wt \Pi_{\sg,\ka,\eps_1,z}^{l,w_0,w_1,m}$ then the function 
$$
f_{\b,\varphi}: (\rx,\zeta,\vth)\mapsto \del^{\b}_{\zeta'}\big(e^{2\pi i \langle \vth,\varphi_{\rx,\zeta}(\zeta')\rangle} \del^{0,0,0,\b}f(\rx,\zeta,\zeta',L_{\rx,\zeta}(\vth))\big)_{\zeta'=0}$$
belongs to $\Pi_{\sg,\ka_1,z}^{l-\eps'_1 |\b|,w_0+\ka_2|\b|,m-|\b|/2}$, where $\eps'_1:=\min\set {\eps_1/2,\eps_v/2}>0$, $\ka_1:=\max\set{\ka_v,\ka}$, $\ka_2:=\ka+|\eps_v-\ka|$, and the 
application $f\mapsto f_{\b,\varphi}$ is continuous.

\end{lem}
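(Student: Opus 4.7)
The proof of Lemma \ref{phasecontrol} proceeds by successive applications of the Faa di Bruno-type formula of Lemma \ref{derivphase}, exploiting the precise $\O_{\sg,\ka_v,\eps_v,\eps_v,c}$-regularity of the phase functions $s$ and $\varphi$ furnished by Lemma \ref{Csgr} $(i)$, together with the algebraic stability of the $\O$, $\wt\Pi$ and $\Pi$ spaces under composition, derivation and pointwise multiplication.

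For $(i)$, I would apply Lemma \ref{derivphase} with $p=3n$, treating $(\rx,\zeta,\zeta')$ as the spatial variables (differentiated by $\mu$) and $\vth$ as the covariable (differentiated by $\ga$). The formula yields $\del^{\nu}e^{2\pi i\langle \vth,\ru_{\rx,\zeta}(\zeta')\rangle}=(\sum_{|\om|\leq|\mu|}\vth^\om T_{\nu,\om,\ru})\,e^{2\pi i\langle \vth,\ru\rangle}$, where $T_{\nu,\om,\ru}$ is a linear combination of products $\prod_j(\del^{l^j}\ru)^{\mu^j}$ with $|\mu^j|>0$, $\sum|\mu^j|=|\om|+|\ga|$, $\sum|\mu^j||l^j|=|\mu|$, and $|l^j|>0$. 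Estimating each factor via the $\O$-bounds from Lemma \ref{Csgr} $(i)$ and multiplying, the graded-algebra structure of the $\O_{\sg,\ka,\eps_0,\eps_1,c}$ spaces places $T_{\nu,\om,\ru}$ in the announced space; the additional estimates on $\del^\rho_{\zeta'}T_{\nu,\om,\ru}$ follow from the inclusion $\del^\rho_{\zeta'}\O_{\sg,\ka,\eps,c}^{l,w_0,w_1}\subseteq \O_{\sg,\ka,\eps,c}^{l-\eps_1|\rho|_c,w_0+\eps_0|\rho|,w_1+\ka|\rho|}$.

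For $(ii)$, I would apply Lemma \ref{derivphase} with $p=n$, taking $\zeta'$ as the spatial variable and no $\vth$-derivative. Because $\varphi_{\rx,\zeta}(0)=0$ and $(d_{\zeta'}\varphi)_0=0$, each factor $\del^{l^j}\varphi$ with $|l^j|=1$ vanishes to order $\geq 1$ at $\zeta'=0$ and $\varphi$ itself to order $\geq 2$; combining $\sum|\mu^j||l^j|=|\b|$ with $\sum|\mu^j|=|\om|$ yields $\sum_{|l^j|\geq 2}|\mu^j|(|l^j|-1)=|\b|-|\om|$, from which the order of vanishing of $T_{\nu,\om,\varphi}$ at $\zeta'=0$ is at least $(2|\om|-|\b|)_+$. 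A Hadamard factorization then produces the decomposition $\vth^\om\zeta'^\la t_{\om,\la}$ with $t_{\om,\la}$ in the claimed $\O$ class; the upper bound $|\la|\leq|\om|$ comes from retaining only the leading $\zeta'$-monomial extracted from the $|\om|$ factors $\del^{l^j}\varphi$. Evaluating at $\zeta'=0$ forces $|\la|=0$, hence $|\om|\leq |\b|/2$, and the estimate on $t_{\om,0}$ from the $\O$-bound translates, after multiplication by $\vth^\om$ with $|\om|\leq|\b|/2$, into membership in $\Pi^{-\eps_v|\b|/2,\eps_v|\b|,|\b|/2}_{\sg,\ka_v,z}$ via Lemma \ref{amptilde} $(ii)$.

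For $(iii)$, I would expand $\del^\b_{\zeta'}$ by Leibniz applied to the product $e^{2\pi i\langle\vth,\varphi\rangle}\cdot g$, where $g(\rx,\zeta,\zeta',\vth):=\del^{0,0,0,\b}f(\rx,\zeta,\zeta',L_{\rx,\zeta}(\vth))$. Part $(ii)$ handles the differentiation of the exponential, contributing factors $\vth^\om\zeta'^\la t_{\om,\la}$; the remaining $\del^{\b-\b'}_{\zeta'}g$ is controlled using Lemma \ref{amptilde} $(i)$--$(ii)$ together with the $E^0_{\sg,\ka_v}$-regularity of $L_{\rx,\zeta}$ from Lemma \ref{Csgr} to keep $g\in\wt\Pi^{l,w_0,w_1,m-|\b|}_{\sg,\ka,\eps_1,z}$ (with suitably adjusted $w_0,w_1$). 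Upon evaluation at $\zeta'=0$, the monomial $\zeta'^\la$ selects only those terms where $|\la|$ extra $\zeta'$-derivatives have hit $g$, and the condition $|\la|\geq(2|\om|-|\b|)_+$ ensures that each gain of $\vth^\om$ is compensated by a decay of $\langle\rx\rangle^{-\sg\eps'_1|\b|}\langle\vth\rangle^{-|\b|/2}$ (paying a controlled loss $\langle\zeta\rangle^{\ka_2|\b|}$). The main obstacle I anticipate is the sheer amount of bookkeeping needed to trace these three parameters $(\ka_1,\eps'_1,\ka_2)$ through the Leibniz expansion; the continuity of $f\mapsto f_{\b,\varphi}$ follows at no extra cost from the explicit seminorm estimates obtained along the way.
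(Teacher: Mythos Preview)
Your approach matches the paper's proof in all three parts: Lemma \ref{derivphase} for the structure of the derivatives of the oscillating factor, the $\O_{\sg,\ka_v,\eps_v,\eps_v,c}$-bounds on $s$ and $\varphi$ from Lemma \ref{Csgr} $(i)$ for the products $\prod(\del^{l^j}\ru)^{\mu^j}$, Hadamard/Taylor factorisation in $\zeta'$ for $(ii)$, and Leibniz plus Lemma \ref{amptilde} and Lemma \ref{HEamp} $(iv)$ for $(iii)$.

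One correction in $(i)$: the spatial variable in the application of Lemma \ref{derivphase} is $(\rx,\zeta)\in\R^{2n}$, not $(\rx,\zeta,\zeta')\in\R^{3n}$; indeed $\nu=(\mu,\ga)$ with $\mu\in\N^{2n}$, and $\zeta'$ is kept as a parameter throughout (the $\zeta'$-derivatives in the displayed estimates are taken \emph{afterwards} on $T_{\nu,\om,\ru}$). Your constraint list $\sum|\mu^j|=|\om|+|\ga|$, $\sum|\mu^j||l^j|=|\mu|$ already reflects this, so the slip is only in the sentence mentioning $p=3n$. Also, in $(iii)$ the mechanism is simpler than you describe: after Leibniz and evaluation at $\zeta'=0$, the factor $\zeta'^\la$ in $P_{\b',\varphi}$ kills every term with $\la\neq 0$, forcing $|\om|\leq|\b'|/2$ directly from $(ii)$; there is no further ``selection'' of extra $\zeta'$-derivatives on $g$.
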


\begin{proof} $(i)$ By Lemma \ref{derivphase}, if $\nu\neq 0$, we have the following equality, valid for any $(\rx,\zeta,\zeta',\vth)\in {\R^{4n}}$, $\del^\nu_{\rx,\zeta,\vth} e^{2\pi  i \langle \vth,\ru_{\rx,\zeta}(\zeta')\rangle} = (\sum_{|\om|\leq |\mu|} \vth^\om T_{\nu,\om,\ru}(\rx,\zeta,\zeta'))\, e^{2\pi  i \langle \vth,\ru_{\rx,\zeta}(\zeta')\rangle}$ where $T_{\nu,\om,\ru}$
is a linear combination of terms of the form $\prod_{j=1}^m( \del_{\rx,\zeta}^{l^j} \ru )^{\mu^j}$ with $1\leq m \leq |\nu|$, $\mu^j\neq 0$, $\sum_{j=1}^m |\mu^j|= |\om+\ga|$ and $\sum_{j=1}^{m} |\mu^j||l^j|=|\mu|$. Since by Lemma \ref{Csgr} $(i)$, $s\in \O_{\sg,\ka_v,\eps_v,\eps_v,1}^{0,0,w_s}(\R^n)$, it is straightforward to check that $T_{\nu,\om,s}\in \O_{\sg,\ka_v,\eps_v,\eps_v,|\om+\ga|}^{-|\mu|,\ka_v|\mu|,w_s|\om+\ga|+\ka_v|\mu|}(\R)$. Moreover, since $\varphi\in \O_{\sg,\ka_v,\eps_v,\eps_v,2}^{-\eps_v,\eps_v,w_\varphi}(\R^n)$, we get $T_{\nu,\om,\varphi}\in \O_{\sg,\ka_v,\eps_v,\eps_v,2|\om+\ga|}^{-|\mu|-\eps_v|\om+\ga|,\eps_v|\om+\ga|+\ka_v|\mu|,w_\varphi|\om+\ga|+\ka_v|\mu|}(\R)$. The first estimate is direct and the second estimate follows from the inequality $|\om+\ga|+|\rho|_{2|\om+\ga|} \geq |\rho|/2$.

\noindent $(ii)$  By Lemma \ref{derivphase}, if $\b\neq 0$, we have for any $(\rx,\zeta,\zeta',\vth)\in {\R^{4n}}$, the following relation $\del^\b_{\zeta'} e^{2\pi  i \langle \vth,\varphi_{\rx,\zeta}(\zeta')\rangle} = (\sum_{1\leq|\om|\leq |\b|} \vth^\om T_{\b,\om,\varphi}(\rx,\zeta,\zeta')) e^{2\pi  i \langle \vth,\varphi_{\rx,\zeta}(\zeta')\rangle}$ where $T_{\b,\om,\varphi}$
is a linear combination of terms of the form $\prod_{j=1}^m( \del^{l^j} \varphi_{\rx,\zeta} )^{\mu^j}$ with $1\leq m \leq |\b|$, $\mu^j\neq 0$, $l^j\neq 0$, $\sum_{j=1}^m |\mu^j|= |\om|$ and $\sum_{j=1}^{m} |\mu^j||l^j|=|\b|$. Let us reorder the $l^j$ indices so that for any $1\leq j\leq j_1$, $|l^j|=1$ and for any  $j\geq j_1+1$, $|l^j|>1$, where $j_1 \in \set{0,\cdots m}$. Thus $\prod_{j=1}^m( \del^{l^j} \varphi_{\rx,\zeta} )^{\mu^j} = \prod_{j=1}^{j_1}( \del^{l^j} \varphi_{\rx,\zeta} )^{\mu^j} \prod_{j\geq j_1+1} ( \del^{l^j} \varphi_{\rx,\zeta} )^{\mu^j}$ and with a Taylor expansion at order 1 of $\del^{l^j}\varphi_{\rx,\zeta}$ in $\zeta'$ around 0 when $1\leq j\leq j_1$, we get $\del^{l^j}\varphi_{\rx,\zeta} = \sum_{1\leq i\leq n} \zeta'_i t_{i,j}^k$ where $t_{i,j}^k = \int_{0}^1 \del_{\zeta'}^{e_i+l^j} \varphi_{\rx,\zeta}(t\zeta')dt$. Thus, using the fact that $\varphi\in \O_{\sg,\ka_v,\eps_v,\eps_v,1}^{0,0,w_s}(\R^n)$, we see that $\prod_{j=1}^{j_1} (\del^{l^j}\varphi_{\rx,\zeta} )^{\mu^j}$ is a linear combination of terms of the form $\zeta'^\la V_\la$ where $|\la|=\sum_{j=1}^{j_1}|\mu^j|$ 
and 
$$
V_\la =\O( \langle \rx \rangle^{-\sg \eps_v \sum_1^{j_1}|l^j||\mu^j|} \langle \zeta\rangle^{\eps_v|\la| + \eps_v\sum_1^{j_1}|\mu^j||l^j|} \langle \zeta'\rangle^{(k_v +w_s)|\la|+ \ka_v\sum_{1}^{j_1}|l^j||\mu^j|}).
$$
As a consequence, we see that $\prod_{j=1}^{m} (\del^{l^j}\varphi_{\rx,\zeta} )^{\mu^j}$ is a linear combination of terms of the form $\zeta'^\la W_\la$ where $|\la|=\sum_{j=1}^{j_1}|\mu^j|$ and
$$
W_\la =\O( \langle \rx \rangle^{-\sg \eps_v (|\b|-v)} \langle \zeta\rangle^{2\eps_v|\b|} \langle \zeta'\rangle^{w'_s|\b|})
$$
where $v:=\sum_{j=j_1+1}^{m} |\mu^j|=|\om|-|\la|$. The first statement now follows from the inequality $2v\leq |\b|-|\la|$.

Since $\varphi_{\rx,\zeta}(0)=0$ and $(d\varphi_{\rx,\zeta})_0=0$, $P_{\b,\varphi}(\rx,\zeta,0,\vth)$ is a linear combination of terms of the form $\vth^\om \prod_{j=1}^m (\del^{0,0,l^j} \varphi(\rx,\zeta,0))^{\mu^j}$ with  
$1\leq |\om|\leq |\b|/2$, $1\leq m \leq |\b|$, $\mu^j\neq 0$, $|l^j|\geq 2$, $\sum_{j=1}^m |\mu^j|= |\om|$ and $\sum_{j=1}^{m} |\mu^j||l^j|=|\b|$.
We check with Lemma \ref{Csgr} $(i)$ that any function of the form $\prod_{j=1}^m (\del^{0,0,l^j} \varphi(\rx,\zeta,\zeta'))^{\mu^j}$ is in $\O_{\sg,\ka_v,\eps_v,|\b|/2}^{-\eps_v|\b|/2,\eps_v|\b|,(w_s/2+\ka_v)|\b|}(\R)$, and thus, $(\rx,\zeta,\vth)\mapsto \prod_{j=1}^m (\del^{0,0,l^j} \varphi(\rx,\zeta,0))^{\mu^j} \,1_{L(E_z)} \in \Pi_{\sg,\ka_v,z}^{-\eps_v |\b|/2,\eps_v|\b|,0}$. Since $(\rx,\zeta,\vth)\mapsto \vth^\om\, 1_{L(E_z)}\in \Pi_{\sg,\ka_v,z}^{0,0,|\b|/2}$ we obtain $(\rx,\zeta,\vth)\mapsto P_{\b,\varphi}(\rx,\zeta,0,\vth)\,1_{L(E_z)}\in \Pi_{\sg,\ka_v,z}^{-\eps|\b|/2,\eps_v|\b|,|\b|/2}$.

\noindent $(iii)$ We have
\begin{align*}
f_{\b,\varphi} (\rx,\zeta,\vth)&= \sum_{\b'\leq \b}\tbinom{\b}{\b'}\del^{\b'}_{\zeta'}(e^{2\pi i \langle \vth,\varphi_{\rx,\zeta}(\zeta')\rangle})_{\zeta'=0}\, \del^{0,0,\b-\b',\b}f(\rx,\zeta,0,L_{\rx,\zeta}(\vth))\\
&=\sum_{\b'\leq \b}\tbinom{\b}{\b'}P_{\b',\varphi}(\rx,\zeta,0,\vth)\, \del^{0,0,\b-\b',\b}f(\rx,\zeta,0,L_{\rx,\zeta}(\vth))\, .
\end{align*}
Since $(\rx,\zeta)\mapsto L_{\rx,\zeta}\in E^0_{\sg,\ka_v}(\M_n(\R))$ and $L_{\rx,\zeta}^{-1}=\O(1)$, we deduce from Lemma \ref{amptilde} $(i)$ and Lemma \ref{HEamp} $(iv)$ that $(\rx,\zeta,\vth)\mapsto \del^{0,0,\b-\b',\b}f(\rx,\zeta,0,L_{\rx,\zeta}(\vth))$ belongs to the amplitude space $\Pi_{\sg,\max\set{\ka,\ka_v},z}^{l-\eps_1|\b-\b'|,w_0+\ka|\b-\b'|,m-|\b|}$. The result now follows from $(ii)$.
\end{proof}

We now introduce two parametrized cut-off functions that will be used later. Let $b \in C^{\infty}_c(\R,[0,1])$ such that $b=1$ on $[-1/4,1/4]$ and $b=0$ on $\R\backslash]-1,1[$. We define for $\eps,\delta,\eta_1,\eta_2 >0$ with $\eps,\delta<1,$
\begin{align*}
&\chi_{\eps}(\vth,\vth'):=b(\tfrac{\norm{\vth'}^2}{\eps^2\langle \vth\rangle^{2}})\, ,\\
&\chi_{\delta,\eta}(\rx,\zeta,\zeta') := b(\tfrac{\norm{\zeta'}^2}{\delta^2\langle \rx\rangle^{2\sg \eta_1} \langle \zeta\rangle^{-2\eta_2}}).
\end{align*}

\begin{lem}
\label{lemchi} The cut-off functions $\chi_{\eps}$ and $\chi_{\delta,\eta}$ are repectively in the spaces $C^{\infty}(\R^{2n},[0,1])$ and $C^{\infty}(\R^{3n},[0,1])$ and satisfy: 

\noindent $(i)$ For any $(\rx,\zeta,\zeta')\in \R^{3n}$, if $\norm{\zeta'}\leq \half \delta \langle \rx\rangle^{\sg\eta_1}\langle \zeta\rangle^{-\eta_2}$, then $\chi_{\delta,\eta}(\rx,\zeta,\zeta')=1$, and if $\norm{\zeta'}\geq \delta\langle  \rx\rangle^{\sg\eta_1}\langle \zeta\rangle^{-\eta_2}$, then $\chi_{\delta,\eta}(\rx,\zeta,\zeta')=0$. In particular, for any $(\rx,\zeta)\in \R^{2n}$, $\chi_{\delta,\eta}(\rx,\zeta,0)=1$ and for any $3n$-multi-index $\nu\neq 0$, $(\del^\nu \chi_{\delta,\eta} )(\rx,\zeta,0)=0$.

\noindent $(ii)$ For any $(\vth,\vth')\in \R^{2n}$, if $\norm{\vth'}\leq \half \eps \langle \vth\rangle$, then $\chi_\eps(\vth,\vth')=1$, and if $\norm{\vth'}\geq \eps\langle \vth\rangle$, then $\chi_\eps(\vth,\vth')=0$. In particular, for any $\vth\in \R^n$, $\chi_\eps(\vth,0)=1$ and for any $2n$-multi-index $\nu\neq 0$, $(\del^\nu \chi_\eps)(\vth,0)=0$.

\noindent $(iii)$ For any $3n$-muti-index $\nu=(\a,\b,\ga)$, we have $\del^{\nu} \chi_{\delta,\eta}(\rx,\zeta,\zeta')=\O(\langle \rx\rangle^{-|\a|} \langle \zeta\rangle^{-\b}\langle \zeta'\rangle^{-|\ga|})$, and $\del^{\nu} \chi_{\delta,\eta}(\rx,\zeta,\zeta')=\O(\langle \rx\rangle^{-\sg|\nu|} \langle \zeta\rangle^{(-1+\eta_2/\eta_1)|\b|+(\eta_2/\eta_1)|\ga|}\langle \zeta'\rangle^{(\eta_1^{-1}-1)|\ga|+\eta_1^{-1}|\b|})$. In particular, the function $\chi_{\delta,\eta}$ is in $\O_{\sg,\ka'_\eta,\ka'_\eta,1,0}^{0,0,0}(\R)$ for a $\ka'_\eta>0$.

\noindent $(iv)$ For any $2n$-muti-index $\nu$, $\del^{\nu} \chi_\eps(\vth,\vth') =\O(\langle \vth\rangle^{-|\nu|} )$ and $\del^{\nu} \chi_\eps(\vth,\vth')=\O(\langle \vth'\rangle^{-|\nu|}).$

\end{lem}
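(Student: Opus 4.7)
The plan is to establish smoothness and items $(i)$, $(ii)$ by elementary inspection of the cut-off $b$, and then to derive $(iii)$, $(iv)$ by a Faà di Bruno expansion combined with the (two-sided) support constraints. Both $\chi_\eps$ and $\chi_{\delta,\eta}$ are smooth as compositions of $b\in C_c^\infty(\R,[0,1])$ with smooth functions whose denominator weights $\langle\cdot\rangle^{\pm 1}$ never vanish. For $(i)$, if $\norm{\zeta'}\leq\tfrac12\delta\langle\rx\rangle^{\sg\eta_1}\langle\zeta\rangle^{-\eta_2}$ the argument of $b$ is at most $1/4$ so $b\equiv 1$ there, while if $\norm{\zeta'}\geq\delta\langle\rx\rangle^{\sg\eta_1}\langle\zeta\rangle^{-\eta_2}$ the argument is $\geq 1$ and $b\equiv 0$; in particular $\chi_{\delta,\eta}$ is identically $1$ on the open neighborhood of $\set{\zeta'=0}$ defined by the strict form of the first inequality, giving $\chi_{\delta,\eta}(\rx,\zeta,0)=1$ and $(\del^\nu\chi_{\delta,\eta})(\rx,\zeta,0)=0$ for $\nu\neq 0$. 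Statement $(ii)$ is argued identically around $\set{\vth'=0}$.

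For $(iii)$, write $\chi_{\delta,\eta}=b\circ f$ with $f=g\,\norm{\zeta'}^2$ and $g(\rx,\zeta):=\delta^{-2}\langle\rx\rangle^{-2\sg\eta_1}\langle\zeta\rangle^{2\eta_2}$. Note that $\del_{\zeta'}^{\ga'}\norm{\zeta'}^2$ vanishes for $|\ga'|\geq 3$ and is otherwise a $\zeta'$-monomial of degree $(2-|\ga'|)_+$, while $|\del_\rx^{\a'}\del_\zeta^{\b'}g|\leq C_{\a'\b'}\langle\rx\rangle^{-|\a'|}\langle\zeta\rangle^{-|\b'|}g$. Applying Theorem \ref{FaaCS} expresses $\del^\nu(b\circ f)$ as a finite linear combination of terms $b^{(k)}(f)\prod_j(\del^{l^j}f)^{k^j}$ with $k=\sum_j|k^j|\geq 1$ (since $\nu\neq 0$) and $\sum_j|k^j|l^j=\nu$. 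All such terms are supported where $1/4\leq f\leq 1$, hence where the two-sided bound $\tfrac12\delta\langle\rx\rangle^{\sg\eta_1}\langle\zeta\rangle^{-\eta_2}\leq\norm{\zeta'}\leq\delta\langle\rx\rangle^{\sg\eta_1}\langle\zeta\rangle^{-\eta_2}$ holds. Using $g\norm{\zeta'}^2\leq 1$ and $\norm{\zeta'}\leq\langle\zeta'\rangle$, pairing each surplus $\sqrt{g}$-factor (produced by $\rx$- or $\zeta$-differentiations of $g$) with a $\norm{\zeta'}$-monomial factor (coming from $\del^{\ga'}\norm{\zeta'}^2$) yields the first estimate $\del^\nu\chi_{\delta,\eta}=\O(\langle\rx\rangle^{-|\a|}\langle\zeta\rangle^{-|\b|}\langle\zeta'\rangle^{-|\ga|})$. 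For the second estimate, re-do the same expansion but now use the support bound in the reverse direction to trade weights: on $\supp(b^{(k)}(f))$ both $\langle\zeta'\rangle^{-1}$ and $\langle\rx\rangle^{\sg\eta_1}\langle\zeta\rangle^{-\eta_2}$ are of comparable size, and together with $\langle\rx\rangle^{-1}\leq\langle\rx\rangle^{-\sg}$ this reshuffling collects the $\rx$-weight into $\langle\rx\rangle^{-\sg|\nu|}$ and the two remaining weights into $\langle\zeta\rangle^{(-1+\eta_2/\eta_1)|\b|+(\eta_2/\eta_1)|\ga|}\,\langle\zeta'\rangle^{(\eta_1^{-1}-1)|\ga|+\eta_1^{-1}|\b|}$.

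Membership $\chi_{\delta,\eta}\in\O^{0,0,0}_{\sg,\ka'_\eta,\ka'_\eta,1,0}(\R)$ then follows from the second estimate by choosing $\ka'_\eta:=\max\set{|{-}1+\eta_2/\eta_1|,\,\eta_2/\eta_1,\,\eta_1^{-1}-1,\,\eta_1^{-1}}$, under which the $\langle\zeta\rangle$-exponent is dominated by $\ka'_\eta(|\mu|+|\ga|)$ (permitted by $\eps_0=\ka'_\eta$), the $\langle\zeta'\rangle$-exponent by $\ka'_\eta|\nu|$ (permitted by $w_1=0$ and $\ka=\ka'_\eta$), and the $\langle\rx\rangle$-exponent $-\sg|\nu|$ matches $\sg(0-|\mu|-1\cdot|\ga|_0)$ with $c=0$, $\eps_1=1$, $l=0$. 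The proof of $(iv)$ is formally identical with $h(\vth,\vth'):=\eps^{-2}\langle\vth\rangle^{-2}\norm{\vth'}^2$ in place of $f$: on $\supp(b^{(k)}(h))$ one has $\tfrac12\eps\langle\vth\rangle\leq\norm{\vth'}\leq\eps\langle\vth\rangle$, and Faà di Bruno immediately gives $\del^\nu\chi_\eps=\O(\langle\vth\rangle^{-|\nu|})$; trading $\langle\vth\rangle$ against $\langle\vth'\rangle$ via the support bound then also gives $\del^\nu\chi_\eps=\O(\langle\vth'\rangle^{-|\nu|})$.

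The main technical obstacle is the Faà di Bruno bookkeeping for the second estimate of $(iii)$: one must verify that every configuration of index data $(l^j,k^j)$ with $\sum_j|k^j|l^j=\nu$, after the two-sided support trade, produces the same final exponents $-\sg|\nu|$, $(-1+\eta_2/\eta_1)|\b|+(\eta_2/\eta_1)|\ga|$ and $(\eta_1^{-1}-1)|\ga|+\eta_1^{-1}|\b|$, independently of how the derivative orders are distributed among the three variables. Carefully tracking the surplus $\sqrt{g}$-factors and pairing them with the $\norm{\zeta'}$-monomials arising from $\del^{\ga'}\norm{\zeta'}^2$ is what ensures that the total $\rx$-degree collapses uniformly to $-\sg|\nu|$ rather than mixing with the $\zeta$- and $\zeta'$-contributions.
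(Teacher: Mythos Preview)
Your approach is essentially the same as the paper's: both argue $(i)$ and $(ii)$ by direct inspection of the support properties of $b$, and both obtain $(iii)$ and $(iv)$ by applying the Fa\`a di Bruno formula to $b\circ f$ and then exploiting the two-sided support constraint $1/4\leq f\leq 1$ to trade weights between the three variables. The paper's proof is more terse---it records the single estimate $P_{\nu,\nu'}(g)=\O(\langle\rx\rangle^{-2\sg\eta_1\nu'-|\a|}\langle\zeta\rangle^{2\eta_2\nu'-|\b|}\langle\zeta'\rangle^{2\nu'-|\ga|})$ and then says ``the estimates of $(iii)$ follow'' from the support inclusion---whereas you spell out the pairing/reshuffling mechanism more explicitly, but the content is the same.
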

\begin{proof}  $(i)$ and $(ii)$ are straightforward. For any $\nu\neq 0$, 
$\del^\nu \chi_{\delta,\eta} = \sum_{1\leq \nu'\leq |\nu|} P_{\nu,\nu'}(g)\, (\del^{\nu'}b)\,\circ\, g$ where $g(\rx,\zeta,\zeta'):= \tfrac{\norm{\zeta}^2}{\delta^2\langle \rx\rangle^{2\sg\eta_1}\langle \zeta\rangle^{-2\eta_2}}$. We obtain from a direct computation the estimate $P_{\nu,\nu'}(g)=\O(\langle \rx\rangle^{-2\sg\eta_1\nu'-|\a|}\langle \zeta\rangle^{2\eta_2\nu'-|\b|} \langle \zeta'\rangle^{2\nu'-|\ga|})$.
Since for any $\nu\in \N$, we have $\del^{\nu'} b=\O(1)$ we obtain $\del^{\nu} \chi_{\delta,\eta}=\O(\langle \rx\rangle^{-|\a|} \langle \zeta\rangle^{-\b}\langle \zeta'\rangle^{-|\ga|}1_{D_\delta})$ where $D_\delta$ is the set of triples $(\rx,\zeta,\zeta')$ satifying the inequalities $\delta/2\leq \langle  \zeta'\rangle \langle \rx\rangle^{-\sg\eta_1}\langle \zeta\rangle^{\eta_2}\leq \sqrt{2}$. The estimates of $(iii)$ follow. The proof of $(iv)$ is similar.
\end{proof}

We will use in the following lemma the space $\O_{\ka}^{t_0,t_1,j}$ ($\ka\geq 0$, $j\in \N$, $(t_0,t_1)\in \R_+^2$) of functions $f\in C^{\infty}(\R^{4n},\C)$ such that for any $\a\in \N^n$,
there is $C_\a>0$ such that for any $(\rx,\zeta,\zeta',\vth)\in \R^{4n}$,  $|\del^\a_{\zeta'} f (\rx,\zeta,\zeta',\vth)| \leq C_\a \langle \zeta\rangle^{t_0+\ka|\a|} \langle \zeta' \rangle^{t_1+\ka|\a|} \langle \vth\rangle^{-2j}$. Clearly, $\O_{\ka}^{t_0,t_1,j}\O_{\ka}^{t'_0,t'_1,j'}\subseteq \O_{\ka}^{t_0+t'_0,t_1+t'_1,j+j'}$ and $\del_{\zeta'}^\a \O_{\ka}^{t_0,t_1,j}\subseteq \O_\ka^{t_0+\ka|\a|,t_1+\ka|\a|,j}$. 

\begin{lem}
\label{lem-hL} .
Defining $h(\rx,\zeta,\zeta',\vth):= \big(1+\norm{^t(ds_{\rx,\zeta})_{\zeta'}(\vth)}^2-(i/2\pi)\langle \vth,(\Delta s_{\rx,\zeta})(\zeta')\rangle\big)^{-1}$, we have the following relation, valid for any $(\rx,\zeta,\zeta',\vth)\in \R^{4n}$, $p\in \N$,
\begin{align*}
e^{2\pi i \langle \vth,s_{\rx,\zeta}(\zeta')\rangle } = (h(\rx,\zeta,\zeta',\vth)\, L_{\zeta'})^p e^{2\pi i \langle \vth,s_{\rx,\zeta}(\zeta')\rangle}\, 
\end{align*}
where $L_{\zeta'}:=1-(2\pi)^{-2}\Delta_{\zeta'}$. Moreover, if $(C_\sg)$ holds, there is $\ka_L \geq 0$ such that for any $p\in \N$, there is $N_p\in \N^*$, $(h^p_k)_{1\leq k\leq N_p}$ functions in $\O_{\ka_L}^{2p\ka_L,2p\ka_L,p}$, $(\b^{k,p})_{1\leq k\leq N_p}$ $n$-multi-indices satisfying $|\b^{k,p}|\leq 2p$, such that  $(L_{\zeta'}\,h)^p = \sum_{k=1}^{N_p} h_k^{p}\,\del_{\zeta'}^{\b^{k,p}}$. 
\end{lem}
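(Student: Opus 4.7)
First, the identity is a direct computation. Using $\del_{\zeta'_j} e^{2\pi i \langle \vth, s_{\rx,\zeta}(\zeta')\rangle} = 2\pi i \langle \vth, \del_{\zeta'_j} s_{\rx,\zeta}(\zeta')\rangle\, e^{2\pi i \langle \vth, s_{\rx,\zeta}(\zeta')\rangle}$, differentiating once more, summing over $j$, and plugging into $L_{\zeta'} = 1 - (2\pi)^{-2}\Delta_{\zeta'}$, one finds
$$L_{\zeta'}\, e^{2\pi i \langle \vth, s_{\rx,\zeta}(\zeta')\rangle} = \Big(1 + \sum_{j=1}^n \langle \vth, \del_{\zeta'_j} s_{\rx,\zeta}(\zeta')\rangle^2 - \tfrac{i}{2\pi}\langle \vth, \Delta s_{\rx,\zeta}(\zeta')\rangle\Big)\, e^{2\pi i \langle \vth, s_{\rx,\zeta}(\zeta')\rangle}.$$
Since $\sum_{j=1}^n \langle \vth, \del_{\zeta'_j} s_{\rx,\zeta}(\zeta')\rangle^2 = \norm{{}^t(ds_{\rx,\zeta})_{\zeta'}(\vth)}^2$, the bracketed factor is exactly $h^{-1}$, so $hL_{\zeta'}$ fixes the exponential, and an immediate induction on $p$ yields $(hL_{\zeta'})^p e^{2\pi i \langle \vth, s_{\rx,\zeta}(\zeta')\rangle} = e^{2\pi i \langle \vth, s_{\rx,\zeta}(\zeta')\rangle}$.

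For the second assertion, the essential estimate is $|h(\rx,\zeta,\zeta',\vth)| \leq C\langle\vth\rangle^{-2}$. By Lemma~\ref{Csgr}~(ii), $V = (dr_{\rx,\zeta})_{\zeta'}$ (which equals $(ds_{\rx,\zeta})_{\zeta'}$ since $s = r - \zeta$) and its inverse are uniformly bounded on $\R^{3n}$; hence there is $c>0$ with $\norm{{}^t V(\vth)}^2 \geq c\norm{\vth}^2$ for all $(\rx,\zeta,\zeta',\vth)$. The real part of $h^{-1}$ is then bounded below by $1 + c\norm{\vth}^2 \geq c'\langle\vth\rangle^2$, giving $|h| \leq C\langle\vth\rangle^{-2}$. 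Applying the Faa di Bruno formula (Theorem~\ref{FaaCS}) to the composition $h = F \circ h^{-1}$ with $F(w) = 1/w$, one writes $\del^\a_{\zeta'} h$ as a finite sum of terms $c_{\a,\la}\,h^{|\la|+1}\prod_j (\del^{l^j}_{\zeta'} h^{-1})^{k^j}$ with $\sum_j |k^j| = |\la|$, $\sum_j |k^j||l^j| = |\a|$, $1 \leq |\la| \leq |\a|$. By Lemma~\ref{Csgr}~(i), $s \in \O^{0,0,w_s}_{\sg,\ka_v,\eps_v,\eps_v,1}(\R^n)$, so each $\del^{l^j}_{\zeta'} h^{-1}$ is at most quadratic in $\vth$ with polynomial growth in $\zeta, \zeta'$ controlled by derivatives of $s$. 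The $\vth$-powers balance as $\langle\vth\rangle^{-2(|\la|+1)} \cdot \langle\vth\rangle^{2|\la|} = \langle\vth\rangle^{-2}$, yielding
$$|\del^\a_{\zeta'} h(\rx,\zeta,\zeta',\vth)| \leq C_\a\, \langle\vth\rangle^{-2}\, \langle\zeta\rangle^{\ka_0 |\a|}\, \langle\zeta'\rangle^{\ka_0 |\a|}$$
for some $\ka_0 \geq 0$.

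Finally, $L_{\zeta'}\circ M_h$ is a second-order $\zeta'$-differential operator: Leibniz gives $L_{\zeta'}(hg) = hg - (2\pi)^{-2}\bigl((\Delta h)g + 2\sum_j (\del_{\zeta'_j} h)(\del_{\zeta'_j} g) + h\Delta_{\zeta'} g\bigr)$. An induction on $p$, expanding Leibniz at each step, yields
$$(L_{\zeta'}h)^p = \sum_{|\b|\leq 2p} h_\b^p\, \del^\b_{\zeta'},$$
where each $h_\b^p$ is a finite sum of products of exactly $p$ factors of the form $\del^{\nu_i}_{\zeta'} h$ with $\sum_i |\nu_i| = 2p - |\b|$. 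Multiplying the bounds of the preceding paragraph through the $p$ factors produces $\vth$-decay $\langle\vth\rangle^{-2p}$ and polynomial $\zeta, \zeta'$-growth of degree at most $\ka_0(2p - |\b|) \leq 2p\ka_0$, while each further $\zeta'$-derivative of $h_\b^p$ contributes at most $\ka_0$ per variable. Setting $\ka_L := \ka_0$ gives $h_\b^p \in \O^{2p\ka_L, 2p\ka_L, p}_{\ka_L}$ as required. The main technical burden lies in the Faa di Bruno bookkeeping for $\del^\a_{\zeta'} h$ and in the inductive verification that a single $\ka_L$, independent of $p$, uniformly controls all coefficients $h_\b^p$ and their subsequent $\zeta'$-derivatives; once this is settled, the remaining computations are routine.
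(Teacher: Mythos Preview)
Your proof is correct and follows essentially the same route as the paper's: a direct computation for the identity $L_{\zeta'}e^{2\pi i\langle\vth,s\rangle}=h^{-1}e^{2\pi i\langle\vth,s\rangle}$, the bound $h\in\O_{\ka_L}^{0,0,1}$ obtained from Lemma~\ref{Csgr}~(ii) together with Faa di Bruno / Proposition~\ref{inverse} applied to $1/h$, and then the Leibniz induction on $p$. One small imprecision: since $L_{\zeta'}=1-(2\pi)^{-2}\Delta_{\zeta'}$ contains the identity, the relation $\sum_i|\nu_i|=2p-|\b|$ should read $\sum_i|\nu_i|\leq 2p-|\b|$ (or more accurately $\sum_i|\nu_i|+|\b|\in\{0,2,\dots,2p\}$), but you already use only the upper bound $\leq 2p\ka_0$ in the subsequent estimate, so the conclusion is unaffected.
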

\begin{proof} We obtain $L_{\zeta'} e^{2\pi i \langle \vth,s_{\rx,\zeta}(\zeta')\rangle}= (1/h) e^{2\pi i \langle \vth,s_{\rx,\zeta}(\zeta')\rangle}$ through a direct computation. Let us show the remaining statement by induction on $p$. Note that by Lemma \ref{Csgr} $(ii)$, we have $|1/h|\geq c  \langle\vth\rangle^{2}$ for a $c>0$ and we check that $1/h \in \wt\Pi_{\sg,\ka_v,\eps_v,z}^{0,\eps_v,w'_v,2}$ where $w'_v=\max\set{2w_v,w_v+\ka_v}$. With a reccurence or using Proposition \ref{inverse}, we check that $h\in \O_{\ka_L}^{0,0,1}$ where $\ka_L:=\max\set{2\eps_v,w'_v+\ka_v}$. The property is obviously true for $p=0$.  Suppose now that the property is true for $p\geq 0$, so that $(L_{\zeta'}\,h)^p =\sum_{k=1}^{N_p} h_k^{p}\,\del_{\zeta'}^{\b^{k,p}}$ with $N_p\in \N^*$, $(h^p_k)_{1\leq k\leq N_p}$ functions in $\O_{\ka_L}^{2p\ka_L,2p\ka_L,p}$ and $(\b^{k,p})_{1\leq k\leq N_p}$ $n$-multi-indices satisfying $|\b^{k,p}|\leq 2p$.  We also have 
\begin{align*}
&(L_{\zeta'} h)^{p+1} = (L_{\zeta'} h) \sum_{k=1}^{N_p} h_k^{p}\,\del_{\zeta'}^{\b^{k,p}} = \sum_{k=1}^{N_p} hh^{p}_k \del_{\zeta'}^{\b^{k,p}} -(2\pi)^{-2}\big(\Delta_{\zeta'}(h h_k^p) \del_{\zeta'}^{\b^{k,p}} \\
&\hspace{4cm}+2 \sum_{i=1}^n \del_{\zeta'_i} (hh^p_k)\del^{\b^{k,p}+e_i}_{\zeta'} +hh^{p}_k\Delta_{\zeta'}\del^{\b^{k,p}}_{\zeta'}\big)
\end{align*}
so the property holds for $p+1$.
\end{proof}

We note $\S_{\sg,c}(\R^{3n},L(E_z))$ the space of smooth functions $f$ such that for any $N\in \N^*$ and $\nu=(\mu,\ga)\in \N^{2n}\times \N^n$, $\del^\nu f (\rx,\zeta,\vth)=\O(\langle \rx\rangle^{-\sg N}\langle \zeta\rangle^{c_0+c_1 N +c_2 |\mu|}\langle  \vth \rangle^{-N})$. It follows from Lemma \ref{noyauReste} that if $f\in \S_{\sg,c}(\R^{3n},L(E_z))$, then $\Op_\Ga(f) \in \Op_\Ga(S^{-\infty}_{\sg,z})$. Here and thereafter $\Ga$ satisfies the hypothesis of Lemma \ref{noyauReste}.

\begin{lem}
\label{lemnegamp} Assume that $(C_\sg)$ holds.

\noindent (i) For any $l,w_0,w_1,m,\ka$, $S_{m,w_1}(\wt \Pi_{\sg,\ka,\eps_1,z}^{l,w_0,w_1,,m})\subseteq \S_{\sg,c}(\R^{3n},L(E_z))$ for a triple $c:=(c_0,c_1,c_2)$ and the linear map $S_{m,w_1}:f\mapsto S_{m,w_1}(f)$ is continuous, where
$$
S_{m,w_1}(f):(\rx,\zeta,\vth)\mapsto \int_{\R^{2n}} e^{2\pi i (\langle \vth',\zeta'\rangle+ \langle \vth,s_{\rx,\zeta}(\zeta')\rangle)}\, ^t M_{\vth'}^{p_{m,w_1},\zeta'}(f)(\rx,\zeta,\zeta',\vth') (1-\chi_{\delta,\eta})(\rx,\zeta,\zeta')\, d\vth'\,d\zeta'
$$
and $p_{m,w_1}:=\max \set{m+2n,[|w_1|]+1+2n}$.

\noindent (ii) For any $u\in \S(\R^{2n},L(E_z))$, the linear application $f\mapsto \langle\Op_\Ga S_{m,w_1}(f),u\rangle$ 
is continuous.
\end{lem}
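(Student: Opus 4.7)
The plan for (i) is a double integration-by-parts procedure combined with the cut-off $1-\chi_{\delta,\eta}$, and then (ii) follows from (i) composed with Proposition \ref{ampliOP}(ii). The exponent $p_{m,w_1}=\max\{m+2n,[|w_1|]+1+2n\}$ is chosen precisely so that $\,^tM_{\vth'}^{p_{m,w_1},\zeta'}(f)$ has decay $\langle\vth'\rangle^{m-p_{m,w_1}}\leq\langle\vth'\rangle^{-2n}$ in $\vth'$ and, away from $\zeta'=0$, behaves like $\langle\zeta'\rangle^{w_1-p_{m,w_1}}\leq\langle\zeta'\rangle^{-2n-1}$; combined with the cut-off $(1-\chi_{\delta,\eta})$ (which forces $\norm{\zeta'}\geq(\delta/2)\langle\rx\rangle^{\sg\eta_1}\langle\zeta\rangle^{-\eta_2}$ by Lemma \ref{lemchi}(i)), this guarantees absolute integrability of the integrand defining $S_{m,w_1}(f)$.

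To upgrade to the rapid-decay estimates characterizing $\S_{\sg,c}(\R^{3n},L(E_z))$, I combine two further integrations by parts. For any $q\in\N$, Lemma \ref{lem-hL} applied to the phase $e^{2\pi i\langle\vth,s_{\rx,\zeta}(\zeta')\rangle}$ yields the identity $e^{2\pi i\langle\vth,s_{\rx,\zeta}(\zeta')\rangle}=(hL_{\zeta'})^q e^{2\pi i\langle\vth,s_{\rx,\zeta}(\zeta')\rangle}$, and transferring $(L_{\zeta'}h)^q=\sum_{k=1}^{N_q} h_k^q\del_{\zeta'}^{\b^{k,q}}$ by integration by parts in $\zeta'$ produces a factor bounded by $\langle\vth\rangle^{-2q}\langle\zeta\rangle^{2q\ka_L}\langle\zeta'\rangle^{2q\ka_L}$, gaining $\langle\vth\rangle^{-2q}$ decay. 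Independently, iterating the identity $e^{2\pi i\langle\vth',\zeta'\rangle}=M_{\vth'}^{P,\zeta'}e^{2\pi i\langle\vth',\zeta'\rangle}$ and transferring via integration by parts in $\vth'$ produces, for any $P\in\N$, additional factors of $\zeta'^\b/\norm{\zeta'}^{2P}$ (with $|\b|=P$) paired with higher $\vth'$-derivatives of $f$, which remain controlled since each such derivative increases the $\vth'$-decay of $f$. On the support of $(1-\chi_{\delta,\eta})$, the bound $1/\norm{\zeta'}^{P}\leq C\langle\rx\rangle^{-\sg\eta_1 P}\langle\zeta\rangle^{\eta_2 P}$ converts each of these integrations into a gain of $\langle\rx\rangle^{-\sg\eta_1}$ decay at the cost of $\langle\zeta\rangle^{\eta_2}$ growth.

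The derivative estimates on $\del^\nu S_{m,w_1}(f)$ are then obtained by differentiating under the integral sign (legitimate since all integrands become absolutely integrable after the procedures above) and distributing the derivatives via Leibniz onto the four ingredients: derivatives of the phase $e^{2\pi i\langle\vth,s_{\rx,\zeta}(\zeta')\rangle}$ are controlled by Lemma \ref{phasecontrol}(i); derivatives of $h_k^q$ by Lemma \ref{lem-hL}; derivatives of $(1-\chi_{\delta,\eta})$ by Lemma \ref{lemchi}(iii); derivatives of $f$ by the amplitude seminorms of $\wt\Pi_{\sg,\ka,\eps_1,z}^{l,w_0,w_1,m}$. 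For any prescribed $N$, choosing $q$ and $P$ sufficiently large (depending on $N$ and on $\nu=(\mu,\ga)$) yields the bound
\[
|\del^\nu S_{m,w_1}(f)(\rx,\zeta,\vth)|\leq C_{\nu,N}\,\langle\rx\rangle^{-\sg N}\,\langle\zeta\rangle^{c_0+c_1 N+c_2|\mu|}\,\langle\vth\rangle^{-N},
\]
for a triple $c=(c_0,c_1,c_2)$ depending only on $l,w_0,w_1,m,\ka$, $\eta_1,\eta_2$, $\ka_v$ and $\ka_L$. Since $C_{\nu,N}$ is controlled by a finite number of seminorms of $f$ in the amplitude space, continuity of the linear map $S_{m,w_1}:\wt\Pi_{\sg,\ka,\eps_1,z}^{l,w_0,w_1,m}\to\S_{\sg,c}(\R^{3n},L(E_z))$ follows, completing (i). Statement (ii) is then a direct consequence: $\S_{\sg,c}(\R^{3n},L(E_z))$ embeds continuously into an $\O_{g,z}$ space with $g_3$ of arbitrary negative order, so Proposition \ref{ampliOP}(ii) ensures that the linear form $L_{u,\Ga}:a\mapsto\langle\Op_\Ga(a),u\rangle$ is continuous on $\S_{\sg,c}$, and the composition $f\mapsto L_{u,\Ga}(S_{m,w_1}(f))=\langle\Op_\Ga S_{m,w_1}(f),u\rangle$ is continuous.

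The main obstacle will be the combinatorial bookkeeping of exponents across the two integrations by parts and the Leibniz expansion of $\del^\nu$: one needs to verify that the $\langle\zeta\rangle$-growth stays of the form $c_0+c_1 N+c_2|\mu|$ with $c_0,c_1,c_2$ independent of $N$ and $\nu$, despite both integration-by-parts procedures contributing polynomial growth in $\langle\zeta\rangle$, and despite each derivative falling on $f$ or on $h_k^q$ raising the $\langle\zeta\rangle$-exponent by $\ka$ or $\ka_L$. This is compatible because the required $q$ and $P$ grow linearly with $N$, and the $\langle\zeta\rangle^{2q\ka_L}$ and $\langle\zeta\rangle^{\eta_2 P}$ contributions are absorbed into the linear-in-$N$ term $c_1 N$, while the $|\mu|$-dependence comes solely from the Leibniz distribution of $\del^\mu_{\rx,\zeta}$, giving the linear-in-$|\mu|$ term $c_2|\mu|$.
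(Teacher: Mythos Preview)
Your proposal is correct and follows essentially the same approach as the paper: both use the $\vth'$-integration by parts via $^tM_{\vth'}^{P,\zeta'}$ (which, through the support condition $\norm{\zeta'}\geq(\delta/2)\langle\rx\rangle^{\sg\eta_1}\langle\zeta\rangle^{-\eta_2}$ from Lemma~\ref{lemchi}, converts $\zeta'$-decay into $\langle\rx\rangle^{-\sg N}$-decay) together with the $\zeta'$-integration by parts via $(L_{\zeta'}h)^p$ of Lemma~\ref{lem-hL} (producing the $\langle\vth\rangle^{-N}$ decay), with phase derivatives handled by Lemma~\ref{phasecontrol}(i). One small caveat worth making explicit: the two procedures are not quite ``independent'' as you write, since each application of $\del_{\zeta'}^{\b^{k,q}}$ from $(L_{\zeta'}h)^q$ hits $e^{2\pi i\langle\vth',\zeta'\rangle}$ and brings down up to $\langle\vth'\rangle^{2q}$, so the number $P$ of $\vth'$-integrations must also absorb this growth (the paper fixes the $\zeta'$-IBP order first, then chooses the $\vth'$-IBP order accordingly); your final paragraph on bookkeeping is consistent with this, but the coupling deserves to be stated.
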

\begin{proof} We fix $N\in \N^*$. First note that $S_{m,w_1}(f)$ is well-defined since for any $(\rx,\zeta)\in \R^{2n}$, there is $C_{\rx,\zeta}>0$ such that $\norm{\, ^t M_{\vth'}^{p_{m,w_1},\zeta'}(f)(\rx,\zeta,\zeta',\vth')(1-\chi_{\delta,\eta})(\rx,\zeta,\zeta')}\leq C_{\rx,\zeta} \langle \vth'\rangle^{-2n}\langle \zeta'\rangle^{-2n}$. Since for any $n$-multi-index $\delta$, $\del^\delta_{\vth'}\, ^t M_{\vth'}^{p_{m,w_1},\zeta'}(f)$ decrease to zero with $\vth'$, we can successively integrate by parts with (\ref{Mformula}), which is valid since $1-\chi_{\delta,\eta}$ assures that $\norm{\zeta'}\geq \half\delta$ on the domain of integration. We obtain thus for any $q\in \N^*$,  
$$
S_{m,w_1}(f):(\rx,\zeta,\vth)\mapsto \int_{\R^{2n}} e^{2\pi i (\langle \vth',\zeta'\rangle+ \langle \vth,s_{\rx,\zeta}(\zeta')\rangle)}\, ^t M_{\vth'}^{p_{m,w_1}+q,\zeta'}(f) (1-\chi_{\delta,\eta})\, d\vth'\,d\zeta'\, .
$$
We note $f_q$ the integrand of the previous integral. If $\nu=(\a,\b,\ga)=(\mu,\ga)$ is a $3n$-multi-index, we see with Lemma \ref{derivphase} that
\begin{align*}
&\del^\nu_{\rx,\zeta,\vth} f_q = e^{2\pi i \langle \vth',\zeta'\rangle}\sum_{\mu'\leq \mu} \tbinom{\mu}{\mu'} e^{2\pi i \langle \vth,s_{\rx,\zeta}(\zeta')\rangle} \sum_{|\om|\leq |\mu'|} \vth^\om T_{\nu',\om,s}(\rx,\zeta,\zeta')\\ 
&\hspace{2cm} \sum_{|\wt\delta|=p_{m,w}+q} \la_\delta (-1)^{|\wt\delta|} \tfrac{\zeta'^{\wt\delta}}{\norm{\zeta'}^{2(p_{m,w_1}+q)}} \del^{\mu-\mu'}_{\rx,\zeta} \del^{\wt\delta}_{\vth'} (f (1-\chi_{\delta,\eta}))\, .
\end{align*}
By Lemma \ref{lemchi} $(iii)$, $(\rx,\zeta,\zeta',\vth')\mapsto \chi_{\delta,\eta}(\rx,\zeta,\zeta') \, 1_{L(E_z)}$ is in $\wt \Pi_{\sg,\ka'_\eta,1,z}^{0,0,0,0}$, so the multiplication operator $f\mapsto f(1-\chi_{\delta,\eta})$ is continuous from $\wt \Pi_{\sg,\ka,\eps_1,z}^{l,w_0,w_1,m}$ into $\wt \Pi_{\sg,\ka_\eta,\eps_1,z}^{l,w_0,w_1,m}$, where $\ka_\eta=\max\set{\ka,\ka'_\eta}$.
Since $\norm{\zeta'}\geq \delta/2$ in the support of $f(1-\chi_{\delta,\eta})$, we get from Lemma \ref{phasecontrol} $(i)$ the following estimates, where $\ka''_\eta:=\ka_v+w_s+\ka_\eta$,
\begin{align*}
&\norm{\del^\nu_{\rx,\zeta,\vth} f_q(\rx,\zeta,\vth,\zeta',\vth')} \leq C_{\nu,q} \langle \vth\rangle^{|\mu|} \langle \vth'\rangle^{m-p_{m,w_1}-q} \sum_{\mu'\leq \mu}\langle \zeta\rangle^{\ka_v|\mu'|+w_0+\ka_\eta|\mu|}\\&\hspace{5cm}\times \langle \zeta'\rangle^{w_1+(\ka_v+w_s)|\mu'|+\ka_\eta|\mu|-(p_{m,w_1}+q) +w_s |\ga|} \langle\rx \rangle^{\sg|l|}\\
&\hspace{4cm} \leq C'_{\nu,q}  \langle \rx\rangle^{\sg|l|} \langle \zeta\rangle^{w_0+\ka''_\eta|\mu|} \langle \vth\rangle^{|\mu|} \langle \vth'\rangle^{m-p_{m,w_1}-q}  \langle \zeta'\rangle^{w_1+\ka''_\eta|\nu|-p_{m,w_1}-q}\, .
\end{align*}
If $k\in \N^*$, and if we set $q:=q_k$ such that $w_1+\ka''_\eta k-p_{m,w_1}-q_k\leq -2n$, we see by applying the theorem of derivation under the integral sign that $S_{m,w}(f)$ is smooth and for any $3n$-multi-index $\nu=(\a,\b,\ga)$ and $q\in \N^*$, after integrations by parts in $\vth'$, with $\nu':=(\mu',\ga)$,
\begin{align*}
&\del^\nu S_{m,w_1}(f)(\rx,\zeta,\vth)=\sum_{\mu'\leq \mu}\sum_{|\om|\leq |\mu'|} \tbinom{\mu}{\mu'} \vth^\om \int_{\R^{2n}} e^{2\pi i (\langle \vth',\zeta'\rangle + \langle \vth,s_{\rx,\zeta}(\zeta')\rangle)} T_{\nu',\om,s}(\rx,\zeta,\zeta')\\
&\hspace{5cm} ^tM_{\vth'}^{p_{m,w_1}+q_{|\nu|}+q,\zeta'} \del^{\mu-\mu'}_{\rx,\zeta} (f (1-\chi_{\delta,\eta}))\, d\vth'\,d\zeta'\, .
\end{align*}
We note $g_q(\rx,\zeta,\zeta',\vth'):=e^{2\pi i \langle \vth',\zeta'\rangle}T_{\nu',\om,s}(\rx,\zeta,\zeta') ^tM_{\vth'}^{p_{m,w_1}+q_{|\nu|}+q,\zeta'} \del^{\mu-\mu'}_{\rx,\zeta} (f (1-\chi_{\delta,\eta}))$. Using now Lemma \ref{lem-hL}, we get the estimates for any $p\in \N$,
\begin{align*}
&\norm{(L_{\zeta'}h)^p g_q(\rx,\zeta,\zeta',\vth')}\leq C_p  \langle \zeta'\rangle^{2p\ka_L}  \langle \zeta\rangle^{2p\ka_L}\langle\vth\rangle^{-2p} \sum_{k=1}^{N_p} \norm{\del_{\zeta'}^{\b^{k,p}} g_q(\rx,\zeta,\zeta',\vth')} \, .
\end{align*}
Thus, with
Lemma \ref{phasecontrol} $(i)$, we obtain with $k_1:=w_s+\ka_v+\ka_\eta+\ka_L$,
\begin{align*}
&\norm{(L_{\zeta'}h)^p g_q(\rx,\zeta,\zeta',\vth')}\leq C'_p \langle \rx\rangle^{\sg |l|} \langle \zeta'\rangle^{w_1+(2p+|\nu|)k_1-p_{m,w_1}-q_{|\nu|}-q} \langle \vth\rangle^{-2p}\\ &\langle \vth'\rangle^{2p +m-p_{m,w_1}-q_{|\nu|}-q} 
\langle \zeta\rangle^{(2p+|\mu|)k_1 +w_0}\sum_{|\wt \b|\leq 2p}\sum_{\mu'\leq \mu} \sum_{|\wt\delta|=p_{m,w_1}+q_{|\nu|}+q} q_{\mu',\wt\b,\wt\delta}(f(1-\chi_{\delta,\eta}))\, 1_{D}(\rx,\zeta,\zeta')
\end{align*}
where $D:=\set{(\rx,\zeta,\zeta')\in \R^{2n}\ | \ \norm{\zeta'}\geq \half \delta \langle \rx\rangle^{\sg\eta_1}\langle\zeta\rangle^{-\eta_2}}$.
If we now fix $p$ such that $-N-2\leq -2p+|\mu|\leq -N$, we see that by taking $q$ such that $A_q\leq -N/\eta_1 -|l|/\eta_1$ where $A_q:=w_1+(2p+|\nu|)k_1-p_{m,w_1}-q_{|\nu|}-q+2n$, and $2p+m-p_{m,w_1}-q_{|\nu|}-q\leq -2n$, we can successively integrate by parts in $\zeta'$ ($p$ times) using the formula of Lemma \ref{lem-hL}. We obtain then the estimate for given constants $c_0, c_1, c_2 >0$, 
\begin{align*}
&\norm{\del^\nu S_{m,w_1}(f)(\rx,\zeta,\vth)}\leq C_{\nu,N} \langle \rx\rangle^{-\sg N}\langle \zeta\rangle^{c_0+ c_1 N +c_2 |\mu|} \langle \vth\rangle^{-N}\\&\hspace{4cm}\sum_{|\wt \b|\leq 2p}\,
\sum_{\mu'\leq \mu}\, \sum_{|\wt \delta|=p_{m,w_1}+q_{|\nu|}+q} q_{\mu',\wt\b,\wt \delta}(f(1-\chi_{\delta,\eta}))
\end{align*}
which yields the result.

\noindent $(ii)$ This statement follows from $(i)$ and Lemma \ref{ampliOP} $(ii)$.
\end{proof}

\begin{lem}
\label{Pi-amp}
Suppose $(C_\sg)$. 

\noindent (i) Defining for any $f\in \wt \Pi_{\sg,\ka,\eps_1,z}^{l,w_0,w_1,m}$,
$$
\Pi(f):(\rx,\zeta,\vth)\mapsto \int_{\R^{2n}} e^{2\pi i (\langle \vth',\zeta'\rangle+\langle \vth,\varphi_{\rx,\zeta}(\zeta')\rangle)} f(\rx,\zeta,\zeta',\vth'+L_{\rx,\zeta}(\vth))\,\chi_{\delta,\eta}(\rx,\zeta,\zeta')\,d\zeta'\,d\vth'\, ,
$$
there is $\delta,\eta,$ such that for any $N\geq |m|$, we have $\Pi(f)=\Pi_N(f) +\Pi_{R,N}(f)$ where 
$
 \Pi_{N}(f) = \sum_{0\leq |\b|\leq N} \tfrac{(i/2\pi)^{|\b|}}{\b!} f_{\b,\varphi}
$
and there is such that $\Pi_{R,N}(f)$ satisfies the estimates for any $3n$-multi-index $\nu=(\mu,\ga)\in \N^{2n}\times \N^n$,
$$
\del^\nu \Pi_{R,N}(f) = \O( \langle \rx\rangle^{\sg(l-\eps'_1(N+1))} \langle \zeta\rangle^{k_0+k_1(N+1+|\mu|) +\eps_v|\ga|} \langle \vth\rangle^{m+|\mu|-(N+1)/2 +n})
$$
where $\eps'_1,k_0,k_1>0$.

\noindent (ii) We have for any $3n$-multi-index $\nu=(\mu,\ga)\in \N^{2n}\times \N^n$, 
$$
\del^\nu \Pi(f) = \O( \langle \rx\rangle^{\sg l} \langle \zeta\rangle^{k'_0+k'_1|\mu| +\eps_v|\ga|} \langle \vth\rangle^{m})
$$
where $k'_0,k'_1>0$. In particular, for any $u\in \S(\R^{2n},L(E_z))$, the linear application $f\mapsto \langle\Op_\Ga\Pi(f),u\rangle$ 
is continuous.
\end{lem}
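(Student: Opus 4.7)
My plan is to treat $\Pi(f)$ as an oscillatory integral whose phase $\langle\vth',\zeta'\rangle+\langle\vth,\varphi_{\rx,\zeta}(\zeta')\rangle$ has critical locus $\{(\zeta',\vth')=(0,0)\}$ (note $\varphi_{\rx,\zeta}(0)=0$ and $(d\varphi_{\rx,\zeta})_0=0$), and extract the expansion via a combined Taylor/integration-by-parts argument. The cut-off $\chi_{\delta,\eta}$, with $\delta$ and $\eta=(\eta_1,\eta_2)$ chosen so that on its support $\varphi_{\rx,\zeta}(\zeta')$ remains well-behaved (in the sense of Lemma \ref{Csgr}), localizes $\zeta'$ in a neighborhood of $0$. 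The statement of Lemma \ref{phasecontrol}(iii) then guarantees that every term $f_{\b,\varphi}$ is a bona fide symbol in $\Pi_{\sg,\ka_1,z}^{l-\eps'_1|\b|,w_0+\ka_2|\b|,m-|\b|/2}$, so it remains to produce it and estimate the remainder.

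The concrete steps for (i). First, Taylor-expand the $\vth'$-argument of $f$ around $L_{\rx,\zeta}(\vth)$ to order $N+1$:
\begin{align*}
f(\rx,\zeta,\zeta',\vth'+L_{\rx,\zeta}(\vth)) = \sum_{|\a|\leq N}\tfrac{\vth'^{\a}}{\a!}(\del^{0,0,0,\a}f)(\rx,\zeta,\zeta',L_{\rx,\zeta}(\vth)) + R_N(\rx,\zeta,\zeta',\vth,\vth'),
\end{align*}
with $R_N$ in integral Lagrange form. Plug in; for each main term use $\vth'^\a e^{2\pi i\langle\vth',\zeta'\rangle}=(2\pi i)^{-|\a|}\del_{\zeta'}^\a e^{2\pi i\langle\vth',\zeta'\rangle}$ and integrate by parts in $\zeta'$ (boundary terms vanish since $\chi_{\delta,\eta}$ is $\zeta'$-compactly supported). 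Then integrate in $\vth'$ using the Fourier-inversion identity $\int\!\int e^{2\pi i\langle\vth',\zeta'\rangle}G(\zeta')\,d\zeta'\,d\vth'=G(0)$, valid for $G\in C^\infty_c$. Because Lemma \ref{lemchi}(i) gives $\chi_{\delta,\eta}(\rx,\zeta,0)=1$ with all $\zeta'$-derivatives vanishing there, the $\chi_{\delta,\eta}$ factor drops out after setting $\zeta'=0$, and the main term becomes exactly $\frac{(i/2\pi)^{|\a|}}{\a!}f_{\a,\varphi}$, yielding $\Pi_N(f)$.

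The remainder $\Pi_{R,N}(f)$ requires a two-directional IBP. Writing it as
\begin{align*}
\Pi_{R,N}(f)= \sum_{|\a|=N+1}\tfrac{N+1}{\a!}\int\!\!\int e^{2\pi i(\langle\vth',\zeta'\rangle+\langle\vth,\varphi_{\rx,\zeta}(\zeta')\rangle)}\vth'^\a\,\chi_{\delta,\eta}\!\int_0^1\!\!(1-t)^N(\del^{0,0,0,\a}f)(\rx,\zeta,\zeta',t\vth'+L_{\rx,\zeta}(\vth))\,dt\,d\zeta'd\vth',
\end{align*}
I convert $\vth'^\a$ to $(i/2\pi)^{|\a|}\del_{\zeta'}^\a$ as above (one of the derivatives produced hits the $e^{2\pi i\langle\vth,\varphi_{\rx,\zeta}(\zeta')\rangle}$ factor, which by Lemma \ref{phasecontrol}(ii) generates the $\langle\vth\rangle^{|\a|/2}$-type balancing needed for the $m+|\mu|-(N+1)/2+n$ gain). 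Further IBP in $\vth'$ via $L_{\vth'}=1-(2\pi)^{-2}\Delta_{\vth'}$ uses $\langle\zeta'\rangle^{-2p}$-weighting (inside the support of $\chi_{\delta,\eta}$, so $\langle\zeta'\rangle$ is controlled by $\langle\rx\rangle^{\sg\eta_1}\langle\zeta\rangle^{-\eta_2}$), while an additional IBP in $\zeta'$ using the operator $h\,L_{\zeta'}$ of Lemma \ref{lem-hL} gains $\langle\vth\rangle^{-2}$ per step. Choosing the number of IBP iterations in terms of $N$ and using the amplitude estimates $\wt\Pi_{\sg,\ka,\eps_1,z}^{l,w_0,w_1,m}$ on $\del^{0,0,0,\a}f$ together with the $\O$-estimates on $\varphi,s,L,\chi_{\delta,\eta}$ from Lemmas \ref{Csgr}, \ref{phasecontrol}, \ref{lemchi}, yields the stated bound with $\eps'_1>0$, $k_0,k_1>0$, for $\delta,\eta_1,\eta_2$ small enough.

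For (ii), I apply (i) with any fixed $N$ (say $N=0$) and combine the estimates on $f_{0,\varphi}$ from Lemma \ref{phasecontrol}(iii) with the remainder bound just obtained; this gives the claimed global $\langle\rx\rangle^{\sg l}\langle\zeta\rangle^{k'_0+k'_1|\mu|+\eps_v|\ga|}\langle\vth\rangle^m$ estimate. The continuity of $f\mapsto\langle\Op_\Ga\Pi(f),u\rangle$ then follows from Proposition \ref{ampliOP}(ii), since the estimates produced in the analysis above are all semi-norm estimates on $f\in\wt\Pi_{\sg,\ka,\eps_1,z}^{l,w_0,w_1,m}$. The main obstacle is the combinatorics of the double IBP in $\zeta'$ and $\vth'$ in the remainder: tracking how the operators $\del_{\zeta'}^\a$ applied to the product $e^{2\pi i\langle\vth,\varphi_{\rx,\zeta}(\zeta')\rangle}\cdot f\cdot\chi_{\delta,\eta}$ distribute their $\vth$-powers (via Lemma \ref{phasecontrol}) and their $\langle\zeta\rangle,\langle\zeta'\rangle,\langle\rx\rangle$-losses, and arranging the cut-off parameters $(\delta,\eta_1,\eta_2)$ so that the exponent $-\eps'_1(N+1)$ in $\langle\rx\rangle$ is actually achieved simultaneously with the $\langle\vth\rangle$-gain $-(N+1)/2$.
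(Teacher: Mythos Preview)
Your extraction of the main terms $\Pi_N(f)$ (Taylor-expand in $\vth'$, convert $\vth'^\b$ into $\del_{\zeta'}^\b$, evaluate at $\zeta'=0$ using $\chi_{\delta,\eta}(\rx,\zeta,0)=1$) matches the paper. The gap is in the remainder.

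After the IBP that turns $\vth'^\b$ into $\del_{\zeta'}^\b$, the integrand still depends on $\vth'$ only through $e^{2\pi i\langle\vth',\zeta'\rangle}$ and through $r_{\b,N,f}=\int_0^1(1-t)^N(\del^{0,0,0,\b}f_\chi)(\cdot,t\vth'+L_{\rx,\zeta}(\vth))\,dt$. The latter has \emph{no} decay in $\vth'$ (the $t$ near $0$ part kills it), so the $\vth'$-integral is not absolutely convergent and your two proposed IBPs do not remedy this: applying $L_{\vth'}$ only produces $\langle\zeta'\rangle^{-2}$, which is bounded on $\supp\chi_{\delta,\eta}$ and hence useless for $\vth'$-integrability; and Lemma~\ref{lem-hL} is stated for the phase $\langle\vth,s_{\rx,\zeta}(\zeta')\rangle$, whose $\zeta'$-gradient is bounded below by $c\langle\vth\rangle$. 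Here the phase is $\langle\vth',\zeta'\rangle+\langle\vth,\varphi_{\rx,\zeta}(\zeta')\rangle$, whose $\zeta'$-gradient at $\zeta'=0$ equals $\vth'$ (since $(d\varphi_{\rx,\zeta})_0=0$), so an $h$-type construction yields $\langle\vth'\rangle^{-2}$, not $\langle\vth\rangle^{-2}$.

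The paper supplies the missing device: a second cut-off $\chi_\eps(\vth,\vth')$ splitting the $\vth'$-integral. On $\{\|\vth'\|\le\eps\langle\vth\rangle\}$ one has $\langle t\vth'+L_{\rx,\zeta}(\vth)\rangle\ge c_1\langle\vth\rangle$ uniformly in $t\in[0,1]$ (this is where $\eps$ is fixed small relative to the lower bound on $L^{-1}$), so $r_{\b,N,f}=\O(\langle\vth\rangle^{m-|\b|})$ and the $\vth'$-integral is over a ball of volume $\O(\langle\vth\rangle^n)$; combined with Lemma~\ref{phasecontrol}(ii) this gives the $\langle\vth\rangle^{m+|\mu|-(N+1)/2+n}$ exponent. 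On $\{\|\vth'\|\ge\tfrac12\eps\langle\vth\rangle\}$ one chooses $\eta_1,\eta_2,\delta$ so that on $\supp\chi_{\delta,\eta}$ the term $\,^t(d\varphi_{\rx,\zeta})_{\zeta'}\vth$ is dominated by $\vth'$, whence $|\nabla_{\zeta'}(\text{phase})|\ge k\|\vth'\|$; a non-stationary-phase IBP in $\zeta'$ with the operator $U_{\zeta'}$ then gains $\langle\vth'\rangle^{-1}$ (hence $\langle\vth\rangle^{-1}$) per step.

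Finally, your argument for (ii) with a fixed $N=0$ does not produce the exponent $m$ in $\langle\vth\rangle$: the remainder bound from (i) is $\langle\vth\rangle^{m+|\mu|-(N+1)/2+n}$, so one must take $N$ depending on $\nu$, namely $N+1=\max\{2(n+|\mu|),|m|\}$, as the paper does.
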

\begin{proof} 
$(i)$
We proceed to a Taylor expansion of $\wt f(\rx,\zeta,\zeta',\vth',\vth):=f(\rx,\zeta,\zeta',\vth'+L_{\rx,\zeta}(\vth))$ in $\vth'$ around zero at order $N\in \N^*$, so that 
$$
\Pi(f)= \sum_{0\leq |\b|\leq N} \tfrac{1}{\b!} I_{\b}(f) + \sum_{|\b|=N+1} \tfrac{N+1}{\b!} R_{\b,N}(f)=:\Pi_N(f)+\Pi_{R,N}(f)
$$
where 
\begin{align*}
&I_\b(f)= \int_{\R^{2n}} \vth'^\b e^{2\pi i (\langle \vth',\zeta'\rangle+\langle \vth,\varphi_{\rx,\zeta}(\zeta')\rangle)} \del^{0,0,0,\b}f(\rx,\zeta,\zeta',L_{\rx,\zeta}(\vth))\,\chi_{\delta,\eta}(\rx,\zeta,\zeta')\,d\zeta'\,d\vth'\, ,\\
&R_{\b,N}(f)= \int_{\R^{2n}} \vth'^\b e^{2\pi i (\langle \vth',\zeta'\rangle+\langle \vth,\varphi_{\rx,\zeta}(\zeta')\rangle)} r_{\b,N,f}(\rx,\zeta,\zeta',\vth',\vth)\,d\zeta'\,d\vth'\, ,
\end{align*}
and $r_{\b,N,f}:=\int_{0}^1(1-t)^N\del^{0,0,0,\b} f_\chi(\rx,\zeta,\zeta',t\vth'+L_{\rx,\zeta}(\vth))\,dt$, $f_\chi:=f\chi_{\delta,\eta}\in\wt \Pi_{\sg,\ka_\eta,z}^{l,w_0,w_1,m}$.
By integration by parts in $\zeta'$ in the integrals $I_\b(f)$, we get
$$
\Pi_{N}(f)= \sum_{0\leq |\b|\leq N} \tfrac{(i/2\pi)^{|\b|}}{\b!} \del^{\b}_{\zeta'}\big(e^{2\pi i \langle \vth,\varphi_{\rx,\zeta}(\zeta')\rangle} \del^{0,0,0,\b}f(\rx,\zeta,\zeta',L_{\rx,\zeta}(\vth))\big)_{\zeta'=0}=\sum_{0\leq |\b|\leq N} \tfrac{(i/2\pi)^{|\b|}}{\b!} f_{\b,\varphi}\, .
$$
Using integration by parts in $\zeta'$, we obtain $R_{\b,N,f}=(i/2\pi)^{|\b|} I_f$, where for any $p\in \N$, 
\begin{align*}
&I_f(\rx,\zeta,\vth):= \int_{\R^{2n}} e^{2\pi i \langle\vth',\zeta' \rangle} \del_{\zeta'}^\b G(\rx,\zeta,\zeta',\vth',\vth)\,d\zeta'\,d\vth'\,, \\
&G(\rx,\zeta,\zeta',\vth',\vth):= e^{2\pi i \langle \vth,\varphi_{\rx,\zeta}(\zeta')\rangle} r_{\b,N,f}(\rx,\zeta,\zeta',\vth',\vth) \, .
\end{align*}
Using integration by parts in $\zeta'$ and $e^{2\pi i \langle\vth',\zeta' \rangle} =\langle \vth'\rangle^{-2p}L_{\zeta'}^p e^{2\pi i \langle\vth',\zeta' \rangle}$, we check that $I_f$ is smooth on $\R^{3n}$ and if $\nu$ is a $3n$-multi-index, we see that $\del^\nu I_f$ is a linear combination of terms of the form
$$
J_f:= \vth^{\wt\om} \int_{\R^{2n}}  e^{2\pi i (\langle \vth',\zeta'\rangle+\langle \vth,\varphi_{\rx,\zeta}(\zeta')\rangle)} \del^{\b^1}_{\zeta'} T_{\nu',\wt \om,\varphi} P_{\b^2,\varphi} \del^{\nu-\nu'}_{\rx,\zeta,\vth} \del^{\b^3}_{\zeta'} r_{\b,N,f}\, d\zeta'\, d\vth'
$$
where $|\wt \om|\leq |\mu'|$, $\nu'\leq \nu$, $\sum \b^i =\b$, $|\b|=N+1$. We now cut the integral $J_f$ in two parts $J_{\chi}+ J_{1-\chi}$, where the cut-off function $\chi_{\eps}(\vth,\vth')$ appears in $J_{\chi}$.

\noindent \emph{Analysis of $J_{\chi}$}

Using Lemma \ref{phasecontrol} $(ii)$ and integration by parts in $\zeta'$, we see that $J_\chi$ is a linear combination of terms of the form 
$$
J_{\chi,\om} = \vth^{\wt\om}\vth^\om \int_{\R^{2n}}e^{2\pi i (\langle \vth',\zeta'\rangle+\langle \vth,\varphi_{\rx,\zeta}(\zeta')\rangle)}\langle \zeta'\rangle^{-2p}
t_{\om,\la}\, \del_\zeta'^{\b^1} T_{\nu',\wt \om,\varphi} \, \del_{\vth'}^{\la'}\del_{\rx,\zeta,\vth}^{\nu-\nu'} \del^{\b^3} r_{\b,N,f} \, \del^{\la+\rho-\la'}\chi_\eps\, d\zeta'\,d\vth' 
$$ 
where $p\in \N$, $|\rho|\leq 2p$, $|\om|\leq |\b^2|$, $(2|\om|-|\b^2|)_+\leq |\la|\leq |\om|$, $\la'\leq \la+\rho$. We now fix $\eps$ such that $\eps<c/2$ where $c$ is a constant such that $c\langle \vth\rangle\leq \langle L_{\rx,\zeta}(\vth)\rangle$. Thus, in the domain of integration of $J_{\chi,\om}$, we have for any $t\in [0,1]$, $\langle t\vth' + L_{\rx,\zeta}(\vth)\rangle \geq c_1 \langle \vth \rangle$ for a $c_1>0$.
As a consequence, we obtain the following estimate:
\begin{align*}
&\norm{\del_{\vth'}^{\la'}\del_{\rx,\zeta,\vth}^{\nu-\nu'} \del^{\b^3}_{\zeta'} r_{\b,N,f}}\leq C\langle \rx\rangle^{\sg(l-\eps_1|\b^3|)} \langle \zeta\rangle^{(\ka_v+\ka_\eta)|\mu-\mu'|+w_0+\ka_\eta|\b^3|}\\
&\hspace{4cm} \langle \zeta'\rangle^{w_1+\ka_\eta(|\mu-\mu'|+|\b^3|)} \langle \vth\rangle^{|\mu-\mu'|+m-|\b|-|\la'|}\, .
\end{align*}
We also deduce from Lemma \ref{phasecontrol} the estimate
$$
|t_{\om,\la}\, \del_\zeta'^{\b^1} T_{\nu',\wt \om,\varphi} |\leq C'\langle \rx\rangle^{-\sg(|\mu'|+(\eps/2)|\b^1+\b^2|)} \langle \zeta\rangle^{2\eps_v|\b^1+\b^2|+(\ka_v+\eps_v)|\mu|+\eps_v|\ga|} \langle \zeta'\rangle^{c_1(N+1)+c_2|\nu|}\, .
$$
As a consequence, by taking $p$ sufficiently big, the integrand $j(\rx,\zeta,\zeta',\vth,\vth')$ of $J_{\chi,\om}$ satisfies the estimate, for a $\eps'_1>0$ and a $k_1>0$,
$$
\norm{j}\leq C'' \langle \rx\rangle^{\sg(l-\eps'_1(N+1))} \langle \zeta\rangle^{w_0+k_1(N+1+|\mu|) +\eps_v|\ga|} \langle \zeta'\rangle^{-2n} \langle \vth\rangle^{m+|\mu|-(N+1)/2}\,  1_{D_\eps}(\vth,\vth')
$$
where $D_\eps$ is the set of $(\vth,\vth')$ in $\R^{2n}$ such that $\norm{\vth'}\leq \eps \langle \vth\rangle$.
We deduce finally that for any $\nu \in \N^{3n}$, 
$$
J_{\chi} = \O( \langle \rx\rangle^{\sg(l-\eps'_1(N+1))} \langle \zeta\rangle^{w_0+k_1(N+1+|\mu|) +\eps_v|\ga|} \langle \vth\rangle^{m+|\mu|-(N+1)/2 +n})\, .
$$

\noindent \emph{Analysis of $J_{1-\chi}$}

We set $\om:=\langle \zeta',\vth'\rangle+\langle \vth,\varphi_{\rx,\zeta}(\zeta')\rangle$. 
By Lemma \ref{Csgr} $(i)$, we have $\sum_i\norm{\del_{\zeta'_i} \varphi_{\rx,\zeta}(\zeta')} \leq C\langle\rx \rangle^{-\sg\eps_v } \langle \zeta\rangle^{c_1} \langle \zeta'\rangle^{c_2})$ for $C,c_1,c_2>0$. The presence of $\chi_{\delta,\eta}$ in the integrand of $J_{1-\chi}$ allows to use the estimate $\langle \zeta'\rangle\leq \sqrt{2} \delta \langle \rx\rangle^{\sg\eta_1} \langle \zeta\rangle^{-\eta_2}$, so that $\sum_i\norm{\del_{\zeta'_i} \varphi_{\rx,\zeta}(\zeta')} \leq C\, 2^{c_2/2}\, \delta^{c_2}$
by taking $\eta_1\leq \eps_v/c_2$ and $\eta_2\geq c_1/c_2$.
As a consequence, we obtain the following estimate in the domain of integration of $J_{1-\chi}$,
$$
|\nabla_{\zeta'} \om|^2 \geq \norm{\vth'}^2 (1- \tfrac{4}{\eps}\,C\, 2^{c_2/2}\, \delta^{c_2})\, . 
$$
We now fix $\delta$ such that $\tfrac{4}{\eps}\,C\, 2^{c_2/2}\, \delta^{c_2}<1$ so that there is $k>0$ such that $|\nabla_{\zeta'} \om| \geq k \norm{\vth'}$. Noting $U_{\zeta'}:=(2\pi i |\nabla_{\zeta'} \om|^{2})^{-1}\sum_i (\del_{\zeta'_i} \om ) \del_{\zeta'_i}$ we have (see for instance \cite{Ruzhansky}) $U_{\zeta'} e^{2\pi i \om} = e^{2\pi i \om}$ and 
$$
(^t U_{\zeta'})^r = \tfrac{1}{ |\nabla_{\zeta'} \om|^{4r} } \sum_{|\rho|\leq r} P^{\om}_{\rho,r} \del^\rho_{\zeta'}
$$
where $P_{\rho,r}^\om$ is a linear combination of terms of the form $(\nabla_{\zeta'} \om)^\pi \del^{\delta^1}_{\zeta'} \om \cdots\del^{\delta^r}_{\zeta'} \om$, with $|\pi|=2r$, $|\delta^{i}|>0$ and $\sum_{j=1}^r |\delta^j| + |\rho| =2r$. We thus obtain after integration by parts in $\zeta'$, for any $r\in \N^*$, that $J_{1-\chi}$ is a linear combination of integrals of the form
$$
 \vth^{\wt\om+\wh \om} \int_{\R^{2n}}  e^{2\pi i \om} (^t U_{\zeta'})^r\big(\del^{\b^1}_{\zeta'} T_{\nu',\wt \om,\varphi} P_{\wh\om,\b^2,\varphi} \del^{\nu-\nu'}_{\rx,\zeta,\vth} \del^{\b^3}_{\zeta'} r_{\b,N,f}\,\big)(1-\chi_\eps) d\zeta'\, d\vth'
$$
where $|\wh \om|\leq|\b^2|$. We noted $P_{\b^2,\varphi}=:\sum_{\wh \om} P_{\wh \om,\b^2,\varphi} \vth^{\wh \om}$. By Lemma \ref{phasecontrol} $(ii)$, we see that $P_{\wh \om,\b^2,\varphi} \in \O_{\sg,\ka_v,\eps_v,\eps_v,2|\b^2|}^{-\eps_v|\b^2|/2,2\eps_v|\b^2|,(w'_s+1)|\b^2|}$. Let us note $\wt T:=\del^{\b^1}_{\zeta'} T_{\nu',\wt \om,\varphi} P_{\wh\om,\b^2,\varphi}$. Lemma \ref{phasecontrol} $(i)$ yields $\wt T \in \O_{\sg,\ka_v,\eps_v,\eps_v,2(|\nu|+N)}^{-(\eps_v/2)|\b^1+\b^2|,c_0(|\mu|+N)+\eps_v|\ga|,c_0(|\nu|+N)}(\R)$ for a constant $c_0>0$.
With our choice of the parameters $\eta_1$ and $\eta_2$, we also have the following estimate, valid in the domain of integration of $J_{1-\chi}$, 
$$
\del^{\la}_{\vth'} \del^{\ga+e_i}_{\zeta'} \om  = \O\big( \langle \zeta\rangle^{\eps_v|\ga|} \langle \zeta'\rangle^{\ka_v|\ga|} \langle \vth'\rangle^{1-|\la|} \big)\, .
$$
In particular, noting $\O_{\ka_v}^{l,m}$ the space of smooth functions $f$ such that for any $n$-multi-indices $\la,\ga$, $\del^{\la}_{\vth'}\del^{\ga}_{\zeta'} f =\O\big( (\langle \zeta\rangle\langle \zeta'\rangle)^{l+\ka_v|\ga|} \langle \vth'\rangle^{m}\big)$, we see that $|\nabla_{\zeta'} \om|^2 \in \O_{\ka_v}^{0,2}$, and for any $\la\in \N^n$, $\del^\la_{\vth'} |\nabla_{\zeta'}\om|^{-4r}$ = $\O(\langle \vth'\rangle^{-4r})$. Moreover, each term $P_{\rho,r}^\om$ is in $\O_{\ka_v}^{\ka_v r , 3r}$ so that finally, for any $\la\in \N^n$ 
$$
\del^{\la}_{\vth'} \tfrac{P^{\om}_{\rho,r}}{ |\nabla_{\zeta'} \om|^{4r} }   = \O\big( (\langle \zeta\rangle\langle \zeta'\rangle)^{\ka_v r } \langle \vth'\rangle^{-r}\big)\, .
$$

We easily check that if $r\geq 2n$, then $h:=(^t U_{\zeta'})^r\big(\del^{\b^1}_{\zeta'} \wt T \del^{\nu-\nu'}_{\rx,\zeta,\vth} \del^{\b^3}_{\zeta'} r_{\b,N,f}\,\big)(1-\chi_\eps) $ satisfies the estimates for any $q\in \N$, $\norm{L_{\vth'}^q h }\leq C_{\rx,\zeta,\zeta',\vth,q} \langle \vth'\rangle^{-2n}$. As a consequence, we can permute the integration $d\zeta'd\vth'\to d\vth' d\zeta'$ and successively integrate by parts in $\vth'$, so that finally $J_{1-\chi}$ is a linear combination of terms of the form
$$
 \vth^{\wt\om+\wh \om} \int_{\R^{2n}}  e^{2\pi i \om} \langle \zeta'\rangle^{-2q} \del_{\vth'}^{\la^1}\tfrac{P^{\om}_{\rho,r}}{ |\nabla_{\zeta'} \om|^{4r} } \, \del^{\rho^1}_{\zeta'}\wt T\, \del_{\vth'}^{\la^2}\del^{\nu-\nu'}_{\rx,\zeta,\vth} \del^{\b^3+\rho^2}_{\zeta'} r_{\b,N,f}\,\del_{\vth'}^{\la^3}(1-\chi_\eps) d\vth'\,d\zeta' 
$$
where $\sum_{i}\la^i = \la$, $|\la|\leq 2q$, $\sum_i \rho^i = \rho$, $|\rho|\leq r$. We also have the following estimate for $c'_0, c'_1>0$,
$$
\del_{\vth'}^{\la^2}\del^{\nu-\nu'}_{\rx,\zeta,\vth} \del^{\b^3+\rho^2}_{\zeta'} r_{\b,N,f}= \O\big(\langle \rx\rangle^{\sg(l-|\b^3|)}(\langle \zeta\rangle\langle \zeta\rangle)^{c'_0+c'_1(|\mu-\mu'|+|\b^3|+|\rho^2|)}\big)\, .
$$
With Lemma \ref{lemchi} $(iv)$ we now see that the integrand $j'$ of the previous integral is estimated by
$$
\norm{j'} \leq C \langle \vth' \rangle^{-r+|\mu|+N+1}\langle \rx\rangle^{\sg(l-\eps'_1(N+1))} \langle \zeta\rangle^{k_0+k_1 N+k_2 r +k_3|\mu|+\eps_v |\ga|}\langle \zeta'\rangle^{-2q +k_0+k_1N +k_2r+k_3|\nu|} 
$$
for constants $k_0,k_1,k_2,k_3>0$. If we now fix $r\geq 2n$ such that $-r+|\mu|+N+1 +2n = m+|\mu|-(N+1) +n$, and $q$ such that $-2q +k_0+k_1N +k_2r+k_3|\nu|\leq -2n$
we finally obtain the estimate
$\nu \in \N^{3n}$, 
$$
J_{1-\chi} = \O( \langle \rx\rangle^{\sg(l-\eps'_1(N+1))} \langle \zeta\rangle^{k'_0+k'_1(N+1+|\mu|) +\eps_v|\ga|} \langle \vth\rangle^{m+|\mu|-(N+1) +n})\, .
$$
The result follows now from this estimate and the one obtained for $J_{\chi}$.

\noindent $(ii)$ The estimate is obtained by applying $(i)$ and $N+1= \max\set{ 2(n+|\mu|),|m|}$. The second statement is then a consequence of Lemma \ref{ampliOP} $(ii)$.
\end{proof}

\begin{thm} 
\label{compo} If $(C_\sg)$ holds, 
$\Psi_{\sigma}^\infty$ is a $*$-subalgebra of $\Re(\S)$. Moreover, if $A \in \Psi_{\sigma}^{l',m'}$ and $B\in \Psi_{\sigma}^{l,m}$, then $AB\in \Psi_{\sigma}^{l+l',m+m'}$ with the following asymptotic expansion of the normal symbol of $AB$, in a frame $(z,\bfr)$:
$$
\sigma_{0}(AB)_{z,\bfr} \sim \sum_{\b,\ga \in \N^n} c_\b c_\ga \del_{\zeta,\vth}^{\ga,\ga}\big( a(\rx,\vth)\del^{\b}_{\zeta'}\big(e^{2\pi i \langle \vth,\varphi_{\rx,\zeta}(\zeta')\rangle} (\del^{\b}_{\vth'} f_b)(\rx,\zeta,\zeta',L_{\rx,\zeta}(\vth))\big)_{\zeta'=0} \tau^{-1}_{\rx,\zeta} \big)_{\zeta=0}
$$
where  $a:=\sigma_0(A)_{z,\bfr}$, $b:=\sigma_0(B)_{z,\bfr}$, $c_\b:=(i/2\pi)^{|\b|}/\b!$ and 
$$
f_b(\rx,\zeta,\zeta',\vth'):=\tau_{\rx,r_{\rx,\zeta}(\zeta')}\,b\circ \wt \Xi(\rx,\zeta,\zeta',\vth')\,\tau_{\rx^{\zeta,\zeta'},q_{\rx,\zeta}(\zeta')}\,|J(R)|(\rx,\zeta,\zeta')\,|\det (P^{z,\bfr}_{-1,\psi(\rx,\zeta),\zeta'})^{-1}|\, .
$$

\end{thm}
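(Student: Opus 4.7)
Fix a frame $(z,\bfr)$ and let $a:=\sigma_0(A)_{z,\bfr}$, $b:=\sigma_0(B)_{z,\bfr}$. By Remark \ref{Oplien} we have $A_{z,\bfr}=\Op_{\Ga_{0,z,\bfr}}(\mu a)$ and similarly for $B$. Plugging these into the definition and unwinding $\Ga_{0,z,\bfr}$ via $\Phi_{0,z,\bfr}(\rx,\zeta)=(\rx,\psi(\rx,-\zeta))$, the first step is to write down the pairing $\langle (AB)_{z,\bfr}(v),u\rangle$ as an oscillatory integral over $\R^{4n}$ in the variables $(\zeta,\vth,\zeta',\vth')$ whose phase is $\langle\vth,\zeta\rangle+\langle\vth',\zeta'\rangle$ and whose amplitude contains $a(\rx,\vth)\,b(\psi_\rx(-\zeta),\vth')$ together with $\tau$-factors and density factors. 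To reduce this to the framework of Lemma \ref{ampliOP}, I perform in the inner integral the change of variables $\zeta'\mapsto s_{\rx,\zeta}(\zeta')$ (equivalently, go from the point $\psi_{\psi_\rx(-\zeta)}(-\zeta')$ to the variable $r_\rx(\zeta,\zeta')$). The Jacobian $|J(R)|$, the determinant $|\det(P^{z,\bfr}_{-1,\psi(\rx,\zeta),\zeta'})^{-1}|$ and the parallel transports $\tau_{\rx,r_{\rx,\zeta}(\zeta')},\tau_{\rx^{\zeta,\zeta'},q_{\rx,\zeta}(\zeta')}$ that appear in $f_b$ are precisely what comes out of this change of variables together with the conjugation of $b\circ\Phi_{0,z,\bfr}$ by $\wt\Xi$ defined in Lemma \ref{amptilde}. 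After this, the composed amplitude takes the form $\mu a(\rx,\vth)\,f_b(\rx,\zeta,\zeta',\vth')\,\tau^{-1}_{\rx,\zeta}$ and the phase becomes $\langle\vth,\zeta\rangle+\langle\vth',\zeta'\rangle+2\pi i$ correction encoding that the inner variable $\vth'$ sees the derivative $L_{\rx,\zeta}$ of $r$ at $\zeta'=0$; a further linear change $\vth'\mapsto \vth'+L_{\rx,\zeta}(\vth)$ kills the coupling and produces the phase $\langle\vth',\zeta'\rangle+\langle\vth,\varphi_{\rx,\zeta}(\zeta')\rangle$ from the excerpt's decomposition of $r$. Using Lemma \ref{Csgr} and Lemma \ref{htilde}, one verifies that $f_b\in\wt\Pi^{l,w_0,w_1,m}_{\sg,\ka,1,z}$ for some constants, hence the whole composed amplitude sits in the same $\wt\Pi$ space (up to multiplication by the $\rx$-symbol $a$).

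Next I split the $\zeta'$-integral with the cut-off $\chi_{\delta,\eta}$ from Lemma \ref{lemchi}. The piece with $1-\chi_{\delta,\eta}$ has the form treated by Lemma \ref{lemnegamp}: after integrating by parts in $\vth'$ using $^tM^{p,\zeta'}_{\vth'}$ (legitimate since $\norm{\zeta'}$ is bounded below on the support), the $(1-\chi_{\delta,\eta})$-contribution lies in $\Op_{\Ga_{0,z,\bfr}}(\S_{\sg,c}(\R^{3n},L(E_z)))\subseteq\Op_{\Ga_{0,z,\bfr}}(S^{-\infty}_{\sg,z})$ by Lemma \ref{noyauReste} and Corollary \ref{correste}; this produces a smoothing operator, i.e.\ an element of $\Psi^{-\infty}_\sigma$. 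The piece carrying $\chi_{\delta,\eta}$ is exactly the operator $\Pi(f)$ of Lemma \ref{Pi-amp} applied (after multiplication by $a(\rx,\vth)\tau^{-1}_{\rx,\zeta}$) to $f_b$; Lemma \ref{Pi-amp}$(i)$ gives the decomposition $\Pi(f_b)=\Pi_N(f_b)+\Pi_{R,N}(f_b)$, where $\Pi_N(f_b)=\sum_{|\b|\le N}c_\b(f_b)_{\b,\varphi}$ is a sum of amplitudes in the $\Pi^{l,\cdot,m-|\b|/2}_{\sg,\ka_1,z}$ spaces (by Lemma \ref{phasecontrol}$(iii)$) and the remainder $\Pi_{R,N}(f_b)$ has gains $-\eps'_1(N+1)$ in $\langle\rx\rangle$ and $-(N+1)/2$ in $\langle\vth\rangle$. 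Multiplying by $a(\rx,\vth)\tau^{-1}_{\rx,\zeta}$ and collecting, we obtain an amplitude $c_N\in\Pi^{l+l',\cdot,m+m'}_{\sg,\ka,z}$ such that $(AB)_{z,\bfr}=\Op_{\Ga_{0,z,\bfr}}(c_N)$ modulo an operator whose amplitude can be made of arbitrarily negative order $(l+l'-\eps'_1(N+1),\,m+m'-(N+1)/2)$ by choosing $N$ large.

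By Lemma \ref{reduction}$(iii)$, each such amplitude $c_N$ has an associated symbol $s_0(c_N)\in S^{l+l',m+m'}_\sigma$ with $(AB)_{z,\bfr}=\Op_{0,z,\bfr}(s_0(c_N))$ modulo $\Psi^{-\infty}_\sigma$, and the asymptotic reduction formula reads $T_{z,\bfr,*}(s_0(c_N))\sim\sum_{\ga}c_\ga\,\mu^{-1}(\del^{0,\ga,\ga}c_N)_{\zeta=0}$. Inserting the explicit form $c_N=\mu\,a(\rx,\vth)\,\Pi_N(f_b)\,\tau^{-1}_{\rx,\zeta}$ and commuting $(\del^{0,\ga,\ga})_{\zeta=0}$ with the $\sum_\b$, the factor $\mu$ cancels the $\mu^{-1}$; the summand for fixed $\b,\ga$ is exactly $c_\b c_\ga\del^{\ga,\ga}_{\zeta,\vth}\!\bigl(a(\rx,\vth)\del^\b_{\zeta'}(e^{2\pi i\langle\vth,\varphi_{\rx,\zeta}(\zeta')\rangle}(\del^\b_{\vth'}f_b)(\rx,\zeta,\zeta',L_{\rx,\zeta}(\vth)))_{\zeta'=0}\,\tau^{-1}_{\rx,\zeta}\bigr)_{\zeta=0}$. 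Consistency of the sequence $s_0(c_N)$ modulo $S^{-\infty}_\sigma$ as $N\to\infty$ is guaranteed by asymptotic completeness (Lemma \ref{asympt}), which also produces the announced asymptotic expansion. Finally, $(AB)\in\Psi^{l+l',m+m'}_\sigma$ follows, and taking adjoints using Proposition \ref{pdoadjoint} yields the $*$-subalgebra statement, while regularity of the product was already part of Proposition \ref{regularity}.

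The main obstacle is the second step: controlling the $\chi$-cut piece rigorously, which is the content of Lemma \ref{Pi-amp}. The delicate point is that $\varphi_{\rx,\zeta}$ vanishes to order $2$ at $\zeta'=0$ so the phase is degenerate and a naive stationary phase is not directly available; instead, one must combine a Taylor expansion in $\vth'$ (giving the series in $\b$), an integration by parts estimate using Lemma \ref{lem-hL} to prove absolute convergence, and the non-stationary phase bound on the $1-\chi_\eps$ region of $\vth'$, all under the weakened bound $(C_\sg)$ and the $\O$/$\wt\Pi$ calculus built up in Lemmas \ref{lemGsigma}--\ref{phasecontrol}. Once Lemma \ref{Pi-amp} is in hand, the rest is careful bookkeeping in the graded algebras of amplitudes and applying the reduction Lemma \ref{reduction}.
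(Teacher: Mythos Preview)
Your plan matches the paper's proof essentially step for step: write the composed kernel as an oscillatory integral, apply the involutive change of variables $R_\rx$ in $(\zeta,\zeta')$ together with the $\vth'$-change through $\wt P^{z,\bfr}_{-1}$ to produce $f_b$, split with $\chi_{\delta,\eta}$, handle the far piece by Lemma~\ref{lemnegamp} and the near piece by Lemma~\ref{Pi-amp}, and finish with the amplitude-to-symbol reduction of Lemma~\ref{reduction} and asymptotic summation. The one technical step you should make explicit is the density argument: the permutation $d\zeta'\,d\vth'\to d\vth'\,d\zeta'$, the change of variables $R_\rx$, and the $\vth'$-translation are justified only after first assuming $b\in S^{l,-2n}_{\sg,z}$ so that the integrals are absolutely convergent; the identity $(K_{AB})_{z,\bfr}=\Op_{\Ga_{0,z,\bfr}}(d_b)$ is then extended to arbitrary $b\in S^{l,m}_{\sg,z}$ by showing that $b\mapsto\langle\Op_{\Ga_{0,z,\bfr}}(d_b),u\rangle$ is continuous on each $S^{l,m}_{\sg,z}$ (this is exactly why Lemmas~\ref{lemnegamp}$(ii)$ and~\ref{Pi-amp}$(ii)$ are stated) and invoking the density of $S^{-\infty}_{\sg,z}$ from Lemma~\ref{toposymbol}.
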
 
\begin{proof} We fix a frame $(z,\bfr)$. We note $K_{AB}$ the kernel of the operator $AB$. As a consequence of Proposition \ref{regularity} we have for any $u,v\in \S(\R^n,E_z)$, $\langle(K_{AB})_{z,\bfr},u\ox \ol v \rangle=\big( A_{z,\bfr} (\mu^{-1} B_{z,\bfr}(v))| u\big)$. We shall note $g:=A_{z,\bfr} (\mu^{-1} B_{z,\bfr}(v))$. A computation shows that for any $\rx\in \R^n$, 
$g(\rx)= \int_{\R^n} \mu a(\rx,\vth) \,\wt b(\rx,\vth)\, d\vth$, and
$$
\wt b(\rx,\vth):= \int_{\R^{3n}} e^{2\pi i (\langle \vth,\zeta\rangle +\langle \vth',\zeta'\rangle)} \tau_{\rx,\zeta} b(\psi(\rx,\zeta),\vth')\tau_{\psi(\rx,\zeta),\zeta'} v(\rx^{\zeta,\zeta'})\,d\zeta'\,d\vth'\,d\zeta\, .
$$
We suppose at first that $b\in S^{l,-2n}_{\sg,z}$. Since $\zeta'\mapsto v(\rx^{\zeta,\zeta'})\in \S(\R^n,E_z)$, we can permute the order integration $d\zeta' d\vth' \mapsto d\vth'\,d\zeta'$ in $\wt b(\rx,\vth)$. Thus, after integrations by parts in $\vth'$, we get
for any $p\in \N^*$, 
$$
\wt b(\rx,\vth)=\int_{\R^{2n}} e^{2\pi i \langle \vth,\zeta\rangle} \tau_{\rx,\zeta}\, \big(\int_{\R^n} e^{2\pi i \langle \vth',\zeta'\rangle}\langle \zeta'\rangle^{-2p}(L_{\vth'}^p b)(\psi(\rx,\zeta),\vth')\,d\vth'\big)\, \tau_{\psi(\rx,\zeta),\zeta'}\,v(\rx^{\zeta,\zeta'})\,d\zeta'\,d\zeta\, .
$$
With the estimate $\langle \rx^{\zeta,\zeta'}\rangle \geq c \langle \zeta\rangle\langle \rx\rangle^{-1} \langle \zeta'\rangle^{-1}$ for a $c>0$, we see that for any $N\in \N$, $\norm{v(\rx^{\zeta,\zeta'})}\leq c_N q_{0,N}(v)\langle \rx\rangle^{N} \langle \zeta'\rangle^N\langle \zeta\rangle^{-N}$. 
As a consequence, we get the following estimates for the integrands $b_p$ of $\wt b(\rx,\vth)$: for any $\rx,\zeta,\zeta',\vth,\vth'$, any $p\in \N^*$ and any $N\in \N^*$, $\norm{b_p(\rx,\zeta,\zeta',\vth,\vth')}\leq C_{p,N} \langle \zeta'\rangle^{N-2p} \langle \rx\rangle^{\sg|l|+N} \langle \zeta\rangle^{\sg|l|-N}\langle \vth'\rangle^{-2n}$. Taking $N$ such that $\sg|l|-N\leq -2n$ and then taking $p$ such that $N-2p \leq -2n$, we see that $(\vth',\zeta',\zeta)\mapsto b_p(\rx,\zeta,\zeta',\vth',\vth)$ is absolutely integrable and we can thus apply the following change of variable $(\zeta,\zeta',\vth')\mapsto (R_\rx(\zeta,\zeta'),\vth')$ to $\wt b(\rx,\vth)$. After reversing the integration by parts in $\vth'$ and applying the change of variable $\vth'=-\wt P_{-1,\psi(\rx,\zeta),\zeta'}^{z,\bfr}(\vth'')$, we get  
$$
\wt b(\rx,\vth)= \int_{\R^{3n}} e^{2\pi i( \langle \vth,r_{\rx,\zeta}(\zeta')\rangle+\langle \vth',\zeta'\rangle)} f_b(\rx,\zeta,\zeta',\vth')\,v(\psi(\rx,\zeta))\,d\vth'\,d\zeta'\,d\zeta\, .
$$
By Lemma \ref{amptilde} $(ii)$ and $(iii)$, Lemma \ref{htilde} $(iii)$ and $(iv)$ and Lemma \ref{Csgr} $(iii)$, we see that $f_b\in \wt \Pi^{l,w_l,w_l,m}_{\sg,\ka,\eps_1,z}$ for a $(w_l,\ka)\in \R^2_+$ and $\eps_1>0$, and the linear application $b\mapsto f_b$ is continuous on any symbol space $S^{l,m}_{\sg,z}$ into $\wt \Pi^{l,w_l,w_l,m}_{\sg,\ka,\eps_1,z}$. We have $g(\rx)= \int_{\R^n}e^{2\pi i \langle \zeta,\vth\rangle} \mu a(\rx,\vth) \, c_b(\rx,\zeta,\vth) v(\psi(\rx,\zeta)\, d\zeta\,d\vth$ and $\langle (K_{AB})_{z,\bfr},u\ox\ol v\rangle = \langle \Op_{\Ga_{0,z,\bfr}}(d_b),u\ox \ol v\rangle$ where $d_b(\rx,\zeta,\vth):=\mu a(\rx,\vth)\,c_b(\rx,\zeta,\vth)\,\tau^{-1}(\rx,\zeta)$ and 
$$
c_b(\rx,\zeta,\vth):= \int_{\R^{2n}} e^{2\pi i( \langle \vth,s_{\rx,\zeta}(\zeta')\rangle+\langle \vth',\zeta'\rangle)} f_b(\rx,\zeta,\zeta',\vth')\,d\vth'\,d\zeta'\, .
$$ 
Using now the cut-off function $(\rx,\zeta,\zeta')\mapsto \chi_{\delta,\eta}(\rx,\zeta,\zeta')$ we see that 
\begin{align*}
c_b(\rx,\zeta,\vth)&= \Pi(f_b)(\rx,\zeta,\vth) + S_{m,w_l}(f_b)(\rx,\zeta,\vth)\, .
\end{align*}
For this equality, we used the formula of Lemma \ref{Mformula} and integration by parts and  in $\vth'$ in the integral $\int_{\R^{2n}} e^{2\pi i( \langle \vth,s_{\rx,\zeta}(\zeta')\rangle+\langle \vth',\zeta'\rangle)} f_b(\rx,\zeta,\zeta',\vth')(1-\chi_{\delta,\eta}(\rx,\zeta,\zeta'))\,d\vth'\,d\zeta'\,$, which are authorized since $b\in S^{l,-2n}_{\sg,z}$ by hypothesis. In $\int_{\R^{2n}} e^{2\pi i( \langle \vth,s_{\rx,\zeta}(\zeta')\rangle+\langle \vth',\zeta'\rangle)} f_b(\rx,\zeta,\zeta',\vth')\chi_{\delta,\eta}(\rx,\zeta,\zeta')\,d\vth'\,d\zeta'\,$, we translated the $\vth'$ variable by $-L_{\rx,\zeta}(\vth)$ and permuted the order of integration $d\vth'\,d\zeta'\to d\zeta' \,d\vth'$, which is legal since  $b\in S^{l,-2n}_{\sg,z}$ and $\zeta'\mapsto \chi(\rx,\zeta,\zeta')$ is of compact support. We deduce from Lemma \ref{lemnegamp} $(ii)$ and Lemma \ref{Pi-amp} $(ii)$ that $b\mapsto \langle \Op_{\Ga_{0,z,\bfr}}(d_b),u\ox \ol v\rangle$ is continuous on $S^{l,m}_{\sg,z}$, and thus, by the density result of Lemma \ref{toposymbol}, we have the equality $\langle (K_{AB})_{z,\bfr},u\ox\ol v\rangle = \langle \Op_{\Ga_{0,z,\bfr}}(d_b),u\ox \ol v\rangle$ even when the hypothesis $b\in S^{l,-2n}_{\sg,z}$ does not hold.

Let us recall the linear map $s:a\mapsto s(a)$ given in Lemma \ref{reduction} $(ii)$ (for $\Ga=\Ga_{0,z,\bfr}$) which is such that $\Op_{\Ga_{0,z,\bfr}}(f)=\Op_{\Ga_0,z,\bfr}(s(f))$ for any $f\in \Pi_{\sg,\ka,z}^{l,w,m}$. We define $f_{a,b,\b}:= \mu a (f_b)_{\b,\varphi} \tau^{-1}$, 
$r_N:= \mu a \Pi_{R,N}(f_b)\tau^{-1}$, $s_0:= \mu a S_{m,w_l}(f_b) \tau^{-1}$.
We now consider a symbol $s_{a,b}$ such that 
$$
s_{a,b}\sim \sum_{\b\in \N^n} \tfrac{(i/2\pi)^{|\b|}}{\b!} s\big( f_{a,b,\b}\big)\, .
$$
Such a symbol exists since by Lemma \ref{phasecontrol} $(iii)$, $s(f_{a,b,\b})\in S^{l+l'-\eps'_1|\b|,m+m'-|\b|/2}_{\sg,z}$.
By Lemma \ref{Pi-amp} $(i)$, we have for any $N\geq |m|$, $u_N:=s( \mu a \Pi_N(f_b) \tau^{-1}) -s_{a,b} \in S^{l+l'-\eps'_1(N+1),m+m'-(N+1)/2}_{\sg,z}$. Thus, noting $S_0:=\Op_{\Ga_{0,z,\bfr}}(s_0)$, which is in $\Op_{\Ga_{0,z,\bfr}}(S^{-\infty}_{\sg,z})$ by Lemma \ref{lemnegamp}, $R_N:= \Op_{\Ga_{0,z,\bfr}}(r_N)$ and $U_N:=\Op_{\Ga_{0,z,\bfr}}(u_N)$  we have 
\begin{align*}
(K_{AB})_{z,\bfr}=\Op_{\Ga_{0,z,\bfr}}(d_b) &= \Op_{\Ga_{0,z,\bfr}}(s( \mu a \Pi_N(f_b) \tau^{-1}) ) + R_N + S_0 \\
&= \Op_{\Ga_{0,z,\bfr}}(s_{a,b}) + U_N + R_N +S_0\, .
\end{align*}
Lemma \ref{noyauReste} and Lemma \ref{Pi-amp} $(i)$ now implies that the kernel $U_N$ + $R_N$ (which independant of $N$) is in $\Op_{\Ga_{0,z,\bfr}}(S^{-\infty}_{\sg,z})$. As a consequence, $(K_{AB})_{z,\bfr}=\Op_{\Ga_{0,z,\bfr}}(s_{a,b} + r)$ where $r\in S^{-\infty}_{\sg,z}$ and the symbol product asymptotic formula is entailed by Lemma \ref{reduction} $(ii)$.
\end{proof}

\section{Examples} \label{exsec}

In order to be able to apply the previous results about the pseudodifferential and symbolic calculi on some concrete cases, we shall see in this section examples of exponential manifolds and associated linearizations that satisfy the hypothesis $S_\sigma$-bounded geometry. 
The Euclidean space $\R^n$ seen as exponential manifold, has its own exponential map $\psi:=\exp (x,\xi)\mapsto x+\xi$ as a $S_1$-linearization, leading to the usual pseudodifferential $SG$ calculus (if $\sg=1$) or standard (if $\sg=0$) pseudodifferential calulus on $\R^n$.
However, we can define other kinds of linearization, leading to new kind of pseudodifferential and symbol calculi, with a non-bilinear linearization map. We will see in particular that we can construct on the flat $\R^n$, a family of $S_\sg$-linearizations that generalize the case of the flat euclidian geometry, and we obtain a extension of the normal ($\la=0$) and antinormal ($\la=1$) quantization on $\R^n$. 

We will also prove that the 2-dimensional hyperbolic space, which is a Cartan--Hadamard manifold (and thus an exponential Riemannian manifold) has $S_1$-bounded geometry. This allows to define a global Fourier transform, Schwartz spaces $\S(\HH)$, $\S(T^*\HH)$, $\S(T\HH)$, $\B(\HH)$ and the space of symbols $S_1^{l,m}(T^*\HH)$. As a consequence, we can define in an intrinsic way a global complete pseudodifferential calculus on $\HH$, if one chose a fixed $S_\sg$-linearization $\psi$ on $T\HH$. There are many possible linearizations, for instance one can take $\psi$ such that in a frame $(z,\bfr)$ $\psi_{z}^\bfr$ is the standard linearization $x+\xi$ of $\R^{n}$.

\subsection{A family of $S_\sg$-linearizations on the euclidean space}

Recall that $G_\sigma^\times(\R^n)$ ($0\leq \sigma \leq 1$) is defined as the subgroup of diffeomorphisms $s$ on $\R^n$ such that for any $n$-multi-index $\a\neq 0$, there are $C_\a$, $C'_\a >0$, such that for any $\rx\in \R^n$, $\norm{\del^\a s (\rx)} \leq C_\a \langle \rx \rangle^{\sigma(1-|\a|)}$ and $\norm{\del^\a s^{-1} (\rx)} \leq C'_\a \langle \rx \rangle^{\sigma(1-|\a|)}$. $G_\sigma^\times (\R^n)$ contains $GL_n(\R)$ and the translations $T_v:=w\mapsto v+w$.

We fix $\eta\in ]0,1[$ such that for any matrix $A\in \M_n(\R)$ such that $\norm{A}_1\leq  \eta$, we have $\det (I_n+A) \geq \half$, where $\norm{A}_1:= \max_{i,j}  |A_{i,j}|$. Taking now $h\in G_0(\R^n,\R^n)$ such that for any $1\leq i,j\leq n$, $|\del_j h^i|\leq \eta/16$, and 
$g(x):= h(x) - h(0) - dh_0 (x)$
we see that $s:=\Id +g$ is a diffeomorphism on $\R^n$ which belongs to $G_0^\times (\R^n)$, satisfying $s(0)=0$ and $ds_0 =\Id$.
 
\noindent We set, for $\sg\in [0,1]$,
$$
\psi(x,\xi):=x+\xi + \langle x\rangle^\sg g( \tfrac{\xi}{ \langle x\rangle^\sg} )= x+ \langle x\rangle^\sg s (  \tfrac{\xi}{ \langle x\rangle^\sg} ).
$$
We obtain the following

\begin{prop} $(\R^n,+,d\la,\psi)$ has a $S_\sg$-bounded geometry and satisfies $(C_\sg)$ (see Definition \ref{Csigma}).
\end{prop}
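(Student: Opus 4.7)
The proof splits into three parts: (a) establishing the $S_\sigma$-bounded geometry of the underlying data $(\R^n,\exp_{\mathrm{std}},\R^n\times\C,d\la)$, (b) verifying that $\psi$ satisfies the three clauses of Definition \ref{sprime}, and (c) checking the condition $(C_\sigma)$ of Definition \ref{Csigma}. Step (a) is automatic: in the frames attached to $\exp_{\mathrm{std}}(x,\xi):=x+\xi$ the coordinate transitions $\psi_{z,z'}^{\bfr,\bfr'}$ are affine, so all derivatives of order $\geq 2$ vanish identically and $(S_\sigma 1)$ holds; the trivial bundle carrying the flat connection $d$ gives $\tau\equiv 1$ and $(S_\sigma 2)$; and $\mu_{z,\bfr}=|\det L_\bfr|^{-1}$ is constant, so $d\la$ is an $S_\sigma^\times$-density.

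For (b), by the one-frame reduction lemma following Definition \ref{sprime} it suffices to verify the three clauses in the frame $(z,\bfr)=(0,\mathrm{std})$, in which $\psi_z^\bfr=\psi$ and $\ol{\psi_z^\bfr}(\rx,\ry)=\langle\rx\rangle^\sigma s^{-1}\bigl((\ry-\rx)/\langle\rx\rangle^\sigma\bigr)$. Setting $\rho(\rx):=\langle\rx\rangle^\sigma$, the identity $\psi(\rx,\zeta)-\rx-\zeta=\rho(\rx)\,g\bigl(\zeta/\rho(\rx)\bigr)$ exhibits a homogeneous rescaling under which each $\zeta$-derivative produces a factor $\rho^{-1}$ while each $\rx$-derivative hits either $\rho$ (costing $\langle\rx\rangle^{-1}$) or the argument $\zeta/\rho$. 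Combining Theorem \ref{FaaCS} with the derivative bounds from $s\in G_0^\times(\R^n)$ and the symbol estimate $\del^\a\rho=O(\langle\rx\rangle^{\sigma-|\a|})$ then yields $\psi_z^\bfr\in H_{\sigma,\kappa}(\R^n)$ for some $\kappa\geq 0$, with the polynomial growth $\psi_z^\bfr(\rx,\zeta)=O(\langle\rx\rangle\langle\zeta\rangle^r)$, $r\geq 1$; the analogous argument applied to $s^{-1}\in G_0^\times(\R^n)$ places $\ol{\psi_z^\bfr}$ in $\O_M(\R^{2n},\R^n)$. For the parallel-transport datum I will set
\begin{equation*}
P^{z,\bfr}_{\rx,\zeta}:=\int_0^1 (dF_\rx)_{t\zeta}\,dt,\qquad F_\rx(\zeta):=\Ups^{z,\bfr}_{1,T}(\rx,\zeta)=-\ol{\psi_z^\bfr}(\psi_z^\bfr(\rx,\zeta),\rx),
\end{equation*}
so that $P^{z,\bfr}_{\rx,\zeta}(\zeta)=F_\rx(\zeta)$ by the fundamental theorem of calculus. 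A direct chain-rule computation in the explicit formula for $\ol\psi$, using $(d\psi_y)_0=\Id$ and the identity $(d\ol\psi_y)_y=\Id$ (from $\ol\psi_y\circ\psi_y=\Id$), gives $(dF_\rx)_0=\Id$, hence $P^{z,\bfr}_{\rx,0}=\Id$. Membership of $P^{z,\bfr}$ and $(P^{z,\bfr})^{-1}$ in $E_{\sigma,\kappa}^0(\M_n(\R))$ will then follow by composing the $H_{\sigma,\kappa}$-estimates on $\psi$ and the explicit formula for $\ol\psi$ via Lemma \ref{lemGsigma}, with Proposition \ref{inverse} handling the inverse. Triviality of $E$ makes $\tau_1^{z,\bfr}\equiv 1$, closing clause $(iii)$ of Definition \ref{sprime}.

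For (c), the two boundedness statements in (\ref{CHyp1}) are immediate: in the canonical frame $(d\psi_{z,\rx}^\bfr)_\zeta=(ds)_{\zeta/\rho}$ and $(d\ol\psi_{z,\rx}^\bfr)_\ry=(ds^{-1})_{(\ry-\rx)/\rho}$, both uniformly bounded on $\R^{2n}$ because $s,s^{-1}\in G_0^\times(\R^n)$. The amplitude-type membership $V\in\O^{0,0,w_v}_{\sigma,\kappa_v,\eps_v,\eps_v,0}(\M_n(\R))$ is obtained by differentiating $r(\rx,\zeta,\zeta')=-\ol{\psi_z^\bfr}(\rx,\rx^{\zeta,\zeta'})$, with $\rx^{\zeta,\zeta'}=\psi_z^\bfr(\psi_z^\bfr(\rx,\zeta),\zeta')$, in $\zeta'$ and controlling the resulting composition of $H_{\sigma,\kappa}$- and $\O_M$-functions through Lemma \ref{lemGsigma}.

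The main obstacle will be the bookkeeping of $(\rx,\zeta,\zeta')$-weights in the Faà di Bruno expansions, namely extracting a single triple $(\kappa_v,\eps_v,w_v)$ with $\kappa_v\geq 1$ and $\eps_v\in ]0,1[$ that controls every derivative of $V$ simultaneously, and verifying the improved $\rx$-decay at each differentiation order; the quantitative ingredient that makes the weights balance is the combination of $g(0)=0$, $(dg)_0=0$, $|\del_j h^i|\leq\eta/16$, and the order-two integral Taylor remainder $g(u)-(dg)_u(u)=O(|u|^2)$, which converts the naive $\langle\zeta\rangle/\rho$ growth produced by an $\rx$-derivative into an additional $\rho^{-1}$ gain at the cost of a $\langle\zeta\rangle^2$ factor.
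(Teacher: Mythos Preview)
Your outline follows the paper's overall strategy, but two steps are genuinely incomplete and would not go through as written.

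\textbf{Invertibility of $P^{z,\bfr}$.} Your choice $P_{\rx,\zeta}=\int_0^1(dF_\rx)_{t\zeta}\,dt$ does satisfy $P_{\rx,\zeta}(\zeta)=\Ups^{z,\bfr}_{1,T}(\rx,\zeta)$ and $P_{\rx,0}=\Id$, but you never establish a uniform lower bound on $\det P_{\rx,\zeta}$. Citing Proposition~\ref{inverse} does not help: that proposition expresses $\del^\a(1/f^s)$ in terms of $\del^\b f$ and powers of $1/f$, so it \emph{presupposes} a lower bound on $f$ rather than producing one. The paper uses a different (and not obviously equivalent) choice of $P$: it writes $\Ups_{1,T}(x,\xi)=\xi+v_x(\xi)+w_{x,\xi}(\xi)$ with $v_x=M_x\circ g\circ M_x^{-1}$ and $w_{x,\xi}=M_{\psi(x,\xi)}\circ\wh g\circ M_{\psi(x,\xi)}^{-1}\circ M_x\circ s\circ M_x^{-1}$ (here $\wh g:=g\circ(g+\Id)^{-1}\circ(-\Id)$), freezes the parameter $\xi$ inside $w_{x,\xi}$, and sets $P_{x,\xi}=\Id+V_{x,\xi}+W_{x,\xi}$ with $V,W$ the Taylor matrices of $v_x,w_{x,\xi}$. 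The smallness hypothesis $|\del_j h^i|\le\eta/16$ then forces $\|V\|_1,\|W\|_1\le\eta/2$, whence $\det P\ge\tfrac12$ by the choice of $\eta$, and $P^{-1}=(\det P)^{-1}\,{}^t\mathrm{cof}(P)\in E^0_{\sg}$. Your integral differs from this because $(dF_\rx)_\zeta$ picks up the $\zeta$-dependence of $w_{x,\zeta}$ through $\psi(x,\zeta)$, producing extra terms of size $|\zeta|/\langle\psi(\rx,\zeta)\rangle$ that are not uniformly small; you would have to argue separately (and it is not obvious) that your $P$ stays invertible.

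\textbf{The $\eps_v$-decay in $(C_\sg)$.} The defining estimate for $V\in\O^{0,0,w_v}_{\sg,\ka_v,\eps_v,\eps_v,0}$ demands that each $\zeta'$-derivative of $V=(dr_{\rx,\zeta})_{\zeta'}$ gain a factor $\langle\rx\rangle^{-\sg\eps_v}$ for some $\eps_v\in]0,1[$. Lemma~\ref{lemGsigma} is a two-variable composition lemma and does not by itself deliver this three-variable refinement; nor does the second-order Taylor remainder $g(u)-(dg)_u(u)=O(|u|^2)$ that you cite, which concerns $\rx$-derivatives rather than $\zeta'$-derivatives. The paper obtains the $\eps_v$-gain by writing $(dr_{x,\xi})_{\xi'}=(ds^{-1}\circ w)\,(ds\circ u)$ with $u(x,\xi,\xi')=-\xi'/\langle\psi(x,-\xi)\rangle^{\sg}$: a $\xi'$-derivative then produces $\langle\psi(x,-\xi)\rangle^{-\sg}$, and Peetre's inequality $\langle x+y\rangle\ge 2^{-\eps/2}\langle x\rangle^{\eps}/\langle y\rangle^{\eps}$ converts this into $\langle x\rangle^{-\sg\eps}\langle\xi\rangle^{\sg\eps}$. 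That Peetre step is the mechanism you are missing; without it there is no source for the fractional $\rx$-decay that $(C_\sg)$ requires.
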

\begin{proof} A computation shows that $\psi \in H_\sg(\R^n)$ and $\psi(\rx,\zeta)=\O(\langle \rx \rangle\langle \xi\rangle)$. We have $\ol \psi(x,y)= \langle x\rangle^\sg s^{-1}(\tfrac{y-x}{\langle x\rangle^\sg})$, and thus $\ol \psi \in \O_M(\R^{2n},\R^n)$. Noting $\wh g:=g\circ (g+\Id)^{-1}\circ -\Id \in G_0(\R^n)$, we also have 
\begin{align*}
\Ups_{1,T}(x,\xi)&= \xi+\langle x\rangle^\sg g(\tfrac{\xi}{\langle x\rangle^\sg}) + \langle \psi(x,\xi)\rangle^\sg \wh g \big(\langle \psi(x,\xi)\rangle^{-\sg}\langle x\rangle^\sg s(\tfrac{\xi}{\langle x\rangle^\sg})\big)\\
&= (\Id + V_{x,\xi} + W_{x,\xi} ) (\xi) 
\end{align*}
where $V_{x,\xi}:= [\int_0^1 \del_j v_x^i (t\xi)dt]_{i,j}$, $W_{x,\xi}:= [\int_0^1 \del_j w_{x,\xi}^i (t\xi)dt]_{i,j}$, and 
$v_x:= M_x \circ g\circ M_x^{-1}$, $w_{x,\xi} := M_{\psi(x,\xi)}\circ  \wh g \circ M_{\psi(x,\xi)}^{-1}\circ M_x \circ s \circ M_x^{-1}$, $M_x$ being the multiplication by $\langle x\rangle^\sg$. We get $dv_x = dg\circ M_x^{-1}$ and $dw_{x,\xi}=d\wh g\circ (M_{\psi(x,\xi)}^{-1}\circ M_x\circ s\circ M_x^{-1}) \, ds\circ M_x^{-1}$.   
and thus, after computations we check that $V_{x,\xi}$ and $W_{x,\xi}$ are in $E_\sg^0$. Moreover, we have $\norm{V_{x,\xi}}_1 \leq \eta/2$ and $\norm{W_{x,\xi}}_1\leq \eta/2$, which proves that $P_{x,\xi}:=\Id + V_{x,\xi} + W_{x,\xi}$ is invertible with $\det P_{x,\xi}\geq \half$. As a consequence its inverse $P_{x,\xi}^{-1}=(\det P_{x,\xi})^{-1} \, ^t \cof(P_{x,\xi})$ is also in $E_\sg^0$. We deduce then that $(\R^n,+,d\la,\psi)$ has a $S_\sg$-bounded geometry. With $r(x,\xi,\xi')=-\ol\psi(x,\psi(\psi(x,-\xi),-\xi'))$, we get
$$
 r(x,\xi,\xi') =-\langle x\rangle^\sg s^{-1}\big( s(\tfrac{-\xi}{\langle x\rangle^\sg})+ \tfrac{\langle \psi(x,-\xi)\rangle^\sg}{\langle x\rangle^\sg} s(\tfrac{-\xi'}{\langle \psi(x,-\xi)\rangle^\sg}) \big)\, .
$$
so that $(dr_{x,\xi})_{\xi'} = (d s^{-1}\circ w )\,( ds \circ u)$ where $w(x,\xi,\xi'):= s(\tfrac{-\xi}{\langle x\rangle^\sg})+ v(x,\xi,\xi')$, $v(x,\xi,\xi):= \tfrac{\langle \psi(x,-\xi)\rangle^\sg}{\langle x\rangle^\sg} s(\tfrac{-\xi'}{\langle \psi(x,-\xi)\rangle^\sg})$, $u(x,\xi,\xi'):= -\tfrac{\xi'}{\langle \psi(x,-\xi)\rangle^\sg}$. 
We check that $v$ satisfies 
$$
\del^{(\mu,\ga)} v =\O\big(\langle \psi(x,-\xi)\rangle^{-\sg |\ga|}\langle x\rangle^{-\sg( |\mu|+1)}\langle \zeta\rangle^{\ka_1|\mu|}\langle \zeta'\rangle^{|\mu|+1}\big).
$$  
It follows from Peetre's inequality that for any $\eps \in [0,1]$ and $x,y\in \R^n$, $\langle x + y\rangle \geq 2^{-\eps/2} \tfrac{\langle x\rangle^\eps}{\langle y\rangle^\eps}$, which implies that $\langle \psi(x,-\xi)\rangle^\sg =\O\big( \langle x\rangle ^{-\sg \eps} \langle \xi\rangle^{\sg\eps}\big)$. As a consequence we get the estimates
\begin{align*}
&\del^{(\mu,\ga)} w = \O\big(\langle x\rangle^{-\sg (1+|\mu|+\eps|\ga|)}\langle \zeta\rangle^{\ka_1|\mu|+ \eps|\ga| + \delta_{\ga,0}}\langle \zeta'\rangle^{|\mu|+1}\big)\, , \\ 
&\del^{(\mu,\ga)} u = \O\big(\langle x\rangle^{-\sg (|\mu|+\eps|\ga|)}\langle \zeta\rangle^{\ka_1|\mu|+ \eps|\ga|}\langle \zeta'\rangle^{1-|\ga|}\big)\, .
\end{align*}
We deduce from this that $(C_\sg)$ is satisfied.
\end{proof}

We also check that the hypothesis $(H_V)$ of section \ref{linkstd} is satisfied so that 
the previous pseudodifferential calculus (for $\la\in \set{0,1}$) is then valid on  $(\R^n,+,d\la,\psi)$, and proves in particular the space of operators of the form
$$
A (v) (x) =\int_{\R^{2n}} e^{2\pi i \langle \th,\xi\rangle } a(x,\th) v(\psi(x,-\xi)) \, d\xi \,d\th = \int_{\R^{2n}} e^{-2\pi i \langle \th,\ol\psi_{x}(y)\rangle } a(x,\th) v(y) |J(\ol\psi_x)|(y)\, dy \,d\th
$$  
where $a\in S^{\infty}_{\sg}(\R^{2n})$, is equal to the standard algebra of algebra of pseudodifferential operators $\R^n$. However, since $(C_\sg)$ is satisfied, we have now at our disposal a new symbol composition formula given by Theorem \ref{compo}, adapted to the new linearization $\psi$.

\subsection{$S_1$-geometry of the Hyperbolic plane}

The (hyperboloid model of the) 2-dimensional hyperbolic space is defined as the submanifold $\HH:=\set{x=(x_1,x_2,x_3)\in \R^3 \ : \ x_1^2+x_2^2-x_3^2=-1 \ \text{and}\ x_3>0}$ of the $(2,1)$-Minkowski space $\R^{2,1}$ with the bilinear symmetric form $\langle v,w\rangle_{2,1}=v_1 w_1+v_2 w_2-v_3 w_3$. The induced metric on $\HH$: $ds^2= (dx_1)^2+(dx_2)^2-(dx_3)^2$ is Riemannian and it is known that $\HH$ is a symmetric Cartan--Hadamard manifold with constant negative sectional curvature (equal to $-1$). The map $\varphi : \R^2\to \HH$ given by 
$$
\varphi(x,y):= (\sinh x,\,\cosh x \sinh y,\,\cosh x \cosh y )
$$
is a diffeomorphism with inverse $\varphi^{-1}(x_1,x_2,x_3)=(\argsh x_1,\,\argsh(\tfrac{x_2}{\cosh(\argsh x_1 )}))$. 
 As a consequence we can construct another model of the hyperbolic space, noted $R^2$ with domain $\R^2$ and metric obtained by pulling back the metric on $\HH$ onto $\R^2$. A computation shows that this metric is $ds^2:= (dx)^2+ \cosh^2 x \,(dy)^2$. We will note $\norm{\cdot}_{p}$ the norm on $T_p R^2\simeq \R^2$ given by this metric, where $p$ is a point in $\R^2$, and $\norm{\cdot}$ is the Euclidian norm. The geodesic equation on $R^2$ leads to the following system of ordinary differential equations:
\begin{align}
&x'' - \cosh x\,\sinh x\,(y')^2=0 \, \nonumber,\\
&y'' + 2 \tanh x \,x'\,y' = 0 \, \label{geodeq}. 
\end{align}
For each $p=(x,y)\in \R^2$ and $v\in \R^2$ such that $\norm{v}_p=1$ there exists an unique solution on $\R$ $\ga_{p,v}=(x(t),y(t))$ of (\ref{geodeq}) such that $\ga_{p,v}(0)=p$ and $\ga'_{p,v}(0)=v$. 

At each point $p=(x,y)\in \R^2$, we can define the ellipse of unit vectors centered at $0$ in $T_p R^2\simeq \R^2$ with equation $X^2+(\cosh^2 x) \, Y^2=1$. The polar equation of this ellipse is $e_p(\th)$ where
$$
e_{p}(\th):= \tfrac{1}{\sqrt{1+\sinh^2 x \, \sin^2 \th}\, }\, .
$$
Thus, any tangent vector $v\in T_p R^2$ with decompostion $v=\norm{v}(\cos \th,\sin \th)$ also admits the following polar decomposition $v=\norm{v}_p(\cos_p \th,\sin_p \th)$ where $\cos_{p} \th :=e_{p}(\th)\,\cos \th$ and $\sin_{p} \th:=e_{p}(\th) \,\sin \th$. Remark that $e_p$, $\cos_p$, $\sin_p$ and $\norm{\cdot}_p$ are in fact independant of the second coordinate $y$ of $p$. We shall therefore also use the notations $e_x:=e_{(x,y)}$ and similarly for $\cos_x$, $\sin_x$ and $\norm{\cdot}_x$. Note that for any vector $v:=\norm{v}(\cos \th,\sin \th)$, we have $\norm{v}_x = \norm{v} / e_x(\th)$.

If $p\in \HH$ and $v\in \R^{2,1}$ are such that $\langle p,v\rangle_{2,1} =0$ and $\langle v,v\rangle_{2,1}=1$, then the unique geodesic $\a_{p,v}$ on $\HH$ such that $\a_{p,v}(0)=p$ and $\a'_{p,v}(0)=v$ is $\a_{p,v}(t)=\cosh t \, p + \sinh t\, v$ (see for instance \cite[p.195]{Jost}). As a consequence, the geodesics $\ga_{p,v}$ on the $R^2$ hyperbolic space can be obtained by pushing forward the $\a_{p,v}$ geodesics with the diffeomorphic isometry $\varphi$. We check after tedious calculations that for any given $p=(x,y)\in \R^2$ and $\th\in \R$, the following curve
\begin{align}
&\ga_{p,\th}^1(t)= \argsh\big( \cosh t \, \sinh x + \sinh t\, \cosh x \, \cos_x \th \big) \, \nonumber ,\\
&\ga_{p,\th}^2(t)=  \argsh \big ( \tfrac{\cosh t \, \cosh x\, \sinh y + \sinh t\,( \sinh x \, \sinh y\, \cos_x \th +\cosh x \, \cosh y\, \sin_x \th  )}{\cosh\big( \argsh ( \cosh t \, \sinh x + \sinh t\, \cosh x \, \cos_x \th ) \big) } \big)\, \label{exponential} ,
\end{align}
where $t\in \R$, is the unique maximal solution of the geodesic system (\ref{geodeq}) satisfying the initial conditions: $\ga_{p,\th}(0)= p$ and $\ga_{p,\th}'(0) = (\cos_x(\th),\sin_x(\th))$. An explicit formula for the exponential map at any point can therefore be obtained, since we have $\exp_p(v)=\ga_{p,\th}(\norm{v}_x)$ where $v\in T_p R^2-\set{0}$ and $\th \in \R$ such that $v=\norm{v}(\cos \th,\sin \th)$. The main interest of this hyperbolic model with domain equal to $\R^2$ is that it is possible to find explicitely the logarithmic map (the inverse of the exponential map) at any point. We find, after an elementary but long computation, the following inverse, for any $p=(x,y)$ and $p'=(x',y')\in \R^2$,
\begin{align}
&\exp_p^{-1}(p') = \tfrac{\argch f_p(p')}{\sqrt{(f_p(p'))^2-1}}\,\twobyone{-g_p(p')}{\cosh x' \,\sech x\,\sinh(y'-y) } \, \label{logarithm} , \\
&f_p(p'):=\cosh(x')\cosh(y'-y)\cosh(x)-\sinh(x')\sinh(x)\, \nonumber ,\\
&g_p(p'):= \cosh(x')\cosh(y'-y)\sinh(x)-\sinh(x')\cosh(x)\, \nonumber .
\end{align} 
We have $\norm{\exp_p^{-1}(p')}_p=\argch f_p(p')$ which is the geodesic distance between two arbitrary points $p, p'$ in the $R^2$ hyperbolic model. The goal of this section is to prove the following result.

\begin{thm}
\label{R2mainthm}
$\HH$ has a $S_1$-bounded geometry. 
\end{thm}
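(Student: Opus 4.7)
The plan is to apply Lemma \ref{Ssigma} (iv), which reduces verifying $S_1$-bounded geometry to showing that for some fixed base frame $(z_0,\bfr_0)$ and every other frame $(z,\bfr)$, the coordinate change $\psi_{z_0,z}^{\bfr_0,\bfr}$ lies in $G_1^\times(\R^2)$. Since $GL_2(\R) \subset G_1^\times(\R^2)$ and $G_1^\times(\R^2)$ is a group by Lemma \ref{Ssigma} (ii), a change of basis at $z$ amounts to postcomposition with an element of $G_1^\times$, so it suffices to exhibit, at each $z\in\HH$, one convenient basis $\bfr_z$ for which the estimate holds.

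First I would take $z_0:=(0,0)\in R^2$ with $\bfr_0$ the canonical basis of $T_{z_0}R^2$, and, for each $z\in\HH$, fix an orientation-preserving isometry $\phi_z:\HH\to\HH$ with $\phi_z(z_0)=z$ (which exists since $SO_0(2,1)$ acts transitively on oriented orthonormal frames of $\HH$), and set $\bfr_z:=(d\phi_z)_{z_0}(\bfr_0)$. The isometric identity $\exp_z\circ(d\phi_z)_{z_0}=\phi_z\circ\exp_{z_0}$ combined with $L_{\bfr_z}\circ(d\phi_z)_{z_0}=L_{\bfr_0}$ yields $n_z^{\bfr_z}=n_{z_0}^{\bfr_0}\circ\phi_z^{-1}$, hence
$$\psi_{z_0,z}^{\bfr_0,\bfr_z}=n_{z_0}^{\bfr_0}\circ\phi_z\circ(n_{z_0}^{\bfr_0})^{-1}.$$
Thus the theorem reduces to showing that for every orientation-preserving isometry $\phi$ of $\HH$, the conjugate $\Phi:=n_{z_0}^{\bfr_0}\circ\phi\circ(n_{z_0}^{\bfr_0})^{-1}$ lies in $G_1^\times(\R^2)$.

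Transferring to the hyperboloid model via $\varphi$, isometries correspond to linear maps $L\in SO_0(2,1)$ acting on $\R^{2,1}$, and a direct computation gives
$$(n_{z_0}^{\bfr_0})^{-1}(\rx)=(\sinh|\rx|\cos\theta,\ \sinh|\rx|\sin\theta,\ \cosh|\rx|),\quad n_{z_0}^{\bfr_0}(q)=\tfrac{\argch q_3}{\sqrt{q_3^2-1}}(q_1,q_2),$$
for $\rx=|\rx|(\cos\theta,\sin\theta)$ and $q\in\HH$. Writing $q_\rx:=(n_{z_0}^{\bfr_0})^{-1}(\rx)$ and setting $a(\theta):=L_{31}\cos\theta+L_{32}\sin\theta$, $b:=L_{33}$, the defining relation $LJL^T=J$ with $J=\mathrm{diag}(1,1,-1)$ yields $L_{31}^2+L_{32}^2=b^2-1$ and $b\geq 1$, so $|a(\theta)|\leq\sqrt{b^2-1}<b$. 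Hence $(Lq_\rx)_3=a(\theta)\sinh|\rx|+b\cosh|\rx|$ stays $\geq 1$ and grows like $\tfrac{a(\theta)+b}{2}e^{|\rx|}$ as $|\rx|\to\infty$, leading to the asymptotic expansion
$$\Phi(\rx)=(|\rx|+c(\theta))\,v(\theta)+\mathcal{O}(|\rx|\,e^{-2|\rx|}),$$
for explicit smooth functions $c:S^1\to\R$ and $v:S^1\to\R^2$ determined by $L$.

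The main obstacle is to extract the $S_1$ derivative estimates $\partial^\alpha\Phi(\rx)=\mathcal{O}(\langle\rx\rangle^{-(|\alpha|-1)})$ rigorously from this asymptotic form. On any compact set the required bounds follow from the smoothness of $\phi$ and of the diffeomorphism $\exp_{z_0}$; for $|\rx|$ large, the polar functions $|\rx|$ and $\theta_\rx$ are smooth and satisfy the elementary estimates $\partial^\alpha|\rx|=\mathcal{O}(|\rx|^{1-|\alpha|})$ for $|\alpha|\geq 1$ and $\partial^\alpha(f(\theta_\rx))=\mathcal{O}(|\rx|^{-|\alpha|})$ for any smooth $f$ on $S^1$. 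Expanding $\partial^\alpha\Phi$ via Leibniz and Faà di Bruno (Theorem \ref{FaaCS}) from the explicit rational expression in $\sinh|\rx|,\cosh|\rx|,\cos\theta,\sin\theta$, and using the denominator estimate $(Lq_\rx)_3\geq \cosh|\rx|(b-|a(\theta)|)$ to control the inverse factor $1/\sqrt{(Lq_\rx)_3^2-1}$, one checks that each resulting term is bounded by $\mathcal{O}(|\rx|^{1-|\alpha|})$ up to exponentially small remainders, giving $\Phi\in G_1(\R^2)$. The inverse map $\Phi^{-1}=n_{z_0}^{\bfr_0}\circ\phi^{-1}\circ(n_{z_0}^{\bfr_0})^{-1}$ has the same form with $L^{-1}\in SO_0(2,1)$ in place of $L$ and is treated identically, yielding $\Phi\in G_1^\times(\R^2)$ and concluding the proof.
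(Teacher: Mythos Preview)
Your reduction is correct and in fact cleaner than the paper's. The paper also fixes $z_0=(0,0)$ (in the $R^2$ model), invokes Lemma~\ref{Ssigma}(iv) and Proposition~\ref{ssigmDiff}, and reduces to showing $\exp_p^{-1}\circ\exp_0$ and $\exp_0^{-1}\circ\exp_p$ lie in $G_1(\R^2)$ for each $p$, using the \emph{canonical} basis of $T_pR^2$. Your use of the transitive isometric $SO_0(2,1)$-action is a genuine simplification: because you pick the basis $\bfr_z=(d\phi_z)_{z_0}(\bfr_0)$, both $\Phi$ and $\Phi^{-1}$ take the single form $\tfrac{\argch f}{\sqrt{f^2-1}}(w_1,w_2)$ with $f,w_i\in H_P$, whereas the paper (using non-orthonormal bases at $p$) also has to treat the variant $\argch\sqrt{1+h^2+g^2}$ in Lemma~\ref{techno}. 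So your route buys a more uniform computation.

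Where your proposal is thin is precisely at the step you label ``one checks''. Both approaches arrive at the same computational core: verify that $(r,\th)\mapsto \tfrac{\argch f}{\sqrt{f^2-1}}(w_1,w_2)$ lies in $S_{P,1}$ and then pass from polar to Cartesian estimates. The paper does this rigorously in two dedicated lemmas: Lemma~\ref{techno} proves $\argch f\in A_{P,1,r_0}$ and $w_i/\sqrt{f^2-1}\in A_{P,0,r_0}$ via Proposition~\ref{inverse} (controlling $\del^\nu(1/f^s)$), using crucially a lower bound $f\geq \eps e^r$ on a half-space $r>r_0$; Lemma~\ref{passage} then converts $S_{P,1}$ estimates to $G_1(\R^2)$ via Fa\`a di Bruno applied to $\chi_0$. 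Your sketch correctly identifies all the ingredients (the polar estimates $\del^\a|\rx|=\O(|\rx|^{1-|\a|})$, $\del^\a f(\th_\rx)=\O(|\rx|^{-|\a|})$, and the denominator bound $(Lq_\rx)_3\geq \cosh|\rx|(b-|a(\th)|)$), but the actual verification that every Fa\`a di Bruno term decays like $\langle\rx\rangle^{1-|\a|}$---in particular that higher $\th$-derivatives of $f,w_i$ do not spoil the count---is the substance of the theorem, and you have not carried it out. Pointing to Lemmas~\ref{passage} and~\ref{techno} (whose hypotheses your $f,w_1,w_2$ satisfy, with $\eps=\tfrac12(b-\sqrt{b^2-1})$) would close this gap cleanly.
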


We note $\R^2_C:=\R^2\backslash]-\infty,0]\times \set{0}$ and $\R^2_P:= ]0,+\infty[\times ]-\pi,\pi[$. For any $x\in \R$, the map $\chi_x:\R^{2}_C\to \R^2_P$ given by $\chi_x(v_1,v_2):=(\norm{v}_x, \arctan (v_1,v_2))$ where $\arctan(v_1,v_2)$ is the unique element $\th$ of $]-\pi,\pi[$ such that $v_1+i v_2 = \norm{v} \exp(i \th)$, is a diffeomorphism with inverse $\chi_x^{-1}(r,\th)= (r \cos_x \th,r\sin_x \th).$

\begin{lem}
\label{passage}
Let $x\in \R$ and $f\in C^\infty(\R^2,\R)$ such that $ f\circ \chi_x^{-1} \in C^\infty(\R^2_P,\R)$ satisfies for any $(\a,\b)\in \N^2\backslash\set{(0,0)}$, and $(r,\th)\in \R_P^2$, $|\del^{\a,\b} f\circ \chi_x^{-1}(r,\th) | \leq C_{\a,\b} \langle r\rangle^{1-\a}$
where $C_{\a,\b}>0$. Then $f\in G_{1}(\R^2,\R)$.
\end{lem}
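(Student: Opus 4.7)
The strategy is to write $f = (f\circ\chi_x^{-1})\circ\chi_x$ on the chart domain $\R^2_C$, apply the multivariate Faà di Bruno formula of Theorem \ref{FaaCS}, and extend the resulting estimate to all of $\R^2$ by continuity and compactness.

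First, I would record explicit expressions for the two components of $\chi_x$, namely $r(v):=\norm{v}_x=(v_1^2+\cosh^2 x\cdot v_2^2)^{1/2}$ and $\vth(v):=\arctan(v_1,v_2)$. The function $r$ is smooth and positively homogeneous of degree $1$ on $\R^2\setminus\set{0}$, while $\vth$ is smooth on $\R^2_C$; but a direct computation shows that every partial derivative $\del^\mu\vth$ with $|\mu|\geq 1$ extends smoothly to $\R^2\setminus\set{0}$ as a rational function positively homogeneous of degree $-|\mu|$. Homogeneity together with compactness of the unit circle therefore give
$|\del^\mu r(v)|\leq C_\mu\,\norm{v}^{1-|\mu|}$ and $|\del^\mu\vth(v)|\leq C_\mu\,\norm{v}^{-|\mu|}$
for every $v\neq 0$ and every $|\mu|\geq 1$. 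The elementary inequality $\norm{v}\leq r(v)\leq (\cosh x)\norm{v}$ also yields $\langle r(v)\rangle\asymp\langle v\rangle$.

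Next, for a nonzero $2$-multi-index $\nu$, I would apply Theorem \ref{FaaCS} to $f=(f\circ\chi_x^{-1})\circ\chi_x$ on $\R^2_C\setminus\set{0}$ and expand $\del^\nu f(v)$ as a finite linear combination of terms of the form
$$
(\del^{(\a,\b)}(f\circ\chi_x^{-1}))(r(v),\vth(v))\,\prod_{j=1}^{s}(\del^{l^j}r)^{k^j_1}(v)\,(\del^{l^j}\vth)^{k^j_2}(v),
$$
where $1\leq |(\a,\b)|\leq |\nu|$, each $l^j$ is a nonzero $2$-multi-index and each $k^j=(k^j_1,k^j_2)$ a nonzero $2$-multi-index, with $\sum_j k^j_1=\a$, $\sum_j k^j_2=\b$, and $\sum_j |k^j|\,l^j=\nu$. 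Feeding in the hypothesis $|\del^{(\a,\b)}(f\circ\chi_x^{-1})|\leq C_{\a,\b}\langle r\rangle^{1-\a}$ together with the homogeneity bounds of the previous paragraph, each such product is estimated by a constant times $\langle r(v)\rangle^{1-\a}\,\norm{v}^{\sum_j(k^j_1-|k^j|\,|l^j|)}=\langle v\rangle^{1-\a}\,\norm{v}^{\a-|\nu|}$, where the exponent arithmetic uses $\sum_j k^j_1=\a$ and $\sum_j|k^j|\,|l^j|=|\nu|$.

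I would then split according to the size of $v$. For $\norm{v}\geq 1$ and $v\in\R^2_C$ the factor $\norm{v}^{\a-|\nu|}$ is bounded by $\langle v\rangle^{\a-|\nu|}$, so the estimate collapses to $|\del^\nu f(v)|\leq C_\nu\langle v\rangle^{1-|\nu|}$, precisely the bound defining $G_1(\R^2,\R)$. For $\norm{v}\leq 1$, smoothness of $f$ on all of $\R^2$ makes $\del^\nu f$ bounded on this compact set, while $\langle v\rangle^{1-|\nu|}$ stays bounded below, so the inequality is trivial there. Finally, since $\R^2_C$ is dense in $\R^2$ and $\del^\nu f$ is continuous on all of $\R^2$, the estimate propagates from $\R^2_C$ to the whole plane. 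The main technical obstacle is organising the Faà di Bruno bookkeeping cleanly enough to verify that the two families of factors, $\norm{v}^{(1-|l^j|)k^j_1}$ from the $r$-derivatives and $\norm{v}^{-|l^j|k^j_2}$ from the $\vth$-derivatives, telescope to exactly $\norm{v}^{\a-|\nu|}$; once this cancellation is in hand, the excluded half-line and the origin are absorbed by density and by the smoothness of $f$, respectively.
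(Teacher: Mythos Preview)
Your proposal is correct and follows essentially the same route as the paper: apply Faà di Bruno to $f=(f\circ\chi_x^{-1})\circ\chi_x$ on $\R^2_C$, use the homogeneity of the two components of $\chi_x$ to control $P_{\nu,\la}(\chi_x)$, combine with the hypothesis, and extend to $\R^2$ by density and smoothness of $f$. Your treatment is in fact slightly more careful than the paper's, which writes the intermediate bounds on $\del^\mu\chi_x^i$ directly with $\langle v\rangle$ rather than $\norm{v}$; your explicit split into $\norm{v}\geq 1$ (where Faà di Bruno does the work) and $\norm{v}\leq 1$ (where smoothness of $f$ on the compact ball suffices) makes the passage through the origin cleaner.
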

\begin{proof} By Theorem \ref{FaaCS}, for any $(\a,\b)\in \N^2\backslash\set{(0,0)}$, $
\del^{\a,\b}f=\sum_{1\leq |(\a',\b')|\leq |(\a,\b)|} (\del^{\a',\b'}f\circ \chi_x^{-1})\circ \chi_x \  P_{\a,\b,\a',\b'}(\chi_x)
$ on $\R^2_C$, where $P_{\a,\b,\a',\b'}(\chi_x)$ is a linear combination of functions of the form $\prod_{j=1}^s (\del^{l^j} \chi_x)^{k^j}$ where $s\in \set{1,\cdots ,\a+\b}$.  
The $k^j$ and $l^j$ are $2$-multi-indices (for $1\leq j\leq s$) such that $|k^j|>0$, $\sum_{j=1}^s k^j = (\a',\b')$ and $\sum_{j=1}^s |k^j| l^j= (\a,\b)$. By definition, $\chi_x(v)=(\chi_x^1(v),\chi_x^2(v))=(\norm{v}_x,\arctan(v_1,v_2))$. It is straightforward to check that for any 2-multi-index $\nu$, $|\del^\nu \chi_x^1 (v) |\leq C_\nu \langle v \rangle^{1-|\nu|}$ and $|\del^\nu \chi_x^2 (v)| \leq C'_\nu \langle v\rangle^{-|\nu|}$ on $\R^2_C$.  
As a consequence, for each $\a,\b,\a',\b'$ with $1\leq \a'+\b'\leq \a+\b$ there exists $C_{\a,\b,\a',\b'}>0$ such that for any $v\in \R^2_C$,
\begin{equation*}
|P_{\a,\b,\a',\b'}(\chi_x) (v)|\leq C_{\a,\b,\a',\b'}  \langle v \rangle^{\a' -(\a+\b)}\, . 
\end{equation*}
Moreover, by hypothesis, there is $C_{\a',\b'}>0$ such that for any $v\in \R^2_C$, $|(\del^{\a',\b'}f\circ \chi_x^{-1})\circ \chi_x(v)|\leq C_{\a',\b'} \langle v \rangle ^{1-\a'}$. This gives $f\in G_{1}(\R^2_C,\R)$. The extension to $G_1(\R^2,\R)$ is a direct consequence of the smoothness of $f$ on $\R^2$ and the fact that $\R^2_C$ is dense in $\R^2$. 
\end{proof}

We shall use the following proposition, which gives a formal expression of the successive derivatives of the inverse (and its real powers) of a smooth function.

\begin{prop}
\label{inverse} Let $s>0$ be given. For any nonzero $n$-multi-index ($n\in \N^*$) $\a$, 
there exist a finite nonempty set $J_\a$, nonzero real numbers $(\la_{s,\a,p})_{p\in J_{\a}}$ 
and $n$-multi-indices $\b^{\a,p,j}$ (with $p\in J_\a$, $1\leq j\leq |\a|$) such that 

\noindent - for any $p\in J_\a$, $\sum_{1\leq j\leq |\a|} \b^{\a,p,j}=\a$,

\noindent - for any smooth function $f\in C^\infty(\R^n,\R_+^*)$,
$$
\del^\a \tfrac{1}{f^s}  = \tfrac{1}{f^{|\a|+s}}\, \sum_{p\in J_\a} \la_{s,\a,p}\, \prod_{j=1}^{|\a|} \del^{\b^{\a,p,j}} f\, .
$$ 
\end{prop}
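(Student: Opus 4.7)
The argument proceeds by induction on $|\a|$. The base case $|\a|=1$, say $\a=e_i$, is the direct identity
\[
\del^{e_i}\bigl(f^{-s}\bigr)=-s\,\del^{e_i}f\cdot f^{-s-1},
\]
which fits the stated form with $J_{e_i}=\{*\}$, $\la_{s,e_i,*}=-s$, and the single entry $\b^{e_i,*,1}=e_i$ (whose sum is indeed $\a=e_i$).

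For the inductive step I apply $\del^{e_i}$ to the expression given by the induction hypothesis for $\del^\a(f^{-s})$, using the Leibniz rule on the quotient and the product rule on $\prod_{j=1}^{|\a|}\del^{\b^{\a,p,j}}f$. The derivative of the prefactor $\del^{e_i}(1/f^{|\a|+s})=-(|\a|+s)\del^{e_i}f/f^{|\a|+s+1}$ contributes, for each $p\in J_\a$, a tuple of length $|\a|+1=|\a+e_i|$ obtained by prepending the new factor $\del^{e_i}f$ to the existing tuple $(\b^{\a,p,1},\ldots,\b^{\a,p,|\a|})$, with coefficient $-(|\a|+s)\la_{s,\a,p}$ and total sum $\a+e_i$. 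The product rule on $\prod_j\del^{\b^{\a,p,j}}f$ yields $|\a|$ tuples in which a single entry $\b^{\a,p,k_0}$ is replaced by $\b^{\a,p,k_0}+e_i$; to normalize the denominator from $f^{|\a|+s}$ to $f^{|\a+e_i|+s}$, I multiply numerator and denominator by $f=\del^0 f$, which augments the tuple by an extra zero entry. In both groups the resulting tuples then have length $|\a|+1$ and sum to $\a+e_i$. Collecting identical tuples and discarding those with vanishing total coefficient (if cancellations occur) defines $J_{\a+e_i}$ together with the coefficients $\la_{s,\a+e_i,q}$.

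The main subtlety is the bookkeeping: fixing the product length at exactly $|\a|$ and the denominator at $f^{|\a|+s}$ forces one to allow zero entries $\b^{\a,p,j}=0$ (with the convention $\del^0 f=f$) in order to absorb the mismatch between the number of derivative factors produced by the Leibniz expansion and the number prescribed by the statement. Nonemptiness of $J_\a$ for $|\a|\ge 1$ is not automatic from the recursion, since a priori the two groups of tuples can overlap and cancel, but it can be verified by testing the identity on $f(x):=e^{x_1+\cdots+x_n}$: there $\del^\a(f^{-s})=(-s)^{|\a|}f^{-s}$ while the right-hand side reduces to $f^{-|\a|-s}\bigl(\sum_{p\in J_\a}\la_{s,\a,p}\bigr)f^{|\a|}=f^{-s}\sum_{p\in J_\a}\la_{s,\a,p}$, forcing $\sum_{p\in J_\a}\la_{s,\a,p}=(-s)^{|\a|}\ne 0$, hence $J_\a\ne\emptyset$.
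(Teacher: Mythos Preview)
Your proof is correct and follows the same induction on $|\a|$ as the paper, with the same Leibniz/product-rule split in the inductive step. The only real difference is bookkeeping: the paper never collects identical tuples but instead sets $J_{\a'}:=J_\a \coprod (J_\a\times\N_{|\a|})$ as a formal disjoint union, so that each new coefficient is either $-(|\a|+s)\la_{s,\a,p}$ or $\la_{s,\a,p}$ and hence automatically nonzero, and $J_{\a'}$ is trivially nonempty. By choosing to merge identical tuples you created the cancellation issue, which you then resolved with the nice test-function argument $f=e^{x_1+\cdots+x_n}$; this is correct but unnecessary if one simply keeps the index set formal. Your approach buys a potentially smaller $J_\a$, while the paper's buys a cleaner verification that all coefficients are nonzero.
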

\begin{proof} The result is true for the case $|\a|=1$. Suppose then that the result holds for any $n$-multi-index $\a$ such that $|\a|=k$, where $k\in \N^*$ and let $\a'$ be a $n$-multi-index such that $|\a'|=k+1$. Let $i$ be the smallest element of $\set{1,\cdots,n}$ such that $\a'_i\geq 1$, and set 
$\a:=(\a'_1,\cdots,\a'_{i-1},\a'_i-1,\a'_{i+1},\cdots,\a'_n)$. Thus for any $f\in C^\infty(\R^n,\R^*_+)$, $\del^{\a'}\tfrac{1}{f^s}=\del_i \del^\a \tfrac{1}{f^s}$. Since $|\a|=k$, there is exist a finite nonempty set $J_\a$, nonzero real numbers $(\la_{s,\a,p})_{p\in J_{\a}}$ 
and $n$-multi-indices $\b^{\a,p,j}$ (with $p\in J_\a$, $1\leq j\leq |\a|$) such that 
for any $p\in J_\a$, $\sum_{1\leq j\leq |\a|} \b^{\a,p,j}=\a$,
and such that for any $f\in C^\infty(\R^n,\R^*_+)$,
$\del^\a \tfrac{1}{f^s}  = \tfrac{1}{f^{|\a|+s}}\, \sum_{p\in J_\a} \la_{s,\a,p}\, \prod_{j=1}^{|\a|} \del^{\b^{\a,p,j}} f$. 
As a consequence, with the formula $\del_i \prod_{j=1}^{|\a|} g_j = \sum_{q=1}^{|\a|} \prod_{j=1}^{|\a|} \del^{\delta_{q,j}e_i} g_{j}$, we obtain for any $f\in C^\infty(\R^n,\R^*_+)$, 
$$
\del^{\a'} \tfrac{1}{f^s} = \tfrac{1}{f^{|\a'|+s}}\big( \sum_{p\in J_{\a}}-(|\a|+s)\la_{s,\a,p} (\prod_{j=1}^{|\a|} \del^{\b^{\a,p,j}} f)\del_i f + \sum_{(p,q)\in J_\a\times \N_{|\a|}}\la_{s,\a,p} (\prod_{j=1}^{|\a|} \del^{\delta_{q,j}e_i + \b^{\a,p,j}} f)f \big)\, .
$$
Thus, if we take $J_{\a'}=J_{\a}\coprod (J_{\a}\times \N_{|\a|})$, $\la_{s,\a',\wt p}:= -(s+|\a|)\la_{s,\a,p}$ if $\wt p =p \in J_{\a}$, $\la_{s,\a',\wt p}:= \la_{s,\a,p}$ if $\wt p = (p,q)\in J_{\a}\times \N_{|\a|}$, $\b^{\a',\wt p,j}:= \b^{\a,p,j}$ if $\wt p =p\in J_{\a}$ and $1\leq j \leq |\a|$, $\b^{\a',\wt p,j}:= e_i$ if $\wt p =p\in J_{\a}$ and $j= |\a|+1=|\a'|$, $\b^{\a',\wt p,j}:= \delta_{q,j}e_i+\b^{\a,p,j}$ if $\wt p =(p,q)\in J_{\a}\times \N_{|\a|}$ and $1\leq j \leq |\a|$ and $\b^{\a',\wt p,j}:= 0$ if $\wt p =(p,q)\in J_{\a}\times \N_{|\a|}$ and $ j = |\a|+1=|\a'|$, the result now holds for $\a'$.
\end{proof}

In the following we set the convention $J_{0}:=\set{1}$, $\la_{s,0,1}:=1$ and $\prod_{j=1}^0:=1$, so that the formula giving $\del^\a \tfrac{1}{f^s}$ in the previous lemma is still valid when $\a=0$. When $s\in \N^*$, the result is also valid for complex valued nowhere zero smooth functions.

We note $H_P$ the space of $C^\infty(\R^2_P,\R)$ functions of the form $(r,\th)\mapsto a(\th) \cosh r + b(\th) \sinh r$ where $a,b \in \mathcal{B}(\R)$, and $A_{P,k}$ the space of functions $f\in C^\infty(\R^2_P,\R)$ such that for any 2-multi-index $(\a,\b)$ with $\a\leq k \in \N$, there is $C_{\a,\b}>0$ such that for any $(r,\th)\in \R^2_P$, $|\del^{\a,\b} f (r,\th) |\leq C_{\a,\b} \langle r\rangle^{k-\a}$, and also such that for any 2-multi-index $(\a,\b)$ with $\a\geq k+1$, there is $C'_{\a,\b}>0$ such that for any $(r,\th)\in \R^2_P$,  $|\del^{\a,\b} f (r,\th)|\leq C'_{\a,\b} e^{-2r}$. Clearly, $A_{P,k}\subset S_{P,k}$ where $S_{P,k}$ is the space of functions $f\in C^\infty(\R^2_P,\R)$ such that for any 2-multi-index $(\a,\b)$, there is $C_{\a,\b}>0$ such that for any $(r,\th)\in \R^2_P$, $|\del^{\a,\b} f (r,\th) |\leq C_{\a,\b} \langle r\rangle^{k-\a}$. By Leibniz rule, $S_{P,k}S_{P,k'}\subseteq S_{P,k+k'}$. We note $N_P$ the space of functions $f\in C^\infty(\R^2_P,\R)$ such that
for any 2-multi-index $(\a,\b)$ there is $C_{\a,\b}>0$ such that for any $(r,\th)\in \R^2_P$,  $|\del^{\a,\b} f (r,\th)|\leq C_{\a,\b} e^{-2r}$. If $r_0>0$ we define the spaces $H_{P,r_0}$, $A_{P,k,r_0}$, $S_{P,k,r_0}$ and $N_{P,r_0}$ exactly as before, except that we now replace the domain $\R^2_P$ by $\R^2_{P,r_0}:=]r_0,+\infty[\times ]-\pi,\pi[$.

\begin{lem}
\label{techno}
Let $f,g,h,w\in H_{P,r_0}$ where $r_0>0$, such that there is $\eps>0$, $C>1$ such that for any $(r,\th)\in \R^2_{P,r_0}$, $f\geq C$, $f \geq \eps\, e^{r}$ and $h^2+g^2 \geq \eps\, e^{2r}$.

\noindent (i) The functions $\tfrac{w}{(h^2+g^2)^{3/2}}$, $\tfrac{w}{(f^2-1)^{3/2}}$ and any function of the form $(r,\th)\mapsto \tfrac{\sum_{k=-4}^{4} b_k(\th)e^{kr}}{((h^2+g^2)(1+h^2+g^2))^{3/2}}$, where $b_k\in \B(\R)$, are in $N_{P,r_0}$.

\noindent (ii) The functions $\argch \sqrt{1+h^2+g^2}$ and $\argch f$ are in $A_{P,1,r_0}$.

\noindent (iii) The functions $\tfrac{w}{\sqrt{h^2+g^2}}$ and $\tfrac{w}{\sqrt{f^2-1}}$ are in  $A_{P,0,r_0}$.

\end{lem}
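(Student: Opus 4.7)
My starting point for (i) is the observation that every $\phi \in H_{P,r_0}$, namely $\phi = a(\theta)\cosh r + b(\theta)\sinh r$ with $a,b \in \mathcal{B}(\R)$, has all its partial derivatives uniformly $O(e^r)$ on $\R^2_{P,r_0}$, so that any polynomial expression of total degree $k$ in $H_{P,r_0}$-functions has all derivatives uniformly $O(e^{kr})$ by the Leibniz rule. I would then apply this to the three denominators appearing in (i): $u := h^2+g^2$ (degree $2$), $f^2-1$ (degree $2$), and $u(1+u)$ (degree $4$), together with the lower bounds $u \geq \eps e^{2r}$, $f^2-1 \geq c e^{2r}$ (the latter following from $f \geq \eps e^r$ combined with $f \geq C>1$), and $u(1+u) \geq c e^{4r}$. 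Proposition~\ref{inverse} with $s=3/2$ then gives, for any such $\Phi$ of degree $k$,
\[
\del^\nu \Phi^{-3/2} \;=\; \Phi^{-|\nu|-3/2}\sum_{p \in J_\nu}\la_{3/2,\nu,p}\prod_{j=1}^{|\nu|}\del^{\b^{\nu,p,j}}\Phi \;=\; O\!\bigl(e^{-\frac{3k}{2}r}\bigr).
\]
Multiplying by the corresponding numerator via Leibniz (using $\del^{\a,\b}w = O(e^r)$ for $w \in H_{P,r_0}$, or $\del^{\a,\b}\sum_{|k|\leq 4}b_k(\theta)e^{kr} = O(e^{4r})$) then yields $O(e^{-2r})$ in every derivative in each of the three cases, settling (i).

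For (ii), I would use $\argch y = \log(y+\sqrt{y^2-1})$ to rewrite
\[
F_1 := \argch\sqrt{1+u} = \tfrac12\log\!\bigl(1+2u+2\sqrt{u(1+u)}\bigr),\qquad F_2 := \argch f = \log\!\bigl(f+\sqrt{f^2-1}\bigr).
\]
Expanding in the natural bases, $u = A(\theta)+\tfrac{B+C}{2}e^{2r}+\tfrac{B-C}{2}e^{-2r}$ and $2f = (a+b)(\theta)e^r + (a-b)(\theta)e^{-r}$ with all coefficients in $\mathcal{B}(\R)$, the hypotheses $u \geq \eps e^{2r}$ and $f \geq \eps e^r$ force $B+C \geq 2\eps$ and $a+b \geq 2\eps$ uniformly in $\theta$ (divide by the leading exponential and let $r \to \infty$). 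A direct manipulation then produces $e^{2F_i} = e^{2r}\,\Psi_i(\theta, e^{-2r})$ with $\Psi_i(\theta, s)$ real-analytic in $s$ at $0$ uniformly in $\theta$, and $\Psi_1(\theta,0)=2(B+C)$, $\Psi_2(\theta,0)=(a+b)^2$, both bounded below by a positive constant. Taking logarithms produces the decomposition
\[
F_i(r,\theta) = r + G_i(\theta) + R_i(r,\theta),\qquad R_i(r,\theta) = \sum_{k\geq 1}d_k^{(i)}(\theta)\,e^{-2kr},
\]
with $G_i \in \mathcal{B}(\R)$, each $d_k^{(i)}\in \mathcal{B}(\R)$, and geometric control on the seminorms $\|d_k^{(i)}\|_{C^\beta}$ (via Faà di Bruno applied to the composition of $\tfrac12\log$ with the analytic $\Psi_i$, legitimate since $\Psi_i(\theta,0)$ stays in a compact subset of $]0,\infty[$). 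Termwise differentiation then yields $\del^{\a,\b}R_i = O(e^{-2r})$, so that $F_i = O(\langle r\rangle)$, $\del_r F_i = 1+O(e^{-2r})=O(1)$, $\del_\theta^\b F_i \in \mathcal{B}$, and $\del^{\a,\b}F_i = O(e^{-2r})$ for $\a\geq 2$, which are exactly the $A_{P,1,r_0}$ estimates.

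Part (iii) will follow along the same lines but using square roots in place of logarithms: write $\sqrt{u} = e^r\sqrt{(B+C)/2}\,(1+\tilde\eta_1(\theta,e^{-2r}))^{1/2}$ and $\sqrt{f^2-1} = \tfrac{a+b}{2}e^r\,(1+\tilde\eta_2(\theta,e^{-2r}))^{1/2}$ with $\tilde\eta_i$ real-analytic at $0$ and vanishing there (uniformly in $\theta$). Inverting the analytic factor and multiplying by $w = \gamma(\theta)e^r + \delta(\theta)e^{-r}$ produces an expansion $w/\sqrt{u} = \bigl(\mathcal{B}(\R)\text{-function of }\theta\bigr) + \sum_{k\geq 1}e_k(\theta)\,e^{-2kr}$; the leading term and its $\theta$-derivatives are bounded, and the tail contributes $O(e^{-2r})$ in every derivative exactly as in (ii), placing $w/\sqrt{u}$ and analogously $w/\sqrt{f^2-1}$ in $A_{P,0,r_0}$. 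The principal technical obstacle throughout is the \emph{uniformity} in $\theta \in {]-\pi,\pi[}$: since this interval is not compact, pointwise positivity of $B+C$ and $a+b$ does not by itself provide a uniform positive lower bound, and one must extract the latter from the asymptotics as $r\to\infty$ combined with the $\mathcal{B}(\R)$-control on all $\theta$-derivatives of the hyperbolic-basis coefficients; this uniformity is precisely what guarantees the $\mathcal{B}(\R)$-regularity of every Taylor coefficient appearing in the $e^{-2r}$-expansions.
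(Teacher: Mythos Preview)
Your treatment of (i) is the same as the paper's: Leibniz plus Proposition~\ref{inverse} with $s=3/2$, using that every $\del^\nu$ of an $H_{P,r_0}$-function is $O(e^r)$ and of $h^2+g^2$ is $O(e^{2r})$, together with the lower bounds on the denominators.

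For (ii) and (iii) you take a genuinely different route. The paper proceeds \emph{reductively}: it simply computes $\del_r^2\argch f$ and $\del_r^2\argch\sqrt{1+h^2+g^2}$ and checks they are of the shapes already placed in $N_{P,r_0}$ by (i); likewise $\del_r(w/\sqrt{h^2+g^2})$ and $\del_r(w/\sqrt{f^2-1})$ are of the form $w_1/(h^2+g^2)^{3/2}$, $w_2/(f^2-1)^{3/2}$ with $w_i\in H_{P,r_0}$. This immediately gives the $O(e^{-2r})$ bound for all derivatives with $\a\geq 2$ (resp.\ $\a\geq 1$), and only the finitely many low-$\a$ cases need a direct check via the explicit first-derivative formulas and Proposition~\ref{inverse}. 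By contrast, you extract the leading exponential behaviour $e^{2F_i}=e^{2r}\Psi_i(\th,e^{-2r})$, force uniform positivity of $\Psi_i(\th,0)$ from the hypotheses, and expand $\tfrac12\log\Psi_i$ (resp.\ $(\cdot)^{-1/2}$) as a convergent power series in $e^{-2r}$ with $\B(\R)$-coefficients, then differentiate termwise. Your argument is correct and gives more structural information (an honest asymptotic series), but it is longer and requires the uniform-in-$\th$ analyticity step you flag at the end; the paper's reduction to (i) sidesteps all of that in a few lines. One small point you should make explicit: your series expansion is only valid for $e^{-2r}$ below a uniform radius $\rho$, i.e.\ for $r>r_0':=\tfrac12\log(1/\rho)$, which may exceed the given $r_0$; the estimates on the strip $r_0<r\leq r_0'$ then follow trivially since all quantities are smooth with derivatives bounded uniformly in $\th$ there (using $f\geq C>1$, $h^2+g^2\geq\eps e^{2r_0}$ and the $\B(\R)$-regularity of the coefficients).
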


\begin{proof}
$(i)$ We give a proof for $\tfrac{w}{(h^2+g^2)^{3/2}}$. The other cases are similar. By Proposition \ref{inverse} and Leibniz rule, we have for any 2-multi-index $\nu$, 
$$
\del^{\nu}\tfrac{w}{(h^2+g^2)^{3/2}} = \sum_{\nu'\leq \nu } \tbinom{\nu}{\nu'} \tfrac{\del^{\nu-\nu'} w }{(h^2+g^2)^{3/2+|\nu'|}} \sum_{p\in J_{\nu'}} \la_{3/2,\nu',p} \prod_{j=1}^{|\nu'|} \del^{\b^{\nu',p,j}} (h^2+g^2)\, .
$$
Note that we have for any 2-multi-index $\nu$, $\del^\nu (h^2+g^2) = \O(e^{2r})$ and $\del^\nu w = \O(e^r)$. The result follows.

\noindent $(ii)$ By $(i)$, since $\del_r^2 \argch \sqrt{1+h^2+g^2}$ is of the form $(r,\th)\mapsto \tfrac{\sum_{k=-4}^{4} b_k(\th)e^{kr}}{((h^2+g^2)(1+h^2+g^2))^{3/2}}$ where $b_k\in \B(\R)$, and $\del_r^2 \argch f$ is of the form $\tfrac{w}{(f^2-1)^{3/2}}$ where $w \in H_{P,r_0}$, we only need to check that for $0\leq \a \leq 1$, and $\b\in \N, \del^{\a,\b} \argch \sqrt{1+h^2+g^2} = \O(\langle r \rangle^{1-\a})$ and  $\del^{\a,\b} \argch f= \O(\langle r \rangle^{1-\a})$. Since $\del_{r} \argch \sqrt{1+h^2+g^2} = \tfrac{(\del_r h)h+(\del_r g)g}{\sqrt{(h^2+g^2)(1+h^2+g^2)}}$, $\del_{r} \argch f = \tfrac{\del_r f}{\sqrt{f^2-1}}$, $\del_{\th} \argch \sqrt{1+h^2+g^2} = \tfrac{(\del_\th h)h+(\del_\th g)g}{\sqrt{(h^2+g^2)(1+h^2+g^2)}}$ and $\del_{\th} \argch f = \tfrac{\del_\th f}{\sqrt{f^2-1}}$, the result follows from an application of Proposition \ref{inverse}.

\noindent $(iii)$ By $(i)$, since $\del_r \tfrac{w}{\sqrt{h^2+g^2}}$ is of the form $\tfrac{w_1}{(h^2+g^2)^{3/2}}$ where $w_1 \in H_{P,r_0}$, and $\del_r \tfrac{w}{\sqrt{f^2-1}}$ is of the form $\tfrac{w_2}{(f^2-1)^{3/2}}$ where $w_2 \in H_{P,r_0}$, we only need to check that for $\b\in \N, \del^{0,\b}\tfrac{w}{\sqrt{h^2+g^2}} = \O(1)$ and  $\del^{0,\b} \tfrac{w}{\sqrt{f^2-1}}= \O(1)$. This is a direct consequence of Proposition \ref{inverse}.
\end{proof}

\begin{proof}[Proof of Theorem \ref{R2mainthm}] By Lemma \ref{Ssigma} $(iii)$ and Proposition \ref{ssigmDiff}, it is sufficient to prove that for any $p:=(x,y)\in \R^2\backslash\set{0}$, $\exp_{p}^{-1}\circ \exp_0$ and $\exp_{0}^{-1}\circ \exp_p$ are in $G_1(\R^2)$. A computation based on (\ref{exponential}) and (\ref{logarithm}) shows that on $\R^2_P$,
\begin{align*}
&\exp_{p}^{-1}\circ \exp_0 \circ \chi_{0}^{-1}= (\argch f )\big (\tfrac{w_1}{\sqrt{f^2-1}}  ,  \tfrac{w_2}{\sqrt{f^2-1}}  \big )\, ,\\
&\exp_{0}^{-1}\circ \exp_{p}\circ \chi_x^{-1} = (\argch \sqrt{1+h^2+g^2} )\big(\tfrac{h}{\sqrt{h^2+g^2}} , \tfrac{g}{\sqrt{h^2+g^2}} \big)\, ,
\end{align*}
\noindent where 
\begin{align*}
&f(r,\th):= \cosh r \cosh y \cosh x - \sinh r (\sinh x \cos \th + \sinh y \cosh x \sin \th)\, ,\\
&w_1(r,\th):= -\cosh r \cosh y \sinh x + \sinh r (\cosh x \cos \th + \sinh y \sinh x \sin \th)\, ,\\
&w_2(r,\th):= - \cosh r \sinh y \sech x +\sinh r \sin \th \cosh y \sech x \, ,\\
&h(r,\th):= \cosh r \sinh x + \sinh r \cosh x \cos_x \th \, , \\
&g(r,\th):= \cosh r \cosh x \sinh y + \sinh r (\sinh x \sinh y \cos_x \th + \cosh x \cosh y \sin_x \th)\, .
\end{align*}
All these functions belong to $H_P$ and $f\geq 1$. Note that $f(r,\th)=1$ if any only if $\exp_0\chi_0^{-1}(r,\th)=p$, in which case $\exp_{p}^{-1}\circ \exp_0 \circ \chi_{0}^{-1}(r,\th)=0$,  so that $\exp_{p}^{-1}\circ \exp_0 \circ \chi_{0}^{-1}$ is well defined as a smooth function on the whole $\R^2_{P}$. The same argument holds for $\exp_{0}^{-1}\circ \exp_{p}\circ \chi_x^{-1}$. We check that 
$$
\half(\cosh x \cosh y - \sqrt{\cosh^2 x \cosh^2 y -1}) e^{r}\leq  f(r,\th)\leq \cosh r \ e^{\argch (\cosh x \cosh y)} 
$$
so that by defining $r_0:= \log 2/\eps$ where $0<\eps< \min\set {1, \half(\cosh x \cosh y - \sqrt{\cosh^2 x \cosh^2 y -1}) }$ we have for any $(r,\th)\in \R^2_{P,r_0}$, $f(r,\th)\geq \eps e^r\geq 2$. Note also that for any $v\in \R^2_C$, we have $\argch f(\chi_0(v)) = \norm{\exp_p^{-1}\circ \exp_0 (v)}_p$ and 
$$
\argch \sqrt{1+h^2(\chi_x(v))+g^2(\chi_x(v))}= \norm{\exp_0^{-1}\circ \exp_p(v)}_0\, .
$$
The first equality entails (since $\exp_p^{-1}\circ \exp_0 (\R^2_C)$ is a dense open subset of $\R^2$) that for any $v$ in $\R^2$, $\cosh \norm{v}_p \leq \cosh \norm{\exp_0^{-1}\circ \exp_p (v)}_0 e^{\argch (\cosh x\cosh y)}$. We then obtain for any $(r,\th)\in \R^2_P$, $\sqrt{1+h^2+g^2}\geq \cosh r \ e^{-\argch (\cosh x \cosh y)}$. In particular, defining 
$$r'_0:=\argch (\sqrt{2}\exp (\argch (\cosh x \cosh y))),$$ we get for any $r\geq r'_0$, the following estimate $h^2+g^2\geq \tfrac{1}{8} e^{-2\argch(\cosh x\cosh y)} e^{2r} $. If we now apply Lemma \ref{techno} for the space $H_{P,r''_0}$ where $r''_0:= \max\set{r_0,r'_0}$, we see that $\exp_{p}^{-1}\circ \exp_0 \circ \chi_{0}^{-1}$ and $\exp_{0}^{-1}\circ \exp_{p}\circ \chi_x^{-1}$ are in $S_{P,1}$. The result then follows from Lemma \ref{passage}.
\end{proof}

\section{Conclusion}

We have seen in this paper certain hypothesis on the geometry ($S_\sg$ or $\O_M$-bounded geometry) of a manifold with linearization that allows a coordinate free definition of most of the topological vector spaces that are needed for Fourier analysis and global complete symbol calculus with uniform and decaying control over the $x$ variable. Given a linearization on the manifold with some properties of control at infinity, we constructed symbol maps and $\la$-quantization, explicit Moyal star-products on the cotangent bundle, and classes of pseudodifferential operators. We proved a stability under composition result, and an associated symbol product asymptotic formula under a hypothesis $(C_\sg)$ of control at infinity of the linearization. The calculus presented here is a generalization of the standard and $SG$ symbol calculi over the Euclidean space $\R^n$ and can be applied to the hyperbolic 2-space since, as proven in section 5.2, it has a $S_1$-bounded geometry.  $L^2$-continuity of pseudodifferential operators of order $(0,0)$ has been established in section \ref{linkstd} under the hypothesis $(H_V)$. We do not know however if this result still holds without this hypothesis.

The full analysis of the obtained Moyal product on $\S(T^*M)$ and spectral properties of pseudodifferential operators in $\Psi_{\sg}^{l,m}$ remain to be studied. Extension and connection of the symbol calculus presented here could be made with, for instance, noncommutative geometry (Gayral, Gracia-Bond\'{i}a, Iochum, Sch\"{u}cker and V\'{a}rilly \cite{Gayral}), the magnetic Moyal calculus (Iftimie, Mantoiu and Purice \cite{Iftimie}), spectral asymptotics (Shubin \cite{Shubin4}), essential self-adjointness (Braverman, Milatovich and Shubin \cite{Braverman}), Fourier integral operators (Coriasco \cite{Coriasco}, Ruzhansky and Sugimoto \cite{Ruzhansky3,Ruzhansky2}), Wiener type calculus (Sj\"{o}strand \cite{Sjostrand,Sjostrand2}), generalized operators (Garetto \cite{Garetto1}), Gelfand--Shilov spaces (Cappiello, Gramchev and Rodino \cite{Cappiello}), regularized traces (Paycha \cite{Paycha}), and white noise analysis for an infinite dimensional Moyal product (L\'{e}andre \cite{Leandre} and Dito and L\'{e}andre \cite{Dito}).   ¨

\section*{Acknowledgements} Thanks are due to my Ph.D. advisor Bruno Iochum, Thomas Krajewski, Thomas Sch\"{u}cker, Sylvie Paycha, Gerd Grubb, Ryszard Nest, Moulay-Tahar Benameur, Ludwik D\c{a}browski, Michael Puschnigg, Victor Gayral, Mathieu Beau and Baptiste Savoie for helpful discussions and observations.

\end{document}